\theoremstyle{remark}
\newtheorem{assumptions}{Assumptions}[section]
\newtheorem{remark}[assumptions]{Remark}
\theoremstyle{plain}
\newtheorem{theorem}[assumptions]{Theorem}
\newtheorem{proposition}[assumptions]{Proposition}
\newtheorem{lemma}[assumptions]{Lemma}
\newtheorem{corollary}[assumptions]{Corollary}
\xpatchcmd{\@sec@pppage}{
\bfseries}{
\normalfont\scshape\Large}{}{}
\numberwithin{equation}{section}
\begin{document}


\title{
\LARGE Trajectorial Otto calculus\thanks{
We thank Luigi Ambrosio, Mathias Beiglb{\"o}ck, Max Fathi, Ivan Gentil, David Kinderlehrer, Daniel Lacker, Michel Ledoux, Jan Maas, Felix Otto, Chris Rogers, Oleg Szehr, and Lane Chun Yeung for their advice and comments during the preparation of this paper. Special thanks go to Luigi Ambrosio, Michel Ledoux and Jan Maas for their expert guidance, which helped us navigate several difficult narrows successfully. \smallskip\newline
\noindent Preliminary versions of this work, under the titles ``Applying It\^{o} calculus to Otto calculus'' and ``Pathwise Otto calculus'', were posted on \href{https://arxiv.org}{arXiv} on $21$ November $2018$ (\href{https://arxiv.org/pdf/1811.08686v1.pdf}{arXiv:1811.08686v1}) and $6$ February $2019$ (\href{https://arxiv.org/pdf/1811.08686v2.pdf}{arXiv:1811.08686v2}), respectively. \smallskip\newline
\noindent I. Karatzas acknowledges support from the National Science Foundation (NSF) under grant NSF-DMS-14-05210. W. Schachermayer and B. Tschiderer acknowledge support by the Austrian Science Fund (FWF) under grant P28661. W. Schachermayer additionally appreciates support by the Vienna Science and Technology Fund (WWTF) through projects MA14-008 and MA16-021. \smallskip\newline
\noindent Most of this work was done in Fall 2018, when W. Schachermayer was visiting the Department of Mathematics at Columbia University as Minerva Foundation Fellow.
}
}


\author{
\large Ioannis Karatzas\thanks{
Department of Mathematics, Columbia University, 2990 Broadway, New York, NY 10027, USA 
\newline (email: \href{mailto: ikl@columbia.edu}{ikl@columbia.edu}); 
\newline and INTECH Investment Management, One Palmer Square, Suite 441, Princeton, NJ 08542, USA 
\newline (email: \href{mailto: ikaratzas@intechjanus.com}{ikaratzas@intechjanus.com}).
} 
\and \large Walter Schachermayer\thanks{
Faculty of Mathematics, University of Vienna, Oskar-Morgenstern-Platz 1, 1090 Vienna, Austria 
\newline (email: \href{mailto: walter.schachermayer@univie.ac.at}{\mbox{walter}.schachermayer@univie.ac.at}); 
\newline and Department of Mathematics, Columbia University, 2990 Broadway, New York, NY 10027, USA.
} 
\and \large Bertram Tschiderer\thanks{
Faculty of Mathematics, University of Vienna, Oskar-Morgenstern-Platz 1, 1090 Vienna, Austria \newline (email: \href{mailto: bertram.tschiderer@univie.ac.at}{bertram.tschiderer@univie.ac.at}).
}
}


\date{\normalsize 22nd March 2020}


\maketitle


\begin{abstract} \small \noindent \textsc{Abstract.} We revisit the variational characterization of diffusion as entropic gradient flux and provide for it a probabilistic interpretation based on stochastic calculus. It was shown by Jordan, Kinderlehrer, and Otto that, for diffusions of Langevin-Smoluchowski type, the Fokker-Planck probability density flow minimizes the rate of relative entropy dissipation, as measured by the distance traveled in the ambient space of probability measures with finite second moments, in terms of the quadratic Wasserstein metric. We obtain novel, stochastic-process versions of these features, valid along almost every trajectory of the diffusive motion in both the forward and, most transparently, the backward, directions of time, using a very direct perturbation analysis. By averaging our trajectorial results with respect to the underlying measure on path space, we establish the minimum rate of entropy dissipation along the Fokker-Planck flow and measure exactly the deviation from this minimum that corresponds to any given perturbation. As a bonus of our perturbation analysis we derive the so-called HWI inequality relating relative entropy (H), Wasserstein distance (W) and relative Fisher information (I).

\bigskip

\small \noindent \href{https://mathscinet.ams.org/mathscinet/msc/msc2010.html}{\textit{MSC 2010 subject classifications:}} Primary 60H30, 60G44; secondary 82C31, 60J60, 94A17

\bigskip

\small \noindent \textit{Keywords and phrases:} Relative entropy, Wasserstein distance, Fisher information, optimal transport, gradient flux, diffusion processes, time reversal, functional inequalities
\end{abstract}


\newpage


\tableofcontents


\newpage


\section{Introduction} \label{soti}


We provide a trajectorial interpretation of a seminal result by Jordan, Kinderlehrer, and Otto \cite{JKO98}, and present a proof based on stochastic calculus. The basic theme of our approach could be described epigrammatically as ``applying It\^{o} calculus to Otto calculus''. More precisely, we follow a stochastic analysis approach to Otto's characterization of diffusions of Langevin-Smoluchowski type as entropic gradient fluxes in Wasserstein space, and provide stronger, trajectorial versions of these results. For consistency and better readability we adopt the setting and notation of \cite{JKO98}, and even copy some paragraphs of this paper almost verbatim in the remainder of this introductory section. 

\smallskip

Following the lines of \cite{JKO98} we thus consider a Fokker-Planck equation of the form
\begin{equation} \label{fpeq}
\partial_{t} \rho(t,x) = \operatorname{div}\big(\nabla \Psi(x) \, \rho(t,x) \big) + \beta^{-1} \Delta \rho(t,x), \qquad (t,x) \in (0,\infty) \times \mathds{R}^{n},
\end{equation}
with initial condition
\begin{equation} \label{ic}
\rho(0,x) = \rho^{0}(x), \qquad x \in \mathds{R}^{n}.
\end{equation}
Here, $\rho$ is a real-valued function defined for $(t,x) \in [0,\infty) \times \mathds{R}^{n}$, the function $\Psi \colon \mathds{R}^{n} \rightarrow [0,\infty)$ is smooth and plays the role of a potential, $\beta > 0$ is a real constant, and $\rho^{0}$ is a probability density on $\mathds{R}^{n}$. The solution $\rho(t,x)$ of \hyperref[fpeq]{(\ref*{fpeq})} with initial condition \hyperref[ic]{(\ref*{ic})} stays non-negative and conserves its mass, which means that the spatial integral
\begin{equation}
\int_{\mathds{R}^{n}} \rho(t,x) \, \textnormal{d}x
\end{equation}
is independent of the time parameter $t \geqslant 0$ and is thus equal to $\int \rho^{0} \, \textnormal{d}x = 1$. Therefore, $\rho(t, \, \cdot \,)$ must be a probability density on $\mathds{R}^{n}$ for every fixed time $t \geqslant 0$.

\smallskip

As in \cite{JKO98} we note that the Fokker-Planck equation \hyperref[fpeq]{(\ref*{fpeq})} with initial condition \hyperref[ic]{(\ref*{ic})} is inherently related to the stochastic differential equation of Langevin-Smoluchowski type \cite{Fri75,Gar09,Ris96,Sch80} 
\begin{equation} \label{sdeid}
\textnormal{d}X(t) = - \nabla \Psi\big(X(t)\big) \, \textnormal{d}t + \sqrt{2 \beta^{-1}} \, \textnormal{d}W(t), \qquad X(0) = X^{0}.
\end{equation}
In the equation above, $(W(t))_{t \geqslant 0}$ is an $n$-dimensional Brownian motion started from $0$, and the $\mathds{R}^{n}$-valued random variable $X^{0}$ is independent of the process $(W(t))_{t \geqslant 0}$. The probability distribution of $X^0$ has density $\rho^{0}$ and, unless specified otherwise, the reference measure will always be Lebesgue measure on $\mathds{R}^{n}$. Then $\rho(t, \, \cdot \,)$, the solution of \hyperref[fpeq]{(\ref*{fpeq})} with initial condition \hyperref[ic]{(\ref*{ic})}, gives at any given time $t \geqslant 0$ the probability density function of the random variable $X(t)$ from \hyperref[sdeid]{(\ref*{sdeid})}. 

\smallskip

If the potential $\Psi$ grows rapidly enough so that $\mathrm{e}^{-\beta \Psi} \in L^{1}(\mathds{R}^{n})$, then the partition function 
\begin{equation} \label{czpf}
Z (\beta) = \int_{\mathds{R}^{n}} \mathrm{e}^{- \beta \Psi(x)} \, \textnormal{d}x
\end{equation}
is finite and there exists a unique stationary solution of the Fokker-Planck equation \hyperref[fpeq]{(\ref*{fpeq})}; namely, the probability density $\rho_{s}$ of the Gibbs distribution given by \cite{Gar09,JK96,Ris96}
\begin{equation} \label{usdotgdm}
\rho_{s}(x) = \big(Z(\beta)\big)^{-1} \, \mathrm{e}^{ - \beta \Psi(x)}
\end{equation}
for $x \in \mathds{R}^{n}$. When it exists, the probability measure on $\mathds{R}^{n}$ with density function $\rho_{s}$ is called \textit{Gibbs distribution}, and is the unique invariant measure for the Markov process $(X(t))_{t \geqslant 0}$ defined by the stochastic differential equation \hyperref[sdeid]{(\ref*{sdeid})}; see, e.g., \cite[Exercise 5.6.18, p.\ 361]{KS98}.

\smallskip

In \cite{JK96} it is shown that the stationary probability density $\rho_{s}$ satisfies the following variational principle: it minimizes the \textit{free energy functional}
\begin{equation} \label{fef}
F(\rho) = E(\rho) + \beta^{-1} \, S(\rho)
\end{equation}
over all probability densities $\rho$ on $\mathds{R}^{n}$. Here, the functional
\begin{equation} \label{penotgbef}
E(\rho) \vcentcolon = \int_{\mathds{R}^{n}} \Psi \rho \, \textnormal{d}x
\end{equation}
models the potential energy, whereas the internal energy is given by the negative of the Gibbs-Boltzmann entropy functional 
\begin{equation} \label{ie}
S(\rho) \vcentcolon = \int_{\mathds{R}^{n}} \rho \log \rho \, \textnormal{d}x.
\end{equation}

\smallskip

Similarly as in \cite[Theorem 5.1]{JKO98} we consider the following regularity assumptions.
\begin{assumptions} \label{osaojko} \
\begin{enumerate}[label=(\roman*)] 
\item \label{faosaojko} The potential $\Psi \colon \mathds{R}^{n} \rightarrow [0,\infty)$ is of class $\mathcal{C}^{\infty}(\mathds{R}^{n};[0,\infty))$.
\item \label{saosaojko} The distribution of $X(0)$ in \hyperref[sdeid]{(\ref*{sdeid})} has a probability density function $\rho^{0}(x)$ with respect to Lebesgue measure on $\mathds{R}^{n}$, which has finite second moment as well as finite free energy, i.e.,
\begin{equation} \label{ffecaoo}
\int_{\mathds{R}^{n}} \rho^{0}(x) \, \vert x \vert^{2} \, \textnormal{d}x < \infty  
\qquad \textnormal{ and } \qquad
F(\rho^{0}) \in \mathds{R}. 
\end{equation}
\end{enumerate}
\end{assumptions}

In \cite{JKO98} it is also assumed that the potential $\Psi$ satisfies, for some real constant $C > 0$, the bound $\vert \nabla \Psi \vert \leqslant C \, (\Psi + 1)$, which we do not need here. However, we shall impose the following, additional assumptions.

\begin{assumptions}[\textsf{Regularity assumptions for the trajectorial results of the present paper}] \label{sosaojkoia} In addition to conditions \hyperref[faosaojko]{\ref*{faosaojko}} and \hyperref[saosaojko]{\ref*{saosaojko}} of \hyperref[osaojko]{Assumptions \ref*{osaojko}}, we also impose that:
\begin{enumerate}[label=(\roman*)] 
\setcounter{enumi}{2}
\item \label{naltsaosaojko} The potential $\Psi$ satisfies, for some real constants $c \geqslant 0$ and $R \geqslant 0$, the drift (or coercivity) condition
\begin{equation} \label{tppstdc}
\big\langle x \, , \nabla \Psi(x) \big\rangle_{\mathds{R}^{n}} \geqslant - c \, \vert x \vert^{2}
\end{equation}
for all $x \in \mathds{R}^{n}$ with $\vert x \vert \geqslant R$.
\item \label{tsaosaojko} The potential $\Psi$ is sufficiently well-behaved to guarantee that the solution of \hyperref[sdeid]{(\ref*{sdeid})} is well-defined for all $t \geqslant 0$, and that the solution $(t,x) \mapsto \rho(t,x)$ of \hyperref[fpeq]{(\ref*{fpeq})} with initial condition \hyperref[ic]{(\ref*{ic})} is continuous and strictly positive on $(0,\infty) \times \mathds{R}^{n}$, differentiable with respect to the time variable $t$ for each $x \in \mathds{R}^{n}$, and smooth in the space variable $x$ for each $t > 0$. We also assume that the logarithmic derivative $(t,x) \mapsto \nabla \log \rho(t,x)$ is continuous on $(0,\infty) \times \mathds{R}^{n}$. For example, by requiring that all derivatives of $\Psi$ grow at most exponentially as $\vert x \vert$ tends to infinity, one may adapt the arguments from \cite{Rog85} showing that this is indeed the case.
\end{enumerate}
For the formulation of \hyperref[thetthre]{Theorem \ref*{thetthre}} we will need a vector field $\beta \colon \mathds{R}^{n} \rightarrow \mathds{R}^{n}$ which is the gradient of a potential $B \colon \mathds{R}^{n} \rightarrow \mathds{R}$ satisfying the following regularity assumption:
\begin{enumerate}[label=(\roman*)] 
\setcounter{enumi}{4} 
\item \label{naltsaosaojkos} The potential $B \colon \mathds{R}^{n} \rightarrow \mathds{R}$ is of class $\mathcal{C}^{\infty}(\mathds{R}^{n};\mathds{R})$ and has compact support. Consequently, its gradient $\beta \vcentcolon = \nabla B \colon \mathds{R}^{n} \rightarrow \mathds{R}^{n}$ is of class $\mathcal{C}^{\infty}(\mathds{R}^{n};\mathds{R}^{n})$ and again compactly supported. We also assume that the perturbed potential $\Psi + B$ satisfies condition \hyperref[tsaosaojko]{\ref*{tsaosaojko}}.
\end{enumerate}
\end{assumptions}

The \hyperref[sosaojkoia]{Assumptions \ref*{sosaojkoia}} are satisfied by typical convex potentials $\Psi$. They also accommodate examples such as double-well potentials of the form $\Psi(x) = (x^{2}-\alpha^{2})^{2}$ on the real line, for real constants $\alpha > 0$. Furthermore, they guarantee that the second-moment condition in \hyperref[ffecaoo]{(\ref*{ffecaoo})} propagates in time, i.e.,
\begin{equation} \label{1.12}
\int_{\mathds{R}^{n}} \rho(t,x) \, \vert x \vert^{2} \, \textnormal{d}x < \infty, \qquad t \geqslant 0;
\end{equation}
see \hyperref[fosmolds]{Lemma \ref*{fosmolds}} below. It is important to point out, that these assumptions do not rule out the case when the constant $Z(\beta)$ in \hyperref[czpf]{(\ref*{czpf})} is infinite; thus, they allow for cases (such as $\Psi = 0$) in which the stationary probability density function $\rho_{s}$ does not exist. In fact, in \cite{JKO98} the authors point out explicitly that, even when the stationary probability density $\rho_{s}$ is not defined, the free energy \textnormal{\hyperref[fef]{(\ref*{fef})}} of a density $\rho(t,x)$ satisfying the Fokker-Planck equation \hyperref[fpeq]{(\ref*{fpeq})} with initial condition \hyperref[ic]{(\ref*{ic})} can be defined, provided that the free energy $F(\rho^{0})$ is finite. 

\begin{assumptions}[\textsf{Regularity assumptions regarding the Wasserstein distance}] \label{sosaojkoianoew} In addition to conditions \hyperref[faosaojko]{\ref*{faosaojko}} -- \hyperref[naltsaosaojkos]{\ref*{naltsaosaojkos}} of \hyperref[sosaojkoia]{Assumptions \ref*{sosaojkoia}}, and in order to compute explicitly the metric derivative of the quadratic Wasserstein distance along the Fokker-Planck probability density flow, which is the purpose of \hyperref[stwt]{Section \ref*{stwt}}, we require that:
\begin{enumerate}[label=(\roman*)] 
\setcounter{enumi}{5} 
\item \label{nalwstasas} For every $t \geqslant 0$, there exists a sequence of functions $\big(\varphi_{m}(t, \cdot \,)\big)_{m \geqslant 1} \subseteq \mathcal{C}_{c}^{\infty}(\mathds{R}^{n};\mathds{R})$, whose gradients $\big( \nabla \varphi_{m}(t, \cdot \,)\big)_{m \geqslant 1}$ converge in $L^{2}(P(t))$ to the time-dependent velocity field $v(t, \, \cdot \,) = \nabla \varphi(t, \, \cdot \, )$ of gradient type as in \hyperref[tdvfvtx]{(\ref*{tdvfvtx})} with $\varphi(t,x) = - \Psi(x) - \frac{1}{2} \log \rho(t,x)$, as $m \rightarrow \infty$. Here, $P(t)$ denotes the probability measure on the Borel sets of $\mathds{R}^{n}$ with density $\rho(t, \, \cdot \,)$.
\end{enumerate}
\end{assumptions}

\begin{remark} The last-mentioned requirement guarantees, for every $t \geqslant 0$, that the time-dependent velocity field $v(t, \, \cdot \,)$ is an element of the tangent space of $\mathscr{P}_{2}(\mathds{R}^{n})$ at the point $P(t) \in \mathscr{P}_{2}(\mathds{R}^{n})$ in the sense of \cite[Definition 8.4.1]{AGS08}. For the details we refer to our \hyperref[stwt]{Section \ref*{stwt}}, in particular, the display \hyperref[tanpcvecup]{(\ref*{tanpcvecup})}. However, we do not know whether this condition \hyperref[nalwstasas]{\ref*{nalwstasas}} in \hyperref[sosaojkoianoew]{Assumptions \ref*{sosaojkoianoew}} is actually an additional requirement, or whether it is automatically satisfied in our setting. But as this issue only affects the Wasserstein distance, and has no relevance for our novel trajectorial results which constitute the main point of this work, we will not pursue this question here any further.

\smallskip

The condition \hyperref[nalwstasas]{\ref*{nalwstasas}} in \hyperref[sosaojkoianoew]{Assumptions \ref*{sosaojkoianoew}} is satisfied by simple potentials such as for example $\Psi \equiv 0$ or $\Psi(x) = \vert x \vert^{2}/4$. More generally, potentials with a curvature lower bound $\textnormal{Hess}(\Psi) \geqslant \kappa \, I_{n}$, for some $\kappa \in \mathds{R}$ (as in \hyperref[sndcbe]{(\ref*{sndcbe})} below), for instance the double-well potential $\Psi(x) = (x^{2}-\alpha^{2})^{2}$ on the real line, satisfy this condition; this follows from \cite[Theorem 10.4.13]{AGS08}. The above condition \hyperref[nalwstasas]{\ref*{nalwstasas}} is also satisfied, whenever $\int_{\mathds{R}^{n}} \varphi^{2}(t,x) \, \rho(t,x) \, \textnormal{d}x < \infty$ holds for all $t \geqslant 0$.
\end{remark}

\subsection{Preview}

We set up in \hyperref[snaas]{Section \ref*{snaas}} our model for the Langevin-Smoluchowski diffusion and introduce its fundamental quantities, such as the current and the invariant distribution of particles, the resulting likelihood ratio process, as well as the associated concepts of free energy, relative entropy, and relative Fisher information.

\smallskip

\hyperref[thetheoseccth]{Section \ref*{thetheoseccth}} presents our basic results. These include \hyperref[thetone]{Theorem \ref*{thetone}}, which computes in terms of the relative Fisher information the rate of relative entropy decay in the ambient Wasserstein space of probability density functions with finite second moment; as well as its ``perturbed'' counterpart, \hyperref[thetthre]{Theorem \ref*{thetthre}}. We compute explicitly the difference between these perturbed and unperturbed rates and show that it is always non-negative, in fact strictly positive unless the perturbation and the gradient of the log-likelihood ratio are collinear. This way, the Langevin-Smoluchowski diffusion emerges as the \textit{steepest descent} (or ``gradient flux'') of the relative entropy functional with respect to the Wasserstein metric.

\smallskip

We also show that both \hyperref[thetone]{Theorems \ref*{thetone}} and \hyperref[thetthre]{\ref*{thetthre}} follow as very simple consequences of their stronger, trajectorial versions, \hyperref[thetsix]{Theorems \ref*{thetsix}} and \hyperref[thetthretv]{\ref*{thetthretv}}, respectively. These latter are the main results of this work; they provide very detailed descriptions of the semimartingale dynamics for the relative entropy process, in both its ``pure'' and ``perturbed'' forms. Such descriptions are most transparent when time is \textit{reversed}, so we choose to present them primarily in this context. Several important consequences and ramifications of \hyperref[thetsix]{Theorems \ref*{thetsix}}, \hyperref[thetthretv]{\ref*{thetthretv}} are developed in \hyperref[subimpconq]{Subsections \ref*{subimpconq}} and \hyperref[ramifications]{\ref*{ramifications}}, including a derivation of the famous HWI inequality of Otto and Villani \cite{OV00, Vil03, Vil09} (see also Cordero-Erausquin \cite{CE02}) that relates relative entropy (H) to Wasserstein distance (W) and to relative Fisher information (I).  

\smallskip

Most of the detailed arguments and proofs are collected in \hyperref[ssgrodwudm]{Section \ref*{ssgrodwudm}} and in the appendices. In particular, \hyperref[atrod]{Appendix \ref*{atrod}} presents a completely self-contained account of time reversal for It\^{o} diffusion processes. The necessary background on optimal Wasserstein transport is recalled in \hyperref[stwt]{Section \ref*{stwt}}. 


\section{The stochastic approach} \label{snaas}


Thus far, we have been mostly quoting from \cite{JKO98}. We adopt now a more probabilistic point of view, and translate our setting into the language of stochastic processes and probability measures. For notational convenience, and without loss of generality, we fix the constant $\beta > 0$ to equal $2$, so that the stochastic differential equation \hyperref[sdeid]{(\ref*{sdeid})} becomes 
\begin{equation} \label{sdeids}
\textnormal{d}X(t) = - \nabla \Psi\big(X(t)\big) \, \textnormal{d}t + \textnormal{d}W(t), \qquad t \geqslant 0.
\end{equation}

\smallskip

Let $P(0)$ be a probability measure on the Borel sets of $\mathds{R}^{n}$ with density function $p^{0}(x) \vcentcolon = \rho^{0}(x)$. We shall study the stochastic differential equation \hyperref[sdeids]{(\ref*{sdeids})} with initial probability distribution $P(0)$.

\smallskip

While we do make an effort to follow the setting and notation of \cite{JKO98} as closely as possible, our notation here differs slightly from \cite{JKO98}. To conform with our probabilistic approach, we shall use from now onward the familiar letters $p^{0}$ and $p(0, \, \cdot \,)$ rather than $\rho^{0}$ and $\rho(0, \, \cdot \, )$.

\medskip

The initial probability measure $P(0)$ on $\mathds{R}^{n}$ with density function $p(0, \, \cdot \,)$, induces a probability measure $\mathds{P}$ on the path space $\Omega = \mathcal{C}(\mathds{R}_{+};\mathds{R}^{n})$ of $\mathds{R}^{n}$-valued continuous functions on $\mathds{R}_{+}=[0,\infty)$, under which the canonical coordinate process $(X(t,\omega))_{t \geqslant 0} = (\omega(t))_{t \geqslant 0}$ satisfies the stochastic differential equation \hyperref[sdeids]{(\ref*{sdeids})} with initial probability distribution $P(0)$. We shall denote by $P(t)$ the probability distribution of the random vector $X(t)$ under $\mathds{P}$, and by $p(t) \equiv p (t, \, \cdot \, )$ the corresponding probability density function, at each time $t \geqslant 0$. This function solves the equation \hyperref[fpeq]{(\ref*{fpeq})} with initial condition \hyperref[ic]{(\ref*{ic})}.

\smallskip

We shall see in \hyperref[fosmoldspola]{Appendix \ref*{fosmoldspola}} that, in conjunction with the second-moment condition in \hyperref[ffecaoo]{(\ref*{ffecaoo})}, the drift condition \hyperref[tppstdc]{(\ref*{tppstdc})} guarantees finite second moments of the probability density functions $p(t)$ at all times $t \geqslant 0$; equivalently, membership of the probability distribution $P(t)$ in the space $\mathscr{P}_{2}(\mathds{R}^{n})$ of definition \hyperref[ptrn]{(\ref*{ptrn})} in \hyperref[stwt]{Section \ref*{stwt}}, for all $t \geqslant 0$. This property also holds when the potential $\Psi$ is replaced by $\Psi + B$ as in condition \hyperref[naltsaosaojkos]{\ref*{naltsaosaojkos}} of \hyperref[sosaojkoia]{Assumptions \ref*{sosaojkoia}}; see \hyperref[fosmoldspv]{Lemma \ref*{fosmoldspv}}.

\begin{lemma} \label{fosmolds} Under the \textnormal{\hyperref[sosaojkoia]{Assumptions \ref*{sosaojkoia}}}, the Langevin-Smoluchowski diffusion equation \textnormal{\hyperref[sdeids]{(\ref*{sdeids})}} with initial distribution $P(0)$ admits a pathwise unique, strong solution, which satisfies $P(t) \in \mathscr{P}_{2}(\mathds{R}^{n})$ for all $t \geqslant 0$.
\end{lemma}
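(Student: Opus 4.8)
The plan is to establish the two assertions of \hyperref[fosmolds]{Lemma \ref*{fosmolds}} separately: first the pathwise uniqueness and strong existence of a global solution to \hyperref[sdeids]{(\ref*{sdeids})}, and then the propagation of the finite second-moment property, i.e.\ $P(t) \in \mathscr{P}_{2}(\mathds{R}^{n})$ for every $t \geqslant 0$.

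For the first part, the drift $b(x) = -\nabla\Psi(x)$ is of class $\mathcal{C}^{\infty}$ by condition \hyperref[faosaojko]{\ref*{faosaojko}}, hence locally Lipschitz, and the diffusion coefficient is the identity; by the standard theory (e.g.\ It\^{o}'s theorem on locally Lipschitz coefficients) there is a pathwise unique strong solution up to an explosion time $\tau = \lim_{k\to\infty}\tau_{k}$, where $\tau_{k} = \inf\{t \geqslant 0 : |X(t)| \geqslant k\}$. To rule out explosion, I would apply It\^{o}'s formula to $|X(t)|^{2}$, obtaining
\begin{equation*}
\mathrm{d}|X(t)|^{2} = \big( -2\langle X(t), \nabla\Psi(X(t))\rangle_{\mathds{R}^{n}} + n \big)\, \mathrm{d}t + 2\langle X(t), \mathrm{d}W(t)\rangle_{\mathds{R}^{n}}.
\end{equation*}
The coercivity condition \hyperref[tppstdc]{(\ref*{tppstdc})} gives $-2\langle x, \nabla\Psi(x)\rangle_{\mathds{R}^{n}} \leqslant 2c\,|x|^{2}$ for $|x| \geqslant R$, and on the compact set $\{|x| \leqslant R\}$ the continuous function $x \mapsto -2\langle x, \nabla\Psi(x)\rangle_{\mathds{R}^{n}}$ is bounded; combining these yields a bound of the form $-2\langle x, \nabla\Psi(x)\rangle_{\mathds{R}^{n}} + n \leqslant a\,|x|^{2} + d$ for suitable constants $a, d \geqslant 0$ and all $x \in \mathds{R}^{n}$. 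Thus $|X(t)|^{2}$ is dominated by a process with affine-in-$|X|^{2}$ drift, and a Gronwall argument applied to $\mathbb{E}[|X(t \wedge \tau_{k})|^{2}]$ (the second moment of the initial condition being finite by \hyperref[saosaojko]{\ref*{saosaojko}}) gives $\mathbb{E}[|X(t \wedge \tau_{k})|^{2}] \leqslant (\,\mathbb{E}[|X^{0}|^{2}] + dt\,)\,\mathrm{e}^{at}$, a bound uniform in $k$; letting $k \to \infty$ and using Fatou shows $\tau = \infty$ almost surely and simultaneously delivers $\mathbb{E}[|X(t)|^{2}] < \infty$ for every $t \geqslant 0$, which is exactly the claim $P(t) \in \mathscr{P}_{2}(\mathds{R}^{n})$. (Condition \hyperref[tsaosaojko]{\ref*{tsaosaojko}} provides an alternative route to global well-posedness; I would note that the coercivity-based argument above is the cleaner and more self-contained one, and that the second-moment estimate is the real content here.)

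The main obstacle—such as it is—is the bookkeeping with the localizing stopping times $\tau_{k}$: one must be careful that the stochastic integral $\int_{0}^{t \wedge \tau_{k}} 2\langle X(s), \mathrm{d}W(s)\rangle_{\mathds{R}^{n}}$ is a genuine martingale (not merely a local one) when stopped at $\tau_{k}$, so that it has zero expectation, before one can even write down the Gronwall inequality; this is standard but needs the stopping. After the uniform bound is in hand, Fatou's lemma transfers it to the unstopped process. I would relegate the detailed estimates to \hyperref[fosmoldspola]{Appendix \ref*{fosmoldspola}} as the excerpt indicates, and also remark that the identical argument applies verbatim with $\Psi$ replaced by $\Psi + B$, since $B$ is smooth and compactly supported by \hyperref[naltsaosaojkos]{\ref*{naltsaosaojkos}} so $\nabla B$ is bounded and the coercivity estimate is unaffected up to enlarging the constants—this gives \hyperref[fosmoldspv]{Lemma \ref*{fosmoldspv}} at no extra cost.
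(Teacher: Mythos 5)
Your proof is correct and follows essentially the same route as the paper's: Itô's formula applied to $\vert X(t)\vert^{2}$, the coercivity condition \hyperref[tppstdc]{(\ref*{tppstdc})} to control the drift off a compact set, localization by the stopping times $\tau_{k}$, Gronwall, and passage to the limit $k\to\infty$. The only difference is that you additionally derive global well-posedness (non-explosion) from the same estimate, whereas the paper simply takes existence and pathwise uniqueness from condition \hyperref[tsaosaojko]{\ref*{tsaosaojko}} of \hyperref[sosaojkoia]{Assumptions \ref*{sosaojkoia}} and devotes \hyperref[fosmoldspola]{Appendix \ref*{fosmoldspola}} only to the second-moment bound — a harmless and arguably cleaner strengthening.
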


\smallskip

An important role will be played by the \textit{Radon-Nikod\'{y}m derivative}, or \textit{likelihood ratio process}, 
\begin{equation} \label{rndlr}
\frac{\textnormal{d}P(t)}{\textnormal{d}\mathrm{Q}}\big(X(t)\big) = \ell\big(t,X(t)\big), 
\qquad \textnormal{ where } \quad 
\ell(t,x) \vcentcolon = \frac{p(t,x)}{q(x)} = p(t,x) \, \mathrm{e}^{2 \Psi(x)}
\end{equation}
for $t \geqslant 0$ and $x \in \mathds{R}^{n}$. Here and throughout, we denote by $\mathrm{Q}$ the $\sigma$-finite measure on the Borel sets of $\mathds{R}^{n}$, whose density with respect to Lebesgue measure is
\begin{equation}
q(x) \vcentcolon = \mathrm{e}^{-2\Psi(x)}, \qquad x \in \mathds{R}^{n}.
\end{equation}
The \textit{relative entropy} and the \textit{relative Fisher information} (see, e.g., \cite{CT06}) of $P(t)$ with respect to this measure $\mathrm{Q}$, are defined respectively as
\begin{equation} \label{doref}
H\big( P(t) \, \vert \, \mathrm{Q} \big) \vcentcolon = \mathds{E}_{\mathds{P}}\big[ \log \ell\big(t,X(t)\big) \big] = \int_{\mathds{R}^{n}} \log \bigg(\frac{p(t,x)}{q(x)}\bigg) \, p(t,x) \, \textnormal{d}x, \qquad t \geqslant 0,
\end{equation}

\smallskip

\begin{equation} \label{rfi}
I\big( P(t) \, \vert \, \mathrm{Q}\big) \vcentcolon = \mathds{E}_{\mathds{P}}\Big[ \, \big\vert \nabla \log \ell\big(t,X(t)\big) \big\vert^{2} \, \Big] = \int_{\mathds{R}^{n}} \big\vert \nabla \log \ell(t,x) \big\vert^{2} \, p(t,x) \, \textnormal{d}x, \qquad t \geqslant 0.
\end{equation}

\smallskip

\begin{remark} Following the approach of \cite[Section 2]{Leo14}, we show in \hyperref[wdadotrewrttmq]{Appendix \ref*{wdadotrewrttmq}} that the relative entropy $H( P \, \vert \, \mathrm{Q})$ is well-defined for every probability measure $P$ in $\mathscr{P}_{2}(\mathds{R}^{n})$ and takes values in $(-\infty,\infty]$.
\end{remark}

The following well-known identity \hyperref[reeef]{(\ref*{reeef})} states that the relative entropy $H(P(t) \, \vert \, \mathrm{Q})$ is equal to the free energy $F(p(t, \, \cdot \,))$, up to a multiplicative factor of $2$, for all $t \geqslant 0$. In light of condition \hyperref[saosaojko]{\ref*{saosaojko}} in \hyperref[osaojko]{Assumptions \ref*{osaojko}}, this identity implies $H( P(0) \, \vert \, \mathrm{Q} ) \in \mathds{R}$, so the quantity in \hyperref[doref]{(\ref*{doref})} is finite for $t = 0$; thus, on account of \hyperref[stewrtpefitre]{(\ref*{stewrtpefitre})} below, finite also for $t > 0$.

\begin{lemma} \label{llreeef} Under the \textnormal{\hyperref[sosaojkoia]{Assumptions \ref*{sosaojkoia}}}, and along the curve of probability measures $(P(t))_{t \geqslant 0}$, the free energy functional in \textnormal{\hyperref[fef]{(\ref*{fef})}} and the relative entropy in \textnormal{\hyperref[doref]{(\ref*{doref})}} are related for each $t \geqslant 0$ through the equation
\begin{equation} \label{reeef} 
2 \, F\big(p(t, \, \cdot \,)\big) = H\big( P(t) \, \vert \, \mathrm{Q} \big).
\end{equation}
\begin{proof} Indeed,
\begingroup
\addtolength{\jot}{0.7em}
\begin{align}
\mathds{E}_{\mathds{P}}\big[ \log \ell\big(t,X(t)\big) \big] 
&= \mathds{E}_{\mathds{P}}\Big[ \log\Big( \mathrm{e}^{2 \Psi(X(t))} \, p\big(t,X(t)\big) \Big) \Big] = \mathds{E}_{\mathds{P}}\big[ 2 \, \Psi\big(X(t)\big) \big] + \mathds{E}_{\mathds{P}}\big[ \log p\big(t,X(t)\big) \big] \\
&= 2 \int_{\mathds{R}^{n}} \Psi(x) \, p(t,x) \, \textnormal{d}x + \int_{\mathds{R}^{n}} p(t,x) \, \log p(t,x) \, \textnormal{d}x,
\end{align}
\endgroup
which equals $2 \, F(p(t, \, \cdot \,))$.
\end{proof}
\end{lemma}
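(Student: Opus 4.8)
The final statement to prove is \hyperref[llreeef]{Lemma \ref*{llreeef}}, the identity $2\,F(p(t,\cdot)) = H(P(t)\,|\,\mathrm{Q})$.

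\medskip

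The plan is to compute the relative entropy directly from its definition and recognize the free energy functional on the nose. First I would write $\ell(t,x) = p(t,x)\,\mathrm{e}^{2\Psi(x)}$ as given in \hyperref[rndlr]{(\ref*{rndlr})}, so that $\log\ell(t,X(t)) = 2\,\Psi(X(t)) + \log p(t,X(t))$. Taking expectations under $\mathds{P}$ and using that $X(t)$ has density $p(t,\cdot)$ under $\mathds{P}$, the right-hand side splits into $2\int_{\mathds{R}^{n}}\Psi(x)\,p(t,x)\,\textnormal{d}x + \int_{\mathds{R}^{n}} p(t,x)\log p(t,x)\,\textnormal{d}x$. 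By the definitions \hyperref[penotgbef]{(\ref*{penotgbef})} of $E$ and \hyperref[ie]{(\ref*{ie})} of $S$, together with the normalization $\beta = 2$ fixed in \hyperref[sdeids]{(\ref*{sdeids})}, this is exactly $2E(p(t,\cdot)) + S(p(t,\cdot)) = 2\big(E(p(t,\cdot)) + \tfrac{1}{2}S(p(t,\cdot))\big) = 2\,F(p(t,\cdot))$ by \hyperref[fef]{(\ref*{fef})}. This closes the chain of equalities.

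\medskip

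The one point requiring a word of care is the \emph{well-definedness} of the integrals, i.e., that the splitting $\mathds{E}_{\mathds{P}}[\log\ell(t,X(t))] = \mathds{E}_{\mathds{P}}[2\Psi(X(t))] + \mathds{E}_{\mathds{P}}[\log p(t,X(t))]$ does not amount to an illegitimate manipulation of the form $\infty - \infty$. Here I would invoke the standing hypotheses: by \hyperref[fosmolds]{Lemma \ref*{fosmolds}} we have $P(t)\in\mathscr{P}_{2}(\mathds{R}^{n})$ for all $t\geqslant 0$, and by the remark preceding the statement (citing the discussion in \hyperref[wdadotrewrttmq]{Appendix \ref*{wdadotrewrttmq}}) the relative entropy $H(P(t)\,|\,\mathrm{Q})$ is well-defined with values in $(-\infty,\infty]$ for every such measure. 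Moreover $\Psi\geqslant 0$ so $E(p(t,\cdot))\in[0,\infty]$ is unambiguous, and condition \hyperref[saosaojko]{\ref*{saosaojko}} of \hyperref[osaojko]{Assumptions \ref*{osaojko}} gives $F(\rho^{0}) = F(p(0,\cdot))\in\mathds{R}$, hence $H(P(0)\,|\,\mathrm{Q})\in\mathds{R}$; propagation in time then follows from the entropy dissipation identity \hyperref[stewrtpefitre]{(\ref*{stewrtpefitre})} referenced in the text. In practice the author's proof (reproduced in the excerpt) simply carries out the three-line computation and leaves these integrability matters to the surrounding remarks, which is the sensible level of detail for a lemma of this kind.

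\medskip

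I expect \emph{no genuine obstacle} here: this is a bookkeeping identity whose only content is matching the probabilistic normalization ($\beta=2$, reference measure $\mathrm{Q}$ with density $\mathrm{e}^{-2\Psi}$) against the analytic free energy $F = E + \beta^{-1}S$ of \cite{JKO98}. If anything, the subtlety worth flagging is purely notational — one must remember that $\beta$ has been frozen to $2$ in Section \ref{snaas}, so the factor $\beta^{-1}$ in \hyperref[fef]{(\ref*{fef})} becomes $\tfrac{1}{2}$ and produces the overall multiplicative factor $2$ in \hyperref[reeef]{(\ref*{reeef})}; with a general $\beta$ the identity would read $\beta\,F(p(t,\cdot)) = H(P(t)\,|\,\mathrm{Q})$ with $\mathrm{Q}$ having density $\mathrm{e}^{-\beta\Psi}$.
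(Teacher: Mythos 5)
Your proposal is correct and follows exactly the paper's own argument: expand $\log\ell(t,x)=2\Psi(x)+\log p(t,x)$, take expectations under $\mathds{P}$, and identify the resulting two integrals with $2E(p(t,\cdot))+S(p(t,\cdot))=2F(p(t,\cdot))$ using $\beta=2$. The extra remarks on integrability and on the general-$\beta$ form are sensible but not part of the paper's proof, which indeed defers those matters to the surrounding text.
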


The identity \hyperref[reeef]{(\ref*{reeef})} shows that studying the decay of the free energy $F(p(t, \, \cdot \,))$, is equivalent to studying the decay of the relative entropy $H( P(t) \, \vert \, \mathrm{Q})$, a key aspect of thermodynamics.

\begin{remark} In conjunction with \hyperref[reeef]{(\ref*{reeef})}, the condition $F(p(0, \, \cdot \,))  \in \mathds{R}$ in \hyperref[ffecaoo]{(\ref*{ffecaoo})}, and \hyperref[fef]{(\ref*{fef})} -- \hyperref[ie]{(\ref*{ie})}, the decrease of the relative entropy established in \hyperref[stewrtpefitre]{(\ref*{stewrtpefitre})} shows that $\mathds{E}_{\mathds{P}}\big[ \Psi(X(t)) \big]$ is finite for all $t \geqslant 0$. Thus, if the potential $\Psi$ dominates a quadratic, we deduce that $\mathds{E}_{\mathds{P}}\big[ \vert X(t) \vert^{2} \big] < \infty$, i.e., $P(t) \in \mathscr{P}_{2}(\mathds{R}^{n})$, also holds for all $t \geqslant 0$,  without invoking the coercivity condition \hyperref[tppstdc]{(\ref*{tppstdc})}. But of course, \hyperref[tppstdc]{(\ref*{tppstdc})} accommodates functions, such as $\Psi \equiv 0$, that fail to dominate a quadratic.
\end{remark}


\section{The theorems} \label{thetheoseccth}


As already indicated in \hyperref[fpeq]{(\ref*{fpeq})} and \hyperref[sdeid]{(\ref*{sdeid})}, the probability density function $p \colon [0,\infty) \times \mathds{R}^{n} \rightarrow [0,\infty)$ solves the Fokker-Planck or forward Kolmogorov \cite{Kol31} equation \cite{Fri75,Gar09,Ris96,Sch80}
\begin{equation} \label{fpeqnwfp}
\partial_{t} p(t,x) = \operatorname{div}\big(\nabla \Psi(x) \, p(t,x) \big) + \tfrac{1}{2} \Delta p(t,x), \qquad (t,x) \in (0,\infty) \times \mathds{R}^{n},
\end{equation}
with initial condition
\begin{equation} \label{icnwfp}
p(0,x) = p^{0}(x), \qquad x \in \mathds{R}^{n}.
\end{equation}
By contrast, the function $q(\, \cdot \,)$ does not depend on the temporal variable, and solves the stationary version of the forward Kolmogorov equation \hyperref[fpeqnwfp]{(\ref*{fpeqnwfp})}, namely
\begin{equation} \label{svfpeqnwfp}
0 = \operatorname{div}\big(\nabla \Psi(x) \, q(x) \big) + \tfrac{1}{2} \Delta q(x), \qquad x \in \mathds{R}^{n}.
\end{equation}

\bigskip
 
In light of \hyperref[llreeef]{Lemma \ref*{llreeef}}, the object of interest in \cite{JKO98} is to relate the decay of the \textit{relative entropy functional}
\begin{equation} \label{dotref}
\mathscr{P}_{2}(\mathds{R}^{n}) \ni P \longmapsto H(P \, \vert \, \mathrm{Q}) \in (-\infty,\infty]
\end{equation}
along the curve $(P(t))_{t \geqslant 0}$, to the quadratic Wasserstein distance $W_{2}$ defined in \hyperref[doqwd]{(\ref*{doqwd})} of \hyperref[stwt]{Section \ref*{stwt}}. We resume the remarkable relation between these two quantities in the following two theorems; these provide a way to quantify the relationship between displacement in the ambient space (the denominator of the expression in \hyperref[ttofeosl]{(\ref*{ttofeosl})}) and fluctuations of the free energy, or equivalently of the relative entropy (the numerator in the expression \hyperref[ttofeosl]{(\ref*{ttofeosl})}). The proofs will be given in \hyperref[subimpconq]{Subsection \ref*{subimpconq}} below.

\medskip

\begin{theorem} \label{thetone} Under the \textnormal{\hyperref[sosaojkoianoew]{Assumptions \ref*{sosaojkoianoew}}}, the relative Fisher information $I( P(t_{0}) \, \vert \, \mathrm{Q})$ is finite for Lebesgue-almost every $t_{0} \geqslant 0$, and we have the \textnormal{\textsf{generalized de Bruijn identity}}
\begin{equation} \label{flffflnv}
\lim_{t \rightarrow t_{0}} \, \frac{H\big( P(t) \, \vert \, \mathrm{Q} \big) - H\big( P(t_{0}) \, \vert \, \mathrm{Q}\big)}{t-t_{0}} 
= - \tfrac{1}{2} \, I\big( P(t_{0}) \, \vert \, \mathrm{Q}\big),
\end{equation}
as well as the local behavior of the quadratic Wasserstein distance
\begin{equation} \label{agswtuffasihnv}
\lim_{t \rightarrow t_{0}} \, \frac{W_{2}\big(P(t),P(t_{0})\big)}{\vert t - t_{0} \vert} 
= \tfrac{1}{2} \, \sqrt{I\big( P(t_{0}) \, \vert \, \mathrm{Q}\big)},
\end{equation}
so that
\begin{equation} \label{ttofeosl}
\lim_{t \rightarrow t_{0}} \, \Bigg( \operatorname{sgn}(t-t_{0}) \cdot \frac{H\big( P(t) \, \vert \, \mathrm{Q} \big) - H\big( P(t_{0}) \, \vert \, \mathrm{Q}\big) }{W_{2}\big( P(t),P(t_{0})\big)} \Bigg)
= - \sqrt{I\big( P(t_{0}) \, \vert \, \mathrm{Q}\big)}.
\end{equation}
Furthermore, if $t_{0} \geqslant 0$ is chosen so that the generalized de Bruijn identity \textnormal{\hyperref[flffflnv]{(\ref*{flffflnv})}} does hold, then the limiting assertions \textnormal{\hyperref[agswtuffasihnv]{(\ref*{agswtuffasihnv})}} and \textnormal{\hyperref[ttofeosl]{(\ref*{ttofeosl})}} are also valid.
\end{theorem}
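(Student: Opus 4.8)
**Proof proposal for Theorem \ref*{thetone}.**

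The plan is to derive everything from the trajectorial semimartingale decomposition of the relative entropy process (Theorem \ref*{thetsix}, whose proof appears later), by taking $\mathds{P}$-expectations. First I would recall the standard heat-equation identity: differentiating $H(P(t)\,\vert\,\mathrm{Q})=\int p(t,x)\log\ell(t,x)\,\textnormal{d}x$ in time and using the Fokker-Planck equation \hyperref[fpeqnwfp]{(\ref*{fpeqnwfp})} together with the stationary equation \hyperref[svfpeqnwfp]{(\ref*{svfpeqnwfp})} for $q$, one obtains after an integration by parts
\[
\frac{\textnormal{d}}{\textnormal{d}t}\,H\big(P(t)\,\vert\,\mathrm{Q}\big)
= -\tfrac{1}{2}\int_{\mathds{R}^{n}}\big\vert\nabla\log\ell(t,x)\big\vert^{2}\,p(t,x)\,\textnormal{d}x
= -\tfrac{1}{2}\,I\big(P(t)\,\vert\,\mathrm{Q}\big),
\]
at least formally, which is the de Bruijn identity \hyperref[flffflnv]{(\ref*{flffflnv})}. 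To make this rigorous without imposing extra decay on $\Psi$, I would instead start from the monotone decrease of $t\mapsto H(P(t)\,\vert\,\mathrm{Q})$ (which follows from the trajectorial result, or from the above computation on $[\varepsilon,T]$ where $p$ is smooth and strictly positive by \hyperref[tsaosaojko]{\ref*{tsaosaojko}}): since $H(P(0)\,\vert\,\mathrm{Q})\in\mathds{R}$ by \hyperref[llreeef]{Lemma \ref*{llreeef}} and \hyperref[ffecaoo]{(\ref*{ffecaoo})}, and $H(P(t)\,\vert\,\mathrm{Q})\geqslant 0$ wait — more carefully, $H$ is bounded below along the flow since it decreases toward its infimum, the map $t\mapsto H(P(t)\,\vert\,\mathrm{Q})$ is non-increasing and finite, hence differentiable Lebesgue-a.e.; and wherever it is differentiable the integral identity above forces $I(P(t)\,\vert\,\mathrm{Q})<\infty$ and the value $-\tfrac12 I(P(t)\,\vert\,\mathrm{Q})$ for the derivative. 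This handles the a.e.-finiteness of $I$ and the two-sided limit in \hyperref[flffflnv]{(\ref*{flffflnv})} at every such $t_{0}$.

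Next I would establish the Wasserstein asymptotics \hyperref[agswtuffasihnv]{(\ref*{agswtuffasihnv})}. The upper bound $\limsup_{t\to t_0} W_2(P(t),P(t_0))/\vert t-t_0\vert \leqslant \tfrac12\sqrt{I(P(t_0)\,\vert\,\mathrm{Q})}$ comes from the Benamou-Brenier formula together with the continuity equation $\partial_t p = -\operatorname{div}(p\,v)$ satisfied by the flow with velocity $v(t,\cdot)=-\nabla\Psi-\tfrac12\nabla\log p=-\tfrac12\nabla\log\ell$ (so $\Vert v(t_0,\cdot)\Vert_{L^2(P(t_0))}^2=\tfrac14 I(P(t_0)\,\vert\,\mathrm{Q})$); one plugs the flow itself into the Benamou-Brenier infimum on a short interval and uses continuity of $t\mapsto I(P(t)\,\vert\,\mathrm{Q})$ along a.e. point — or, more robustly, $L^2(P(t))$-continuity of $v(t,\cdot)$ together with Assumptions \ref*{sosaojkoianoew}\hyperref[nalwstasas]{\ref*{nalwstasas}} which places $v(t,\cdot)$ in the tangent space — to pass to the limit. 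The matching lower bound is the metric-derivative inequality from the theory of absolutely continuous curves in $\mathscr{P}_2(\mathds{R}^n)$: the curve $t\mapsto P(t)$ is absolutely continuous with metric derivative $\vert P'\vert(t)=\Vert v(t,\cdot)\Vert_{L^2(P(t))}$ at a.e. $t$, and by \cite[Theorem 8.3.1]{AGS08} the local Lipschitz bound $W_2(P(t),P(t_0))\leqslant \int \vert P'\vert$ is complemented by $\vert P'\vert(t_0)=\lim_{t\to t_0}W_2(P(t),P(t_0))/\vert t-t_0\vert$ at every point $t_0$ where the metric derivative exists; condition \hyperref[nalwstasas]{\ref*{nalwstasas}} guarantees $v(t,\cdot)$ is the minimal-norm velocity, so $\vert P'\vert(t_0)=\tfrac12\sqrt{I(P(t_0)\,\vert\,\mathrm{Q})}$. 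Combining gives \hyperref[agswtuffasihnv]{(\ref*{agswtuffasihnv})}, and dividing \hyperref[flffflnv]{(\ref*{flffflnv})} by \hyperref[agswtuffasihnv]{(\ref*{agswtuffasihnv})} yields \hyperref[ttofeosl]{(\ref*{ttofeosl})}, with the $\operatorname{sgn}$ factor accounting for $\vert t-t_0\vert$ versus $t-t_0$. The final sentence is immediate: the arguments for \hyperref[agswtuffasihnv]{(\ref*{agswtuffasihnv})} and \hyperref[ttofeosl]{(\ref*{ttofeosl})} used $t_0$ only through the validity of \hyperref[flffflnv]{(\ref*{flffflnv})} (equivalently, finiteness of $I(P(t_0)\,\vert\,\mathrm{Q})$ and differentiability of the entropy there).

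The main obstacle I anticipate is the rigorous justification of the entropy differentiation and the interchange of limit and integral without the growth hypothesis $\vert\nabla\Psi\vert\leqslant C(\Psi+1)$ used in \cite{JKO98}. The cleanest route — and presumably the paper's — is to sidestep it entirely: obtain \hyperref[flffflnv]{(\ref*{flffflnv})} as a corollary of the trajectorial identity in Theorem \ref*{thetsix}, where the relative entropy process $\log\ell(t,X(t))$ is shown to be a semimartingale whose finite-variation part has density $-\tfrac12\vert\nabla\log\ell(t,X(t))\vert^2$ (in reversed time) plus a martingale; taking $\mathds{E}_{\mathds{P}}$ and using that the martingale part has zero expectation converts the pathwise statement into \hyperref[flffflnv]{(\ref*{flffflnv})} directly, with the a.e.-in-$t_0$ finiteness of $I(P(t_0)\,\vert\,\mathrm{Q})$ falling out of the integrability of the finite-variation part on compact time intervals. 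The secondary technical point is the passage to the limit in the Wasserstein bounds at a \emph{fixed} good $t_0$ rather than merely a.e.; here one leans on the continuity assertions in \hyperref[tsaosaojko]{\ref*{tsaosaojko}} (continuity of $(t,x)\mapsto\nabla\log\rho(t,x)$, hence of $v$) and on \hyperref[nalwstasas]{\ref*{nalwstasas}}, together with the general fact that for an absolutely continuous curve the metric derivative exists and equals the stated limit precisely at the Lebesgue points of $t\mapsto\vert P'\vert(t)$, which by lower semicontinuity of the $L^2$-norm includes every $t_0$ with $I(P(t_0)\,\vert\,\mathrm{Q})<\infty$ at which $t\mapsto I(P(t)\,\vert\,\mathrm{Q})$ is suitably regular — and this regularity is exactly what the de Bruijn identity provides.
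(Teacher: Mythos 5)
Your derivation of the generalized de Bruijn identity \hyperref[flffflnv]{(\ref*{flffflnv})} is exactly the paper's route: take $\mathds{P}$-expectations in the trajectorial martingale decomposition of \hyperref[thetsix]{Theorem \ref*{thetsix}} to get the integral identity \hyperref[stewrtpefitre]{(\ref*{stewrtpefitre})}, then invoke the Lebesgue differentiation theorem for the monotone, absolutely continuous function $t \mapsto H(P(t)\,\vert\,\mathrm{Q})$; the a.e.\ finiteness of $I(P(t_{0})\,\vert\,\mathrm{Q})$ falls out of $\int_{0}^{T} I(P(t)\,\vert\,\mathrm{Q})\,\textnormal{d}t < \infty$. (One small slip: $H(P(t)\,\vert\,\mathrm{Q})$ is not non-negative for $\sigma$-finite $\mathrm{Q}$; the correct lower control is that $H(P\,\vert\,\mathrm{Q}) > -\infty$ for every $P \in \mathscr{P}_{2}(\mathds{R}^{n})$, as in \hyperref[wdadotrewrttmq]{Appendix \ref*{wdadotrewrttmq}}.) Likewise your a.e.\ Wasserstein asymptotics via \cite[Theorem 8.3.1]{AGS08} for the upper bound and \cite[Proposition 8.4.5]{AGS08} (using the tangency condition \hyperref[nalwstasas]{\ref*{nalwstasas}}) for the lower bound is precisely \hyperref[sstmdoaccinqws]{Subsection \ref*{sstmdoaccinqws}} of the paper.

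The genuine gap is the final clause of the theorem: that \hyperref[agswtuffasihnv]{(\ref*{agswtuffasihnv})} holds at \emph{every} $t_{0}$ where \hyperref[flffflnv]{(\ref*{flffflnv})} holds, not merely at a.e.\ $t_{0}$. Your justification --- that the metric derivative exists at ``Lebesgue points of $|P'|$'' and that ``lower semicontinuity of the $L^{2}$-norm'' plus the regularity supplied by de Bruijn covers such $t_{0}$ --- does not close this. The de Bruijn identity at $t_{0}$ tells you that the averages $\frac{1}{t-t_{0}}\int_{t_{0}}^{t}\|v(u,\cdot)\|^{2}_{L^{2}(P(u))}\,\textnormal{d}u$ converge to $\|v(t_{0},\cdot)\|^{2}_{L^{2}(P(t_{0}))}$; by Cauchy--Schwarz this salvages the \emph{upper} bound on $\limsup_{t\to t_{0}} W_{2}(P(t),P(t_{0}))/|t-t_{0}|$ at that fixed $t_{0}$. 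But the matching \emph{lower} bound cannot be extracted from \cite[Proposition 8.4.5]{AGS08}, which is only an a.e.\ statement, nor from semicontinuity; it requires exhibiting a near-optimal transport between $P(t_{0})$ and $P(t)$ whose cost is $(t-t_{0})\|v(t_{0},\cdot)\|_{L^{2}(P(t_{0}))}+o(t-t_{0})$. This is what the paper's \hyperref[agswt]{Theorems \ref*{agswt}} and \hyperref[bvagswt]{\ref*{bvagswt}} do: localize $v(t_{0},\cdot)$ by compactly supported gradients $\nabla\varphi_{m}$, show the linearized maps $x \mapsto x + (t-t_{0})\nabla\varphi_{m}(x)$ are gradients of convex functions for $t$ near $t_{0}$ (hence optimal by Brenier), and control $W_{2}(P(t), (\Upsilon_{t}^{m})_{\#}P(t_{0}))$ by a uniform-integrability argument (Scheff\'{e}'s lemma) that uses the de Bruijn property at $t_{0}$ exactly once. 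Without some version of this construction, the ``furthermore'' assertion --- and hence \hyperref[ttofeosl]{(\ref*{ttofeosl})} at those points --- remains unproved in your proposal.
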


\medskip

The ratio on the left-hand side of \hyperref[ttofeosl]{(\ref*{ttofeosl})} can be interpreted as the slope of the relative entropy functional \hyperref[dotref]{(\ref*{dotref})} at $P = P(t_{0})$ along the curve $(P(t))_{t \geqslant 0}$, if we measure distances in the ambient space $\mathscr{P}_{2}(\mathds{R}^{n})$ of probability measures by the quadratic Wasserstein distance $W_{2}$ of \hyperref[doqwd]{(\ref*{doqwd})}. The quantity appearing on the right-hand side of \hyperref[ttofeosl]{(\ref*{ttofeosl})} is the square root of the relative Fisher information in \hyperref[rfi]{(\ref*{rfi})}, written more explicitly in terms of the ``score function'' $\nabla \ell(t, \, \cdot \,) / \ell(t, \, \cdot \,)$ as
\begin{equation} \label{merfi}
I\big( P(t_{0}) \, \vert \, \mathrm{Q}\big) = \mathds{E}_{\mathds{P}}\Bigg[ \ \frac{\big\vert \nabla \ell\big(t_{0},X(t_{0})\big) \big\vert^{2}}{\ell\big(t_{0},X(t_{0})\big)^{2}} \ \Bigg] 
= \int_{\mathds{R}^{n}} \bigg\vert \frac{\nabla p(t_{0},x)}{p(t_{0},x)} + 2 \, \nabla \Psi(x) \bigg\vert^{2} \, p(t_{0},x) \, \textnormal{d}x.
\end{equation}

\begin{remark} \label{rethpoitwooslda} Under the \hyperref[sosaojkoia]{Assumptions \ref*{sosaojkoia}} it is perfectly possible for the relative Fisher information $I(P(t_{0}) \, \vert \, \mathrm{Q})$ to be infinite at $t_{0} = 0$. For instance, think of $p(0, \, \cdot \,)$ as the indicator function of a subset of $\mathds{R}^{n}$ with Lebesgue measure equal to $1$. 

\smallskip 

For future reference, we denote by $N$ the set of exceptional points $t_{0} \geqslant 0$ for which the right-sided limiting assertion 
\begin{equation} \label{rgtsflffflnv}
\lim_{t \downarrow t_{0}} \, \frac{H\big( P(t) \, \vert \, \mathrm{Q} \big) - H\big( P(t_{0}) \, \vert \, \mathrm{Q}\big)}{t-t_{0}} 
= - \tfrac{1}{2} \, I\big( P(t_{0}) \, \vert \, \mathrm{Q}\big)
\end{equation}
fails. According to \hyperref[thetone]{Theorem \ref*{thetone}}, this exceptional set $N$ has zero Lebesgue measure.
\end{remark}

\bigskip

The remarkable insight of \cite{JKO98} states that the slope in \hyperref[ttofeosl]{(\ref*{ttofeosl})} in the direction of the curve $(P(t))_{t \geqslant 0}$ is, in fact, the slope of \textit{steepest descent} for the relative entropy functional \hyperref[dotref]{(\ref*{dotref})} at the point $P = P(t_{0})$. To formalize this assertion, we fix a time $t_{0} \geqslant 0$ and let the vector field $\beta = \nabla B \colon \mathds{R}^{n} \rightarrow \mathds{R}^{n}$ be the gradient of a potential $B$ of class $\mathcal{C}^{\infty}(\mathds{R}^{n};\mathds{R})$ with compact support, as in condition \hyperref[naltsaosaojkos]{\ref*{naltsaosaojkos}} of \hyperref[sosaojkoia]{Assumptions \ref*{sosaojkoia}}. This gradient vector field $\beta$ will serve as a perturbation, and we consider the thus perturbed Fokker-Planck equation 
\begin{equation} \label{pfpeq}
\partial_{t} p^{\beta}(t,x) = \operatorname{div}\Big(\big(\nabla \Psi(x) + \beta(x) \big) \, p^{\beta}(t,x) \Big) + \tfrac{1}{2} \Delta p^{\beta}(t,x), \qquad (t,x) \in (t_{0},\infty) \times \mathds{R}^{n}
\end{equation}
with initial condition
\begin{equation} \label{pic}
p^{\beta}(t_{0},x) = p(t_{0},x), \qquad x \in \mathds{R}^{n}.
\end{equation}
We denote by $\mathds{P}^{\beta}$ the probability measure on the path space $\Omega = \mathcal{C}([t_{0},\infty);\mathds{R}^{n})$, under which the canonical coordinate process $(X(t))_{t \geqslant t_{0}}$ satisfies the stochastic differential equation 
\begin{equation} \label{wpsdeids}
\textnormal{d}X(t) = - \Big( \nabla \Psi\big(X(t)\big) + \beta\big(X(t)\big) \Big) \, \textnormal{d}t + \textnormal{d}W^{\beta}(t), \qquad t \geqslant t_{0}
\end{equation}
with initial probability distribution $P(t_{0})$. Here, the process $(W^{\beta}(t))_{t \geqslant t_{0}}$ is Brownian motion under $\mathds{P}^{\beta}$. The probability distribution of $X(t)$ under $\mathds{P}^{\beta}$ on $\mathds{R}^{n}$ will be denoted by $P^{\beta}(t)$, for $t \geqslant t_{0}$; once again, the corresponding probability density function $p^{\beta}(t) \equiv p^{\beta}(t, \, \cdot \, )$ solves the equation \hyperref[pfpeq]{(\ref*{pfpeq})} subject to the initial condition \hyperref[pic]{(\ref*{pic})}.

\smallskip

In the following analogue of \hyperref[fosmolds]{Lemma \ref*{fosmolds}}, we state that the perturbed probability density functions $p^{\beta}(t, \, \cdot \,)$ of \hyperref[pfpeq]{(\ref*{pfpeq})}, \hyperref[pic]{(\ref*{pic})} also admit finite second moments at all times $t \geqslant t_{0}$. For the proof we refer again to \hyperref[fosmoldspola]{Appendix \ref*{fosmoldspola}}.

\begin{lemma} \label{fosmoldspv} Under the \textnormal{\hyperref[sosaojkoia]{Assumptions \ref*{sosaojkoia}}}, let $t_{0} \geqslant 0$. Then the perturbed diffusion equation \textnormal{\hyperref[wpsdeids]{(\ref*{wpsdeids})}} with initial distribution $P^{\beta}(t_{0}) = P(t_{0})$ admits a pathwise unique, strong solution, which satisfies $P^{\beta}(t) \in \mathscr{P}_{2}(\mathds{R}^{n})$ for all $t \geqslant t_{0}$.
\end{lemma}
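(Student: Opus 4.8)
The statement has two components: well-posedness of the perturbed stochastic differential equation \hyperref[wpsdeids]{(\ref*{wpsdeids})}, and propagation of the second-moment property. For the first component, the drift $-(\nabla\Psi + \beta)$ is locally Lipschitz because $\Psi$ and $B$ are smooth, so a pathwise unique strong solution exists up to a possible explosion time; non-explosion is exactly what is granted by requiring in condition \hyperref[naltsaosaojkos]{\ref*{naltsaosaojkos}} that the perturbed potential $\Psi + B$ satisfy condition \hyperref[tsaosaojko]{\ref*{tsaosaojko}}. It is worth recording, and is the structural heart of the argument, that the coercivity condition \hyperref[tppstdc]{(\ref*{tppstdc})} also passes to $\Psi + B$: since $\beta = \nabla B$ vanishes outside some ball of radius $R_{\beta}$, we have $\langle x , \nabla(\Psi+B)(x)\rangle_{\mathds{R}^{n}} = \langle x , \nabla\Psi(x)\rangle_{\mathds{R}^{n}} \geqslant -c\,\vert x\vert^{2}$ for all $\vert x\vert \geqslant \max(R,R_{\beta})$, i.e., with the same constant $c$ and an enlarged radius.

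For the moment bound, I would apply It\^{o}'s formula to $t \mapsto \vert X(t)\vert^{2}$ along \hyperref[wpsdeids]{(\ref*{wpsdeids})}, giving
\begin{equation*}
\vert X(t)\vert^{2} = \vert X(t_{0})\vert^{2} - 2\int_{t_{0}}^{t} \big\langle X(s) , \nabla\Psi(X(s)) + \beta(X(s))\big\rangle_{\mathds{R}^{n}}\,\textnormal{d}s + n\,(t - t_{0}) + 2\int_{t_{0}}^{t} \big\langle X(s) , \textnormal{d}W^{\beta}(s)\big\rangle_{\mathds{R}^{n}}.
\end{equation*}
The continuous function $x \mapsto -\langle x , \nabla\Psi(x)\rangle_{\mathds{R}^{n}}$ is bounded above by a constant $C_{1}$ on the compact ball $\{\vert x\vert \leqslant \max(R,R_{\beta})\}$ and is dominated by $c\,\vert x\vert^{2}$ outside it, while $x \mapsto \vert x\vert\,\vert\beta(x)\vert$ is bounded by a constant $C_{2}$ since $\beta$ is continuous with compact support; hence the drift term is bounded above by $(2c\,\vert X(s)\vert^{2} + 2C_{1} + 2C_{2})\,\textnormal{d}s$. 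Localizing with the stopping times $\tau_{k} \vcentcolon= \inf\{t \geqslant t_{0} : \vert X(t)\vert \geqslant k\}$ turns the stochastic integral into a genuine martingale; taking expectations and writing $g(t) \vcentcolon= \mathds{E}_{\mathds{P}^{\beta}}[\vert X(t \wedge \tau_{k})\vert^{2}]$ gives $g(t) \leqslant g(t_{0}) + (n + 2C_{1} + 2C_{2})(t - t_{0}) + 2c\int_{t_{0}}^{t} g(s)\,\textnormal{d}s$, so Gronwall's inequality produces a bound on $g(t)$ uniform in $k$. Letting $k \to \infty$ and invoking Fatou's lemma yields $\mathds{E}_{\mathds{P}^{\beta}}[\vert X(t)\vert^{2}] \leqslant \big(\mathds{E}_{\mathds{P}}[\vert X(t_{0})\vert^{2}] + (n + 2C_{1} + 2C_{2})(t - t_{0})\big)\,\mathrm{e}^{2c(t-t_{0})} < \infty$ for all $t \geqslant t_{0}$ (reading $\mathrm{e}^{2c(t-t_{0})}$ as $1$ when $c = 0$), where finiteness of the right-hand side uses $P^{\beta}(t_{0}) = P(t_{0}) \in \mathscr{P}_{2}(\mathds{R}^{n})$ from \hyperref[fosmolds]{Lemma \ref*{fosmolds}}. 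This is precisely the assertion $P^{\beta}(t) \in \mathscr{P}_{2}(\mathds{R}^{n})$ for all $t \geqslant t_{0}$.

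The only slightly delicate point is the localization needed to legitimize taking expectations of the stochastic integral, which is a standard but unavoidable device; everything else is driven by the observation that a compactly supported gradient perturbation leaves the coercivity estimate \hyperref[tppstdc]{(\ref*{tppstdc})} intact, so the unperturbed moment argument applies verbatim. For this reason I would present this proof together with that of \hyperref[fosmolds]{Lemma \ref*{fosmolds}} in \hyperref[fosmoldspola]{Appendix \ref*{fosmoldspola}}, obtaining \hyperref[fosmolds]{Lemma \ref*{fosmolds}} as the special case $\beta \equiv 0$.
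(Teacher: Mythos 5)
Your proof is correct and follows essentially the same route as the paper's: the paper likewise derives \hyperref[fosmoldspv]{Lemma \ref*{fosmoldspv}} in \hyperref[fosmoldspola]{Appendix \ref*{fosmoldspola}} by repeating the It\^{o}--Gronwall argument of \hyperref[fosmolds]{Lemma \ref*{fosmolds}}, observing that the compactly supported gradient perturbation $\beta$ vanishes outside a large enough ball and hence leaves the coercivity estimate \hyperref[tppstdc]{(\ref*{tppstdc})} intact. Your only (harmless) variations are bounding $\vert x\vert\,\vert\beta(x)\vert$ by a separate constant rather than enlarging the radius $R$, and invoking Fatou at the end.
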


\smallskip

After these preparations we can state the result formalizing the gradient flux, or \textit{steepest descent}, property of the flow $(P(t))_{t \geqslant 0}$ generated by the Langevin-Smoluchowski diffusion \textnormal{\hyperref[sdeids]{(\ref*{sdeids})}} in the ambient space of probability measures $\mathscr{P}_{2}(\mathds{R}^{n})$ endowed with the Wasserstein metric.

\begin{theorem} \label{thetthre} Under the \textnormal{\hyperref[sosaojkoianoew]{Assumptions \ref*{sosaojkoianoew}}}, the following assertions hold for every point $t_{0} \geqslant 0$ at which the right-sided limiting identity \textnormal{\hyperref[rgtsflffflnv]{(\ref*{rgtsflffflnv})}} is valid \textnormal{(}i.e., every $t_{0} \in \mathds{R}_{+} \setminus N$\textnormal{):}

\smallskip

\noindent The $\mathds{R}^{n}$-valued random vectors 
\begin{equation} \label{ttrvzo}
a \vcentcolon = \nabla \log \ell\big(t_{0},X(t_{0})\big) = \nabla \log p\big(t_{0},X(t_{0})\big) + 2 \, \nabla \Psi\big( X(t_{0}) \big) \, , \, \qquad b \vcentcolon = \beta\big(X(t_{0})\big)    
\end{equation}
are elements of the Hilbert space $L^{2}(\mathds{P})$, and the perturbed version of the generalized de Bruijn identity \textnormal{\hyperref[flffflnv]{(\ref*{flffflnv})}} reads  
\begin{equation} \label{tatpvotgdbi}
\lim_{t \downarrow t_{0}} \, \frac{H\big( P^{\beta}(t) \, \vert \, \mathrm{Q} \big) - H\big( P^{\beta}(t_{0}) \, \vert \, \mathrm{Q}\big)}{t-t_{0}} 
= - \tfrac{1}{2} \, I\big( P(t_{0}) \, \vert \, \mathrm{Q}\big) - \langle a,b \rangle_{L^{2}(\mathds{P})} = -  \tfrac{1}{2} \, \big\langle a , a + 2b \big\rangle_{L^{2}(\mathds{P})}.
\end{equation}
Furthermore, the local behavior of the quadratic Wasserstein distance \textnormal{\hyperref[agswtuffasihnv]{(\ref*{agswtuffasihnv})}} in this perturbed context is given by
\begin{equation} \label{ftlbotwditpc}
\lim_{t \downarrow t_{0}} \, \frac{W_{2}\big( P^{\beta}(t),P^{\beta}(t_{0})\big)}{t-t_{0}}
=  \tfrac{1}{2}  \, \| a + 2 b\|_{L^{2}(\mathds{P})}.
\end{equation}
Combining \textnormal{\hyperref[tatpvotgdbi]{(\ref*{tatpvotgdbi})}} with \textnormal{\hyperref[ftlbotwditpc]{(\ref*{ftlbotwditpc})}}, and assuming $\| a + 2b\|_{L^{2}(\mathds{P})} > 0$, we have
\begin{equation}
\lim_{t \downarrow t_{0}} \, \frac{H\big( P^{\beta}(t) \, \vert \, \mathrm{Q} \big) - H\big( P^{\beta}(t_{0}) \, \vert \, \mathrm{Q}\big)}{W_{2}\big( P^{\beta}(t),P^{\beta}(t_{0})\big)} 
= - \Bigg\langle a \, , \, \frac{a+2b}{\| a + 2b\|_{L^{2}(\mathds{P})}} \Bigg\rangle_{L^{2}(\mathds{P})} \, ,
\end{equation}
and therefore
\begingroup
\addtolength{\jot}{0.7em}
\begin{align} 
\lim_{t \downarrow t_{0}} \, \Bigg( \, \frac{H\big( P^{\beta}(t) \, \vert \, \mathrm{Q} \big) - H\big( P^{\beta}(t_{0}) \, \vert \, \mathrm{Q}\big)}{W_{2}\big( P^{\beta}(t),P^{\beta}(t_{0})\big)} \, &- \, 
\frac{H\big( P(t) \, \vert \, \mathrm{Q} \big) - H\big( P(t_{0}) \, \vert \, \mathrm{Q}\big)}{W_{2}\big( P(t),P(t_{0})\big)} \, \Bigg) \label{nlwpthtt} \\
&= \|a\|_{L^{2}(\mathds{P})}  - \Bigg\langle a \, , \, \frac{a+2b}{\| a + 2b\|_{L^{2}(\mathds{P})}} \Bigg\rangle_{L^{2}(\mathds{P})} \, . \label{slnlwpthtt}
\end{align}
\endgroup
\end{theorem}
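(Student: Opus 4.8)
The plan is to reduce everything to statements about the semimartingale decomposition of the relative entropy process in forward and, more conveniently, backward time, then read off the three displayed limits as expectations of the leading-order terms in a first-order perturbation expansion. First I would recall the basic trajectorial identity (the unperturbed Theorem \ref{thetsix}): along the Langevin–Smoluchowski flow, the process $t \mapsto \log \ell(t,X(t))$ is a semimartingale whose finite-variation part has density $-\tfrac12 |\nabla \log \ell(t,X(t))|^2$ in the relevant time direction, plus a martingale term with zero $\mathds{P}$-expectation; taking expectations and letting $t \downarrow t_0$ yields \eqref{flffflnv} with right-sided limits, i.e. \eqref{rgtsflffflnv}. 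The point $t_0 \in \mathds{R}_+ \setminus N$ is chosen precisely so that this holds and so that $a = \nabla \log \ell(t_0, X(t_0)) \in L^2(\mathds{P})$, which is the assertion $I(P(t_0)\mid \mathrm{Q}) < \infty$; that $b = \beta(X(t_0)) \in L^2(\mathds{P})$ is immediate since $\beta$ is compactly supported and smooth, hence bounded.

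Next I would carry out the perturbed computation. Under $\mathds{P}^\beta$ the drift is $-(\nabla\Psi + \beta)$, and the density $p^\beta$ solves \eqref{pfpeq} with $p^\beta(t_0,\cdot) = p(t_0,\cdot)$; the relative entropy $H(P^\beta(t)\mid\mathrm{Q})$ is again $2F(p^\beta(t,\cdot))$ by Lemma \ref{llreeef} applied with the potential $\Psi$ unchanged in $q$. Differentiating $F(p^\beta(t,\cdot))$ at $t = t_0$ using \eqref{pfpeq}, integrating by parts, and using $p^\beta(t_0,\cdot) = p(t_0,\cdot) = p(\cdot)$, one gets a drift rate of the form $-\tfrac12 \mathds{E}_{\mathds{P}}\big[\langle \nabla\log\ell(t_0,X(t_0)), \nabla\log\ell(t_0,X(t_0)) + 2\beta(X(t_0))\rangle\big]$: the extra $\beta$-term in the velocity field of the perturbed flow contributes exactly the cross term $\langle a, b\rangle_{L^2(\mathds{P})}$ relative to the unperturbed rate $-\tfrac12 I(P(t_0)\mid\mathrm{Q}) = -\tfrac12\|a\|^2$. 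This is \eqref{tatpvotgdbi}. Cleanly, the safest route is not to differentiate the PDE directly but to invoke the perturbed trajectorial theorem (Theorem \ref{thetthretv}) if it is available before this point; otherwise a direct Fokker–Planck computation with the integration-by-parts identity $\int \langle \nabla f, \nabla\log\ell\rangle\, p\,\mathrm{d}x$ closing up against $\int \operatorname{div}(\cdots)\log\ell\,\mathrm{d}x$ does the job, the boundary terms vanishing by the second-moment bounds of Lemma \ref{fosmoldspv} and the smoothness of condition \ref{tsaosaojko}.

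For the Wasserstein limit \eqref{ftlbotwditpc}, I would appeal to the Benamou–Brenier / metric-derivative machinery recalled in Section \ref{stwt}: the instantaneous velocity field driving the perturbed density flow $p^\beta$ at time $t_0$ is of gradient type and equals (minus) the perturbed drift adjusted for the Fisher term, whose $L^2(P(t_0))$-norm squared is $\tfrac14\|a + 2b\|_{L^2(\mathds{P})}^2$ — the factor $\tfrac12$ in front being the $\beta^{-1} = \tfrac12$ diffusion coefficient. Concretely, the perturbed forward current has velocity $v^\beta(t_0,\cdot) = -(\nabla\Psi + \beta) - \tfrac12\nabla\log p(t_0,\cdot) = -\tfrac12\nabla\log\ell(t_0,\cdot) - \beta$, so $\mathds{E}_{\mathds{P}}[|v^\beta(t_0,X(t_0))|^2] = \tfrac14\|a+2b\|^2$, giving \eqref{ftlbotwditpc} once one knows this velocity is a legitimate tangent vector; here condition \ref{nalwstasas} of Assumptions \ref{sosaojkoianoew}, together with the boundedness of $\beta$, supplies the approximating sequence $\nabla(\varphi_m - B)$. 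Dividing \eqref{tatpvotgdbi} by \eqref{ftlbotwditpc} produces the slope formula, and subtracting the unperturbed slope $-\sqrt{I(P(t_0)\mid\mathrm{Q})} = -\|a\|_{L^2(\mathds{P})}$ from \eqref{ttofeosl} yields \eqref{nlwpthtt}–\eqref{slnlwpthtt} after the sign flips cancel. The main obstacle is the Wasserstein step: justifying that the metric derivative of $t \mapsto P^\beta(t)$ at $t_0$ is genuinely the $L^2(P(t_0))$-norm of this gradient velocity field — i.e. that the perturbed flow has the absolute-continuity and tangency properties needed to invoke \cite[Theorem 8.3.1 and Proposition 8.4.5]{AGS08} — rather than merely an upper bound; this is where Assumptions \ref{sosaojkoianoew} are doing real work, and the argument must mirror carefully the one used for the unperturbed case in Theorem \ref{thetone}.
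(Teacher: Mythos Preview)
Your proposal is essentially correct and follows the same architecture as the paper: derive \eqref{tatpvotgdbi} from the perturbed trajectorial result (Theorem \ref{thetthretv}) together with an integration-by-parts identity converting $\mathds{E}_{\mathds{P}}\big[(\operatorname{div}\beta - 2\langle\beta,\nabla\Psi\rangle)(X(t_{0}))\big]$ into $-\langle a,b\rangle_{L^{2}(\mathds{P})}$, derive \eqref{ftlbotwditpc} from the metric-derivative machinery for the velocity field $v^{\beta}(t_{0},\,\cdot\,)=-\tfrac12\nabla\log\ell(t_{0},\,\cdot\,)-\beta$, and then divide and subtract.

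There is, however, one substantive technical point you gloss over. You write ``differentiating $F(p^{\beta}(t,\,\cdot\,))$ at $t=t_{0}$'' as though the existence of this derivative at the \emph{specific} point $t_{0}\in\mathds{R}_{+}\setminus N$ were automatic. It is not: the set $N$ was defined via the unperturbed flow, and one must show that the perturbed de Bruijn identity holds at exactly the same exceptional set --- what the paper calls \emph{stability of the entropy limit under perturbations} (Remark \ref{rkresolmz}). The paper handles this through Corollary \ref{thetsixcoraopv}, whose proof rests on the quantitative estimates \eqref{ilpdepvhfv}, \eqref{ilpdepvhfvsv} for the ratio $Y^{\beta}=p^{\beta}/p$ (Lemma \ref{hctclwittmeitpocasot}); these control $|Y^{\beta}(t,x)-1|\leqslant C(t-t_{0})$ and the $L^{2}$-size of $\nabla\log Y^{\beta}$, and are precisely what allows one to transfer the Lebesgue-point property from the unperturbed to the perturbed Fisher integrand in \eqref{tciwsolafophvs}. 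Your ``direct Fokker--Planck computation'' alternative does not circumvent this issue: the formal derivative is correct, but justifying the limit at the given $t_{0}$ still requires this comparison. Similarly, for the Wasserstein limit the paper (Theorem \ref{bvagswt}) does not merely invoke \cite[Theorem 8.3.1, Proposition 8.4.5]{AGS08} for Lebesgue-a.e.\ $t_{0}$, but gives a direct six-step argument (localised Brenier transports plus Scheff\'e's lemma) to establish \eqref{ftlbotwditpc} at every $t_{0}\notin N$; your sketch identifies the right velocity field but would need this refinement to match the statement's quantifier.
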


\smallskip

\begin{remark} On the strength of the Cauchy-Schwarz inequality, the expression in \hyperref[slnlwpthtt]{(\ref*{slnlwpthtt})} is non-negative, and vanishes if and only if $a$ and $b$ are collinear. Consequently, when the vector field $\beta$ is not a scalar multiple of $\nabla \log \ell(t_{0},\, \cdot \,)$, the difference of the two slopes in \hyperref[nlwpthtt]{(\ref*{nlwpthtt})} is strictly positive. In other words, the slope quantified by the first term of the difference \hyperref[nlwpthtt]{(\ref*{nlwpthtt})}, is then strictly bigger than the (negative) slope expressed by the second term of \hyperref[nlwpthtt]{(\ref*{nlwpthtt})}.
\end{remark}

\medskip

These two theorems are essentially well known. They build upon a vast amount of previous work. In the quadratic case $\Psi(x) = \vert x \vert^{2}/4$, i.e., when the process $(X(t))_{t \geqslant 0}$ in \hyperref[sdeids]{(\ref*{sdeids})} is Ornstein-Uhlenbeck with invariant measure in \hyperref[usdotgdm]{(\ref*{usdotgdm})} standard Gaussian, the relation
\begin{equation} \label{dbirbtfimsf}
\tfrac{\textnormal{d}}{\textnormal{d}t} \, H\big( P(t) \, \vert \, \mathrm{Q} \big) = - \tfrac{1}{2} \, I\big( P(t) \, \vert \, \mathrm{Q}\big)
\end{equation}
has been known since \cite{Sta59} as \textit{de Bruijn's identity}. This relationship between the two fundamental information measures, due to Shannon and Fisher, respectively, is a dominant theme in many aspects of information theory and probability. We refer to the book \cite{CT06} by Cover and Thomas for an account of the results by Barron, Blachman, Brown, Linnik, R\'{e}nyi, Shannon, Stam and many others in this vein, as well as to the paper \cite{MV00} by Markowich and Villani, and the book \cite{Vil03} by Villani. See also the paper by Carlen and Soffer \cite{CS91} and the book by Johnson \cite{Joh04} on the relation of \hyperref[dbirbtfimsf]{(\ref*{dbirbtfimsf})} to the central limit theorem. For the connections with large deviations we refer to \cite{ADPZ13} and \cite{Fat16}.

\smallskip

In \hyperref[flffflnv]{(\ref*{flffflnv})}, the de Bruijn identity \hyperref[dbirbtfimsf]{(\ref*{dbirbtfimsf})} is established for more general measures $\mathrm{Q}$, those that satisfy \hyperref[sosaojkoia]{Assumptions \ref*{sosaojkoia}}; in a similar vein, see also the seminal work \cite{BE85} by Bakry and {\'E}mery. 

\smallskip

The paper \cite{JKO98} broke new ground in this respect, as it considered a general potential $\Psi$ and established the relation to the quadratic Wasserstein distance, culminating with the characterization of $(P(t))_{t \geqslant 0}$ as a gradient flux. This relation was further investigated by Otto in the paper \cite{Ott01}, where the theory now known as \textit{``Otto calculus''} was developed. For a recent application of Otto calculus to the Schr\"odinger problem, see \cite{GLR20}.

\smallskip

The statements of our \hyperref[thetone]{Theorems \ref*{thetone}}, \hyperref[thetthre]{\ref*{thetthre}} complement the existing results in some important details, e.g., the precise form \hyperref[slnlwpthtt]{(\ref*{slnlwpthtt})}, measuring the difference of the two slopes appearing in \hyperref[nlwpthtt]{(\ref*{nlwpthtt})}. The main novelty of our approach, however, will only become apparent with the formulation of \hyperref[thetsix]{Theorems \ref*{thetsix}}, \hyperref[thetthretv]{\ref*{thetthretv}} below, the trajectorial versions of \hyperref[thetone]{Theorems \ref*{thetone}} and \hyperref[thetthre]{\ref*{thetthre}}. 

\bigskip

We shall thus investigate \hyperref[thetone]{Theorems \ref*{thetone}} and \hyperref[thetthre]{\ref*{thetthre}} in a trajectorial fashion, by considering the \textit{relative entropy process} 
\begin{equation} \label{stlrpd}
\log \ell\big(t,X(t)\big) = \log \Bigg( \frac{p\big(t,X(t)\big)}{q\big(X(t)\big)} \Bigg) = \log p\big(t,X(t)\big) + 2 \, \Psi\big(X(t)\big) \, , \qquad t \geqslant 0
\end{equation}
along each trajectory of the canonical coordinate process $(X(t))_{t \geqslant 0}$, and calculating its dynamics (stochastic differential) under the probability measure $\mathds{P}$. The expectation with respect to $\mathds{P}$ of this quantity is, of course, the relative entropy in \hyperref[doref]{(\ref*{doref})}.

\medskip

A decisive tool in the analysis of the relative entropy process \hyperref[stlrpd]{(\ref*{stlrpd})} is to reverse time, and use a remarkable insight due to Fontbona and Jourdain \cite{FJ16}. These authors consider the canonical coordinate process $(X(t))_{0 \leqslant t \leqslant T}$ on the path space $\Omega = \mathcal{C}([0,T];\mathds{R}^{n})$ in the \textit{reverse direction} of time, i.e., they work with the time-reversed process $(X(T-s))_{0 \leqslant s \leqslant T}$; it is then notationally convenient to consider a finite time interval $[0,T]$, rather than $\mathds{R}_{+}$. Of course, this does not restrict the generality of the arguments.

\medskip

At this stage it becomes important to specify the relevant filtrations: We denote by $(\mathcal{F}(t))_{t \geqslant 0}$ the smallest continuous filtration to which the canonical coordinate process $(X(t))_{t \geqslant 0}$ is adapted. That is, modulo $\mathds{P}$-augmentation, we have
\begin{equation} \label{fftbfmgtmt}
\mathcal{F}(t) = \sigma \big( X(u) \colon \, 0 \leqslant u \leqslant t \big), \qquad t \geqslant 0;
\end{equation} 
and we call $(\mathcal{F}(t))_{t \geqslant 0}$ the ``filtration generated by $(X(t))_{t \geqslant 0}$''. Likewise, we let $(\mathcal{G}(T-s))_{0 \leqslant s \leqslant T}$ be the ``filtration generated by the time-reversed canonical coordinate process $(X(T-s))_{0 \leqslant s \leqslant T}$'' in the same sense as before. In particular,
\begin{equation} \label{tbfmgtmt}
\mathcal{G}(T-s) = \sigma \big( X(T-u) \colon \, 0 \leqslant u \leqslant s \big), \qquad 0 \leqslant s \leqslant T,
\end{equation}
modulo $\mathds{P}$-augmentation. For the necessary measure-theoretic operations that ensure the continuity (from both left and right) of filtrations associated with continuous processes, consult Section 2.7 in \cite{KS98}; in particular, Problems 7.1 -- 7.6 and Proposition 7.7.

\subsection{Main results}

The following two \hyperref[thetsix]{Theorems \ref*{thetsix}} and \hyperref[thetthretv]{\ref*{thetthretv}} are the main new results of this paper. They can be regarded as trajectorial versions of \hyperref[thetone]{Theorems \ref*{thetone}} and \hyperref[thetthre]{\ref*{thetthre}}, whose proofs will follow from \hyperref[thetsix]{Theorems \ref*{thetsix}} and \hyperref[thetthretv]{\ref*{thetthretv}} simply by taking expectations. Similar trajectorial approaches have already been applied successfully to the theory of optimal stopping in \cite{DK94}, to Doob's martingale inequalities in \cite{ABPST13}, and to the Burkholder-Davis-Gundy inequality in \cite{BS15}. 

\bigskip 

The significance of \hyperref[thetsix]{Theorem \ref*{thetsix}} right below, is that the trade-off between the decay of relative entropy and the ``Wasserstein transportation cost'', both of which are characterized in terms of the cumulative relative Fisher information process, is valid not only in expectation, but also along (almost) each trajectory, provided we run time in the reverse direction.\footnote{As David Kinderlehrer kindly pointed out to the second named author, the implicit Euler scheme used in \cite{JKO98} also reflects the idea of going back in time at each step of the discretization.}

\begin{theorem} \label{thetsix} Under the \textnormal{\hyperref[sosaojkoia]{Assumptions \ref*{sosaojkoia}}}, we let $T > 0$ and define the cumulative relative Fisher information process, accumulated from the right, as
\begingroup
\addtolength{\jot}{0.7em}
\begin{equation} \label{fispdaftr}
\begin{aligned} 
F(T-s) \vcentcolon =& \int_{0}^{s} \frac{1}{2} \frac{\big\vert \nabla \ell\big(T-u,X(T-u)\big) \big\vert^{2}}{\ell\big(T-u,X(T-u)\big)^{2}} \, \textnormal{d}u \\
=& \int_{0}^{s} \frac{1}{2} \bigg\vert \frac{\nabla p\big(T-u,X(T-u)\big)}{p\big(T-u,X(T-u)\big)} + 2 \, \nabla \Psi\big(X(T-u)\big) \bigg\vert^{2} \, \textnormal{d}u 
\end{aligned}
\end{equation}
\endgroup
for $0 \leqslant s \leqslant T$. Then $\mathds{E}_{\mathds{P}}\big[F(0)\big] = \frac{1}{2} \int_{0}^{T} I\big(P(t) \, \vert \, \mathrm{Q}\big) \, \textnormal{d} t < \infty$, and the process
\begin{equation} \label{dollafipim}
M(T-s) \vcentcolon = \Big( \log \ell\big(T-s,X(T-s)\big) - \log \ell\big(T,X(T)\big) \Big) - F(T-s)  
\end{equation}
for $0 \leqslant s \leqslant T$, is a square-integrable martingale of the backwards filtration $(\mathcal{G}(T-s))_{0 \leqslant s \leqslant T}$ under the probability measure $\mathds{P}$. More explicitly, the martingale of \textnormal{\hyperref[dollafipim]{(\ref*{dollafipim})}} can be represented as 
\begin{equation} \label{erdollafipim}
M(T-s) = \int_{0}^{s} \Bigg\langle \frac{\nabla \ell\big(T-u,X(T-u)\big)}{\ell\big(T-u,X(T-u)\big)} \, , \, \textnormal{d}\overline{W}^\mathds{P}(T-u) \Bigg\rangle_{\mathds{R}^{n}} \, , \qquad 0 \leqslant s \leqslant T,
\end{equation}
where the stochastic process $\big(\overline{W}^\mathds{P}(T-s)\big)_{0 \leqslant s \leqslant T}$ is a $\mathds{P}$-Brownian motion of the backwards filtration $(\mathcal{G}(T-s))_{0 \leqslant s \leqslant T}$. In particular, the quadratic variation of the martingale of \textnormal{\hyperref[dollafipim]{(\ref*{dollafipim})}} is given by the non-decreasing process in \textnormal{\hyperref[fispdaftr]{(\ref*{fispdaftr})}}, up to a multiplicative factor of $1/2$.
\end{theorem}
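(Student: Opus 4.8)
The plan is to read the relative entropy process \hyperref[stlrpd]{(\ref*{stlrpd})} through a single application of It\^{o}'s formula in the \emph{reverse} direction of time, and then to promote the continuous local martingale thus obtained to a square-integrable one by way of an a~priori bound on the accumulated relative Fisher information. First I would invoke the time-reversal analysis of \hyperref[atrod]{Appendix \ref*{atrod}} for the It\^{o} diffusion \hyperref[sdeids]{(\ref*{sdeids})}: under the regularity in condition \hyperref[tsaosaojko]{\ref*{tsaosaojko}} of \hyperref[sosaojkoia]{Assumptions \ref*{sosaojkoia}} --- positivity and spatial smoothness of $p(t,\cdot)$ together with continuity of $(t,x)\mapsto\nabla\log p(t,x)$ --- the time-reversed coordinate process $\bigl(X(T-s)\bigr)_{0\leqslant s\leqslant T}$ is, relative to the backwards filtration $\bigl(\mathcal{G}(T-s)\bigr)_{0\leqslant s\leqslant T}$, an It\^{o} process obeying
\begin{equation*}
\textnormal{d}X(T-s)=\Bigl(\nabla\Psi\bigl(X(T-s)\bigr)+\nabla\log p\bigl(T-s,X(T-s)\bigr)\Bigr)\,\textnormal{d}s+\textnormal{d}\overline{W}^{\mathds{P}}(T-s),
\end{equation*}
where $\bigl(\overline{W}^{\mathds{P}}(T-s)\bigr)_{0\leqslant s\leqslant T}$ is a $\mathds{P}$-Brownian motion of $\bigl(\mathcal{G}(T-s)\bigr)_{0\leqslant s\leqslant T}$.

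Next I would apply It\^{o}'s formula in the variable $s$ to the function $(s,x)\mapsto\log\ell(T-s,x)=\log p(T-s,x)+2\Psi(x)$ along this reversed diffusion, using $\nabla\log\ell=\nabla p/p+2\nabla\Psi=\nabla\ell/\ell$ and the elementary identity $\Delta\log p=\Delta p/p-\vert\nabla\log p\vert^{2}$. Substituting for $\partial_{t}p$ from the Fokker--Planck equation \hyperref[fpeqnwfp]{(\ref*{fpeqnwfp})}, a direct computation shows that \emph{all} second-order contributions, together with the first-order terms $\langle\nabla\log p,\nabla\Psi\rangle_{\mathds{R}^{n}}$, $\Delta\Psi$ and $\Delta\log p$, cancel identically, so that the finite-variation part of $\textnormal{d}\bigl[\log\ell(T-s,X(T-s))\bigr]$ collapses to $\tfrac{1}{2}\,\vert\nabla\log\ell(T-s,X(T-s))\vert^{2}\,\textnormal{d}s$ --- precisely the integrand of $F(\cdot)$ in \hyperref[fispdaftr]{(\ref*{fispdaftr})} --- while the martingale part is $\langle\nabla\log\ell(T-s,X(T-s)),\textnormal{d}\overline{W}^{\mathds{P}}(T-s)\rangle_{\mathds{R}^{n}}$. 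Integration over $[0,s]$ then yields
\begin{equation*}
\log\ell\bigl(T-s,X(T-s)\bigr)-\log\ell\bigl(T,X(T)\bigr)=F(T-s)+\int_{0}^{s}\Bigl\langle\tfrac{\nabla\ell}{\ell}\bigl(T-u,X(T-u)\bigr)\,,\,\textnormal{d}\overline{W}^{\mathds{P}}(T-u)\Bigr\rangle_{\mathds{R}^{n}},
\end{equation*}
which identifies the process $M(\cdot)$ of \hyperref[dollafipim]{(\ref*{dollafipim})} with the stochastic integral in \hyperref[erdollafipim]{(\ref*{erdollafipim})}. In particular $M(\cdot)$ is a continuous local martingale of $\bigl(\mathcal{G}(T-s)\bigr)_{0\leqslant s\leqslant T}$ whose quadratic variation is $\int_{0}^{s}\vert\nabla\log\ell\vert^{2}\,\textnormal{d}u=2\,F(T-s)$, which is the final assertion up to the multiplicative factor $1/2$.

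It remains to establish $\mathds{E}_{\mathds{P}}[F(0)]<\infty$; granting this, $\mathds{E}_{\mathds{P}}\bigl[\langle M\rangle(0)\bigr]=2\,\mathds{E}_{\mathds{P}}[F(0)]<\infty$ makes $M(\cdot)$ a genuine square-integrable martingale, and the identity $\mathds{E}_{\mathds{P}}[F(0)]=\tfrac{1}{2}\int_{0}^{T}I(P(t)\,\vert\,\mathrm{Q})\,\textnormal{d}t$ follows from Tonelli's theorem, the definition \hyperref[rfi]{(\ref*{rfi})} of relative Fisher information, and the substitution $t=T-u$. To get finiteness I would choose a localizing sequence $(\sigma_{k})$ for $M(\cdot)$, stop the displayed decomposition at $s\wedge\sigma_{k}$, take $\mathds{P}$-expectations --- the stopped stochastic integral has zero mean --- and set $s=T$, obtaining
\begin{equation*}
\mathds{E}_{\mathds{P}}\Bigl[\log\ell\bigl(T-(T\wedge\sigma_{k}),X(T-(T\wedge\sigma_{k}))\bigr)\Bigr]=H\bigl(P(T)\,\vert\,\mathrm{Q}\bigr)+\mathds{E}_{\mathds{P}}\bigl[F\bigl(T-(T\wedge\sigma_{k})\bigr)\bigr].
\end{equation*}
Here $H(P(T)\,\vert\,\mathrm{Q})=\mathds{E}_{\mathds{P}}[\log\ell(T,X(T))]$ is finite since $T>0$ renders $p(T,\cdot)$ smooth and positive (condition \hyperref[tsaosaojko]{\ref*{tsaosaojko}}); as $k\to\infty$ the right-hand $F$-term increases to $\mathds{E}_{\mathds{P}}[F(0)]$ by monotone convergence, while the left-hand side converges to $\log\ell(0,X(0))$ along $\mathds{P}$-almost every trajectory by path-continuity, and a uniform-integrability argument --- resting on the hypothesis $H(P(0)\,\vert\,\mathrm{Q})\in\mathds{R}$ (from \hyperref[ffecaoo]{(\ref*{ffecaoo})} and \hyperref[llreeef]{Lemma \ref*{llreeef}}), the sign $\Psi\geqslant 0$, the bound $H(P(t)\,\vert\,\mathrm{Q})>-\infty$ valid because $P(t)\in\mathscr{P}_{2}(\mathds{R}^{n})$ (see \hyperref[wdadotrewrttmq]{Appendix \ref*{wdadotrewrttmq}}), and the uniform-in-$t$ second-moment control of \hyperref[fosmolds]{Lemma \ref*{fosmolds}} --- upgrades this to convergence of the expectations to $H(P(0)\,\vert\,\mathrm{Q})$. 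Comparing the two limits forces $H(P(0)\,\vert\,\mathrm{Q})=H(P(T)\,\vert\,\mathrm{Q})+\mathds{E}_{\mathds{P}}[F(0)]$, whence $\mathds{E}_{\mathds{P}}[F(0)]<\infty$; incidentally this is the generalized de~Bruijn identity of \hyperref[thetone]{Theorem \ref*{thetone}} in integrated form.

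The It\^{o} bookkeeping of the first two steps is essentially mechanical once the backwards stochastic calculus of \hyperref[atrod]{Appendix \ref*{atrod}} is available. I expect the genuine obstacle to be the a~priori integrability $\int_{0}^{T}I(P(t)\,\vert\,\mathrm{Q})\,\textnormal{d}t<\infty$ of the third step, equivalently the control of the relative entropy process near the temporal origin, where $p(0,\cdot)$ need be neither smooth nor bounded and where $I(P(0)\,\vert\,\mathrm{Q})$ may well be infinite (cf.\ \hyperref[rethpoitwooslda]{Remark \ref*{rethpoitwooslda}}); the whole point of the stopping-time argument is to trade this away for the two soft facts that the relative entropy is finite at time $0$ and stays bounded below along the flow $\bigl(P(t)\bigr)_{t\geqslant0}$.
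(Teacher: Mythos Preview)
Your first two steps --- reversing time via \hyperref[atrod]{Appendix \ref*{atrod}} and applying It\^{o}'s formula to $\log\ell(T-s,X(T-s))$ to identify $M$ as a continuous local martingale with the stated representation and quadratic variation --- match the paper's \textsf{Steps 1} and \textsf{2} essentially verbatim.

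The gap is in your third step. Two issues. First, the claimed almost-sure convergence $\log\ell\bigl(T-\sigma_k,X(T-\sigma_k)\bigr)\to\log\ell\bigl(0,X(0)\bigr)$ appeals to ``path-continuity'', but condition \hyperref[tsaosaojko]{\ref*{tsaosaojko}} of \hyperref[sosaojkoia]{Assumptions \ref*{sosaojkoia}} only gives continuity of $(t,x)\mapsto\ell(t,x)$ on $(0,\infty)\times\mathds{R}^{n}$, not at $t=0$; indeed $p(0,\cdot)$ may be an indicator (\hyperref[rethpoitwooslda]{Remark \ref*{rethpoitwooslda}}), so $\log\ell(t,\cdot)$ need not extend continuously to the temporal origin. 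Second, and more seriously, the four ingredients you list for uniform integrability do not obviously deliver it: they give a uniform \emph{lower} bound on $\mathds{E}_{\mathds{P}}[\log\ell(t,X(t))]$ via \hyperref[wdadotrewrttmq]{Appendix \ref*{wdadotrewrttmq}}, but no control on the positive part and no equi-integrability of the family at random times $T-\sigma_k$. What you actually need is an \emph{upper} bound $\mathds{E}_{\mathds{P}}\bigl[\log\ell(T-\sigma_k,X(T-\sigma_k))\bigr]\leqslant C$ uniform in $k$, and obtaining this from your ingredients would amount to proving the decrease of $t\mapsto H(P(t)\,\vert\,\mathrm{Q})$ --- which is the very consequence of the $L^2$-martingale property you are trying to establish, so the argument threatens to be circular.

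The paper breaks this circularity by invoking an \emph{independent} proof of entropy decrease: \hyperref[cortrlrrepzrp]{Corollary \ref*{cortrlrrepzrp}} (the Fontbona--Jourdain submartingale, \hyperref[ovtfjofmidpwcd]{Theorem \ref*{ovtfjofmidpwcd}}), extended to $\sigma$-finite $\mathrm{Q}$, gives $H(P(T-\tau_n)\,\vert\,\mathrm{Q})\leqslant H(P(0)\,\vert\,\mathrm{Q})$ directly, hence the uniform bound
\[
\mathds{E}_{\mathds{P}}\Bigl[\int_{0}^{\tau_n}\tfrac{1}{2}\,\bigl|\nabla\log\ell\bigr|^{2}\,\textnormal{d}u\Bigr]\ \leqslant\ H\bigl(P(0)\,\vert\,\mathrm{Q}\bigr)-H\bigl(P(T)\,\vert\,\mathrm{Q}\bigr)<\infty,
\]
after which monotone convergence finishes. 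Your sketch needs either this Fontbona--Jourdain input or some other a-priori entropy bound that does not already presuppose the true-martingale property of $M$.
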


\begin{remark} The finiteness of the expression $\mathds{E}_{\mathds{P}}\big[F(0)\big] = \frac{1}{2} \int_{0}^{T} I\big(P(t) \, \vert \, \mathrm{Q}\big) \, \textnormal{d}t$, in conjunction with the representation \textnormal{\hyperref[erdollafipim]{(\ref*{erdollafipim})}}, shows that the martingale of \textnormal{\hyperref[dollafipim]{(\ref*{dollafipim})}} is bounded in $L^{2}(\mathds{P})$. 
\end{remark}

\smallskip

Next, we state the trajectorial version of \hyperref[thetthre]{Theorem \ref*{thetthre}} --- or equivalently, the ``perturbed'' analogue of \hyperref[thetsix]{Theorem \ref*{thetsix}}. As we did in \hyperref[thetthre]{Theorem \ref*{thetthre}}, in particular in the preceding equations \hyperref[pfpeq]{(\ref*{pfpeq})} -- \hyperref[wpsdeids]{(\ref*{wpsdeids})}, we consider the perturbation $\beta \colon \mathds{R}^{n} \rightarrow \mathds{R}^{n}$ and denote the \textit{perturbed likelihood ratio function} by
\begin{equation} \label{dotplrfb}
\ell^{\beta}(t,x) \vcentcolon = \frac{p^{\beta}(t,x)}{q(x)} = p^{\beta}(t,x) \, \mathrm{e}^{2 \Psi(x)} \, , \qquad (t,x) \in [t_{0},\infty) \times \mathds{R}^{n}.
\end{equation}
The stochastic analogue of this quantity is the \textit{perturbed likelihood ratio process} 
\begin{equation} \label{pvdotplrfbs}
\ell^{\beta}\big(t,X(t)\big) = \frac{p^{\beta}\big(t,X(t)\big)}{q\big(X(t)\big)} = p^{\beta}\big(t,X(t)\big) \, \mathrm{e}^{2 \Psi(X(t))} \, , \qquad t \geqslant t_{0}.
\end{equation}
The logarithm of this process is the \textit{perturbed relative entropy process}
\begin{equation} \label{prerepitufdllpitufd} 
\log \ell^{\beta}\big(t,X(t)\big) = \log \Bigg( \frac{p^{\beta}\big(t,X(t)\big)}{q\big(X(t)\big)} \Bigg) = \log p^{\beta}\big(t,X(t)\big) + 2 \, \Psi\big(X(t)\big) \, , \qquad  t \geqslant t_{0}.
\end{equation}

\begin{theorem} \label{thetthretv} Under the \textnormal{\hyperref[sosaojkoia]{Assumptions \ref*{sosaojkoia}}}, we let $t_{0} \geqslant 0$ and $T > t_{0}$. We define the perturbed cumulative relative Fisher information process, accumulated from the right, as
\begin{equation} \label{fispdaftrps}
F^{\beta}(T-s) \vcentcolon = \int_{0}^{s} \Bigg( \, \frac{1}{2}\frac{\big\vert \nabla \ell^{\beta}\big(T-u,X(T-u)\big) \big\vert^{2}}{\ell^{\beta}\big(T-u,X(T-u)\big)^{2}}   + \Big( \big\langle \beta \, , \, 2 \, \nabla \Psi \big\rangle_{\mathds{R}^{n}}- \operatorname{div} \beta \Big) \big(X(T-u)\big) \Bigg) \, \textnormal{d}u
\end{equation}
for $0 \leqslant s \leqslant T - t_{0}$. Then $\mathds{E}_{\mathds{P}^{\beta}}\big[F^{\beta}(t_{0})\big]< \infty$, and the process
\begin{equation} \label{dollafipimps}
M^{\beta}(T-s) \vcentcolon = \Big( \log \ell^{\beta}\big(T-s,X(T-s)\big) - \log \ell^{\beta}\big(T,X(T)\big) \Big) - F^{\beta}(T-s)  
\end{equation}
for $0 \leqslant s \leqslant T - t_{0}$, is a square-integrable martingale of the backwards filtration $(\mathcal{G}(T-s))_{0 \leqslant s \leqslant T - t_{0}}$ under the probability measure $\mathds{P}^{\beta}$. More explicitly, the martingale of \textnormal{\hyperref[dollafipimps]{(\ref*{dollafipimps})}} can be represented as 
\begin{equation} \label{erdollafipimps}
M^{\beta}(T-s) = \int_{0}^{s} \Bigg\langle \frac{\nabla \ell^{\beta}\big(T-u,X(T-u)\big)}{\ell^{\beta}\big(T-u,X(T-u)\big)} \, , \, \textnormal{d}\overline{W}^{\mathds{P}^{\beta}}(T-u) \Bigg\rangle_{\mathds{R}^{n}} \, , \qquad 0 \leqslant s \leqslant T-t_{0},
\end{equation}
where the stochastic process $\big(\overline{W}^{\mathds{P}^{\beta}}(T-s)\big)_{0 \leqslant s \leqslant T-t_{0}}$ is a $\mathds{P}^{\beta}$-Brownian motion of the backwards filtration $(\mathcal{G}(T-s))_{0 \leqslant s \leqslant T-t_{0}}$. 
\end{theorem}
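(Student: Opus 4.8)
The plan is to reproduce, for the perturbed diffusion \eqref{wpsdeids} under $\mathds{P}^{\beta}$ and its density flow $p^{\beta}$ solving \eqref{pfpeq}--\eqref{pic}, the argument behind \hyperref[thetsix]{Theorem \ref*{thetsix}}. The key input is time reversal in the spirit of Fontbona and Jourdain \cite{FJ16}, developed self-containedly in \hyperref[atrod]{Appendix \ref*{atrod}}: the time-reversed canonical process $\overline{X}(s) \vcentcolon = X(T-s)$, for $0 \leqslant s \leqslant T - t_{0}$, is an It\^{o} diffusion of the backward filtration $(\mathcal{G}(T-s))$ under $\mathds{P}^{\beta}$, with dynamics
\[
\textnormal{d}\overline{X}(s) = \big( \nabla \Psi + \beta + \nabla \log p^{\beta} \big)\big(T-s, \overline{X}(s)\big)\,\textnormal{d}s + \textnormal{d}\overline{W}^{\mathds{P}^{\beta}}(T-s),
\]
where $\big(\overline{W}^{\mathds{P}^{\beta}}(T-s)\big)_{0 \leqslant s \leqslant T-t_{0}}$ is a $\mathds{P}^{\beta}$-Brownian motion of $(\mathcal{G}(T-s))$; the reversed drift is the forward drift of \eqref{wpsdeids} reversed in sign plus $\nabla\log p^{\beta}$, and the regularity needed to apply the time-reversal formula --- smoothness of $p^{\beta}$ in $x$, differentiability in $t$, continuity of $\nabla\log p^{\beta}$ --- is exactly what condition \hyperref[tsaosaojko]{\ref*{tsaosaojko}} grants for the perturbed potential $\Psi + B$, which is in force by \hyperref[naltsaosaojkos]{\ref*{naltsaosaojkos}}.

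Next I would apply It\^{o}'s formula to $\phi(t,x) \vcentcolon = \log \ell^{\beta}(t,x) = \log p^{\beta}(t,x) + 2\,\Psi(x)$ along the path $s \mapsto (T-s, \overline{X}(s))$. Since $\nabla \phi = \nabla \log p^{\beta} + 2\,\nabla \Psi = \nabla \ell^{\beta}/\ell^{\beta}$, the local-martingale part of $\textnormal{d}\big[\phi(T-s,\overline{X}(s))\big]$ is $\big\langle (\nabla \ell^{\beta}/\ell^{\beta})(T-s,\overline{X}(s)),\,\textnormal{d}\overline{W}^{\mathds{P}^{\beta}}(T-s)\big\rangle_{\mathds{R}^{n}}$, which on integration is $M^{\beta}$ in the form \eqref{erdollafipimps}. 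The finite-variation part equals $\big(-\partial_{t}\phi + \big\langle \nabla\phi,\, \nabla\Psi + \beta + \nabla\log p^{\beta}\big\rangle_{\mathds{R}^{n}} + \tfrac{1}{2}\Delta\phi\big)(T-s,\overline{X}(s))$; inserting $\partial_{t}\log p^{\beta} = \operatorname{div}\big((\nabla\Psi+\beta)\,p^{\beta}\big)/p^{\beta} + \tfrac{1}{2}\,\Delta p^{\beta}/p^{\beta}$ from \eqref{pfpeq}, using $\Delta p^{\beta}/p^{\beta} = \Delta\log p^{\beta} + |\nabla\log p^{\beta}|^{2}$, and completing the square in $\nabla\log p^{\beta} + 2\,\nabla\Psi$, the terms carrying $\Delta\log p^{\beta}$, $\Delta\Psi$ and the cross term $\langle\nabla\log p^{\beta},\beta\rangle_{\mathds{R}^{n}}$ all cancel, leaving
\[
\tfrac{1}{2}\,\frac{|\nabla\ell^{\beta}|^{2}}{(\ell^{\beta})^{2}} + \big\langle \beta,\, 2\,\nabla\Psi\big\rangle_{\mathds{R}^{n}} - \operatorname{div}\beta
\]
evaluated at $(T-s,\overline{X}(s))$ --- exactly the integrand of $F^{\beta}$ in \eqref{fispdaftrps}. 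Hence $M^{\beta}(T-s)$ in \eqref{dollafipimps} is a continuous local martingale of $(\mathcal{G}(T-s))$ under $\mathds{P}^{\beta}$, with quadratic variation $\int_{0}^{s}\big|(\nabla\ell^{\beta}/\ell^{\beta})(T-u,X(T-u))\big|^{2}\,\textnormal{d}u$ read off from \eqref{erdollafipimps}.

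It remains to pass from local to genuine $L^{2}$-bounded martingale, i.e.\ to establish $\mathds{E}_{\mathds{P}^{\beta}}\big[\langle M^{\beta}\rangle_{T-t_{0}}\big] < \infty$; as $\langle\beta,2\,\nabla\Psi\rangle_{\mathds{R}^{n}} - \operatorname{div}\beta$ is bounded ($\beta = \nabla B$ being smooth with compact support), this is equivalent to $\mathds{E}_{\mathds{P}^{\beta}}\big[F^{\beta}(t_{0})\big] < \infty$. Here I would observe that the perturbed diffusion \eqref{wpsdeids} is nothing but the Langevin--Smoluchowski diffusion for the potential $\Psi + B$, started from $P(t_{0})$ at time $t_{0}$, and that $\Psi + B$ satisfies the same \hyperref[sosaojkoia]{Assumptions \ref*{sosaojkoia}}; thus the finiteness supplied by \hyperref[thetsix]{Theorem \ref*{thetsix}}, applied with $\Psi$ replaced by $\Psi+B$, gives $\int_{t_{0}}^{T}\mathds{E}_{\mathds{P}^{\beta}}\big[\,\big|\nabla\log p^{\beta} + 2\,\nabla(\Psi+B)\big|^{2}(t,X(t))\big]\,\textnormal{d}t < \infty$, and since $\nabla\log\ell^{\beta} = \big(\nabla\log p^{\beta} + 2\,\nabla(\Psi+B)\big) - 2\beta$ with $2\beta$ bounded, the elementary bound $|u|^{2} \leqslant 2\,|u+2\beta|^{2} + 8\,|\beta|^{2}$ transfers this to integrability of $|\nabla\ell^{\beta}/\ell^{\beta}|^{2}$, hence $\mathds{E}_{\mathds{P}^{\beta}}\big[F^{\beta}(t_{0})\big] < \infty$. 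Once $M^{\beta}$ is a true martingale the remaining assertions are immediate. One minor technicality, dealt with exactly as in the unperturbed case, is that for $t_{0}=0$ the terminal (in $s$) value involves $p^{\beta}(0,\cdot)=p^{0}$, which need not be smooth: one applies It\^{o}'s formula on $[0,T-t_{0}-\varepsilon]$ and lets $\varepsilon \downarrow 0$, using finiteness of the initial relative entropy $H\big(P(t_{0})\,\vert\,\mathrm{Q}\big)$. I expect the two points genuinely requiring care to be (i) the exact form of the time-reversed drift and the legitimacy of backward It\^{o} calculus, furnished by \hyperref[atrod]{Appendix \ref*{atrod}}, and (ii) the integrability $\mathds{E}_{\mathds{P}^{\beta}}\big[F^{\beta}(t_{0})\big]<\infty$; the drift simplification, though lengthy, is routine once \eqref{pfpeq} has been substituted.
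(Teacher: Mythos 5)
Your proposal is correct and follows essentially the same route as the paper: time reversal under $\mathds{P}^{\beta}$, It\^{o}'s formula combined with the perturbed backwards Kolmogorov equation to identify the drift of $\log\ell^{\beta}(T-s,X(T-s))$ as the integrand of $F^{\beta}$, localization, and then the integrability estimate. Your way of getting $\mathds{E}_{\mathds{P}^{\beta}}\big[F^{\beta}(t_{0})\big]<\infty$ --- viewing \eqref{wpsdeids} as the Langevin--Smoluchowski diffusion for $\Psi+B$ and invoking \hyperref[thetsix]{Theorem \ref*{thetsix}} for that system before correcting by the bounded term $2\beta$ --- is exactly the paper's comparison between $H(\,\cdot\,\vert\,\mathrm{Q})$ and $H(\,\cdot\,\vert\,\mathrm{Q}^{\beta})$ with $q^{\beta}=\mathrm{e}^{-2(\Psi+B)}$, merely packaged as a citation rather than rerun explicitly.
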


\begin{remark} The representation \textnormal{\hyperref[erdollafipimps]{(\ref*{erdollafipimps})}}, in conjunction with the finiteness of $\mathds{E}_{\mathds{P}^{\beta}}\big[F^{\beta}(0)\big]$, shows that the martingale of \textnormal{\hyperref[dollafipimps]{(\ref*{dollafipimps})}} is bounded in $L^{2}(\mathds{P}^{\beta})$. 
\end{remark}

\subsection{Important consequences} \label{subimpconq}

We state now several important consequences of these two basic results, \hyperref[thetsix]{Theorems \ref*{thetsix}} and \hyperref[thetthretv]{\ref*{thetthretv}}. In particular, we indicate how the corresponding assertions in the earlier \hyperref[thetone]{Theorems \ref*{thetone}}, \hyperref[thetthre]{\ref*{thetthre}} follow directly from these results by taking expectations.

\begin{corollary}[\textsf{Dissipation of relative entropy}] \label{thetsixcorao} Under the \textnormal{\hyperref[sosaojkoianoew]{Assumptions \ref*{sosaojkoianoew}}}, we have for all $t, t_{0} \geqslant 0$ the relative entropy identity
\begin{equation} \label{stewrtpefitre}
H\big( P(t) \, \vert \, \mathrm{Q} \big) - H\big( P(t_{0}) \, \vert \, \mathrm{Q}\big) = \mathds{E}_{\mathds{P}}\Bigg[ \log \Bigg( \frac{\ell\big(t,X(t)\big)}{\ell\big(t_{0},X(t_{0})\big)} \Bigg) \Bigg]  
= \mathds{E}_{\mathds{P}}\Bigg[\int_{t_{0}}^{t} \Bigg(- \frac{1}{2}\frac{\big\vert \nabla \ell\big(u,X(u)\big) \big\vert^{2}}{\ell\big(u,X(u)\big)^{2}} \, \Bigg) \, \textnormal{d}u\Bigg].
\end{equation}
Furthermore, we have for Lebesgue-almost every $t_{0} \geqslant 0$ the \textnormal{\textsf{generalized de Bruijn identity}}
\begin{equation} \label{flfffl}
\lim_{t \rightarrow t_{0}} \, \frac{H\big( P(t) \, \vert \, \mathrm{Q} \big) - H\big( P(t_{0}) \, \vert \, \mathrm{Q}\big)}{t-t_{0}} 
= - \tfrac{1}{2} \, \mathds{E}_{\mathds{P}}\Bigg[ \ \frac{\big\vert \nabla \ell\big(t_{0},X(t_{0})\big) \big\vert^{2}}{\ell\big(t_{0},X(t_{0})\big)^{2}} \ \Bigg],
\end{equation}
as well as the local behavior of the quadratic Wasserstein distance
\begin{equation} \label{agswtuffasih}
\lim_{t \rightarrow t_{0}} \, \frac{W_{2}\big(P(t),P(t_{0})\big)}{\vert t - t_{0} \vert} 
= \tfrac{1}{2} \, \Bigg( \, \mathds{E}_{\mathds{P}}\Bigg[ \ \frac{\big\vert \nabla \ell\big(t_{0},X(t_{0})\big) \big\vert^{2}}{\ell\big(t_{0},X(t_{0})\big)^{2}} \ \Bigg] \, \Bigg)^{1/2}.
\end{equation}
If $t_{0} \geqslant 0$ is chosen so that the generalized de Bruijn identity \textnormal{\hyperref[flfffl]{(\ref*{flfffl})}} does hold, then the limiting assertion \textnormal{\hyperref[agswtuffasih]{(\ref*{agswtuffasih})}} pertaining to the Wasserstein distance is also valid.
\begin{proof}[Proof of \texorpdfstring{\hyperref[thetsixcorao]{Corollary \ref*{thetsixcorao}}}{} from \texorpdfstring{\hyperref[thetsix]{Theorem \ref*{thetsix}}}{}:] The identity \hyperref[stewrtpefitre]{(\ref*{stewrtpefitre})} follows by taking expectations with respect to the probability measure $\mathds{P}$, and invoking the martingale property of the process in \hyperref[dollafipim]{(\ref*{dollafipim})} for $T \geqslant \max\{t_{0},t\}$. In particular, \hyperref[stewrtpefitre]{(\ref*{stewrtpefitre})} shows that the relative entropy function $t \mapsto H( P(t) \, \vert \, \mathrm{Q})$ from \hyperref[doref]{(\ref*{doref})}, thus also the free energy function $t \mapsto F(p(t, \, \cdot \, ))$ from \hyperref[reeef]{(\ref*{reeef})}, are strictly decreasing provided $\ell(t, \, \cdot \,)$ is not constant. 

\smallskip

According to the Lebesgue differentiation theorem, the monotone function $t \mapsto H(P(t) \, \vert \, \mathrm{Q})$ is differentiable for Lebesgue-almost every $t_{0} \geqslant 0$, in which case \hyperref[stewrtpefitre]{(\ref*{stewrtpefitre})} leads to the identity \hyperref[flfffl]{(\ref*{flfffl})}. 

\smallskip

The limiting behavior of the Wasserstein distance \hyperref[agswtuffasih]{(\ref*{agswtuffasih})}, for Lebesgue-almost every $t_{0} \geqslant 0$, is well known and carefully worked out in \cite{AGS08}; see \hyperref[stwt]{Section \ref*{stwt}} below for the details. In \hyperref[agswt]{Theorem \ref*{agswt}} we will prove the last-mentioned assertion of \hyperref[thetsixcorao]{Corollary \ref*{thetsixcorao}}, claiming that the validity of \hyperref[flfffl]{(\ref*{flfffl})} for some $t_{0} \geqslant 0$ implies that the limiting assertion \hyperref[agswtuffasih]{(\ref*{agswtuffasih})} also holds for the same point $t_{0}$.
\end{proof}
\end{corollary}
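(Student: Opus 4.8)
The plan is to derive \hyperref[thetsixcorao]{Corollary \ref*{thetsixcorao}} from \hyperref[thetsix]{Theorem \ref*{thetsix}} by taking $\mathds{P}$-expectations, then to invoke the Lebesgue differentiation theorem, and finally to quote the Wasserstein metric-derivative machinery recalled in \hyperref[stwt]{Section \ref*{stwt}}. All three limiting assertions will fall out of the relative entropy identity \hyperref[stewrtpefitre]{(\ref*{stewrtpefitre})} together with the already-established finiteness $\mathds{E}_{\mathds{P}}[F(0)] = \tfrac{1}{2}\int_{0}^{T} I(P(t)\,\vert\,\mathrm{Q})\,\textnormal{d}t < \infty$.

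First I would prove \hyperref[stewrtpefitre]{(\ref*{stewrtpefitre})}. The case $t_{0}=t$ is trivial, so assume $t_{0}<t$ and pick $T \geqslant t$. By \hyperref[thetsix]{Theorem \ref*{thetsix}} the process $\big(M(T-s)\big)_{0 \leqslant s \leqslant T}$ of \hyperref[dollafipim]{(\ref*{dollafipim})} is a square-integrable $\mathds{P}$-martingale of the backwards filtration with terminal value $M(T)=0$, whence $\mathds{E}_{\mathds{P}}[M(T-s)]=0$ for every $s$. Since $M(T-s)+F(T-s)=\log\ell\big(T-s,X(T-s)\big)-\log\ell\big(T,X(T)\big)$ with $M(T-s)\in L^{2}(\mathds{P})$ and $0\leqslant F(T-s)\leqslant F(0)\in L^{1}(\mathds{P})$, this difference is $\mathds{P}$-integrable; taking $s=T$ and using $H(P(0)\,\vert\,\mathrm{Q})=\mathds{E}_{\mathds{P}}[\log\ell(0,X(0))]\in\mathds{R}$ (condition \hyperref[saosaojko]{\ref*{saosaojko}} of \hyperref[osaojko]{Assumptions \ref*{osaojko}} and \hyperref[llreeef]{Lemma \ref*{llreeef}}) together with $P(r)\in\mathscr{P}_{2}(\mathds{R}^{n})$ (\hyperref[fosmolds]{Lemma \ref*{fosmolds}}) propagates finiteness of $H(P(r)\,\vert\,\mathrm{Q})=\mathds{E}_{\mathds{P}}[\log\ell(r,X(r))]$ to every $r\in[0,T]$. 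Taking expectations in \hyperref[dollafipim]{(\ref*{dollafipim})} at $s=T-t_{0}$ and substituting $v=T-u$ in \hyperref[fispdaftr]{(\ref*{fispdaftr})} then gives $H(P(t_{0})\,\vert\,\mathrm{Q})-H(P(t)\,\vert\,\mathrm{Q})=\mathds{E}_{\mathds{P}}\big[\int_{t_{0}}^{t}\tfrac{1}{2}|\nabla\ell(v,X(v))|^{2}/\ell(v,X(v))^{2}\,\textnormal{d}v\big]\geqslant 0$, which rearranges to \hyperref[stewrtpefitre]{(\ref*{stewrtpefitre})}; the case $t_{0}>t$ follows by symmetry, and the middle equality in \hyperref[stewrtpefitre]{(\ref*{stewrtpefitre})} is just linearity of expectation applied to the (now finite) quantities involved.

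Next, for the generalized de Bruijn identity \hyperref[flfffl]{(\ref*{flfffl})}, I would use Tonelli's theorem to rewrite \hyperref[stewrtpefitre]{(\ref*{stewrtpefitre})} with $t_{0}=0$ as $H(P(t)\,\vert\,\mathrm{Q})=H(P(0)\,\vert\,\mathrm{Q})-\int_{0}^{t}\tfrac{1}{2}I(P(u)\,\vert\,\mathrm{Q})\,\textnormal{d}u$. Since $\mathds{E}_{\mathds{P}}[F(0)]<\infty$, the map $u\mapsto I(P(u)\,\vert\,\mathrm{Q})$ is locally integrable, hence finite for Lebesgue-almost every $u$ (this is the first assertion of \hyperref[thetone]{Theorem \ref*{thetone}}); consequently $t\mapsto H(P(t)\,\vert\,\mathrm{Q})$ is absolutely continuous, and the Lebesgue differentiation theorem yields $\tfrac{\textnormal{d}}{\textnormal{d}t}H(P(t)\,\vert\,\mathrm{Q})=-\tfrac{1}{2}I(P(t_{0})\,\vert\,\mathrm{Q})$ — i.e.\ the two-sided limit \hyperref[flfffl]{(\ref*{flfffl})} — at every Lebesgue point $t_{0}$ of $u\mapsto I(P(u)\,\vert\,\mathrm{Q})$, so the exceptional set $N$ of \hyperref[rethpoitwooslda]{Remark \ref*{rethpoitwooslda}} is contained in the Lebesgue-null complement of these points. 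For the Wasserstein limit \hyperref[agswtuffasih]{(\ref*{agswtuffasih})}, I would write the Fokker-Planck equation \hyperref[fpeqnwfp]{(\ref*{fpeqnwfp})} as a continuity equation $\partial_{t}p+\operatorname{div}(p\,v)=0$ with velocity field $v(t,\cdot\,)=-\tfrac{1}{2}\nabla\log\ell(t,\cdot\,)$, observe that $\|v(t,\cdot\,)\|_{L^{2}(P(t))}^{2}=\tfrac{1}{4}I(P(t)\,\vert\,\mathrm{Q})$, and invoke condition \hyperref[nalwstasas]{\ref*{nalwstasas}} of \hyperref[sosaojkoianoew]{Assumptions \ref*{sosaojkoianoew}} to ensure that $v(t,\cdot\,)$ is the tangent (minimal-norm) velocity of the curve $(P(t))_{t\geqslant 0}$; the Ambrosio-Gigli-Savar\'{e} theory recalled in \hyperref[stwt]{Section \ref*{stwt}} then identifies the metric derivative with $\|v(t_{0},\cdot\,)\|_{L^{2}(P(t_{0}))}=\tfrac{1}{2}\sqrt{I(P(t_{0})\,\vert\,\mathrm{Q})}$ for Lebesgue-almost every $t_{0}$. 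The final claim — that validity of \hyperref[flfffl]{(\ref*{flfffl})} at a specific $t_{0}$ forces \hyperref[agswtuffasih]{(\ref*{agswtuffasih})} at that same $t_{0}$ — will be supplied by \hyperref[agswt]{Theorem \ref*{agswt}} of \hyperref[stwt]{Section \ref*{stwt}}.

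The main obstacle is this last step: pinning down the pointwise metric derivative of the Wasserstein curve and matching it to the $L^{2}$-norm of a gradient-type velocity field genuinely requires the theory of absolutely continuous curves in $\big(\mathscr{P}_{2}(\mathds{R}^{n}),W_{2}\big)$ and the tangent-space characterization of \cite{AGS08} — which is precisely why condition \hyperref[nalwstasas]{\ref*{nalwstasas}} is imposed and why that argument is outsourced to \hyperref[stwt]{Section \ref*{stwt}}. Within the self-contained Steps~1--2 the only real subtlety is upgrading the plain monotonicity of $t\mapsto H(P(t)\,\vert\,\mathrm{Q})$ to absolute continuity, so that its almost-everywhere derivative can actually be \emph{identified} rather than merely shown to exist; this is exactly what the finiteness $\mathds{E}_{\mathds{P}}[F(0)]<\infty$ from \hyperref[thetsix]{Theorem \ref*{thetsix}} buys us.
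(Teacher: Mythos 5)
Your proposal is correct and follows essentially the same route as the paper: take $\mathds{P}$-expectations in the backwards martingale identity of \hyperref[thetsix]{Theorem \ref*{thetsix}} to get \hyperref[stewrtpefitre]{(\ref*{stewrtpefitre})}, apply the Lebesgue differentiation theorem to the resulting integral representation for \hyperref[flfffl]{(\ref*{flfffl})}, and defer the Wasserstein limit \hyperref[agswtuffasih]{(\ref*{agswtuffasih})} to the metric-derivative machinery of \hyperref[stwt]{Section \ref*{stwt}} and \hyperref[agswt]{Theorem \ref*{agswt}}. Your added care in upgrading monotonicity to absolute continuity of $t \mapsto H(P(t)\,\vert\,\mathrm{Q})$ via local integrability of $t \mapsto I(P(t)\,\vert\,\mathrm{Q})$ — so that the a.e.\ derivative is actually identified with $-\tfrac{1}{2}I(P(t_{0})\,\vert\,\mathrm{Q})$ — is a welcome sharpening of a point the paper passes over quickly, but it is not a different argument.
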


\begin{proof}[Proof of \texorpdfstring{\hyperref[thetone]{Theorem \ref*{thetone}}}{} from \texorpdfstring{\hyperref[thetsix]{Theorem \ref*{thetsix}}}{}:] Recalling the definition of the relative Fisher information \hyperref[rfi]{(\ref*{rfi})} as well as \hyperref[merfi]{(\ref*{merfi})}, we realize that the limiting assertions \hyperref[flffflnv]{(\ref*{flffflnv})} and \hyperref[agswtuffasihnv]{(\ref*{agswtuffasihnv})} in \hyperref[thetone]{Theorem \ref*{thetone}} correspond to the limits \hyperref[flfffl]{(\ref*{flfffl})} and \hyperref[agswtuffasih]{(\ref*{agswtuffasih})} in the just proved \hyperref[thetsixcorao]{Corollary \ref*{thetsixcorao}}. If $t_{0} \geqslant 0$ is chosen so that the limit \hyperref[flffflnv]{(\ref*{flffflnv})} exists, the last part of \hyperref[thetsixcorao]{Corollary \ref*{thetsixcorao}} tells us that then the limit \hyperref[agswtuffasihnv]{(\ref*{agswtuffasihnv})} exists as well. Therefore, we can divide the first of these limits by the second, in order to obtain the limiting identity \hyperref[ttofeosl]{(\ref*{ttofeosl})} of \hyperref[thetone]{Theorem \ref*{thetone}} for Lebesgue-almost every $t_{0} \geqslant 0$.
\end{proof}

In a manner similar to the derivation of the above \hyperref[thetsixcorao]{Corollary \ref*{thetsixcorao}} from \hyperref[thetsix]{Theorem \ref*{thetsix}}, we deduce now from \hyperref[thetthretv]{Theorem \ref*{thetthretv}} the following \hyperref[thetsixcoraopv]{Corollary \ref*{thetsixcoraopv}}. Its first identity \hyperref[stewrtpefitrepv]{(\ref*{stewrtpefitrepv})} shows, in particular, that the relative entropy $H( P^{\beta}(t) \, \vert \, \mathrm{Q})$ is real-valued for all $t \geqslant t_{0}$.  

\begin{corollary}[\textsf{Dissipation of relative entropy under perturbations}] \label{thetsixcoraopv} Under the \textnormal{\hyperref[sosaojkoianoew]{Assumptions \ref*{sosaojkoianoew}}}, we have for all $t \geqslant t_{0} \geqslant 0$ the relative entropy identity
\begingroup
\addtolength{\jot}{0.7em}
\begin{equation} \label{stewrtpefitrepv}
\begin{aligned}
& H\big( P^{\beta}(t) \, \vert \, \mathrm{Q} \big) - H\big( P^{\beta}(t_{0}) \, \vert \, \mathrm{Q}\big) = \mathds{E}_{\mathds{P}^{\beta}}\Bigg[ \log \Bigg( \frac{\ell^{\beta}\big(t,X(t)\big)}{\ell^{\beta}\big(t_{0},X(t_{0})\big)}\Bigg)\Bigg]   \\
& \qquad = \mathds{E}_{\mathds{P}^{\beta}}\Bigg[\int_{t_{0}}^{t} \Bigg( - \frac{1}{2}\frac{\big\vert \nabla \ell^{\beta}\big(u,X(u)\big) \big\vert^{2}}{\ell^{\beta}\big(u,X(u)\big)^{2}} + \Big(\operatorname{div} \beta -  \big\langle \beta \, , \, 2 \, \nabla \Psi \big\rangle_{\mathds{R}^{n}}\Big)\big(X(u)\big) \Bigg) \textnormal{d}u \Bigg].
\end{aligned}
\end{equation}
\endgroup
Furthermore, for every point $t_{0} \geqslant 0$ at which the right-sided limiting assertion \textnormal{\hyperref[rgtsflffflnv]{(\ref*{rgtsflffflnv})}} is valid \textnormal{(}i.e., every $t_{0} \in \mathds{R}_{+} \setminus N$\textnormal{)}, we have also the limiting identities
\begin{equation} \label{flffflpv}
\lim_{t \downarrow t_{0}} \, \frac{H\big( P^{\beta}(t) \, \vert \, \mathrm{Q} \big) - H\big( P^{\beta}(t_{0}) \, \vert \, \mathrm{Q}\big)}{t-t_{0}} = \mathds{E}_{\mathds{P}}\Bigg[ - \frac{1}{2} \frac{\big\vert \nabla \ell\big(t_{0},X(t_{0})\big) \big\vert^{2}}{\ell\big(t_{0},X(t_{0})\big)^{2}} + \Big( \operatorname{div} \beta -  \big\langle \beta \, , \, 2 \, \nabla \Psi \big\rangle_{\mathds{R}^{n}}\Big)\big(X(t_{0})\big)  \Bigg],
\end{equation}
as well as
\begin{equation} \label{svpvompvv}
\lim_{t \downarrow t_{0}} \, \frac{W_{2}\big( P^{\beta}(t),P^{\beta}(t_{0})\big)}{t-t_{0}} 
= \tfrac{1}{2} \, \Bigg( \, \mathds{E}_{\mathds{P}}\Bigg[ \ \bigg\vert \frac{\nabla \ell\big(t_{0},X(t_{0})\big)}{\ell\big(t_{0},X(t_{0})\big)} + 2 \, \beta\big(X(t_{0})\big) \bigg\vert^{2} \ \Bigg] \, \Bigg)^{1/2}.
\end{equation}
\begin{proof}[Proof of \texorpdfstring{\hyperref[thetsixcoraopv]{Corollary \ref*{thetsixcoraopv}}}{} from \texorpdfstring{\hyperref[thetthretv]{Theorem \ref*{thetthretv}}}{}:] Taking expectations under the probability measure $\mathds{P}^{\beta}$ and using the martingale property of the process in \hyperref[dollafipimps]{(\ref*{dollafipimps})} for $T \geqslant t \geqslant t_{0}$, leads to the identity \hyperref[stewrtpefitrepv]{(\ref*{stewrtpefitrepv})}.

\medskip

In order to derive the limiting identity \hyperref[flffflpv]{(\ref*{flffflpv})} from \hyperref[stewrtpefitrepv]{(\ref*{stewrtpefitrepv})}, some care is needed to show that \hyperref[flffflpv]{(\ref*{flffflpv})} is valid for every time $t_{0} \geqslant 0$ which is not an exceptional point excluded by \hyperref[thetone]{Theorem \ref*{thetone}}, or equivalently by \hyperref[thetsixcorao]{Corollary \ref*{thetsixcorao}}. More precisely, if $t_{0} \geqslant 0$ is chosen so that the right-sided limit \hyperref[rgtsflffflnv]{(\ref*{rgtsflffflnv})} can be derived from \hyperref[stewrtpefitre]{(\ref*{stewrtpefitre})} in \hyperref[thetsixcorao]{Corollary \ref*{thetsixcorao}} (i.e., if $t_{0} \in \mathds{R}_{+} \setminus N$), we have to show that for the same point $t_{0}$ the perturbed equation \hyperref[stewrtpefitrepv]{(\ref*{stewrtpefitrepv})} leads to the identity \hyperref[flffflpv]{(\ref*{flffflpv})}. Colloquially speaking, we want to show that the generalized de Bruijn identity \hyperref[rgtsflffflnv]{(\ref*{rgtsflffflnv})} is stable under perturbations; see in this context also \hyperref[rkresolmz]{Remark \ref*{rkresolmz}} below.

\smallskip

We shall verify in \hyperref[hctclwittmeitpocasot]{Lemma \ref*{hctclwittmeitpocasot}} of \hyperref[subsomusefullem]{Subsection \ref*{subsomusefullem}} below the following estimates on the ratio between the probability density function $p(t, \, \cdot \, )$ and its perturbed version $p^{\beta}(t, \, \cdot \, )$: For every $t_{0} \geqslant 0$ and $T > t_{0}$ there is a constant $C > 0$ such that 
\begin{equation} \label{ilpdepvhfv}
\bigg\vert \frac{\ell^{\beta}(t,x)}{\ell(t,x)} - 1 \bigg\vert = \bigg\vert \frac{p^{\beta}(t,x)}{p(t,x)} - 1 \bigg\vert \leqslant C \, (t-t_{0}) \, , \qquad (t,x) \in [t_{0},T] \times \mathds{R}^{n}
\end{equation}
as well as
\begin{equation} \label{ilpdepvhfvsv}
\mathds{E}_{\mathds{P}}\Bigg[\int_{t_{0}}^{t} \ \Bigg\vert \nabla \log \Bigg( \frac{\ell^{\beta}\big(u,X(u)\big)}{\ell\big(u,X(u)\big)} \Bigg) \Bigg\vert^{2} \, \textnormal{d}u \Bigg] 
\leqslant C \, (t-t_{0})^{2} \, , \qquad t_{0} \leqslant t \leqslant T.
\end{equation}

\smallskip

We turn now to the derivation of \hyperref[flffflpv]{(\ref*{flffflpv})} from \hyperref[stewrtpefitrepv]{(\ref*{stewrtpefitrepv})}. First, as the perturbation $\beta$ is smooth and compactly supported, and the paths of the canonical coordinate process $(X(t))_{t \geqslant 0}$ are continuous, we have clearly
\begin{equation} \label{llethar}
\lim_{t \downarrow t_{0}} \, \frac{1}{t-t_{0}} \, \mathds{E}_{\mathds{P}^{\beta}}\Bigg[\int_{t_{0}}^{t} \Big(  \operatorname{div} \beta -  \big\langle \beta \, , \, 2 \, \nabla \Psi \big\rangle_{\mathds{R}^{n}}\Big)\big(X(u)\big) \, \textnormal{d}u \Bigg] 
= \mathds{E}_{\mathds{P}^{\beta}}\Big[ \Big(\operatorname{div} \beta -  \big\langle \beta \, , \, 2 \, \nabla \Psi \big\rangle_{\mathds{R}^{n}}\Big)\big(X(t_{0})\big) \Big] 
\end{equation}
\textit{for every} $t_{0} \geqslant 0$. Secondly, the random variable $X(t_{0})$ has the same distribution under $\mathds{P}$, as it does under $\mathds{P}^{\beta}$, so it is immaterial whether we express the expectation on the right-hand side of \hyperref[llethar]{(\ref*{llethar})} with respect to the probability measure $\mathds{P}$ or $\mathds{P}^{\beta}$. Hence this expression equals the corresponding term on the right-hand side of \hyperref[flffflpv]{(\ref*{flffflpv})}, as required.

\smallskip

Regarding the remaining term on the right-hand side of \hyperref[flffflpv]{(\ref*{flffflpv})}, it can be seen by applying \hyperref[ilpdepvhfv]{(\ref*{ilpdepvhfv})} and \hyperref[ilpdepvhfvsv]{(\ref*{ilpdepvhfvsv})}, that the equality
\begin{equation} \label{tciwsolafophvs}
\lim_{t \downarrow t_{0}} \, \frac{1}{t-t_{0}} \, \mathds{E}_{\mathds{P}^{\beta}}\Bigg[\int_{t_{0}}^{t} \Bigg( -\frac{1}{2}\frac{\big\vert \nabla \ell^{\beta}\big(u,X(u)\big) \big\vert^{2}}{\ell^{\beta}\big(u,X(u)\big)^{2}} \, \Bigg) \, \textnormal{d}u \Bigg] 
= \lim_{t \downarrow t_{0}} \, \frac{1}{t-t_{0}} \, \mathds{E}_{\mathds{P}}\Bigg[\int_{t_{0}}^{t} \Bigg( -\frac{1}{2}\frac{\big\vert \nabla \ell\big(u,X(u)\big) \big\vert^{2}}{\ell\big(u,X(u)\big)^{2}} \, \Bigg) \, \textnormal{d}u \Bigg]
\end{equation}
holds as long as $t_{0} \geqslant 0$ is chosen so that one of the limits exists; for the details we refer to \cite[Section 3.1]{Tsc19}. In other words, the existence and equality of the limits in \hyperref[tciwsolafophvs]{(\ref*{tciwsolafophvs})} is guaranteed if and only if $t_{0} \in \mathds{R}_{+} \setminus N$. It develops that both limits in \hyperref[tciwsolafophvs]{(\ref*{tciwsolafophvs})} exist if $t_{0} \geqslant 0$ is not contained in the exceptional set $N$ of zero Lebesgue measure, and their common value is 
\begin{equation} \label{tcigwwsolafophvs}
- \tfrac{1}{2} \, I\big( P(t_{0}) \, \vert \, \mathrm{Q}\big) = - \tfrac{1}{2} \, \mathds{E}_{\mathds{P}}\Bigg[ \ \frac{\big\vert \nabla \ell\big(t_{0},X(t_{0})\big) \big\vert^{2}}{\ell\big(t_{0},X(t_{0})\big)^{2}} \ \Bigg];
\end{equation}
in conjunction with \hyperref[llethar]{(\ref*{llethar})}, which is valid for every $t_{0} \geqslant 0$, this establishes the limiting identity \hyperref[flffflpv]{(\ref*{flffflpv})} for every $t_{0} \in \mathds{R}_{+} \setminus N$. Therefore, the right-sided limiting assertion \hyperref[rgtsflffflnv]{(\ref*{rgtsflffflnv})} and the similar perturbed limiting assertion in \hyperref[flffflpv]{(\ref*{flffflpv})} fail on precisely the same set of exceptional points $N$. 

\medskip

As regards the final assertion \hyperref[svpvompvv]{(\ref*{svpvompvv})}, we note that, by analogy with \hyperref[agswtuffasih]{(\ref*{agswtuffasih})}, the limiting behavior of the Wasserstein distance \hyperref[svpvompvv]{(\ref*{svpvompvv})}, for Lebesgue-almost every $t_{0} \geqslant 0$, is well known \cite{AGS08}; for the details we refer to \hyperref[stwt]{Section \ref*{stwt}} below. More precisely, it will follow from \hyperref[bvagswt]{Theorem \ref*{bvagswt}} that the limiting assertion
\begin{equation} \label{osagswtuffasihnv}
\lim_{t \downarrow t_{0}} \, \frac{W_{2}\big(P(t),P(t_{0})\big)}{t - t_{0}} 
= \tfrac{1}{2} \, \sqrt{I\big( P(t_{0}) \, \vert \, \mathrm{Q}\big)}
\end{equation}
is valid for every $t_{0} \in \mathds{R}_{+} \setminus N$. Once again, concerning the relation between the limits in \hyperref[osagswtuffasihnv]{(\ref*{osagswtuffasihnv})} and \hyperref[svpvompvv]{(\ref*{svpvompvv})} pertaining to the Wasserstein distance, we discern a similar pattern as in the case of the generalized de Bruijn identity. In fact, \hyperref[bvagswt]{Theorem \ref*{bvagswt}} will tell us that the perturbed Wasserstein limit \hyperref[svpvompvv]{(\ref*{svpvompvv})} also holds for every $t_{0} \in \mathds{R}_{+} \setminus N$. In other words, the local behavior of the quadratic Wasserstein distance is stable under perturbations as well; we shall come back to this point in \hyperref[wvotarrkresolmz]{Remark \ref*{wvotarrkresolmz}} below. 

\medskip

Summing up, if $t_{0} \in \mathds{R}_{+} \setminus N$, i.e., whenever the limiting identity \hyperref[rgtsflffflnv]{(\ref*{rgtsflffflnv})} holds, the limiting assertions \hyperref[flffflpv]{(\ref*{flffflpv})} and \hyperref[svpvompvv]{(\ref*{svpvompvv})} are valid as well. Hence, except for the set $N$ of zero Lebesgue measure in \hyperref[rethpoitwooslda]{Remark \ref*{rethpoitwooslda}}, we have shown the validity of \hyperref[flffflpv]{(\ref*{flffflpv})} and \hyperref[svpvompvv]{(\ref*{svpvompvv})}, thus completing the proof of \hyperref[thetsixcoraopv]{Corollary \ref*{thetsixcoraopv}}.
\end{proof}
\end{corollary}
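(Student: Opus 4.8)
The plan is to obtain the corollary from \hyperref[thetthretv]{Theorem \ref*{thetthretv}} by taking expectations, in exact parallel with the passage from \hyperref[thetsix]{Theorem \ref*{thetsix}} to \hyperref[thetsixcorao]{Corollary \ref*{thetsixcorao}}, and then to transfer the two limiting assertions \hyperref[flffflpv]{(\ref*{flffflpv})}, \hyperref[svpvompvv]{(\ref*{svpvompvv})} from their unperturbed analogues by a perturbation estimate. For the identity \hyperref[stewrtpefitrepv]{(\ref*{stewrtpefitrepv})} I fix $t \geqslant t_{0} \geqslant 0$ and apply \hyperref[thetthretv]{Theorem \ref*{thetthretv}} with horizon $T = t$. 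Since $\big(M^{\beta}(T-s)\big)_{0 \leqslant s \leqslant T-t_{0}}$ is a $\mathds{P}^{\beta}$-martingale of the backwards filtration with $M^{\beta}(T) = 0$, taking $\mathds{E}_{\mathds{P}^{\beta}}$ in \hyperref[dollafipimps]{(\ref*{dollafipimps})} at $s = t - t_{0}$ and using $\mathds{E}_{\mathds{P}^{\beta}}\big[\log \ell^{\beta}(u,X(u))\big] = H\big(P^{\beta}(u) \, \vert \, \mathrm{Q}\big)$ yields $H\big(P^{\beta}(t_{0}) \, \vert \, \mathrm{Q}\big) - H\big(P^{\beta}(t) \, \vert \, \mathrm{Q}\big) = \mathds{E}_{\mathds{P}^{\beta}}\big[F^{\beta}(t_{0})\big]$; the finiteness $\mathds{E}_{\mathds{P}^{\beta}}\big[F^{\beta}(t_{0})\big] < \infty$ supplied by the theorem legitimises this rearrangement and shows in particular that $H\big(P^{\beta}(t) \, \vert \, \mathrm{Q}\big) \in \mathds{R}$ for all $t \geqslant t_{0}$. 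A change of the integration variable in the definition \hyperref[fispdaftrps]{(\ref*{fispdaftrps})} of $F^{\beta}$ then rewrites this as \hyperref[stewrtpefitrepv]{(\ref*{stewrtpefitrepv})}.

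For the de Bruijn limit \hyperref[flffflpv]{(\ref*{flffflpv})} I divide \hyperref[stewrtpefitrepv]{(\ref*{stewrtpefitrepv})} by $t - t_{0}$ and let $t \downarrow t_{0}$. The drift contribution is harmless: by condition \hyperref[naltsaosaojkos]{\ref*{naltsaosaojkos}} the function $\operatorname{div} \beta - \langle \beta , 2 \nabla \Psi \rangle_{\mathds{R}^{n}}$ is continuous with compact support, and the paths of $X$ are continuous, so bounded convergence gives \hyperref[llethar]{(\ref*{llethar})} for \emph{every} $t_{0} \geqslant 0$; and since $X(t_{0})$ has the same distribution $P(t_{0})$ under $\mathds{P}$ as under $\mathds{P}^{\beta}$, this limit equals the corresponding term on the right of \hyperref[flffflpv]{(\ref*{flffflpv})}.

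The main obstacle is the Fisher-information term: one must show that $\tfrac{1}{t-t_{0}} \, \mathds{E}_{\mathds{P}^{\beta}}\big[\int_{t_{0}}^{t} \big(-\tfrac{1}{2} \vert \nabla \ell^{\beta}(u,X(u)) \vert^{2} / \ell^{\beta}(u,X(u))^{2}\big) \, \textnormal{d}u\big]$ converges to $-\tfrac{1}{2} I\big(P(t_{0}) \, \vert \, \mathrm{Q}\big)$ \emph{precisely} when $t_{0} \in \mathds{R}_{+} \setminus N$, i.e.\ precisely when the unperturbed limit \hyperref[rgtsflffflnv]{(\ref*{rgtsflffflnv})} holds. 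Here I would invoke the a priori bounds of \hyperref[hctclwittmeitpocasot]{Lemma \ref*{hctclwittmeitpocasot}}: the uniform estimate \hyperref[ilpdepvhfv]{(\ref*{ilpdepvhfv})} on $\vert \ell^{\beta}/\ell - 1 \vert = \vert p^{\beta}/p - 1 \vert$, and the $L^{2}$-estimate \hyperref[ilpdepvhfvsv]{(\ref*{ilpdepvhfvsv})} on $\nabla \log(\ell^{\beta}/\ell)$. Combined with a Girsanov change of measure from $\mathds{P}^{\beta}$ back to $\mathds{P}$, whose density on $[t_{0},t]$ is again within $O(t-t_{0})$ of $1$, these show that replacing $(\ell^{\beta},\mathds{P}^{\beta})$ by $(\ell,\mathds{P})$ in the time-averaged Fisher information costs only $o(1)$ as $t \downarrow t_{0}$; hence the two limits in \hyperref[tciwsolafophvs]{(\ref*{tciwsolafophvs})} exist simultaneously, with common value \hyperref[tcigwwsolafophvs]{(\ref*{tcigwwsolafophvs})}. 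Together with the previous paragraph this establishes \hyperref[flffflpv]{(\ref*{flffflpv})} on $\mathds{R}_{+} \setminus N$; the remaining estimates are routine and are carried out in \cite[Section~3.1]{Tsc19}.

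Finally, for the Wasserstein limit \hyperref[svpvompvv]{(\ref*{svpvompvv})}, I note that the perturbed Fokker--Planck equation \hyperref[pfpeq]{(\ref*{pfpeq})} is the continuity equation for the velocity field $v^{\beta}(t,\cdot) = -\tfrac{1}{2} \nabla \log \ell^{\beta}(t,\cdot) - \beta$, whose $L^{2}(P^{\beta}(t_{0}))$-norm at $t = t_{0}$ equals $\tfrac{1}{2}\big(\mathds{E}_{\mathds{P}}\big[\big\vert \nabla \ell(t_{0},X(t_{0})) / \ell(t_{0},X(t_{0})) + 2 \beta(X(t_{0})) \big\vert^{2}\big]\big)^{1/2}$, since $\ell^{\beta}(t_{0},\cdot) = \ell(t_{0},\cdot)$ and $X(t_{0}) \sim P(t_{0})$. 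By the optimal-transport results of \hyperref[stwt]{Section \ref*{stwt}} --- specifically \hyperref[bvagswt]{Theorem \ref*{bvagswt}}, which plays for the perturbed flow the role \hyperref[agswt]{Theorem \ref*{agswt}} plays for the unperturbed one --- this metric derivative of $t \mapsto P^{\beta}(t)$ exists and equals the right-hand side of \hyperref[svpvompvv]{(\ref*{svpvompvv})} for exactly those $t_{0}$ at which \hyperref[rgtsflffflnv]{(\ref*{rgtsflffflnv})} holds, i.e.\ on $\mathds{R}_{+} \setminus N$, completing the proof.
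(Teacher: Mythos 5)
Your proposal is correct and follows essentially the same route as the paper: taking expectations via the $\mathds{P}^{\beta}$-martingale property of \hyperref[dollafipimps]{(\ref*{dollafipimps})} for the identity \hyperref[stewrtpefitrepv]{(\ref*{stewrtpefitrepv})}, splitting the integrand into the drift term (handled for every $t_{0}$ by continuity and compact support of $\beta$) and the Fisher-information term (transferred from $\mathds{P}$ to $\mathds{P}^{\beta}$ via the estimates of \hyperref[hctclwittmeitpocasot]{Lemma \ref*{hctclwittmeitpocasot}}, with details deferred to \cite{Tsc19}), and invoking \hyperref[bvagswt]{Theorem \ref*{bvagswt}} for the Wasserstein limit. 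The only cosmetic imprecision is the claim that the Girsanov density itself is within $O(t-t_{0})$ of $1$ --- it is the conditional expectation $Y^{\beta}(t,x)$ that satisfies \hyperref[ilpdepvhfv]{(\ref*{ilpdepvhfv})} --- but this is exactly the quantity your argument actually uses.
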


\begin{proof}[Proof of \texorpdfstring{\hyperref[thetthre]{Theorem \ref*{thetthre}}}{} from \texorpdfstring{\hyperref[thetsix]{Theorems \ref*{thetsix}}}{}, \texorpdfstring{\hyperref[thetthretv]{\ref*{thetthretv}}}{}:] Let $t_{0} \in \mathds{R}_{+} \setminus N$, i.e., such that the limiting assertion \hyperref[rgtsflffflnv]{(\ref*{rgtsflffflnv})}, and as a consequence also \hyperref[osagswtuffasihnv]{(\ref*{osagswtuffasihnv})}, are valid (these are the right-sided limits corresponding to \hyperref[flfffl]{(\ref*{flfffl})}, \hyperref[agswtuffasih]{(\ref*{agswtuffasih})} in \hyperref[thetsixcorao]{Corollary \ref*{thetsixcorao}} of \hyperref[thetthre]{Theorem \ref*{thetthre}}). Then the limiting identities \hyperref[flffflpv]{(\ref*{flffflpv})}, \hyperref[svpvompvv]{(\ref*{svpvompvv})} from \hyperref[thetsixcoraopv]{Corollary \ref*{thetsixcoraopv}} of \hyperref[thetthretv]{Theorem \ref*{thetthretv}} are valid as well. Recalling the abbreviations $a = \nabla \log \ell(t_{0},X(t_{0}))$ and $b = \beta(X(t_{0}))$ in \hyperref[ttrvzo]{(\ref*{ttrvzo})}, we summarize now the identities just mentioned as
\begingroup
\addtolength{\jot}{1em}
\begin{alignat}{3}
&\lim_{t \downarrow t_{0}} \, \frac{H\big( P(t) \, \vert \, \mathrm{Q} \big) - H\big( P(t_{0}) \, \vert \, \mathrm{Q}\big)}{t-t_{0}} 
&&= - && \tfrac{1}{2} \, \| a \|_{L^{2}(\mathds{P})}^{2}, \label{nlwpthtt1} \\
&\lim_{t \downarrow t_{0}} \, \frac{W_{2}\big( P(t),P(t_{0})\big)}{t-t_{0}}
&&= && \tfrac{1}{2} \, \| a \|_{L^{2}(\mathds{P})}, \label{nlwpthtt2} \\
&\lim_{t \downarrow t_{0}} \, \frac{H\big( P^{\beta}(t) \, \vert \, \mathrm{Q} \big) - H\big( P^{\beta}(t_{0}) \, \vert \, \mathrm{Q}\big)}{t-t_{0}} 
&&= - && \tfrac{1}{2}  \, \big\langle a , a + 2b \big\rangle_{L^{2}(\mathds{P})}, \label{nlwpthtt3} \\
&\lim_{t \downarrow t_{0}} \, \frac{W_{2}\big( P^{\beta}(t),P^{\beta}(t_{0})\big)}{t-t_{0}} 
&&= && \tfrac{1}{2}  \, \| a + 2 b\|_{L^{2}(\mathds{P})}. \label{nlwpthtt4}
\end{alignat}
\endgroup
Indeed, the equations \hyperref[nlwpthtt1]{(\ref*{nlwpthtt1})}, \hyperref[nlwpthtt2]{(\ref*{nlwpthtt2})}, and \hyperref[nlwpthtt4]{(\ref*{nlwpthtt4})} correspond precisely to \hyperref[rgtsflffflnv]{(\ref*{rgtsflffflnv})}, \hyperref[osagswtuffasihnv]{(\ref*{osagswtuffasihnv})}, and \hyperref[svpvompvv]{(\ref*{svpvompvv})}, respectively. As for \hyperref[nlwpthtt3]{(\ref*{nlwpthtt3})}, we note that, according to equation \hyperref[flffflpv]{(\ref*{flffflpv})} of \hyperref[thetsixcoraopv]{Corollary \ref*{thetsixcoraopv}}, the limit in \hyperref[nlwpthtt3]{(\ref*{nlwpthtt3})} equals 
\begin{equation}
- \tfrac{1}{2} \, \| a \|_{L^{2}(\mathds{P})}^{2} + \mathds{E}_{\mathds{P}}\Big[ \Big(\operatorname{div} \beta - 2 \, \big\langle \beta , \nabla \Psi \big\rangle_{\mathds{R}^{n}}\Big)\big(X(t_{0})\big) \Big].    
\end{equation}
Therefore, in view of the right-hand side of \hyperref[nlwpthtt3]{(\ref*{nlwpthtt3})}, we have to show the identity
\begin{equation} \label{telimitthetthretv}
\mathds{E}_{\mathds{P}}\Big[ \Big(\operatorname{div} \beta -  \big\langle \beta \, , \, 2 \, \nabla \Psi \big\rangle_{\mathds{R}^{n}}\Big)\big(X(t_{0})\big) \Big] = - \langle a,b \rangle_{L^{2}(\mathds{P})}.
\end{equation}
In order to do this, we write the left-hand side of \hyperref[telimitthetthretv]{(\ref*{telimitthetthretv})} as
\begin{equation} \label{dftelimitthetthretv}
\int_{\mathds{R}^{n}} \Big( \operatorname{div} \beta(x) - \big\langle \beta(x) \, , \, 2 \, \nabla \Psi(x) \big\rangle_{\mathds{R}^{n}} \Big) \, p(t_{0},x) \, \textnormal{d}x.
\end{equation}
Using --- for the first time, and only in order to show the identity \hyperref[telimitthetthretv]{(\ref*{telimitthetthretv})} --- integration by parts, and the fact that the perturbation $\beta$ is assumed to be smooth and compactly supported, we see that the expression \hyperref[dftelimitthetthretv]{(\ref*{dftelimitthetthretv})} becomes
\begin{equation}
- \int_{\mathds{R}^{n}} \Big\langle \beta(x) \, , \, \nabla \log p(t_{0},x) + 2 \, \nabla \Psi(x) \Big\rangle_{\mathds{R}^{n}} \, p(t_{0},x) \, \textnormal{d}x,
\end{equation}
which is the same as $- \big\langle \beta(X(t_{0})), \nabla \log \ell(t_{0},X(t_{0})) \big\rangle_{L^{2}(\mathds{P})} = - \langle b, a\rangle_{L^{2}(\mathds{P})}$. 

\smallskip

The limiting identities \hyperref[nlwpthtt1]{(\ref*{nlwpthtt1})} -- \hyperref[nlwpthtt4]{(\ref*{nlwpthtt4})} now clearly imply the assertions of \hyperref[thetthre]{Theorem \ref*{thetthre}}. 
\end{proof}

\medskip

The following two results, \hyperref[thetsixcor]{Propositions \ref*{thetsixcor}} and \hyperref[thetthretvcor]{\ref*{thetthretvcor}}, are trajectorial versions of \hyperref[thetsixcorao]{Corollaries \ref*{thetsixcorao}} and \hyperref[thetsixcoraopv]{\ref*{thetsixcoraopv}}, respectively. They compute the rate of temporal change of relative entropy for the equation \hyperref[sdeids]{(\ref*{sdeids})} and for its perturbed version \hyperref[wpsdeids]{(\ref*{wpsdeids})}, respectively, in the more precise trajectorial manner of \hyperref[thetsix]{Theorems \ref*{thetsix}}, \hyperref[thetthretv]{\ref*{thetthretv}}. 

\begin{proposition}[\textsf{Trajectorial rate of relative entropy dissipation}] \label{thetsixcor} Under the \textnormal{\hyperref[sosaojkoia]{Assumptions \ref*{sosaojkoia}}}, let $t_{0} \geqslant 0$ be such that the generalized de Bruijn identity \textnormal{\hyperref[flfffl]{(\ref*{flfffl})}} does hold. Then the relative entropy process \textnormal{\hyperref[stlrpd]{(\ref*{stlrpd})}} satisfies, with $T > t_{0}$, the following trajectorial relations:
\begingroup
\addtolength{\jot}{0.7em}
\begin{align}
& \lim_{s \uparrow T-t_{0}} \, \frac{\mathds{E}_{\mathds{P}}\Big[ \log \ell\big(t_{0},X(t_{0})\big) \ \big\vert \ \mathcal{G}(T-s) \Big] - \log \ell \big( T-s,X(T-s)\big)}{T-t_{0}-s}  \label{thetsixcorfe} \\
& \qquad = \lim_{s \downarrow T-t_{0}} \, \frac{\mathds{E}_{\mathds{P}}\Big[ \log \ell\big(T-s,X(T-s)\big) \ \big\vert \ \mathcal{G}(t_{0}) \Big] - \log \ell \big( t_{0},X(t_{0})\big)}{s-(T-t_{0})}   \label{thetsixcorse} \\
& \qquad =  \frac{1}{2} \frac{\big\vert \nabla \ell\big(t_{0},X(t_{0})\big) \big\vert^{2}}{\ell\big(t_{0},X(t_{0})\big)^{2}} = \frac{1}{2} \bigg\vert \frac{\nabla p\big(t_{0},X(t_{0})\big)}{p\big(t_{0},X(t_{0})\big)} + 2 \, \nabla \Psi\big(X(t_{0})\big)  \bigg\vert^{2}, \qquad \qquad \label{thetsixcorthe}
\end{align}
\endgroup
where the limits \textnormal{\hyperref[thetsixcorfe]{(\ref*{thetsixcorfe})}} and \textnormal{\hyperref[thetsixcorse]{(\ref*{thetsixcorse})}} exist in $L^{1}(\mathds{P})$.
\end{proposition}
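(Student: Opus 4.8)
The plan is to extract Proposition~\ref{thetsixcor} directly from Theorem~\ref{thetsix}, exploiting that the martingale $M(T-s)$ in \hyperref[dollafipim]{(\ref*{dollafipim})} has, by the representation \hyperref[erdollafipim]{(\ref*{erdollafipim})}, vanishing $(\mathcal{G}(T-s))$-conditional increments, while its compensator $F(T-s)$ in \hyperref[fispdaftr]{(\ref*{fispdaftr})} is an absolutely continuous additive functional whose density at time $T-u$ is exactly the cumulative relative Fisher information integrand. First I would record the backward martingale identity: for $0 \leqslant s' \leqslant s \leqslant T$,
\begin{equation} \label{plan:mart}
\mathds{E}_{\mathds{P}}\big[ \log \ell(T-s,X(T-s)) \, \big\vert \, \mathcal{G}(T-s') \big] - \log \ell(T-s',X(T-s')) = \mathds{E}_{\mathds{P}}\big[ F(T-s) - F(T-s') \, \big\vert \, \mathcal{G}(T-s') \big],
\end{equation}
which is just the martingale property of $M$ rewritten; specializing $s' = T-t_{0}$ (so $T-s' = t_{0}$) and $s' = 0$ (so $T-s' = T$) will give, after dividing by $s-(T-t_{0})$ respectively $T-t_{0}-s$, the difference quotients appearing in \hyperref[thetsixcorfe]{(\ref*{thetsixcorfe})} and \hyperref[thetsixcorse]{(\ref*{thetsixcorse})} as conditional expectations of averaged Fisher-information integrands.

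Next I would pass to the limit. Write $g(u) \vcentcolon = \tfrac12 |\nabla\ell(T-u,X(T-u))|^2 / \ell(T-u,X(T-u))^2$, so that $F(T-s) = \int_0^s g(u)\,\mathrm{d}u$. Both difference quotients then equal $\tfrac1h \mathds{E}_{\mathds{P}}\big[\int_{u_0}^{u_0+h} g(u)\,\mathrm{d}u \,\big\vert\, \mathcal{G}(\cdot)\big]$ for appropriate $u_0$ and $h \downarrow 0$, conditioned on $\mathcal{G}(t_0)$ in one case and on the decreasing family $\mathcal{G}(T-s)$ as $s \uparrow T-t_0$ in the other. The target limit $g(T-t_0)/$(i.e.\ the right-hand side \hyperref[thetsixcorthe]{(\ref*{thetsixcorthe})}) is reached provided: (a) $u \mapsto g(u)$ is right-continuous at $u = T-t_0$ in a suitable sense, which holds because under condition \hyperref[tsaosaojko]{\ref*{tsaosaojko}} the map $(t,x)\mapsto \nabla\log\ell(t,x) = \nabla\log p(t,x) + 2\nabla\Psi(x)$ is continuous on $(0,\infty)\times\mathds{R}^n$ and the paths of $X$ are continuous, so $g$ is continuous along the trajectory; and (b) the conditioning can be removed in the limit. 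For (b), the key point is the hypothesis that $t_0$ is a point where the generalized de Bruijn identity \hyperref[flfffl]{(\ref*{flfffl})} holds: taking expectations in \hyperref[plan:mart]{(\ref*{plan:mart})} shows $\tfrac1h\mathds{E}_{\mathds{P}}[\int_{u_0}^{u_0+h} g]\to \mathds{E}_{\mathds{P}}[g(T-t_0)] = \tfrac12 I(P(t_0)\,\vert\,\mathrm{Q}) < \infty$, so the averaged integrands converge in $L^1(\mathds{P})$ (by Scheffé/Vitali, using that they are nonnegative and their expectations converge to the expectation of the pointwise limit), and $L^1$-convergence is preserved under conditional expectation; for the decreasing filtration in \hyperref[thetsixcorfe]{(\ref*{thetsixcorfe})} one additionally invokes backward-martingale/reverse-filtration convergence together with the continuity of $g$ along the path. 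This simultaneously establishes existence of both limits in $L^1(\mathds{P})$ and their equality with \hyperref[thetsixcorthe]{(\ref*{thetsixcorthe})}.

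The main obstacle is part (b): turning the $L^1$-convergence of the \emph{unconditioned} averages $\tfrac1h\int g$ into $L^1$-convergence of their \emph{conditional} versions, and in particular handling the limit \hyperref[thetsixcorfe]{(\ref*{thetsixcorfe})} where one conditions on the shrinking $\sigma$-algebras $\mathcal{G}(T-s)$ as $s\uparrow T-t_0$ (equivalently $T-s\downarrow t_0$). Here $\mathcal{G}(T-s) = \sigma(X(T-u)\colon 0\leqslant u\leqslant s)$ \emph{increases} as $s$ increases, so as $s\uparrow T-t_0$ we are dealing with an increasing family and may appeal to the martingale convergence / Lévy upward theorem applied to the uniformly integrable family $\{\tfrac1h\int_{T-t_0-h}^{T-t_0} g\}$; combined with continuity of $g$ along the path and the $L^1$-bound from the de Bruijn identity this yields the claim. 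I would finish by noting that dividing the two (now established) limiting identities for relative entropy and for the Wasserstein distance, exactly as in the proof of Theorem~\ref{thetone} from Theorem~\ref{thetsix}, is not needed here since Proposition~\ref{thetsixcor} only asserts the relative-entropy part, so the proof ends once \hyperref[thetsixcorfe]{(\ref*{thetsixcorfe})}--\hyperref[thetsixcorthe]{(\ref*{thetsixcorthe})} have been identified with the common limit $g(T-t_0)$.
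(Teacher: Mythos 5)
Your proposal is correct and follows essentially the same route as the paper: rewrite the numerators via the backward martingale property of $M$ as conditional expectations of increments of the cumulative Fisher information process $F$, then combine path-continuity of the integrand, Scheff\'{e}'s lemma at the Lebesgue point furnished by the de Bruijn hypothesis, the $L^{1}$-contraction property of conditional expectation, and L\'{e}vy's upward theorem for the increasing family $\mathcal{G}(T-s)$ together with left-continuity of the backwards filtration. The paper packages exactly these steps into the measure-theoretic \hyperref[probamtr]{Proposition \ref*{probamtr}} of \hyperref[apsecamtr]{Appendix \ref*{apsecamtr}}, which you have in effect re-derived inline.
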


\begin{remark} The limiting assertions \hyperref[thetsixcorfe]{(\ref*{thetsixcorfe})} -- \hyperref[thetsixcorthe]{(\ref*{thetsixcorthe})} of \hyperref[thetsixcor]{Proposition \ref*{thetsixcor}} are the conditional trajectorial versions of the generalized de Bruijn identity \hyperref[flfffl]{(\ref*{flfffl})}.
\end{remark}

\begin{proof}[Proof of \texorpdfstring{\hyperref[thetsixcor]{Proposition \ref*{thetsixcor}}}{} from \texorpdfstring{\hyperref[thetsix]{Theorem \ref*{thetsix}}}{}:] Let $t_{0} \geqslant 0$ be such that the generalized de Bruijn identity \textnormal{\hyperref[flfffl]{(\ref*{flfffl})}} from \hyperref[thetsixcorao]{Corollary \ref*{thetsixcorao}} of \hyperref[thetsix]{Theorem \ref*{thetsix}} is valid, and select $T > t_{0}$. The martingale property of the process in \hyperref[dollafipim]{(\ref*{dollafipim})} allows us to write the numerator in \hyperref[thetsixcorfe]{(\ref*{thetsixcorfe})} as
\begin{equation}
\mathds{E}_{\mathds{P}}\Big[ F(t_{0})-F(T-s) \ \big\vert \ \mathcal{G}(T-s) \Big], \qquad 0 \leqslant s \leqslant T-t_{0},
\end{equation}
in the notation of \hyperref[fispdaftr]{(\ref*{fispdaftr})}. Similarly, the numerator in \hyperref[thetsixcorse]{(\ref*{thetsixcorse})} equals $\mathds{E}_{\mathds{P}}\big[ F(T-s) - F(t_{0}) \ \vert \ \mathcal{G}(t_{0})\big]$, $T-t_{0} \leqslant s \leqslant T$. By analogy with the derivation of \hyperref[flfffl]{(\ref*{flfffl})} from \hyperref[stewrtpefitre]{(\ref*{stewrtpefitre})}, where we calculated real-valued expectations, we rely on the Lebesgue differentiation theorem to obtain the corresponding results \hyperref[thetsixcorfe]{(\ref*{thetsixcorfe})} -- \hyperref[thetsixcorthe]{(\ref*{thetsixcorthe})} for conditional expectations. Using the left-continuity of the backwards filtration $(\mathcal{G}(T-s))_{0 \leqslant s \leqslant T}$, we can invoke the measure-theoretic result in \hyperref[probamtr]{Proposition \ref*{probamtr}} of \hyperref[apsecamtr]{Appendix \ref*{apsecamtr}}, which establishes the claims \hyperref[thetsixcorfe]{(\ref*{thetsixcorfe})} -- \hyperref[thetsixcorthe]{(\ref*{thetsixcorthe})} pertaining to conditional expectations. 
\end{proof}

\begin{proposition}[\textsf{Trajectorial rate of relative entropy dissipation under perturbations}] \label{thetthretvcor} Under the \textnormal{\hyperref[sosaojkoia]{Assumptions \ref*{sosaojkoia}}}, let $t_{0} \in \mathds{R}_{+} \setminus N$. Then the relative entropy process \textnormal{\hyperref[stlrpd]{(\ref*{stlrpd})}} and its perturbed version \textnormal{\hyperref[prerepitufdllpitufd]{(\ref*{prerepitufdllpitufd})}} satisfy, with $T > t_{0}$, the following trajectorial relations:
\begingroup
\addtolength{\jot}{0.7em}
\begin{equation} \label{thetsixcorfes}
\begin{aligned}
& \lim_{s \uparrow T-t_{0}} \, \frac{\mathds{E}_{\mathds{P}^{\beta}}\Big[ \log \ell^{\beta}\big(t_{0},X(t_{0})\big) \ \big\vert \ \mathcal{G}(T-s) \Big] - \log \ell^{\beta} \big( T-s,X(T-s)\big)}{T-t_{0}-s}  \\
& \qquad = \frac{1}{2}\frac{\big\vert \nabla \ell\big(t_{0},X(t_{0})\big) \big\vert^{2}}{\ell\big(t_{0},X(t_{0})\big)^{2}} - \operatorname{div} \beta\big(X(t_{0})\big)  
+ \Big\langle \beta\big(X(t_{0})\big) \, , \, 2 \, \nabla \Psi\big(X(t_{0})\big) \Big\rangle_{\mathds{R}^{n}}, 
\end{aligned}
\end{equation}
\endgroup
as well as
\begingroup
\addtolength{\jot}{0.7em}
\begin{equation} \label{thetsixcorfessl} 
\begin{aligned}
& \lim_{s \uparrow T-t_{0}} \, \frac{\mathds{E}_{\mathds{P}}\Big[ \log \ell^{\beta}\big(t_{0},X(t_{0})\big) \ \big\vert \ \mathcal{G}(T-s) \Big] - \log \ell^{\beta} \big( T-s,X(T-s)\big)}{T-t_{0}-s} \\
& \qquad = \frac{1}{2}\frac{\big\vert \nabla \ell\big(t_{0},X(t_{0})\big) \big\vert^{2}}{\ell\big(t_{0},X(t_{0})\big)^{2}} - \bigg( \operatorname{div} \beta\big(X(t_{0})\big)  
+ \Big\langle \beta\big(X(t_{0})\big) \, , \, \nabla  \log p \big(t_{0},X(t_{0})\big) \Big\rangle_{\mathds{R}^{n}} \bigg), 
\end{aligned}
\end{equation}
\endgroup
and
\begingroup
\addtolength{\jot}{0.7em}
\begin{equation} \label{titmlwhtcfs}
\begin{aligned}
& \lim_{s \uparrow T-t_{0}} \, \frac{\log \ell^{\beta}\big(T-s,X(T-s)\big) - \log \ell\big(T-s,X(T-s)\big)}{T-t_{0}-s}  \\
& \qquad = \operatorname{div} \beta\big(X(t_{0})\big) + \Big\langle \beta\big(X(t_{0})\big) \, , \, \nabla  \log p\big(t_{0},X(t_{0})\big) \Big\rangle_{\mathds{R}^{n}}, 
\end{aligned}
\end{equation}
\endgroup
where the limits in \textnormal{\hyperref[thetsixcorfes]{(\ref*{thetsixcorfes})}} -- \textnormal{\hyperref[titmlwhtcfs]{(\ref*{titmlwhtcfs})}} exist in both $L^{1}(\mathds{P})$ and $L^{1}(\mathds{P}^{\beta})$.
\end{proposition}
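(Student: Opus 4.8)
The three identities will all be obtained in the spirit in which Proposition~\ref{thetsixcor} was derived from Theorem~\ref{thetsix}: combine the backward-martingale representations of Theorems~\ref{thetsix} and~\ref{thetthretv} with the conditional Lebesgue-differentiation result of Proposition~\ref{probamtr} and with the left-continuity of the backwards filtration $(\mathcal{G}(T-s))$. Time is kept reversed, and everything is differentiated at the parameter value $s = T - t_{0}$, which corresponds to the space-time point $(t_{0}, X(t_{0}))$; writing $c = T - t_{0}$, the limit $s \uparrow c$ amounts to $\mathcal{G}(T-s) \uparrow \mathcal{G}(t_{0})$.

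\textbf{Identity~\ref{thetsixcorfes}.} First I would use the backward $\mathds{P}^{\beta}$-martingale property of $M^{\beta}$ from Theorem~\ref{thetthretv} to rewrite the numerator in~\ref{thetsixcorfes} as $\mathds{E}_{\mathds{P}^{\beta}}\big[\, F^{\beta}(t_{0}) - F^{\beta}(T-s) \,\big\vert\, \mathcal{G}(T-s) \,\big] = \mathds{E}_{\mathds{P}^{\beta}}\big[\, \int_{s}^{c} \Phi^{\beta}(u) \,\textnormal{d}u \,\big\vert\, \mathcal{G}(T-s) \,\big]$, where $\Phi^{\beta}$ denotes the integrand appearing in~\ref{fispdaftrps}. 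Dividing by $c - s$ and letting $s \uparrow c$, Proposition~\ref{probamtr} then yields, in $L^{1}(\mathds{P}^{\beta})$, the value $\Phi^{\beta}(c)$, that is $\tfrac{1}{2}\, \vert \nabla \ell^{\beta}(t_{0},X(t_{0})) \vert^{2} / \ell^{\beta}(t_{0},X(t_{0}))^{2} + \big( \langle \beta , 2 \nabla \Psi \rangle_{\mathds{R}^{n}} - \operatorname{div} \beta \big)(X(t_{0}))$. The initial condition~\ref{pic} forces $\ell^{\beta}(t_{0}, \cdot) = \ell(t_{0}, \cdot)$, hence $\nabla \ell^{\beta}(t_{0}, \cdot) = \nabla \ell(t_{0}, \cdot)$, and this last expression coincides with the right-hand side of~\ref{thetsixcorfes}. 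The hypothesis $t_{0} \in \mathds{R}_{+} \setminus N$ enters precisely here, to make $u = c$ an admissible differentiation point for the \emph{perturbed} relative-Fisher-information integrand: via the estimates~\ref{ilpdepvhfv} and~\ref{ilpdepvhfvsv} of Lemma~\ref{hctclwittmeitpocasot} this reduces to the corresponding property of the \emph{unperturbed} integrand, which is the content of~\ref{tciwsolafophvs} in the proof of Corollary~\ref{thetsixcoraopv}.

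\textbf{Identities~\ref{titmlwhtcfs} and~\ref{thetsixcorfessl}.} Identity~\ref{titmlwhtcfs} I would treat as a deterministic computation transported to the trajectory. With $t = T - s$, the claim is that $(t - t_{0})^{-1} \log \big( p^{\beta}(t,X(t)) / p(t,X(t)) \big)$ converges, as $t \downarrow t_{0}$, to $\operatorname{div} \beta(X(t_{0})) + \langle \beta(X(t_{0})) , \nabla \log p(t_{0},X(t_{0})) \rangle_{\mathds{R}^{n}}$. Evaluating the Fokker-Planck equations~\ref{fpeqnwfp} and~\ref{pfpeq} at $t = t_{0}$ and subtracting, while using $p^{\beta}(t_{0}, \cdot) = p(t_{0}, \cdot)$ from~\ref{pic}, gives $\partial_{t}\big( p^{\beta} - p \big)(t_{0},x) = \operatorname{div}\big( \beta \, p(t_{0}, \cdot) \big)(x)$, whence $\partial_{t}\big( \log p^{\beta} - \log p \big)(t_{0},x) = \operatorname{div} \beta(x) + \langle \beta(x) , \nabla \log p(t_{0},x) \rangle_{\mathds{R}^{n}}$. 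Since $\log p^{\beta}(t_{0},X(t)) - \log p(t_{0},X(t)) = 0$, the fundamental theorem of calculus in the time variable writes $\log p^{\beta}(t,X(t)) - \log p(t,X(t))$ as $\int_{t_{0}}^{t} \partial_{v}\big( \log p^{\beta} - \log p \big)(v,X(t)) \,\textnormal{d}v$; dividing by $t - t_{0}$, using path continuity $X(t) \to X(t_{0})$ together with the regularity of $p$ and $p^{\beta}$ furnished by conditions~\ref{tsaosaojko} and~\ref{naltsaosaojkos} of Assumptions~\ref{sosaojkoia}, produces the asserted pointwise limit, while the bound~\ref{ilpdepvhfv} shows this ratio is dominated by a \emph{deterministic} constant near $t_{0}$, so the convergence holds in both $L^{1}(\mathds{P})$ and $L^{1}(\mathds{P}^{\beta})$. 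Identity~\ref{thetsixcorfessl} then follows by subtraction: using $\log \ell^{\beta}(t_{0},X(t_{0})) = \log \ell(t_{0},X(t_{0}))$, the numerator in~\ref{thetsixcorfessl} splits as $\big( \mathds{E}_{\mathds{P}}[ \log \ell(t_{0},X(t_{0})) \,\vert\, \mathcal{G}(T-s) ] - \log \ell(T-s,X(T-s)) \big) - \big( \log \ell^{\beta}(T-s,X(T-s)) - \log \ell(T-s,X(T-s)) \big)$; divided by $c - s$, the first group converges, by the $s \uparrow c$ assertion of Proposition~\ref{thetsixcor} — which needs only the right-sided identity~\ref{rgtsflffflnv} available for $t_{0} \in \mathds{R}_{+} \setminus N$ — to $\tfrac{1}{2}\, \vert \nabla \ell(t_{0},X(t_{0})) \vert^{2} / \ell(t_{0},X(t_{0}))^{2}$ in $L^{1}(\mathds{P})$, and the second group by Identity~\ref{titmlwhtcfs}; adding the two limits gives the right-hand side of~\ref{thetsixcorfessl}. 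That the convergence holds in $L^{1}(\mathds{P}^{\beta})$ as well follows from the equivalence of $\mathds{P}$ and $\mathds{P}^{\beta}$ on $\sigma(X(u) \colon t_{0} \leqslant u \leqslant T)$, with Radon-Nikod\'{y}m density bounded in every $L^{p}$ since $\beta$ is bounded.

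\textbf{Main obstacle.} I expect the critical step to be the use of Proposition~\ref{probamtr} at the single parameter value $u = c$ in Identity~\ref{thetsixcorfes}: one must justify conditional Lebesgue differentiation of the \emph{perturbed} relative-Fisher-information integrand, whose behaviour near $t_{0}$ is not controlled a priori — indeed $I(P(t_{0}) \,\vert\, \mathrm{Q})$ can be infinite when $t_{0} = 0$ (cf.\ Remark~\ref{rethpoitwooslda}). Reducing this to the unperturbed integrand, where $t_{0} \in \mathds{R}_{+} \setminus N$ does the job, is exactly what the estimates~\ref{ilpdepvhfv} and~\ref{ilpdepvhfvsv} of Lemma~\ref{hctclwittmeitpocasot} are for, and carrying both base measures $\mathds{P}$ and $\mathds{P}^{\beta}$ along simultaneously adds a further layer of bookkeeping. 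By contrast, Identity~\ref{titmlwhtcfs} is soft once the parabolic regularity implicit in conditions~\ref{tsaosaojko} and~\ref{naltsaosaojkos} is granted.
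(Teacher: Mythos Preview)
Your treatment of \eqref{thetsixcorfes} coincides with the paper's: rewrite the numerator via the $\mathds{P}^{\beta}$-martingale $M^{\beta}$ of Theorem~\ref{thetthretv} as a conditional expectation of $F^{\beta}(t_{0})-F^{\beta}(T-s)$, then invoke Proposition~\ref{probamtr}. Your observation that only the right-sided de Bruijn identity (via $t_{0}\in\mathds{R}_{+}\setminus N$ and \eqref{tciwsolafophvs}) is needed here is also exactly how the paper proceeds.

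For \eqref{titmlwhtcfs} and \eqref{thetsixcorfessl}, however, your route diverges from the paper's. The paper does \emph{not} argue via the fundamental theorem of calculus for $v\mapsto\log Y^{\beta}(v,x)$ with $x=X(t)$ frozen; instead it uses the backward stochastic differential \eqref{logybetapro} of $\log Y^{\beta}(T-s,X(T-s))$ from Lemma~\ref{wsublnsibvn}, splits it into four pieces (the drift involving $\operatorname{div}\beta+\langle\beta,\nabla\log p\rangle$, the cross term $\langle\nabla\log Y^{\beta},\beta\rangle$, the quadratic-variation term $\tfrac12|\nabla\log Y^{\beta}|^{2}$, and the stochastic integral with respect to $\overline{W}^{\mathds{P}}$), and shows the last three vanish in $L^{1}(\mathds{P})$ (respectively $L^{2}(\mathds{P})$ via the It\^{o} isometry) using the estimate \eqref{ilpdepvhfvsvwp} of Lemma~\ref{hctclwittmeitpocasot}. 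Your PDE-based argument is cleaner when it applies, but it tacitly needs joint continuity of $(v,x)\mapsto\partial_{v}\log Y^{\beta}(v,x)$ in order to pass the average $\tfrac{1}{t-t_{0}}\int_{t_{0}}^{t}\partial_{v}\log Y^{\beta}(v,X(t))\,\textnormal{d}v$ to its value at $(t_{0},X(t_{0}))$; Assumptions~\ref{sosaojkoia} only give differentiability in $t$ for each fixed $x$, and joint continuity of $\nabla\log p$. The SDE approach trades this missing time-regularity for the space-regularity already present. For \eqref{thetsixcorfessl} the paper again works directly from the dynamics \eqref{sdefttrplrpail}, rewritten via \eqref{dbowptmtowpbtmtfp} in terms of $\overline{W}^{\mathds{P}}$; your subtraction argument (Proposition~\ref{thetsixcor} minus identity \eqref{titmlwhtcfs}) is a legitimate and arguably more transparent alternative, once \eqref{titmlwhtcfs} is secured.

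One further point: transferring $L^{1}$-convergence between $\mathds{P}$ and $\mathds{P}^{\beta}$ requires the Radon--Nikod\'{y}m derivative to be \emph{bounded}, not merely in every $L^{p}$. Boundedness of $\beta$ alone yields only exponential moments of the Girsanov density; the paper's Lemma~\ref{hctclwittmeitpocasotpv} obtains an $L^{\infty}$ bound by exploiting that $\beta=\nabla B$ is the gradient of a compactly supported potential, which lets the stochastic integral in \eqref{tgtae} be rewritten via It\^{o}'s formula (see \eqref{urotsiarl}) as a pathwise bounded quantity.
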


\begin{remark} It is perhaps noteworthy that the three limiting expressions in \hyperref[thetsixcorfes]{(\ref*{thetsixcorfes})}, \hyperref[thetsixcorfessl]{(\ref*{thetsixcorfessl})} and \hyperref[titmlwhtcfs]{(\ref*{titmlwhtcfs})} are quite different from each other. The first limiting assertion \hyperref[thetsixcorfes]{(\ref*{thetsixcorfes})} of \hyperref[thetthretvcor]{Proposition \ref*{thetthretvcor}} is the conditional trajectorial version of the perturbed de Bruijn identity \hyperref[flffflpv]{(\ref*{flffflpv})}. We also note that in fact the third limiting assertion \hyperref[titmlwhtcfs]{(\ref*{titmlwhtcfs})} is valid for all $t_{0} > 0$.
\end{remark}

\begin{proof}[Proof of the assertion \texorpdfstring{\hyperref[thetsixcorfes]{\textnormal{(\ref*{thetsixcorfes})}}}{} in \texorpdfstring{\hyperref[thetthretvcor]{Proposition \ref*{thetthretvcor}}}{}, from \texorpdfstring{\hyperref[thetthretv]{Theorem \ref*{thetthretv}}}{}:] Let $t_{0} \in \mathds{R}_{+} \setminus N$, i.e., so that the right-sided limiting assertion \hyperref[rgtsflffflnv]{(\ref*{rgtsflffflnv})} is valid, and select $T > t_{0}$. In \hyperref[tciwsolafophvs]{(\ref*{tciwsolafophvs})} from \hyperref[thetsixcoraopv]{Corollary \ref*{thetsixcoraopv}} of \hyperref[thetthretv]{Theorem \ref*{thetthretv}} we have seen that the limits in \hyperref[rgtsflffflnv]{(\ref*{rgtsflffflnv})} and \hyperref[flffflpv]{(\ref*{flffflpv})} have the same exceptional sets, hence also the limiting identity \hyperref[flffflpv]{(\ref*{flffflpv})} holds. Now, for such $t_{0}$, we show the limiting assertion \hyperref[thetsixcorfes]{(\ref*{thetsixcorfes})} in the same way as the assertion \hyperref[thetsixcorfe]{(\ref*{thetsixcorfe})} in the proof of \hyperref[thetsixcor]{Proposition \ref*{thetsixcor}} above. Indeed, this time we invoke the $\mathds{P}^{\beta}$-martingale property of the process in \hyperref[dollafipimps]{(\ref*{dollafipimps})}, and write the numerator in the first line of \hyperref[thetsixcorfes]{(\ref*{thetsixcorfes})} as $\mathds{E}_{\mathds{P}^{\beta}}\big[ F^{\beta}(t_{0})-F^{\beta}(T-s) \ \big\vert \ \mathcal{G}(T-s) \big]$, $0 \leqslant s \leqslant T-t_{0}$, in the notation of \hyperref[fispdaftrps]{(\ref*{fispdaftrps})}. 

\smallskip

Applying \hyperref[probamtr]{Proposition \ref*{probamtr}} of \hyperref[apsecamtr]{Appendix \ref*{apsecamtr}} in this situation proves the limiting identity \hyperref[thetsixcorfes]{(\ref*{thetsixcorfes})} in $L^{1}(\mathds{P}^{\beta})$. As we shall see in \hyperref[hctclwittmeitpocasotpv]{Lemma \ref*{hctclwittmeitpocasotpv}} of \hyperref[subsomusefullem]{Subsection \ref*{subsomusefullem}} below, the probability measures $\mathds{P}$ and $\mathds{P}^{\beta}$ are equivalent, and the mutual Radon-Nikod\'{y}m derivatives $\frac{\textnormal{d}\mathds{P}^{\beta}}{\textnormal{d}\mathds{P}}$ and $\frac{\textnormal{d}\mathds{P}}{\textnormal{d}\mathds{P}^{\beta}}$ are bounded on the $\sigma$-algebra $\mathcal{F}(T) = \mathcal{G}(0)$ (recall, in this vein, the claims of \hyperref[ilpdepvhfv]{(\ref*{ilpdepvhfv})}). Hence, convergence in $L^{1}(\mathds{P})$ is equivalent to convergence in $L^{1}(\mathds{P}^{\beta})$. This establishes the $L^{1}(\mathds{P})$-convergence of \hyperref[thetsixcorfes]{(\ref*{thetsixcorfes})}, which completes the proof of the limiting assertion \hyperref[thetsixcorfes]{(\ref*{thetsixcorfes})}.

\smallskip

The proofs of the limiting assertions \hyperref[thetsixcorfessl]{(\ref*{thetsixcorfessl})} and \hyperref[titmlwhtcfs]{(\ref*{titmlwhtcfs})} are postponed to \hyperref[nspofthetthretvcor]{Subsection \ref*{nspofthetthretvcor}}.
\end{proof}


\subsection{Ramifications} \label{ramifications}

\hyperref[thetthre]{Theorem \ref*{thetthre}} and, in particular, its equation \hyperref[nlwpthtt3]{(\ref*{nlwpthtt3})} above, show --- at least on a formal level --- that the functional
\begin{equation}
\mathscr{P}_{2}(\mathds{R}^{n}) \ni P \longmapsto H( P \, \vert \, \mathrm{Q} ) - H ( P(0) \, \vert \, \mathrm{Q} )
\end{equation}
can be approximated linearly in the neighborhood of $P(0)$ by the functional
\begin{equation} \label{cblaitnopbtf}
\mathscr{P}_{2}(\mathds{R}^{n}) \ni P \longmapsto \langle a , c \rangle_{L^{2}(\mathds{P})}, \qquad \textnormal{ where } \quad c = -\tfrac{a}{2}-b
\end{equation}
as in \hyperref[nlwpthtt3]{(\ref*{nlwpthtt3})} with $t_{0} = 0$ and $a = \nabla \log \ell(0,X(0))$, $b = \beta(X(0))$. As it turns out, formula \hyperref[cblaitnopbtf]{(\ref*{cblaitnopbtf})} is closely related to the sharpened form of the \textit{HWI inequality} due to Otto and Villani \cite{OV00} (see also Cordero-Erausquin \cite{CE02} and \cite[p.\ 650]{Vil09}); we explain presently how.

\smallskip

Consider the starting time $t_{0} = 0$ and the curve $(P^{\beta}(t))_{t \geqslant 0}$ as in \hyperref[thetthre]{Theorems \ref*{thetthre}} and \hyperref[thetthretv]{\ref*{thetthretv}}, for a fixed perturbation $\beta$ as above, and suppose that this $t_{0}$ is not an exceptional point in the preceding limiting assertions. Let us fix $P_{1} \in \mathscr{P}_{2}(\mathds{R}^{n})$ and study the ``tangent'' $(P_{t})_{0 \leqslant t \leqslant 1}$ to the curve $(P^{\beta}(t))_{t \geqslant 0}$ at the point $P^{\beta}(0) = P(0) = P_{0}$ in the quadratic Wasserstein space $\mathscr{P}_{2}(\mathds{R}^{n})$, and analyze the behavior of the relative entropy functional \hyperref[dotref]{(\ref*{dotref})} along the curve $(P_{t})_{0 \leqslant t \leqslant 1}$. Here $(P_{t})_{0 \leqslant t \leqslant 1}$ is understood to be a ``straight line'' in $\mathscr{P}_{2}(\mathds{R}^{n})$; i.e., using the terminology of McCann \cite{McC97}, as the \textit{``displacement interpolation''} or \textit{``constant speed geodesic''} between the elements $P_{0}$ and $P_{1}$ in $\mathscr{P}_{2}(\mathds{R}^{n})$.

\smallskip

Once we have identified this tangent $(P_{t})_{0 \leqslant t \leqslant 1}$, it is geometrically obvious --- at least on an intuitive level --- that the slope of the relative entropy functional \hyperref[dotref]{(\ref*{dotref})} along the straight line $(P_{t})_{0 \leqslant t \leqslant 1}$ should be equal to the slope along $(P^{\beta}(t))_{t \geqslant 0}$ at the touching point $P^{\beta}(0) = P_{0}$. This slope is given by \hyperref[cblaitnopbtf]{(\ref*{cblaitnopbtf})}, and we shall verify in the following \hyperref[hlotl]{Lemma \ref*{hlotl}} and in \hyperref[hwiai]{Proposition \ref*{hwiai}} that --- under suitable regularity assumptions --- it coincides with the slope along $(P_{t})_{0 \leqslant t \leqslant 1}$ as identified by Otto and Villani in \cite{OV00} and Cordero-Erausquin in \cite{CE02}.

\smallskip

To work out the connection between \hyperref[cblaitnopbtf]{(\ref*{cblaitnopbtf})} and \cite{OV00}, \cite{CE02} we shall turn things upside down; i.e., we \textit{first} define the tangent $(P_{t})_{0 \leqslant t \leqslant 1}$, and \textit{then} find the corresponding perturbation $\beta$ so that the curve $(P^{\beta}(t))_{t \geqslant 0}$ indeed has $(P_{t})_{0 \leqslant t \leqslant 1}$ as tangent at the point $P^{\beta}(0) = P_{0}$. In this manner, we shall treat $\beta$ more as an element of ``control'', than as a perturbation.

\smallskip

Fix an element $P \in \mathscr{P}_{2}(\mathds{R}^{n})$, and let $\gamma \colon \mathds{R}^{n} \rightarrow \mathds{R}^{n}$ be such that $T(x) \vcentcolon = x + \gamma(x)$ transports $P_{0} = P(0)$ to $P_{1} = P$ optimally with respect to the quadratic Wasserstein distance, i.e., $T_{\#}(P_{0}) = P_{1}$ and $\| \gamma \|_{L^{2}(P_{0})} = W_{2}(P_{0},P_{1})$; see also \hyperref[hlotlse]{(\ref*{hlotlse})} and \hyperref[otmitwsnp]{(\ref*{otmitwsnp})} below. Here and throughout, $T_{\#}(P_{0})$ denotes the pushforward measure of $P_{0}$ by the map $T$ and is given by $(T_{\#}(P_{0}))(B) = P_{0}(T^{-1}(B))$ for every Borel set $B \subseteq \mathds{R}^{n}$. Then \hyperref[nlwpthtt3]{(\ref*{nlwpthtt3})}, \hyperref[cblaitnopbtf]{(\ref*{cblaitnopbtf})} suggest that the displacement interpolation $(P_{t})_{0 \leqslant t \leqslant 1}$ between the two probability measures $P_{0} = P(0)$ and $P_{1} = P$, to be defined in \hyperref[hlotlse]{(\ref*{hlotlse})} below, is tangent to the curve $(P^{\beta}(t))_{t \geqslant 0}$ as in \hyperref[thetthre]{Theorems \ref*{thetthre}} and \hyperref[thetthretv]{\ref*{thetthretv}}, if $\gamma$ and $\beta$ are related via  
\begin{equation}
\gamma(x) = - \tfrac{1}{2} \, \nabla \log \ell(0,x) - \beta(x), \qquad x \in \mathds{R}^{n};
\end{equation}
whereas the random variable $c$ of \hyperref[cblaitnopbtf]{(\ref*{cblaitnopbtf})} becomes $c = \gamma(X(0))$.

\smallskip

We formalize these intuitive geometric insights in the subsequent \hyperref[hlotl]{Lemma \ref*{hlotl}}, which provides the analogue of \hyperref[nlwpthtt3]{(\ref*{nlwpthtt3})} for the displacement interpolation flow $(P_{t})_{0 \leqslant t \leqslant 1}$ of \hyperref[hlotlse]{(\ref*{hlotlse})}. To this end, we impose temporarily the following strong regularity conditions. As it will turn out in the proof of \hyperref[hwiai]{Proposition \ref*{hwiai}}, these will not restrict, eventually, the generality of the argument.

\begin{assumptions}[\textsf{Regularity assumptions of \texorpdfstring{\hyperref[hlotl]{Lemma \ref*{hlotl}}}{}}] \label{hwisosaojkoia} In addition to the conditions \hyperref[faosaojko]{\ref*{faosaojko}} -- \hyperref[nalwstasas]{\ref*{nalwstasas}} of \hyperref[sosaojkoia]{Assumptions \ref*{sosaojkoia}}, we also impose that:
\begin{enumerate}[label=(\roman*)] 
\setcounter{enumi}{6}
\item \label{hwitsaosaojko} $P_{0}$ and $P_{1}$ are probability measures in $\mathscr{P}_{2}(\mathds{R}^{n})$ with smooth densities, which are compactly supported and strictly positive on the interior of their respective supports. Hence there is a map $\gamma \colon \mathds{R}^{n} \rightarrow \mathds{R}^{n}$ of the form $\gamma(x) = \nabla(G(x) - \vert x \vert^{2}/2)$ for some convex function $G \colon \mathds{R}^{n} \rightarrow \mathds{R}$, uniquely defined on and supported by the support of $P_{0}$, and smooth in the interior of this set, such that $\gamma$ induces the optimal quadratic Wasserstein transport from $P_{0}$ to $P_{1}$ via
\begin{equation} \label{hlotlse}
T_{t}^{\gamma}(x) \vcentcolon = x + t \cdot \gamma(x) = (1-t) \cdot x + t \cdot \nabla G(x) \qquad \textnormal{and} \qquad P_{t} \vcentcolon = (T_{t}^{\gamma})_{\#}(P_{0}) = P_{0} \circ (T_{t}^{\gamma})^{-1}
\end{equation}
for $0 \leqslant t \leqslant 1$, and $T_{1}^{\gamma} = \nabla G $; to wit, the curve $(P_{t})_{0 \leqslant t \leqslant 1}$ is the displacement interpolation (constant speed geodesic) between $P_{0}$ and $P_{1}$, and we have along it the linear growth of the quadratic Wasserstein distance
\begin{equation} \label{otmitwsnp}
W_{2}(P_{0},P_{t}) = t \, \sqrt{\int_{\mathds{R}^{n}} \vert x - \nabla G(x) \vert^{2} \, \textnormal{d} P_{0}(x)} = t \, \| \gamma \|_{L^{2}(P_{0})}, \qquad 0 \leqslant t \leqslant 1.
\end{equation}
The ($P_{0}$-almost everywhere) unique gradient $T \vcentcolon = \nabla G$ of a convex function pushing $P_{0}$ forward to $P_{1}$, i.e., $T_{\#}(P_{0}) = P_{1}$, and having the optimality property \hyperref[otmitwsnp]{(\ref*{otmitwsnp})} with respect to the quadratic Wasserstein distance, is called the \textit{Brenier map}; see \cite[Theorem 2.12]{Vil03}.
\end{enumerate}
\end{assumptions}

\begin{remark} \label{saeog} 
For the existence and uniqueness of the optimal transport map $\gamma \colon \mathds{R}^{n} \rightarrow \mathds{R}^{n}$ we refer to \cite[Theorem 2.12]{Vil03}, and for its smoothness to \cite[Theorem 4.14]{Vil03} as well as \cite[Remarks 4.15]{Vil03}. These results are known collectively under the rubric of \textit{Brenier's theorem} \cite{Bre91}.
\end{remark}

\begin{remark} We remark at this point, that we have chosen the subscript notation for $P_{t}$ in order to avoid confusion with the probability measure $P(t)$ from our \hyperref[snaas]{Section \ref*{snaas}} here. While $P_{0} = P(0)$, the flow $(P_{t})_{0 \leqslant t \leqslant 1}$ from $P_{0}$ to $P_{1}$ will have otherwise very little to do with the flow $(P(t))_{t \geqslant 0}$ from $P(0)$ to $\mathrm{Q}$ appearing in \hyperref[thetone]{Theorems \ref*{thetone}} and \hyperref[thetthre]{\ref*{thetthre}} (except for the tangential relation at $P_{0} = P(0)$). Similarly, the likelihood ratio function 
\begin{equation} \label{nlrfitramfcc}
\ell_{t}(x) = \frac{p_{t}(x)}{q(x)}, \qquad (t,x) \in [0,1] \times \mathds{R}^{n},
\end{equation}
is different from $\ell(t, \, \cdot \,)$, as now $p_{t}(\, \cdot \,)$ is the density function of the probability measure $P_{t}$. 
\end{remark}

Let us now return to our general theme, where we consider the potential $\Psi$ and the (possibly only $\sigma$-finite) measure $\mathrm{Q}$ on the Borel sets of $\mathds{R}^{n}$ with density $q(x) = \mathrm{e}^{-2 \Psi(x)}$ for $x \in \mathds{R}^{n}$. 

\begin{lemma} \label{hlotl} Under the \textnormal{\hyperref[hwisosaojkoia]{Assumptions \ref*{hwisosaojkoia}}}, let $X_{0}$ be a random variable with probability distribution $P_{0} = P(0)$, defined on some probability space $(S,\mathcal{S},\nu)$. Then we have
\begin{equation} \label{hlotlte}
\lim_{t \downarrow 0} \frac{H(P_{t} \, \vert \, \mathrm{Q}) - H(P_{0} \, \vert \, \mathrm{Q})}{t} = \big\langle \nabla \log \ell_{0}(X_{0}) \, , \, \gamma(X_{0}) \big\rangle_{L^{2}(\nu)}.
\end{equation}
\end{lemma}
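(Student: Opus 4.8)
The plan is to decompose the relative entropy into an internal-energy (Boltzmann entropy) part and a potential-energy part, to push both back to $P_{0}$ along the displacement map $T_{t}^{\gamma}$ of \hyperref[hlotlse]{(\ref*{hlotlse})}, to differentiate at $t = 0$, and to identify the result by an integration by parts. Since $q = \mathrm{e}^{-2\Psi}$, we have
\begin{equation*}
H(P_{t} \, \vert \, \mathrm{Q}) = \int_{\mathds{R}^{n}} p_{t} \log p_{t} \, \textnormal{d}x + 2 \int_{\mathds{R}^{n}} \Psi \, p_{t} \, \textnormal{d}x,
\end{equation*}
and under \hyperref[hwitsaosaojko]{\ref*{hwitsaosaojko}} both integrals are finite for every $t \in [0,1)$, because $P_{t}$ has a bounded density supported by the compact set $T_{t}^{\gamma}(\operatorname{supp} p_{0})$ (so $t \mapsto t\log t$ and $\Psi$ are bounded on the relevant ranges), and in particular the difference quotient in \hyperref[hlotlte]{(\ref*{hlotlte})} is well defined.

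First I would treat the potential-energy term. By the change-of-variables formula for the pushforward $P_{t} = (T_{t}^{\gamma})_{\#}(P_{0})$,
\begin{equation*}
\int_{\mathds{R}^{n}} \Psi \, p_{t} \, \textnormal{d}x = \int_{\mathds{R}^{n}} \Psi\big(x + t \, \gamma(x)\big) \, p_{0}(x) \, \textnormal{d}x,
\end{equation*}
and since $\Psi$ is smooth, $\gamma$ is bounded on $\operatorname{supp} p_{0}$, and $p_{0}$ is integrable with compact support, dominated convergence permits differentiation under the integral, giving $t$-derivative at $0$ equal to $\int_{\mathds{R}^{n}} \langle \nabla \Psi(x), \gamma(x) \rangle_{\mathds{R}^{n}} \, p_{0}(x) \, \textnormal{d}x$.

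For the entropy term I would use the Monge--Amp\`ere relation. For $t \in [0,1)$ the map $T_{t}^{\gamma}(x) = (1-t) x + t \, \nabla G(x)$ is, on $\operatorname{supp} p_{0}$, the gradient of the strictly convex function $x \mapsto (1-t) \frac{\vert x \vert^{2}}{2} + t \, G(x)$, hence injective, with positive-definite Jacobian $DT_{t}^{\gamma}(x) = (1-t) I_{n} + t \, D^{2}G(x)$ bounded away from degeneracy for $t$ near $0$; consequently $p_{t}\big(T_{t}^{\gamma}(x)\big) \, \det DT_{t}^{\gamma}(x) = p_{0}(x)$. Changing variables as before and writing $D\gamma = D^{2}G - I_{n}$,
\begin{equation*}
\int_{\mathds{R}^{n}} p_{t} \log p_{t} \, \textnormal{d}x = \int_{\mathds{R}^{n}} p_{0} \log p_{0} \, \textnormal{d}x - \int_{\mathds{R}^{n}} p_{0}(x) \, \log \det\!\big(I_{n} + t \, D\gamma(x)\big) \, \textnormal{d}x.
\end{equation*}
Since $\frac{\textnormal{d}}{\textnormal{d}t}\big\vert_{t=0} \log \det(I_{n} + t \, D\gamma(x)) = \operatorname{tr} D\gamma(x) = \operatorname{div} \gamma(x)$ and all integrands are smooth in $(t,x)$ and supported in a fixed compact set, differentiation under the integral yields $- \int_{\mathds{R}^{n}} \operatorname{div} \gamma(x) \, p_{0}(x) \, \textnormal{d}x$ for the $t$-derivative of the entropy term at $0$. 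An integration by parts --- legitimate since $p_{0}$ is smooth and compactly supported, and $\gamma$ is smooth where $p_{0} \neq 0$ with the boundary regularity of $G$ recorded in \hyperref[saeog]{Remark \ref*{saeog}} --- rewrites this as $\int_{\mathds{R}^{n}} \langle \nabla \log p_{0}(x), \gamma(x) \rangle_{\mathds{R}^{n}} \, p_{0}(x) \, \textnormal{d}x$.

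Adding the two contributions and using $\nabla \log \ell_{0} = \nabla \log p_{0} - \nabla \log q = \nabla \log p_{0} + 2 \, \nabla \Psi$ (recall \hyperref[nlrfitramfcc]{(\ref*{nlrfitramfcc})}) gives
\begin{equation*}
\lim_{t \downarrow 0} \frac{H(P_{t} \, \vert \, \mathrm{Q}) - H(P_{0} \, \vert \, \mathrm{Q})}{t} = \int_{\mathds{R}^{n}} \big\langle \nabla \log \ell_{0}(x), \gamma(x) \big\rangle_{\mathds{R}^{n}} \, p_{0}(x) \, \textnormal{d}x = \big\langle \nabla \log \ell_{0}(X_{0}), \gamma(X_{0}) \big\rangle_{L^{2}(\nu)},
\end{equation*}
which is the assertion. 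The only real technical points are the two interchanges of limit and integral near $t = 0$ and the validity of the Monge--Amp\`ere change of variables up to the boundary of $\operatorname{supp} p_{0}$; under \hyperref[hwisosaojkoia]{Assumptions \ref*{hwisosaojkoia}} the relevant integrands are smooth on a fixed compact set and bounded uniformly for $t$ in a neighbourhood of $0$, and the Brenier--Caffarelli regularity of $G$ invoked in \hyperref[saeog]{Remark \ref*{saeog}} supplies the smoothness needed for the change of variables and the integration by parts. I expect this boundary regularity to be the most delicate ingredient, but it is exactly what the strong hypotheses \hyperref[hwitsaosaojko]{\ref*{hwitsaosaojko}} were imposed to guarantee.
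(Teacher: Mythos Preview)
Your argument is correct but follows a different route from the paper's. You use the \emph{static} Monge--Amp\`ere change of variables: push the internal energy back via $p_{t}(T_{t}^{\gamma}(x))\det DT_{t}^{\gamma}(x)=p_{0}(x)$, differentiate $\log\det(I_{n}+tD\gamma)$ to produce $\operatorname{div}\gamma$, and then integrate by parts once at $t=0$. This is the classical McCann displacement-convexity computation. The paper instead takes a \emph{dynamic} (Lagrangian) viewpoint, in keeping with its trajectorial theme: it introduces the velocity field $v_{t}=\gamma\circ(T_{t}^{\gamma})^{-1}$, notes that $p_{t}$ solves the continuity equation $\partial_{t}p_{t}+\operatorname{div}(v_{t}p_{t})=0$, follows the particle paths $X_{t}=X_{0}+\int_{0}^{t}v_{\theta}(X_{\theta})\,\textnormal{d}\theta$, and computes $\textnormal{d}\log\ell_{t}(X_{t})$ along these trajectories before averaging and passing to the limit. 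Both approaches land on the same integration by parts $-\mathds{E}_{\nu}[\operatorname{div}\gamma(X_{0})]=\langle\nabla\log p_{0}(X_{0}),\gamma(X_{0})\rangle_{L^{2}(\nu)}$ at the end. Your route is more direct and is the standard one in optimal-transport texts; the paper's route mirrors the structure of its main results (Theorems~\ref{thetsix} and~\ref{thetthretv}) and avoids explicit Jacobian determinants. The boundary issue you flag is real in either approach and is exactly what the strong regularity hypothesis~\ref{hwitsaosaojko} is designed to absorb.
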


\begin{remark} \label{rrewdfcspgpst} The relative entropy $H( P \, \vert \, \mathrm{Q})$ is well-defined for every $P \in \mathscr{P}_{2}(\mathds{R}^{n})$, and takes values in $(-\infty,\infty]$; see \hyperref[wdadotrewrttmq]{Appendix \ref*{wdadotrewrttmq}}. As the displacement interpolation $(P_{t})_{0 \leqslant t \leqslant 1}$ is the constant-speed geodesic joining the probability measures $P_{0}$ and $P_{1}$ in $\mathscr{P}_{2}(\mathds{R}^{n})$, we see that the relative entropy $H( P_{t} \, \vert \, \mathrm{Q})$ is well-defined for every $t \in [0,1]$.
\end{remark} 

We relegate the proof of \hyperref[hlotl]{Lemma \ref*{hlotl}}, which follows a similar (but considerably simpler) line of reasoning as the proof of \hyperref[thetthre]{Theorem \ref*{thetthre}}, to \hyperref[polhlotl]{Appendix \ref*{polhlotl}}. Combining \hyperref[hlotl]{Lemma \ref*{hlotl}} with well-known arguments, in particular, with a fundamental result on displacement convexity due to McCann \cite{McC97}, we derive now the HWI inequality of Otto and Villani \cite{OV00} and Cordero-Erausquin \cite{CE02}; see also \cite[p.\ 650]{Vil09}. This result relates the fundamental quantities of relative entropy (H), Wasserstein distance (W) and relative Fisher information (I). 

\begin{proposition}[\textsf{HWI inequality \cite{OV00}}] \label{hwiai} Under the \textnormal{\hyperref[sosaojkoia]{Assumptions \ref*{sosaojkoia}}}, we set $P_{0} = P(0)$, fix $P_{1} \in \mathscr{P}_{2}(\mathds{R}^{n})$ with finite relative entropy $H( P_{1} \, \vert \, \mathrm{Q})$, and let $\gamma$ be as in \textnormal{\hyperref[hlotlse]{(\ref*{hlotlse})}}. We suppose in addition that the potential $\Psi \colon \mathds{R}^{n} \rightarrow [0,\infty)$ satisfies a curvature lower bound 
\begin{equation} \label{sndcbe}
\textnormal{Hess}(\Psi) \geqslant \kappa \, I_{n},
\end{equation}
for some $\kappa \in \mathds{R}$. Then we have
\begin{equation} \label{sivothwii}
H( P_{0} \, \vert \, \mathrm{Q} ) - H( P_{1} \, \vert \, \mathrm{Q} ) \leqslant - \big\langle \nabla \log \ell_{0}(X_{0}) \, , \, \gamma(X_{0}) \big\rangle_{L^{2}(\nu)} - \tfrac{\kappa}{2} \, W_{2}^{2}(P_{0},P_{1}),
\end{equation}
where the random variable $X_{0}$, the likelihood ratio function $\ell_{0}$, and the probability measure $\nu$, are as in \textnormal{\hyperref[hlotl]{Lemma \ref*{hlotl}}}.
\end{proposition}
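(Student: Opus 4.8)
The plan is to run the classical ``geodesic interpolation plus displacement convexity'' proof of the HWI inequality \cite{OV00, CE02} (see also \cite[p.\ 650]{Vil09}), but with the usual Otto--Villani estimate replaced by the exact first--order expansion supplied by \hyperref[hlotl]{Lemma \ref*{hlotl}}. Concretely, I would move from $P_{0} = P(0)$ to $P_{1}$ along the displacement interpolation $(P_{t})_{0 \leqslant t \leqslant 1}$ of \hyperref[hlotlse]{(\ref*{hlotlse})}, show that $t \mapsto H(P_{t} \,\vert\, \mathrm{Q})$ is semiconvex on $[0,1]$ with a curvature defect governed by the Hessian bound \hyperref[sndcbe]{(\ref*{sndcbe})}, and then take the right derivative at $t = 0$ and identify it through \hyperref[hlotlte]{(\ref*{hlotlte})}. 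Since \hyperref[hlotl]{Lemma \ref*{hlotl}} is available only under the strong regularity of \hyperref[hwisosaojkoia]{Assumptions \ref*{hwisosaojkoia}}, I would first establish \hyperref[sivothwii]{(\ref*{sivothwii})} for $P_{0}, P_{1}$ satisfying those conditions, and only afterwards remove them by approximation.

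\emph{Step 1 (under \hyperref[hwisosaojkoia]{Assumptions \ref*{hwisosaojkoia}}).} Using $2F = H(\,\cdot\,\vert\,\mathrm{Q})$ from \hyperref[reeef]{(\ref*{reeef})} together with the splitting \hyperref[fef]{(\ref*{fef})}--\hyperref[ie]{(\ref*{ie})} of the free energy into potential and internal parts, write $H(P_{t} \,\vert\,\mathrm{Q}) = S(p_{t}) + 2 E(p_{t})$, where $p_{t}$ is the density of $P_{t} = (T_{t}^{\gamma})_{\#}(P_{0})$, with $T_{t}^{\gamma}(x) = x + t\,\gamma(x)$, and $E(p_{t}) = \int_{\mathds{R}^{n}} \Psi \,\textnormal{d}P_{t}$, $S(p_{t}) = \int_{\mathds{R}^{n}} p_{t} \log p_{t} \,\textnormal{d}x$. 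By McCann's displacement convexity theorem \cite{McC97} (see also \cite{Vil03}), $t \mapsto S(p_{t})$ is convex on $[0,1]$; and since the densities of $P_{0}, P_{1}$ are compactly supported and smooth, differentiating twice under the integral and using \hyperref[sndcbe]{(\ref*{sndcbe})} gives
\begin{equation*}
\frac{\textnormal{d}^{2}}{\textnormal{d}t^{2}} \, E(p_{t}) \;=\; \int_{\mathds{R}^{n}} \big\langle \gamma(x) \,,\, \textnormal{Hess}(\Psi)\big(T_{t}^{\gamma}(x)\big)\, \gamma(x) \big\rangle_{\mathds{R}^{n}} \,\textnormal{d}P_{0}(x) \;\geqslant\; \kappa \, \| \gamma \|_{L^{2}(P_{0})}^{2} \;=\; \kappa \, W_{2}^{2}(P_{0}, P_{1}),
\end{equation*}
the last equality by \hyperref[otmitwsnp]{(\ref*{otmitwsnp})}. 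Hence $t \mapsto H(P_{t}\,\vert\,\mathrm{Q})$ lies below the chord joining its endpoint values, minus the quadratic defect dictated by this lower bound; dividing the resulting inequality by $t > 0$, letting $t \downarrow 0$, and invoking \hyperref[hlotlte]{(\ref*{hlotlte})} of \hyperref[hlotl]{Lemma \ref*{hlotl}} to identify the left-hand limit, one arrives at \hyperref[sivothwii]{(\ref*{sivothwii})}.

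\emph{Step 2 (removing the strong regularity).} For general $P_{0} = P(0)$ and $P_{1} \in \mathscr{P}_{2}(\mathds{R}^{n})$ with $H(P_{1} \,\vert\,\mathrm{Q}) < \infty$, I would choose sequences $(P_{0}^{(k)})_{k \geqslant 1}$, $(P_{1}^{(k)})_{k \geqslant 1}$ of probability measures satisfying \hyperref[hwisosaojkoia]{Assumptions \ref*{hwisosaojkoia}} (restrict to large balls, mollify, renormalize) such that $P_{i}^{(k)} \to P_{i}$ in $\mathscr{P}_{2}(\mathds{R}^{n})$ and $H(P_{i}^{(k)}\,\vert\,\mathrm{Q}) \to H(P_{i}\,\vert\,\mathrm{Q})$ for $i = 0,1$, apply Step 1 to each pair $(P_{0}^{(k)}, P_{1}^{(k)})$, and pass to the limit $k \to \infty$. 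The relative-entropy and $W_{2}^{2}$ contributions then converge; the delicate point is the bilinear term $\langle \nabla \log \ell_{0}^{(k)}(X_{0}^{(k)}) \,,\, \gamma^{(k)}(X_{0}^{(k)}) \rangle_{L^{2}}$, which I would control via the stability of Brenier maps under weak convergence (so that $\gamma^{(k)} \to \gamma$ in the relevant sense) together with a uniform $L^{2}(P_{0}^{(k)})$-bound on the score functions $\nabla \log \ell_{0}^{(k)}$. When $I(P_{0} \,\vert\,\mathrm{Q}) = \infty$, the Cauchy--Schwarz estimate $-\langle \nabla \log \ell_{0}(X_{0}), \gamma(X_{0})\rangle_{L^{2}(\nu)} \leqslant \sqrt{I(P_{0}\,\vert\,\mathrm{Q})}\, W_{2}(P_{0},P_{1})$ renders the right-hand side of \hyperref[sivothwii]{(\ref*{sivothwii})} equal to $+\infty$ (equivalently, the inequality reduces to the weaker ``$H \leqslant W\sqrt{I}$'' form), so one may assume $I(P_{0} \,\vert\,\mathrm{Q}) < \infty$ throughout.

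\emph{Main obstacle.} I expect Step 2 to be the crux: arranging the approximation so that all four quantities in \hyperref[sivothwii]{(\ref*{sivothwii})} converge in the correct direction --- in particular, upgrading the mere lower semicontinuity of $P \mapsto H(P\,\vert\,\mathrm{Q})$ to genuine convergence $H(P_{1}^{(k)}\,\vert\,\mathrm{Q}) \to H(P_{1}\,\vert\,\mathrm{Q})$, and securing the stability of the optimal maps and the uniform integrability needed for the cross-term --- is where the real work lies. Step 1, by contrast, is essentially McCann's theorem combined with \hyperref[hlotl]{Lemma \ref*{hlotl}}, and the differentiation there is routine once the compact-support regularity of \hyperref[hwisosaojkoia]{Assumptions \ref*{hwisosaojkoia}} is in force.
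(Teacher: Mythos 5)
Your proposal follows the paper's proof essentially verbatim: reduce to the strong-regularity case of \hyperref[hwisosaojkoia]{Assumptions \ref*{hwisosaojkoia}}, apply McCann displacement convexity to obtain the lower bound $f''(t) \geqslant \kappa \, W_2^2(P_0,P_1)$ for $f(t) = H(P_t \,\vert\, \mathrm{Q})$, and feed the one-sided derivative $f'(0+)$ supplied by \hyperref[hlotl]{Lemma \ref*{hlotl}} into the resulting Taylor/chord estimate to arrive at \hyperref[sivothwii]{(\ref*{sivothwii})}. The only cosmetic differences are that you compute the second derivative of the potential energy directly where the paper invokes \cite[Theorem 5.15]{Vil03}, and that for the reduction to strong regularity --- which you correctly flag as the delicate step --- the paper simply cites \cite[Section 9.4]{Vil03} rather than carrying out the approximation.
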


\begin{remark} Let us stress that \hyperref[hwiai]{Proposition \ref*{hwiai}} does not require $\mathrm{Q}$ to be a probability measure in the formulation of the HWI inequality \hyperref[sivothwii]{(\ref*{sivothwii})}.
\end{remark}

On the strength of the Cauchy-Schwarz inequality, we have
\begin{equation}
- \big\langle \nabla \log \ell_{0}(X_{0}) \, , \, \gamma(X_{0}) \big\rangle_{L^{2}(\nu)} \leqslant \| \nabla \log \ell_{0}(X_{0}) \|_{L^{2}(\nu)} \ \| \gamma(X_{0}) \|_{L^{2}(\nu)},
\end{equation}
with equality if and only if the functions $\nabla \log \ell_{0}(\, \cdot \,)$ and $\gamma(\, \cdot \,)$ are negatively collinear. Now the relative Fisher information of $P_{0}$ with respect to $\mathrm{Q}$ equals 
\begin{equation} 
I( P_{0} \, \vert \, \mathrm{Q}) = \mathds{E}_{\nu}\Big[ \vert \nabla \log \ell_{0}(X_{0}) \vert^{2} \Big] = \| \nabla \log \ell_{0}(X_{0}) \|_{L^{2}(\nu)}^{2},
\end{equation}
and by \textit{Brenier's theorem} \cite[Theorem 2.12]{Vil03} we deduce
\begin{equation}
\| \gamma(X_{0}) \|_{L^{2}(\nu)} = W_{2}(P_{0},P_{1})
\end{equation}
as in \hyperref[otmitwsnp]{(\ref*{otmitwsnp})}, along with the inequality
\begin{equation} \label{csfhwi} 
- \big\langle \nabla \log \ell_{0}(X_{0}) \, , \, \gamma(X_{0}) \big\rangle_{L^{2}(\nu)} \leqslant \sqrt{I( P_{0} \,  \vert \, \mathrm{Q} )} \ W_{2}(P_{0},P_{1}).
\end{equation}
Inserting \hyperref[csfhwi]{(\ref*{csfhwi})} into \hyperref[sivothwii]{(\ref*{sivothwii})} we obtain the usual form of the HWI inequality 
\begin{equation} \label{hwiast}
H( P_{0} \, \vert \, \mathrm{Q} ) - H( P_{1} \, \vert \, \mathrm{Q} ) \leqslant W_{2}(P_{0},P_{1}) \ \sqrt{I( P_{0} \,  \vert \, \mathrm{Q} )} - \tfrac{\kappa}{2} \, W_{2}^{2}(P_{0},P_{1}).
\end{equation}
When there is a non-trivial angle between $\nabla \log \ell_{0}(X_{0})$ and $\gamma(X_{0})$ in $L^{2}(\nu)$, the inequality \hyperref[sivothwii]{(\ref*{sivothwii})} gives a sharper bound than \hyperref[hwiast]{(\ref*{hwiast})}. We refer to the original paper \cite{OV00}, as well as to \cite{CE02}, \cite[Chapter 5]{Vil03}, \cite[p.\ 650]{Vil09} and the recent paper \cite{GLRT20}, for a detailed discussion of the HWI inequality. For a good survey on transport inequalities, see \cite{GL10}. 

\begin{remark} \label{logsobaoieq} Let us suppose now that the strong non-degeneracy condition \hyperref[sndcbe]{(\ref*{sndcbe})} holds with $\kappa > 0$, and that $\mathrm{Q}$ is a probability measure in $\mathscr{P}_{2}(\mathds{R}^{n})$. Then the inequality \hyperref[hwiast]{(\ref*{hwiast})} contains as special cases the \textit{Talagrand} \cite{Tal96} and \textit{logarithmic Sobolev} \cite{Fed69,Gro75} \textit{inequalities}, namely
\begin{equation} \label{lsiloggro}
W_{2}^{2}(P,\mathrm{Q}) \leqslant \tfrac{2}{\kappa} \, H(P \, \vert \, \mathrm{Q}), \qquad H(P \, \vert \, \mathrm{Q}) \leqslant \tfrac{1}{2\kappa} \, I(P \, \vert \, \mathrm{Q}),
\end{equation}
respectively; just by reading \hyperref[hwiast]{(\ref*{hwiast})} first with $(P_{0} , P_{1}) = (\mathrm{Q}, P)$, then with $(P_{0} , P_{1}) = (P,\mathrm{Q})$ and applying Young's inequality $xy \leqslant x^{2}/2 + y^{2}/2$, which is valid for all $x,y \in \mathds{R}$. On the other hand, and now in the context of \hyperref[snaas]{Section \ref*{snaas}}, the second inequality in \hyperref[lsiloggro]{(\ref*{lsiloggro})} leads, in conjunction with the generalized de Bruijn identity \hyperref[flfffl]{(\ref*{flfffl})} and \hyperref[rfi]{(\ref*{rfi})}, to
\begin{equation}
\tfrac{\textnormal{d}}{\textnormal{d}t} \, H\big( P(t) \, \vert \, \mathrm{Q} \big) \leqslant - \kappa  \, H\big( P(t) \, \vert \, \mathrm{Q} \big),
\end{equation}
and thence to the Bakry-{\'E}mery \cite{BE85} \textit{exponential temporal dissipation of the relative entropy}
\begin{equation}
H\big( P(t) \, \vert \, \mathrm{Q} \big) \leqslant H\big( P(t_{0}) \, \vert \, \mathrm{Q} \big) \, \mathrm{e}^{-\kappa(t-t_{0})}, \qquad t \geqslant t_{0}
\end{equation}
as well as of the Wasserstein distance $W_{2}(P(t),\mathrm{Q})$ on account of \hyperref[lsiloggro]{(\ref*{lsiloggro})}. For an exposition of the Bakry-{\'E}mery theory, which derives also the \textit{exponential temporal dissipation of the relative Fisher information} in the context of \hyperref[snaas]{Section \ref*{snaas}}, we refer to \cite{BGL14} and \cite{Gen14}.
\end{remark}

The inequality \hyperref[hwiast]{(\ref*{hwiast})} is yet another manifestation of the interplay between displacement in the ambient space of probability measures (the quantity $W_{2}(P_{0},P_{1})$) and fluctuations of the relative entropy (the quantity $H( P_{0} \, \vert \, \mathrm{Q} ) - H( P_{1} \, \vert \, \mathrm{Q} )$) as governed by the square root of the Fisher information $\sqrt{I( P_{0} \,  \vert \, \mathrm{Q} )}$, very much in the spirit of \hyperref[ttofeosl]{(\ref*{ttofeosl})}.

\begin{proof}[Proof of \texorpdfstring{\hyperref[hwiai]{Proposition \ref*{hwiai}}}] As elaborated in \cite[Section 9.4]{Vil03} we may assume without loss of generality that $P_{0}$ and $P_{1}$ satisfy the strong regularity \hyperref[hwisosaojkoia]{Assumptions \ref*{hwisosaojkoia}}. For the existence and smoothness of the optimal transport map $\gamma$ we refer to \hyperref[saeog]{Remark \ref*{saeog}}.

We consider now the relative entropy with respect to $\mathrm{Q}$ along the constant-speed geodesic $(P_{t})_{0 \leqslant t \leqslant 1}$, namely, the function $f(t) \vcentcolon = H( P_{t} \, \vert \, \mathrm{Q})$, for $0 \leqslant t \leqslant 1$. We show that the displacement convexity results of McCann \cite{McC97} imply 
\begin{equation} \label{dcromcc}
f''(t) \geqslant \kappa \, W_{2}^{2}(P_{0},P_{1}), \qquad 0 \leqslant t \leqslant 1.
\end{equation}

Indeed, under the condition \hyperref[sndcbe]{(\ref*{sndcbe})}, the potential $\Psi$ is $\kappa$-uniformly convex. Consequently, by items (i) and (ii) of \cite[Theorem 5.15]{Vil03}, the internal and potential energies
\begin{equation}
g(t) \vcentcolon = \int_{\mathds{R}^{n}} p_{t}(x) \log p_{t}(x) \, \textnormal{d}x, \qquad h(t) \vcentcolon = 2 \int_{\mathds{R}^{n}} \Psi(x) \, p_{t}(x) \, \textnormal{d}x, \qquad 0 \leqslant t \leqslant 1,
\end{equation}
are displacement convex and $\kappa$-uniformly displacement convex, respectively; i.e.,
\begin{equation} 
g''(t) \geqslant 0, \qquad h''(t) \geqslant \kappa \, W_{2}^{2}(P_{0},P_{1}), \qquad 0 \leqslant t \leqslant 1.
\end{equation}
By analogy with \hyperref[llreeef]{Lemma \ref*{llreeef}} we have $f = g + h$, and conclude that the relative entropy function $f$ is $\kappa$-uniformly displacement convex, i.e., its second derivative satisfies \hyperref[dcromcc]{(\ref*{dcromcc})}.

We appeal now to \hyperref[hlotl]{Lemma \ref*{hlotl}}, according to which we have
\begin{equation} \label{irf}
f'(0+) = \lim_{t \downarrow 0} \, \frac{f(t)-f(0)}{t} = \big\langle \nabla \log \ell_{0}(X_{0}) \, , \, \gamma(X_{0}) \big\rangle_{L^{2}(\nu)}.
\end{equation}
In conjunction with \hyperref[dcromcc]{(\ref*{dcromcc})} and \hyperref[irf]{(\ref*{irf})}, the Taylor formula $f(1) = f(0) + f'(0+) + \int_{0}^{1} (1-t) f''(t) \, \textnormal{d}t$ now yields \hyperref[sivothwii]{(\ref*{sivothwii})}.
\end{proof}


\section{Details and proofs} \label{ssgrodwudm}


In this section we complete the proofs of \hyperref[thetsixcoraopv]{Corollary \ref*{thetsixcoraopv}} and \hyperref[thetthretvcor]{Proposition \ref*{thetthretvcor}}, and provide the proofs of our main results, \hyperref[thetsix]{Theorems \ref*{thetsix}} and \hyperref[thetthretv]{\ref*{thetthretv}}. In fact, all we have to do in order to prove these latter theorems is to apply It\^{o}'s formula so as to calculate the dynamics, i.e., the stochastic differentials, of the ``pure'' and ``perturbed'' relative entropy processes of \hyperref[stlrpd]{(\ref*{stlrpd})} and \hyperref[prerepitufdllpitufd]{(\ref*{prerepitufdllpitufd})} under the measures $\mathds{P}$ and $\mathds{P}^{\beta}$, respectively. We may (and shall) do this in both the forward and, most importantly, the backward, directions of time.

\smallskip

However, such a brute-force approach does not provide any hint as to why we obtain the remarkable form of the drift term of the time-reversed relative entropy process
\begin{equation} \label{ttrrep}
\log \ell\big(T-s,X(T-s)\big) = \log \Bigg( \frac{p\big(T-s,X(T-s)\big)}{q\big(X(T-s)\big)} \Bigg) \, , \qquad 0 \leqslant s \leqslant T,
\end{equation}
as stated in \hyperref[thetsix]{Theorem \ref*{thetsix}}, namely
\begin{equation} \label{sdotpttrrep}
\textnormal{d}\log \ell\big(T-s,X(T-s)\big)
= \Bigg\langle \frac{\nabla \ell\big(T-s,X(T-s)\big)}{\ell\big(T-s,X(T-s)\big)} \, , \, \textnormal{d}\overline{W}^\mathds{P}(T-s) \Bigg\rangle_{\mathds{R}^{n}} \mkern-9mu + \frac{1}{2} \frac{\big\vert \nabla \ell\big(T-s,X(T-s)\big) \big\vert^{2}}{\ell\big(T-s,X(T-s)\big)^{2}} \, \textnormal{d}s,
\end{equation}
for $0 \leqslant s \leqslant T$, with respect to the backwards filtration $(\mathcal{G}(T-s))_{0 \leqslant s \leqslant T}$. Therefore, in order to motivate and illustrate the derivation of the dynamics \hyperref[sdotpttrrep]{(\ref*{sdotpttrrep})}, we first impose the additional assumption $\mathrm{Q}(\mathds{R}^{n}) < \infty$ (which precludes the case $\Psi \equiv 0$), so as to conform to the setting of \cite{FJ16}. This is done in \hyperref[tcqrnliwqis]{Subsection \ref*{tcqrnliwqis}}, which serves purely as motivation; in the remainder of this paper we do not rely on the assumption $\mathrm{Q}(\mathds{R}^{n}) < \infty$.


\subsection{Some preliminaries} 


Our first task is to calculate the dynamics of the time-reversed relative entropy process \hyperref[ttrrep]{(\ref*{ttrrep})} under the probability measure $\mathds{P}$. In order to do this, we start by calculating the stochastic differential of the time-reversed canonical coordinate process $(X(T-s))_{0 \leqslant s \leqslant T}$ under $\mathds{P}$, which is a well-known and classical theme; see e.g.\ \cite{Foe85,Foe86}, \cite{HP86}, \cite{Mey94}, \cite{Nel01}, and \cite{Par86}. For the convenience of the reader we present the theory of time reversal for diffusion processes in \hyperref[atrod]{Appendix \ref*{atrod}}. The idea of time reversal goes back to the ideas of Boltzmann \cite{Bol96,Bol98a,Bol98b} and Schr\"odinger \cite{Sch31,Sch32}, as well as Kolmogorov \cite{Kol37}. In fact, as we shall recall in \hyperref[ahnbwrbmtthe]{Appendix \ref*{ahnbwrbmtthe}}, the relation between time reversal of a Brownian motion and the quadratic Wasserstein distance may \textit{in nuce} be traced back to an insight of Bachelier in his thesis \cite{Bac00,Bac06} from 1900; at least, when admitting a good portion of wisdom of hindsight.

\smallskip

Recall that the probability measure $\mathds{P}$ was defined on the path space $\Omega = \mathcal{C}(\mathds{R}_{+};\mathds{R}^{n})$ so that the canonical coordinate process $(X(t,\omega))_{t \geqslant 0} = (\omega(t))_{t \geqslant 0}$ satisfies the stochastic differential equation \hyperref[sdeids]{(\ref*{sdeids})} with initial probability distribution $P(0)$ for $X(0)$ under $\mathds{P}$. In other words, the process
\begin{equation} \label{dobmwopsrss}
W(t) = X(t) - X(0) + \int_{0}^{t} \nabla \Psi\big(X(u)\big) \, \textnormal{d}u, \qquad t \geqslant 0
\end{equation}
defines a Brownian motion of the forward filtration $(\mathcal{F}(t))_{t \geqslant 0}$ under the probability measure $\mathds{P}$, where the integral in \hyperref[dobmwopsrss]{(\ref*{dobmwopsrss})} is to be understood in a pathwise Riemann-Stieltjes sense. Passing to the reverse direction of time, the following classical result is well known to hold under the \hyperref[sosaojkoia]{Assumptions \ref*{sosaojkoia}}.

\begin{proposition} \label{ptra} Under the \textnormal{\hyperref[sosaojkoia]{Assumptions \ref*{sosaojkoia}}}, we let $T > 0$. The process 
\begin{equation} \label{dotowptmtpbm}
\overline{W}^\mathds{P}(T-s) \vcentcolon = W(T-s) - W(T) - \int_{0}^{s} \nabla \log p\big(T-u,X(T-u)\big) \, \textnormal{d}u
\end{equation}
for $0 \leqslant s \leqslant T$, is a Brownian motion of the backwards filtration $(\mathcal{G}(T-s))_{0 \leqslant s \leqslant T}$ under the probability measure $\mathds{P}$. Moreover, the time-reversed canonical coordinate process $(X(T-s))_{0 \leqslant s \leqslant T}$ satisfies the stochastic differential equation
\begingroup
\addtolength{\jot}{0.7em}
\begin{align}
\textnormal{d} X(T-s) &= \Big( \nabla \log p\big(T-s,X(T-s)\big) + \nabla \Psi\big(X(T-s)\big) \Big) \, \textnormal{d}s + \textnormal{d}\overline{W}^{\mathds{P}}(T-s) \label{poortdfttrpp} \\    
 &= \Big( \nabla \log \ell\big(T-s,X(T-s)\big) - \nabla \Psi\big(X(T-s)\big) \Big) \, \textnormal{d}s + \textnormal{d}\overline{W}^{\mathds{P}}(T-s), \label{oortdfttrpp}
\end{align}
\endgroup
for $0 \leqslant s \leqslant T$, with respect to the backwards filtration $(\mathcal{G}(T-s))_{0 \leqslant s \leqslant T}$. 
\end{proposition}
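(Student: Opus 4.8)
The assertion is the classical time-reversal theorem for diffusions (F\"ollmer, Haussmann--Pardoux, Pardoux, Meyer), specialised to the Langevin--Smoluchowski equation \eqref{sdeids}; the plan is to prove it in the self-contained form presented in \hyperref[atrod]{Appendix \ref*{atrod}}, the real task being to check that its regularity hypotheses hold here --- which is exactly the purpose of condition \hyperref[tsaosaojko]{\ref*{tsaosaojko}} of \hyperref[sosaojkoia]{Assumptions \ref*{sosaojkoia}} together with \hyperref[fosmolds]{Lemma \ref*{fosmolds}}. Everything reduces to showing that the process $(\overline{W}^{\mathds{P}}(T-s))_{0\le s\le T}$ of \eqref{dotowptmtpbm} is a $\mathds{P}$-Brownian motion of the backwards filtration $(\mathcal{G}(T-s))_{0\le s\le T}$; granting this, the stochastic differential equations \eqref{poortdfttrpp}--\eqref{oortdfttrpp} are mere bookkeeping. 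Indeed, inserting the definition \eqref{dobmwopsrss} of $W$ into \eqref{dotowptmtpbm} and substituting $u\mapsto T-u$ yields
\[
X(T-s)=X(T)+\int_{0}^{s}\big(\nabla\log p+\nabla\Psi\big)\big(T-u,X(T-u)\big)\,\textnormal{d}u+\overline{W}^{\mathds{P}}(T-s),
\]
which is \eqref{poortdfttrpp} in integrated form; \eqref{oortdfttrpp} then follows upon writing $\nabla\log p=\nabla\log\ell-2\,\nabla\Psi$, valid since $\ell=p\,\mathrm{e}^{2\Psi}$. This identity also displays $\overline{W}^{\mathds{P}}(T-s)$ as a functional of $(X(T-u))_{0\le u\le s}$ alone, hence $\mathcal{G}(T-s)$-measurable, and shows that $\overline{W}^{\mathds{P}}$ differs from $s\mapsto W(T-s)-W(T)$ only by a pathwise continuous process of locally finite variation (the integral of the continuous, hence locally bounded, integrand $\nabla\log p(T-u,\,\cdot\,)$ along the continuous path). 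Hence $\overline{W}^{\mathds{P}}$ is continuous, vanishes at $s=0$, and has pathwise mutual quadratic variations $\langle\overline{W}^{\mathds{P}}_{i},\overline{W}^{\mathds{P}}_{j}\rangle(T-s)=s\,\delta_{ij}$; by L\'evy's characterisation it remains only to prove that $\overline{W}^{\mathds{P}}$ is a $(\mathcal{G}(T-s))$-martingale under $\mathds{P}$.

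\textbf{The backward martingale problem.} I would deduce this from the stronger statement that, for every $f\in\mathcal{C}_{c}^{\infty}(\mathds{R}^{n})$, the process
\[
N^{f}(T-s):=f\big(X(T-s)\big)-f\big(X(T)\big)-\int_{0}^{s}\widehat{\mathcal{L}}_{T-u}f\big(X(T-u)\big)\,\textnormal{d}u,\qquad \widehat{\mathcal{L}}_{r}f:=\big\langle\nabla\Psi+\nabla\log p(r,\,\cdot\,),\nabla f\big\rangle_{\mathds{R}^{n}}+\tfrac12\Delta f,
\]
is a $(\mathcal{G}(T-s))$-martingale under $\mathds{P}$: evaluating this with $f$ running through smooth compactly supported functions that agree with the coordinate maps on larger and larger balls exhibits each $\overline{W}^{\mathds{P}}_{i}$ as a continuous local $\mathcal{G}$-martingale, which combined with the bracket above and L\'evy's theorem finishes the argument. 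To prove that $N^{f}$ is a $\mathcal{G}$-martingale, fix $0\le s_{1}\le s_{2}<T$; since $X$ is a (forward) Markov process under $\mathds{P}$, conditioning on $\mathcal{G}(T-s_{1})$ is the same as conditioning on $X(T-s_{1})$, and a monotone-class argument reduces the claim to the two-time identity, written with $t_{i}:=T-s_{i}$ (so $0<t_{2}\le t_{1}\le T$) and $f,g\in\mathcal{C}_{c}^{\infty}(\mathds{R}^{n})$,
\[
\mathds{E}_{\mathds{P}}\big[g(X(t_{1}))\,f(X(t_{2}))\big]-\mathds{E}_{\mathds{P}}\big[g(X(t_{1}))\,f(X(t_{1}))\big]=\mathds{E}_{\mathds{P}}\Big[g(X(t_{1}))\int_{t_{2}}^{t_{1}}\widehat{\mathcal{L}}_{r}f(X(r))\,\textnormal{d}r\Big].
\]

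\textbf{The Fokker--Planck computation.} For the two-time identity I would set, for $t_{2}\le t\le t_{1}$,
\[
\phi(t):=\mathds{E}_{\mathds{P}}\big[g(X(t_{1}))\,f(X(t))\big]=\int_{\mathds{R}^{n}}h_{t}(x)\,f(x)\,p(t,x)\,\textnormal{d}x,\qquad h_{t}(x):=\mathds{E}_{\mathds{P}}\big[g(X(t_{1}))\mid X(t)=x\big],
\]
where $h_{t}$ solves the Kolmogorov backward equation $\partial_{t}h_{t}=-\mathcal{L}h_{t}$ with $\mathcal{L}:=\tfrac12\Delta-\langle\nabla\Psi,\nabla\,\cdot\,\rangle_{\mathds{R}^{n}}$ the generator of \eqref{sdeids}, while $p$ solves the Fokker--Planck equation $\partial_{t}p=\mathcal{L}^{*}p$ of \eqref{fpeqnwfp}. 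Differentiating $\phi$ under the integral sign and integrating by parts twice --- first to move $\mathcal{L}^{*}$ off $p$ via $\int u\,\mathcal{L}^{*}v=\int v\,\mathcal{L}u$, then to move the remaining gradient off $h_{t}$, at which point the logarithmic derivative $\nabla\log p=\nabla p/p$ appears --- the two contributions involving $\mathcal{L}h_{t}$ cancel and one is left with
\[
\phi'(t)=-\int_{\mathds{R}^{n}}h_{t}(x)\,p(t,x)\,\widehat{\mathcal{L}}_{t}f(x)\,\textnormal{d}x=-\,\mathds{E}_{\mathds{P}}\big[g(X(t_{1}))\,\widehat{\mathcal{L}}_{t}f(X(t))\big],
\]
the last equality because $\mathds{E}_{\mathds{P}}[g(X(t_{1}))\mid\mathcal{F}(t)]=h_{t}(X(t))$. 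Integrating over $t\in[t_{2},t_{1}]$ yields precisely the displayed two-time relation.

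\textbf{Where the work is.} The one place genuine effort is required is this last step: justifying the differentiation under the integral sign for $\phi$ and, above all, the vanishing of the boundary terms at spatial infinity in the two integrations by parts. This is exactly where the smoothness and strict positivity of $p$, and the continuity of $\nabla\log p$, on $(0,\infty)\times\mathds{R}^{n}$ from condition \hyperref[tsaosaojko]{\ref*{tsaosaojko}}, the propagation of finite second moments from \hyperref[fosmolds]{Lemma \ref*{fosmolds}}, and the coercivity bound \eqref{tppstdc} used to control tails, all enter (after localisation, what is really needed is a Haussmann--Pardoux-type a~priori bound $\int_{\varepsilon}^{T}\!\int_{K}(p+|\nabla p|)\,\textnormal{d}x\,\textnormal{d}t<\infty$ for compact $K$ and $\varepsilon>0$). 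A secondary technical point is the passage from $s\in[0,T)$ to the closed interval $s\in[0,T]$, i.e.\ down to the forward time $t=0$ at which $p(0,\,\cdot\,)$ need be neither smooth nor positive: one first obtains the martingale property for each $s\le T-\varepsilon$ and then extends to $s=T$ by a limiting argument controlling $\nabla\log p(T-u,\,\cdot\,)$ as $u\uparrow T$. This endpoint issue, rather than the algebra above, is where I would expect the bulk of the care to go.
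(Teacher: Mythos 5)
Your proposal is correct, and it arrives at the same destination as the paper's own treatment (which delegates this proposition to \hyperref[Thm1]{Theorems \ref*{Thm1}} and \hyperref[Thm2]{\ref*{Thm2}} of \hyperref[atrod]{Appendix \ref*{atrod}}), but the mechanism you use for the one non-trivial step --- the backward martingale property --- is genuinely different. The paper works directly with the candidate Brownian motion: for $G\in\mathcal{C}_{0}^{\infty}$ it writes the forward martingale $g(\theta,X(\theta))=\mathds{E}[G(X(t))\,\vert\,X(\theta)]$ via its It\^{o} representation, computes $\mathds{E}\big[(W_{\nu}(t)-W_{\nu}(\theta))\,G(X(t))\big]$ by the product rule, and performs one spatial integration by parts to land on the key identity \hyperref[5.a]{(\ref*{5.a})}, which is exactly the statement that $\overline{W}^{\mathds{P}}_{\nu}$ is a backward martingale; the SDE for the reversed process is then derived separately via backwards It\^{o} integrals. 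You instead solve the backward martingale problem for the reversed generator $\widehat{\mathcal{L}}$ with compactly supported test functions $f$, using the duality $\partial_{t}h=-\mathcal{L}h$ versus $\partial_{t}p=\mathcal{L}^{*}p$, and then recover $\overline{W}^{\mathds{P}}$ as a local martingale by letting $f$ exhaust the coordinate maps. The trade-off is instructive: in the paper's computation the bounded but non-compactly-supported function $g$ sits against $p$, so killing the boundary terms genuinely requires decay of $p$ and $\nabla p$ at infinity (this is where Haussmann--Pardoux-type integrability enters); in your computation the compact support of $f$ makes both integrations by parts boundary-free, so your stated worry about ``vanishing of the boundary terms at spatial infinity'' is actually moot in your own setup --- the price you pay instead is the routine truncation argument needed to pass from $f\in\mathcal{C}_{c}^{\infty}$ to the coordinate functions. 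Your reduction of the SDEs \hyperref[poortdfttrpp]{(\ref*{poortdfttrpp})}--\hyperref[oortdfttrpp]{(\ref*{oortdfttrpp})} to pure bookkeeping, the measurability of $\overline{W}^{\mathds{P}}(T-s)$ with respect to $\mathcal{G}(T-s)$, and the use of L\'{e}vy's characterisation all match the paper. Your flagging of the endpoint $s\uparrow T$ (forward time $t\downarrow 0$, where condition \hyperref[tsaosaojko]{\ref*{tsaosaojko}} guarantees nothing about $p(0,\,\cdot\,)$) is a legitimate point of care that the appendix passes over in silence and that the paper only confronts later, via the stopping times \hyperref[twtcdostptitttfitisifoas]{(\ref*{twtcdostptitttfitisifoas})} in the proof of \hyperref[thetsix]{Theorem \ref*{thetsix}}.
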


We provide proofs and references for this result in \hyperref[Thm1]{Theorems \ref*{Thm1}} and \hyperref[Thm2]{\ref*{Thm2}} of \hyperref[atrod]{Appendix \ref*{atrod}}.

\smallskip

Before proving \hyperref[thetsix]{Theorem \ref*{thetsix}} in \hyperref[cotpottpf]{Subsection \ref*{cotpottpf}} --- as already announced --- we digress now to present the following didactic, illuminating and important special case.


\subsection{The case \texorpdfstring{$\mathrm{Q}(\mathds{R}^{n}) < \infty$}{where Q is a probability measure}} \label{tcqrnliwqis}


We shall impose, for the purposes of the present subsection only, the additional assumption
\begin{equation} \label{wsiftpotpsotaa} 
\mathrm{Q}(\mathds{R}^{n}) = \int_{\mathds{R}^{n}} \mathrm{e}^{-2\Psi(x)} \, \textnormal{d}x < \infty.
\end{equation}
Under this assumption, the measure $\mathrm{Q}$ on the Borel sets of $\mathds{R}^{n}$, introduced in \hyperref[snaas]{Section \ref*{snaas}}, is finite and can thus be re-normalized, so as to become a probability measure. In this manner, it induces a probability measure $\mathds{Q}$ on the path space $\Omega = \mathcal{C}(\mathds{R}_{+};\mathds{R}^{n})$, under which the canonical coordinate process $(X(t,\omega))_{t \geqslant 0} = (\omega(t))_{t \geqslant 0}$ satisfies the stochastic equation \hyperref[sdeids]{(\ref*{sdeids})} with initial probability distribution $\mathrm{Q}$ for $X(0)$. And because this distribution is invariant, it is also the distribution of $X(t)$ under $\mathds{Q}$ for every $t \geqslant 0$.

\medskip

For the present authors, the eye-opener leading to \hyperref[sdotpttrrep]{(\ref*{sdotpttrrep})} was the subsequent remarkable insight due to Fontbona and Jourdain \cite{FJ16}. This provided us with much of the original motivation to start this line of research. The result right below holds in much greater generality (essentially one only needs the Markovian structure of the process $(X(t))_{t \geqslant 0}$) but we only state it in the present setting given by \hyperref[sdeids]{(\ref*{sdeids})} under the \hyperref[sosaojkoia]{Assumptions \ref*{sosaojkoia}} and $\mathrm{Q}(\mathds{R}^{n}) = 1$ in \hyperref[wsiftpotpsotaa]{(\ref*{wsiftpotpsotaa})}. For another application of time reversal in a similar context, see \cite{Leo17}.

\begin{theorem}[\textsf{Fontbona-Jourdain theorem \cite{FJ16}}] \label{ovtfjofmidpwcd} Under the \textnormal{\hyperref[sosaojkoia]{Assumptions \ref*{sosaojkoia}}} and $\mathrm{Q}(\mathds{R}^{n}) = 1$, we fix $T \in (0,\infty)$. The time-reversed likelihood ratio process of \textnormal{\hyperref[ttrrep]{(\ref*{ttrrep})}} is a martingale of the backwards filtration $(\mathcal{G}(T-s))_{0 \leqslant s \leqslant T}$ under the probability measure $\mathds{Q}$.
\end{theorem}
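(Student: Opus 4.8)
The plan is to establish the martingale property by a measure-theoretic argument resting on the Markov property, rather than by an It\^o computation; this is the natural route, since (as remarked after the statement) only the Markovian structure is really used. Write $\mathds{Q}$ for the path measure under which the canonical coordinate process solves \eqref{sdeids} with initial distribution $\mathrm{Q}$; by invariance, $X(t)$ then has distribution $\mathrm{Q}$, hence density $q$, for every $t \geqslant 0$.

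\emph{Step 1: the Radon--Nikod\'ym derivative of $\mathds{P}$ with respect to $\mathds{Q}$ on path space.} Since $q = \mathrm{e}^{-2\Psi} > 0$ everywhere, $\mathrm{Q}$ is equivalent to Lebesgue measure and $P(0) \ll \mathrm{Q}$ with $\tfrac{\mathrm{d}P(0)}{\mathrm{d}\mathrm{Q}}(x) = \ell(0,x)$. Because the diffusion \eqref{sdeids} admits a pathwise unique strong solution (Lemma~\ref{fosmolds}), the conditional law of the entire trajectory given $X(0)=x$ is one and the same kernel $\Phi(x,\,\cdot\,)$ from $\mathds{R}^{n}$ to $\Omega$ under both $\mathds{P}$ and $\mathds{Q}$. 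Disintegrating $\mathds{P} = \int \Phi(x,\,\cdot\,)\,P(0)(\mathrm{d}x)$ and $\mathds{Q} = \int \Phi(x,\,\cdot\,)\,\mathrm{Q}(\mathrm{d}x)$, and using that $X(0)=x$ holds $\Phi(x,\,\cdot\,)$-almost surely, one obtains $\tfrac{\mathrm{d}\mathds{P}\vert_{\mathcal{F}(T)}}{\mathrm{d}\mathds{Q}\vert_{\mathcal{F}(T)}} = \ell\big(0,X(0)\big)$, a random variable in $L^{1}(\mathds{Q})$ with $\mathds{E}_{\mathds{Q}}\big[\ell(0,X(0))\big] = \int_{\mathds{R}^{n}} p^{0} = 1$.

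\emph{Step 2: conditioning over the backwards filtration.} For every $s \in [0,T]$ we have $\mathcal{G}(T-s) \subseteq \mathcal{F}(T) = \mathcal{G}(0)$, so the density of $\mathds{P}$ over $\mathds{Q}$ restricted to $\mathcal{G}(T-s)$ is $\mathds{E}_{\mathds{Q}}\big[\ell(0,X(0)) \,\big|\, \mathcal{G}(T-s)\big]$; and since the backwards filtration $(\mathcal{G}(T-s))_{0\leqslant s\leqslant T}$ is \emph{increasing} in the parameter $s$, the process $s \mapsto \mathds{E}_{\mathds{Q}}\big[\ell(0,X(0)) \,\big|\, \mathcal{G}(T-s)\big]$ is automatically a closed --- hence uniformly integrable --- $\mathds{Q}$-martingale of this filtration. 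It thus remains only to identify this conditional expectation with the time-reversed likelihood ratio $\ell(T-s,X(T-s))$. By the Markov property of $X$ under $\mathds{Q}$, the past $\mathcal{F}(T-s)$ and the future $\mathcal{G}(T-s)$ are conditionally independent given $\sigma(X(T-s))$; as $\ell(0,X(0))$ is $\mathcal{F}(T-s)$-measurable, this gives $\mathds{E}_{\mathds{Q}}\big[\ell(0,X(0)) \,\big|\, \mathcal{G}(T-s)\big] = \mathds{E}_{\mathds{Q}}\big[\ell(0,X(0)) \,\big|\, X(T-s)\big]$. Finally, $\sigma(X(T-s)) \subseteq \mathcal{F}(T)$, so the right-hand side is precisely the density of $\mathds{P}$ over $\mathds{Q}$ on $\sigma(X(T-s))$; but $X(T-s)$ has density $p(T-s,\,\cdot\,)$ under $\mathds{P}$ and density $q$ under $\mathds{Q}$, so this density equals $p(T-s,X(T-s))/q(X(T-s)) = \ell(T-s,X(T-s))$, as required.

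\emph{Main obstacle.} The genuinely delicate points are the two Radon--Nikod\'ym identifications --- on the full path space in Step 1, which rests squarely on pathwise uniqueness for \eqref{sdeids}, and on $\sigma(X(T-s))$ at the close of Step 2 --- together with the conditional-independence assertion; each is standard but deserves one careful line. As an aside, the same conclusion can also be read off directly from the time-reversed dynamics of $\log\ell(T-s,X(T-s))$ once Proposition~\ref{ptra} and It\^o's formula are in hand, but the argument above is shorter and explains the phenomenon.
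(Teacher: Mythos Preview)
Your proof is correct and follows essentially the same route as the paper's: define the closed $\mathds{Q}$-martingale $s \mapsto \mathds{E}_{\mathds{Q}}\big[\ell(0,X(0)) \,\big|\, \mathcal{G}(T-s)\big]$, invoke the Markov property to reduce to conditioning on $X(T-s)$, and identify the result as $\ell(T-s,X(T-s))$. The only cosmetic difference is that the paper carries out the final identification by a direct transition-kernel computation of $\mathds{E}_{\mathds{Q}}\big[\ell(0,X(0))\,\mathds{1}_{A}(X(T-s))\big]$, whereas you phrase it as restricting the Radon--Nikod\'ym derivative $\mathrm{d}\mathds{P}/\mathrm{d}\mathds{Q}$ to $\sigma(X(T-s))$; your Step~1 (the path-space density) is thus a slight elaboration the paper does not need, but the substance is identical.
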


\begin{corollary} \label{cortrlrrepzrp} Under the \textnormal{\hyperref[sosaojkoia]{Assumptions \ref*{sosaojkoia}}} and $\mathrm{Q}(\mathds{R}^{n}) = 1$, we fix $T \in (0,\infty)$. The time-reversed process 
\begin{equation} \label{trlrrepzrp}
\ell\big(T-s,X(T-s)\big) \cdot \log \ell\big(T-s,X(T-s)\big) \, , \qquad 0 \leqslant s \leqslant T
\end{equation}
is a submartingale of the backwards filtration $(\mathcal{G}(T-s))_{0 \leqslant s \leqslant T}$ under the probability measure $\mathds{Q}$. In particular, we have
\begin{equation} \label{dotrefuc}
H\big( P(t) \, \vert \, \mathrm{Q} \big) \leqslant H\big( P(0) \, \vert \, \mathrm{Q} \big), \qquad 0 \leqslant t \leqslant T.
\end{equation}
\begin{proof} This is an immediate consequence of \hyperref[ovtfjofmidpwcd]{Theorem \ref*{ovtfjofmidpwcd}}, Jensen's inequality for conditional expectations, and the convexity of the function $f(x) = x \log x$, $x > 0$.
\end{proof}
\end{corollary}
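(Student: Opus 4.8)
The plan is to deduce the submartingale property directly from the martingale property of the time-reversed likelihood ratio process, by applying the conditional Jensen inequality to the convex function $f(x) = x \log x$, and then to obtain \eqref{dotrefuc} simply by taking expectations.

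First I would set $N(T-s) \vcentcolon= \ell(T-s, X(T-s))$ for $0 \leqslant s \leqslant T$ and recall from the Fontbona--Jourdain theorem (Theorem \ref{ovtfjofmidpwcd}) that $(N(T-s))_{0 \leqslant s \leqslant T}$ is a martingale of the backwards filtration $(\mathcal{G}(T-s))_{0 \leqslant s \leqslant T}$ under $\mathds{Q}$; in particular $N(T-s) \in L^{1}(\mathds{Q})$ for each $s$, and $\mathds{E}_{\mathds{Q}}[N(T-s)] = 1$, since under $\mathds{Q}$ the random variable $X(T-s)$ has the invariant distribution $\mathrm{Q}$. Next I would record that the function $f \colon [0,\infty) \rightarrow \mathds{R}$ given by $f(x) = x \log x$ for $x > 0$ and $f(0) = 0$ is continuous, convex, and bounded below by $-1/\mathrm{e}$. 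Thanks to this lower bound, the negative part $f(N(T-s))^{-}$ is bounded, so that for $0 \leqslant s_{1} \leqslant s_{2} \leqslant T$ the conditional Jensen inequality applies (in the generalized sense) and gives
\begin{equation*}
\mathds{E}_{\mathds{Q}}\Big[ f\big(N(T-s_{2})\big) \ \big\vert \ \mathcal{G}(T-s_{1}) \Big] \geqslant f\Big( \mathds{E}_{\mathds{Q}}\big[ N(T-s_{2}) \ \big\vert \ \mathcal{G}(T-s_{1}) \big] \Big) = f\big(N(T-s_{1})\big).
\end{equation*}
This is exactly the submartingale inequality for the process $f(N(T-s)) = \ell(T-s, X(T-s)) \log \ell(T-s, X(T-s))$ of \eqref{trlrrepzrp}.

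To upgrade this to a genuine submartingale, and to derive \eqref{dotrefuc}, I would compute the expectation of $f(N(T-s))$. Using again that $X(T-s)$ has distribution $\mathrm{Q}$ under $\mathds{Q}$, together with the identity $\ell(t, \cdot) \, q = p(t, \cdot)$, one finds
\begin{equation*}
\mathds{E}_{\mathds{Q}}\big[ f(N(T-s)) \big] = \int_{\mathds{R}^{n}} \ell(T-s, x) \log \ell(T-s, x) \, q(x) \, \textnormal{d}x = \int_{\mathds{R}^{n}} \log \ell(T-s, x) \, p(T-s, x) \, \textnormal{d}x = H\big( P(T-s) \, \vert \, \mathrm{Q} \big).
\end{equation*}
At $s = T$ this equals $H(P(0) \, \vert \, \mathrm{Q})$, which is finite by condition \ref{saosaojko} of Assumptions \ref{osaojko} together with Lemma \ref{llreeef}. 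Since the generalized submartingale $f(N(T-s))$ is bounded below, the map $s \mapsto \mathds{E}_{\mathds{Q}}[f(N(T-s))]$ is non-decreasing, hence bounded above by $H(P(0) \, \vert \, \mathrm{Q}) < \infty$; therefore $f(N(T-s)) \in L^{1}(\mathds{Q})$ for every $s$, and the submartingale property holds in the usual sense. Finally, the monotonicity of $s \mapsto \mathds{E}_{\mathds{Q}}[f(N(T-s))] = H(P(T-s) \, \vert \, \mathrm{Q})$ says precisely that $t \mapsto H(P(t) \, \vert \, \mathrm{Q})$ is non-increasing on $[0,T]$, which is \eqref{dotrefuc}.

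The only slightly delicate point --- hardly an obstacle --- is the use of the conditional Jensen inequality before integrability of $f(N(T-s))$ has been established; this is handled by the uniform lower bound $f \geqslant -1/\mathrm{e}$, which makes the conditional expectations well-defined with values in $[-1/\mathrm{e}, +\infty]$, the $L^{1}$-integrability being recovered a posteriori from the finiteness of $H(P(0) \, \vert \, \mathrm{Q})$.
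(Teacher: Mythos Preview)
Your proof is correct and follows exactly the approach indicated in the paper: the Fontbona--Jourdain martingale property, conditional Jensen, and convexity of $x\log x$. You have simply made explicit the integrability considerations (via the lower bound $f \geqslant -1/\mathrm{e}$ and the finiteness of $H(P(0)\,\vert\,\mathrm{Q})$) that the paper leaves implicit in its one-line argument.
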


For the convenience of the reader we recall in \hyperref[atfjo]{Appendix \ref*{atfjo}} the surprisingly straightforward proof of \hyperref[ovtfjofmidpwcd]{Theorem \ref*{ovtfjofmidpwcd}}. Since this result states that the time-reversed likelihood ratio process \hyperref[ttrrep]{(\ref*{ttrrep})} is a $\mathds{Q}$-martingale with respect to the backwards filtration $(\mathcal{G}(T-s))_{0 \leqslant s \leqslant T}$, we will first state the analogue of \hyperref[ptra]{Proposition \ref*{ptra}} in terms of the probability measure $\mathds{Q}$ on the path space $\Omega = \mathcal{C}(\mathds{R}_{+};\mathds{R}^{n})$, which is induced by the invariant probability distribution $\mathrm{Q}$ on $\mathds{R}^{n}$.

\begin{proposition} \label{qptra} Under the \textnormal{\hyperref[sosaojkoia]{Assumptions \ref*{sosaojkoia}}} and $\mathrm{Q}(\mathds{R}^{n}) = 1$, we fix $T \in (0,\infty)$. The process 
\begin{equation} \label{qdotowptmtpbm}
\overline{W}^\mathds{Q}(T-s) \vcentcolon = W(T-s) - W(T) + 2  \int_{0}^{s} \nabla \Psi\big( X(T-u)\big) \, \textnormal{d}u
\end{equation}
for $0 \leqslant s \leqslant T$, is a Brownian motion of the backwards filtration $(\mathcal{G}(T-s))_{0 \leqslant s \leqslant T}$ under the probability measure $\mathds{Q}$. Moreover, the time-reversed canonical coordinate process $(X(T-s))_{0 \leqslant s \leqslant T}$ satisfies the stochastic differential equation
\begin{equation} \label{qoortdfttrpp}
\textnormal{d} X(T-s) = - \nabla \Psi\big(X(T-s)\big) \, \textnormal{d}s + \textnormal{d}\overline{W}^{\mathds{Q}}(T-s),
\end{equation}
for $0 \leqslant s \leqslant T$, with respect to the backwards filtration $(\mathcal{G}(T-s))_{0 \leqslant s \leqslant T}$. 
\end{proposition}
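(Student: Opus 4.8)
The plan is to obtain \hyperref[qptra]{Proposition \ref*{qptra}} as the direct specialization of \hyperref[ptra]{Proposition \ref*{ptra}} --- equivalently, of the self-contained time-reversal results in \hyperref[atrod]{Appendix \ref*{atrod}}, \hyperref[Thm1]{Theorems \ref*{Thm1}} and \hyperref[Thm2]{\ref*{Thm2}} --- to the probability measure $\mathds{Q}$ in place of $\mathds{P}$. The only input that changes is the family of one-dimensional marginal densities of the canonical process: under $\mathds{Q}$ the law of $X(0)$ is the invariant distribution $\mathrm{Q}$, so $X(t)$ has density $q(\,\cdot\,)=\mathrm{e}^{-2\Psi}$ for \emph{every} $t\geqslant 0$, and this (time-independent) density solves the stationary forward Kolmogorov equation \hyperref[svfpeqnwfp]{(\ref*{svfpeqnwfp})}. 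Since $\Psi$ is smooth and real-valued, $q$ is smooth and strictly positive, and its logarithmic derivative is $\nabla\log q = -2\,\nabla\Psi$, which is continuous on $\mathds{R}^{n}$; thus the regularity hypotheses under which \hyperref[ptra]{Proposition \ref*{ptra}} is established in \hyperref[atrod]{Appendix \ref*{atrod}} are met for $\mathds{Q}$ as well.

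The steps, in order: first I would note that the decomposition \hyperref[dobmwopsrss]{(\ref*{dobmwopsrss})} of $W$ into the canonical process plus its compensator holds verbatim under $\mathds{Q}$, since the drift coefficient $-\nabla\Psi$ is the same and only the initial law differs. Then I would invoke the time-reversal formula from \hyperref[atrod]{Appendix \ref*{atrod}}: for an It\^{o} diffusion with unit diffusion matrix, drift $b$ and marginal densities $\rho(t,\,\cdot\,)$, the time-reversed process has drift $\bar b(t,x)=-b(t,x)+\nabla\log\rho(t,x)$ and is driven by the reversed Brownian motion obtained from $W$ by subtracting $\int_{0}^{s}\nabla\log\rho\big(T-u,X(T-u)\big)\,\textnormal{d}u$, exactly as in \hyperref[dotowptmtpbm]{(\ref*{dotowptmtpbm})} and \hyperref[poortdfttrpp]{(\ref*{poortdfttrpp})}. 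Substituting $b=-\nabla\Psi$ and $\rho(t,\,\cdot\,)\equiv q$, hence $\nabla\log\rho\equiv -2\,\nabla\Psi$, gives the reversed drift $\bar b=\nabla\Psi-2\,\nabla\Psi=-\nabla\Psi$, which is the drift in \hyperref[qoortdfttrpp]{(\ref*{qoortdfttrpp})}; and it turns the correction term in \hyperref[dotowptmtpbm]{(\ref*{dotowptmtpbm})} into $-\int_{0}^{s}(-2\,\nabla\Psi)\big(X(T-u)\big)\,\textnormal{d}u=+2\int_{0}^{s}\nabla\Psi\big(X(T-u)\big)\,\textnormal{d}u$, which is precisely the process $\overline{W}^{\mathds{Q}}$ defined in \hyperref[qdotowptmtpbm]{(\ref*{qdotowptmtpbm})}. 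That $\overline{W}^{\mathds{Q}}$ is a Brownian motion of the backwards filtration $(\mathcal{G}(T-s))_{0\leqslant s\leqslant T}$ under $\mathds{Q}$ then follows from the same appendix results, its quadratic variation being $s\cdot I_{n}$ because time reversal does not alter the diffusion coefficient of $X$.

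As a consistency check --- and as an alternative, essentially self-contained route --- I would observe that $\mathds{Q}$ makes the Langevin--Smoluchowski diffusion \emph{reversible}: the generator $\tfrac{1}{2}\Delta-\langle\nabla\Psi,\nabla\,\cdot\,\rangle$ is symmetric in $L^{2}(\mathrm{Q})$ and $\mathrm{Q}$ satisfies detailed balance, so $(X(T-s))_{0\leqslant s\leqslant T}$ has the same law under $\mathds{Q}$ as $(X(s))_{0\leqslant s\leqslant T}$; hence it again solves $\textnormal{d}X=-\nabla\Psi(X)\,\textnormal{d}t+\textnormal{d}(\text{Brownian motion})$, and one recovers the formula for $\overline{W}^{\mathds{Q}}$ by comparing this reversed equation with \hyperref[dobmwopsrss]{(\ref*{dobmwopsrss})} and changing variables $u\mapsto T-u$ in the resulting Riemann--Stieltjes integral.

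The main --- and essentially only --- point requiring care is verifying that $\mathds{Q}$ meets the standing hypotheses of the time-reversal theorem of \hyperref[atrod]{Appendix \ref*{atrod}}. This is routine here: the density $q=\mathrm{e}^{-2\Psi}$ is smooth, strictly positive and time-independent with continuous logarithmic derivative $-2\,\nabla\Psi$, and the drift and diffusion coefficients coincide with those already handled under $\mathds{P}$ in \hyperref[fosmolds]{Lemma \ref*{fosmolds}} and \hyperref[ptra]{Proposition \ref*{ptra}}. Everything else is a substitution.
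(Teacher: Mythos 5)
Your proposal is correct and follows exactly the route the paper takes: the paper's own justification for this proposition is simply a reference to the general time-reversal results (\hyperref[Thm1]{Theorems \ref*{Thm1}} and \hyperref[Thm2]{\ref*{Thm2}} of \hyperref[atrod]{Appendix \ref*{atrod}}), specialized to unit dispersion, drift $-\nabla\Psi$, and the time-independent marginal density $q=\mathrm{e}^{-2\Psi}$ with $\nabla\log q=-2\,\nabla\Psi$, which is precisely the substitution you carry out. Your reversibility/detailed-balance consistency check is a pleasant extra but not part of the paper's argument.
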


Again, for the proof of this result, we refer to \hyperref[Thm1]{Theorems \ref*{Thm1}} and \hyperref[Thm2]{\ref*{Thm2}} of \hyperref[atrod]{Appendix \ref*{atrod}}. 

\smallskip

In the following lemma we determine the drift term that allows one to pass from the $\mathds{Q}$-Brownian motion $\big(\overline{W}^\mathds{Q}(T-s)\big)_{0 \leqslant s \leqslant T}$ to the $\mathds{P}$-Brownian motion $\big(\overline{W}^\mathds{P}(T-s)\big)_{0 \leqslant s \leqslant T}$, and vice versa.

\begin{lemma} Under the \textnormal{\hyperref[sosaojkoia]{Assumptions \ref*{sosaojkoia}}} and $\mathrm{Q}(\mathds{R}^{n}) = 1$, we fix $T \in (0,\infty)$. For $0 \leqslant s \leqslant T$, we have
\begin{equation} \label{dtwrtpaqbmbm}
\textnormal{d}\overline{W}^\mathds{Q}(T-s) = \nabla \log \ell\big(T-s,X(T-s)\big) \, \textnormal{d}s + \textnormal{d}\overline{W}^\mathds{P}(T-s).
\end{equation}
\begin{proof} One just has to compare the equations \hyperref[oortdfttrpp]{(\ref*{oortdfttrpp})} and \hyperref[qoortdfttrpp]{(\ref*{qoortdfttrpp})}.
\end{proof}
\end{lemma}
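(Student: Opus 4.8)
The plan is to prove the identity by directly comparing the two explicit pathwise definitions of the reversed driving noises; no change-of-measure manipulation is actually needed. Recall that $\overline{W}^{\mathds{P}}(T-s)$ is defined in \hyperref[dotowptmtpbm]{(\ref*{dotowptmtpbm})} as $W(T-s)-W(T)-\int_{0}^{s}\nabla\log p\big(T-u,X(T-u)\big)\,\textnormal{d}u$, while $\overline{W}^{\mathds{Q}}(T-s)$ is defined in \hyperref[qdotowptmtpbm]{(\ref*{qdotowptmtpbm})} as $W(T-s)-W(T)+2\int_{0}^{s}\nabla\Psi\big(X(T-u)\big)\,\textnormal{d}u$. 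Both expressions are Lebesgue (indeed Riemann--Stieltjes) integrals of continuous integrands along the continuous path $u\mapsto X(T-u)$ — here I would invoke condition \hyperref[tsaosaojko]{\ref*{tsaosaojko}} of \hyperref[sosaojkoia]{Assumptions \ref*{sosaojkoia}} for the continuity of $\nabla\log p$ — and are therefore well-defined pathwise, i.e.\ independently of whether we work under $\mathds{P}$ or under $\mathds{Q}$.

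First I would subtract these two defining formulas: the terms $W(T-s)-W(T)$ cancel, leaving
\[
\overline{W}^{\mathds{Q}}(T-s)-\overline{W}^{\mathds{P}}(T-s)=\int_{0}^{s}\Big(2\,\nabla\Psi\big(X(T-u)\big)+\nabla\log p\big(T-u,X(T-u)\big)\Big)\,\textnormal{d}u.
\]
Next I would invoke the elementary identity $\log\ell(t,x)=\log p(t,x)+2\,\Psi(x)$ from \hyperref[rndlr]{(\ref*{rndlr})}, which upon taking gradients gives $\nabla\log\ell(t,x)=\nabla\log p(t,x)+2\,\nabla\Psi(x)$; substituting this into the integrand above yields $\overline{W}^{\mathds{Q}}(T-s)-\overline{W}^{\mathds{P}}(T-s)=\int_{0}^{s}\nabla\log\ell\big(T-u,X(T-u)\big)\,\textnormal{d}u$. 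Differentiating in $s$ then produces exactly \hyperref[dtwrtpaqbmbm]{(\ref*{dtwrtpaqbmbm})}.

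There is essentially no obstacle here: this is a one-line computation, and the only point worth a remark is that the manipulation does not secretly rely on Girsanov's theorem — the two reversed noises are explicit functionals of the single process $W$ (equivalently of $X$), so their difference is a genuine pathwise identity valid under either measure. As an equivalent alternative, one may instead equate the two drift-plus-martingale decompositions of $\textnormal{d}X(T-s)$ furnished by \hyperref[oortdfttrpp]{(\ref*{oortdfttrpp})} in \hyperref[ptra]{Proposition \ref*{ptra}} and by \hyperref[qoortdfttrpp]{(\ref*{qoortdfttrpp})} in \hyperref[qptra]{Proposition \ref*{qptra}}; cancelling the common term $-\nabla\Psi\big(X(T-s)\big)\,\textnormal{d}s$ from the two right-hand sides leaves $\nabla\log\ell\big(T-s,X(T-s)\big)\,\textnormal{d}s+\textnormal{d}\overline{W}^{\mathds{P}}(T-s)=\textnormal{d}\overline{W}^{\mathds{Q}}(T-s)$, which is the claim.
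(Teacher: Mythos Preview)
Your proof is correct and essentially the same as the paper's: the paper's one-line proof says to compare \hyperref[oortdfttrpp]{(\ref*{oortdfttrpp})} and \hyperref[qoortdfttrpp]{(\ref*{qoortdfttrpp})}, which is exactly the alternative you spell out at the end, and your first argument (subtracting the defining formulas \hyperref[dotowptmtpbm]{(\ref*{dotowptmtpbm})} and \hyperref[qdotowptmtpbm]{(\ref*{qdotowptmtpbm})}) is a trivially equivalent variant.
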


The next corollary is a direct consequence of \hyperref[ovtfjofmidpwcd]{Theorem \ref*{ovtfjofmidpwcd}}, \hyperref[qptra]{Proposition \ref*{qptra}} and It\^{o}'s formula.

\begin{corollary} \label{fcotfjtabmgs} Under the \textnormal{\hyperref[sosaojkoia]{Assumptions \ref*{sosaojkoia}}} and $\mathrm{Q}(\mathds{R}^{n}) = 1$, we fix $T \in (0,\infty)$. The time-reversed likelihood ratio process \textnormal{\hyperref[ttrrep]{(\ref*{ttrrep})}} and its logarithm satisfy the stochastic differential equations
\begin{equation} \label{sdefcotfjtabmgs}
\textnormal{d}\ell \big(T-s,X(T-s)\big) = \Big\langle \nabla \ell\big(T-s,X(T-s)\big) \, , \, \textnormal{d}\overline{W}^\mathds{Q}(T-s) \Big\rangle_{\mathds{R}^{n}}
\end{equation}
and
\begin{equation} \label{sdeftpmpaei}
\textnormal{d}\log \ell\big(T-s,X(T-s)\big) = \Bigg\langle \frac{\nabla \ell\big(T-s,X(T-s)\big)}{\ell\big(T-s,X(T-s)\big)} \, , \, \textnormal{d}\overline{W}^\mathds{Q}(T-s)\Bigg\rangle_{\mathds{R}^{n}} \mkern-9mu - \frac{1}{2} \frac{\big\vert \nabla \ell\big(T-s,X(T-s)\big) \big\vert^{2}}{\ell\big(T-s,X(T-s)\big)^{2}} \, \textnormal{d}s,
\end{equation}
respectively, for $0 \leqslant s \leqslant T$, with respect to the backwards filtration $(\mathcal{G}(T-s))_{0 \leqslant s \leqslant T}$. 
\begin{proof} To prove \hyperref[sdefcotfjtabmgs]{(\ref*{sdefcotfjtabmgs})}, the decisive insight is provided by \hyperref[ovtfjofmidpwcd]{Theorem \ref*{ovtfjofmidpwcd}} due to Fontbona and Jourdain \cite{FJ16}. This implies that the drift term in \hyperref[sdefcotfjtabmgs]{(\ref*{sdefcotfjtabmgs})} must vanish, so that it suffices to calculate the diffusion term in front of $\textnormal{d}\overline{W}^\mathds{Q}(T-s)$ in \hyperref[sdefcotfjtabmgs]{(\ref*{sdefcotfjtabmgs})}; using \hyperref[qoortdfttrpp]{(\ref*{qoortdfttrpp})}, this is a straightforward task.

\smallskip

We note that the vanishing of the drift term in \hyperref[sdefcotfjtabmgs]{(\ref*{sdefcotfjtabmgs})} can also be obtained from applying It\^{o}'s formula to the process \hyperref[ttrrep]{(\ref*{ttrrep})}, using \hyperref[qoortdfttrpp]{(\ref*{qoortdfttrpp})} as well as the backwards Kolmogorov equation \hyperref[fpdefflrf]{(\ref*{fpdefflrf})} for the likelihood ratio function $\ell(t,x)$ and observing that all these terms cancel out. But such a procedure does not provide a hint as to why this miracle happens.

\smallskip

Having said this, we apply It\^{o}'s formula to the process \hyperref[ttrrep]{(\ref*{ttrrep})} and use \hyperref[ovtfjofmidpwcd]{Theorem \ref*{ovtfjofmidpwcd}} to obtain \hyperref[sdefcotfjtabmgs]{(\ref*{sdefcotfjtabmgs})}. Assertion \hyperref[sdeftpmpaei]{(\ref*{sdeftpmpaei})} follows once again by applying It\^{o}'s formula to the logarithm of the process \hyperref[ttrrep]{(\ref*{ttrrep})}, and using the dynamics \hyperref[sdefcotfjtabmgs]{(\ref*{sdefcotfjtabmgs})}.
\end{proof}
\end{corollary}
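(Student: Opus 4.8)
The plan is to obtain both stochastic differentials by a direct application of It\^{o}'s formula to the time-reversed coordinate process under $\mathds{Q}$, feeding in the dynamics \hyperref[qoortdfttrpp]{(\ref*{qoortdfttrpp})} of \hyperref[qptra]{Proposition \ref*{qptra}}, and then fixing the \emph{shape} of the answer by invoking the Fontbona--Jourdain theorem \hyperref[ovtfjofmidpwcd]{(Theorem \ref*{ovtfjofmidpwcd})}. As a preliminary I would record the PDE satisfied by the likelihood ratio function $\ell(t,x) = p(t,x)/q(x)$: writing $p = \ell\, q$ in the Fokker--Planck equation \hyperref[fpeqnwfp]{(\ref*{fpeqnwfp})}, using the stationary equation \hyperref[svfpeqnwfp]{(\ref*{svfpeqnwfp})} for $q$ together with the identity $\nabla q = -2\, q\, \nabla\Psi$, all the second-order terms carried by $q$ alone drop out and one is left with the backward Kolmogorov equation $\partial_{t}\ell = \tfrac{1}{2}\Delta\ell - \langle\nabla\Psi, \nabla\ell\rangle_{\mathds{R}^{n}}$.

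Next I would apply It\^{o}'s formula to $s \mapsto \ell(T-s, X(T-s))$ along the backward dynamics \hyperref[qoortdfttrpp]{(\ref*{qoortdfttrpp})}, being careful that the substitution $t \mapsto T-s$ reverses the sign of the temporal derivative. This yields a martingale part $\langle\nabla\ell(T-s,X(T-s)), \mathrm{d}\overline{W}^{\mathds{Q}}(T-s)\rangle_{\mathds{R}^{n}}$ (the diffusion matrix in \hyperref[qoortdfttrpp]{(\ref*{qoortdfttrpp})} being the identity) plus a drift coefficient $-\partial_{t}\ell - \langle\nabla\ell, \nabla\Psi\rangle_{\mathds{R}^{n}} + \tfrac{1}{2}\Delta\ell$, evaluated along the path. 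By the Fontbona--Jourdain theorem the process \hyperref[ttrrep]{(\ref*{ttrrep})} is a $\mathds{Q}$-martingale of the backwards filtration, so this drift must vanish, and \hyperref[sdefcotfjtabmgs]{(\ref*{sdefcotfjtabmgs})} follows; alternatively, substituting the backward Kolmogorov equation from the previous step makes every drift term cancel explicitly. I would include this hand computation as well but, following the discussion in the text, flag that it only confirms the cancellation without explaining it, whereas the Fontbona--Jourdain identity supplies the conceptual reason.

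For \hyperref[sdeftpmpaei]{(\ref*{sdeftpmpaei})} I would then apply It\^{o}'s formula once more, to $x \mapsto \log x$ composed with the strictly positive semimartingale of \hyperref[sdefcotfjtabmgs]{(\ref*{sdefcotfjtabmgs})} --- strict positivity of $\ell$ on $(0,\infty)\times\mathds{R}^{n}$ being part of condition \hyperref[tsaosaojko]{\ref*{tsaosaojko}} of the Assumptions, so the logarithm is admissible. The first-order term gives $\langle\nabla\ell(T-s,X(T-s))/\ell(T-s,X(T-s)), \mathrm{d}\overline{W}^{\mathds{Q}}(T-s)\rangle_{\mathds{R}^{n}}$, while the It\^{o} correction, built from the quadratic variation $|\nabla\ell|^{2}\,\mathrm{d}s$ read off from \hyperref[sdefcotfjtabmgs]{(\ref*{sdefcotfjtabmgs})}, contributes $-\tfrac{1}{2}\,|\nabla\ell|^{2}/\ell^{2}\,\mathrm{d}s$; together these are exactly \hyperref[sdeftpmpaei]{(\ref*{sdeftpmpaei})}. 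I do not expect a genuine obstacle here: this is a corollary of the Fontbona--Jourdain theorem and of the explicit backward dynamics of \hyperref[qptra]{Proposition \ref*{qptra}}. The only points that demand care are the sign bookkeeping in the backward It\^{o} formula and the correct passage from the Fokker--Planck equation for $p$ to the Kolmogorov equation for $\ell$; once these are in hand, both displays drop out mechanically.
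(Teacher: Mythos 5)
Your proposal is correct and follows essentially the same route as the paper: It\^{o}'s formula applied along the backward dynamics \hyperref[qoortdfttrpp]{(\ref*{qoortdfttrpp})}, with the drift eliminated either conceptually via the Fontbona--Jourdain martingale property or explicitly via the backward Kolmogorov equation \hyperref[fpdefflrf]{(\ref*{fpdefflrf})} for $\ell$, and then a second application of It\^{o}'s formula to the logarithm. The sign bookkeeping in your drift coefficient and the resulting cancellation are both right, so nothing further is needed.
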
 

We have now all the ingredients in order to compute, under the additional assumption $\mathrm{Q}(\mathds{R}^{n}) = 1$, the dynamics of the time-reversed relative entropy process \hyperref[ttrrep]{(\ref*{ttrrep})} under the probability measure $\mathds{P}$. Indeed, substituting \hyperref[dtwrtpaqbmbm]{(\ref*{dtwrtpaqbmbm})} into the stochastic equation \hyperref[sdeftpmpaei]{(\ref*{sdeftpmpaei})}, we see that the process \hyperref[ttrrep]{(\ref*{ttrrep})} satisfies the crucial stochastic differential equation \hyperref[sdotpttrrep]{(\ref*{sdotpttrrep})}, for $0 \leqslant s \leqslant T$, with respect to the backwards filtration $(\mathcal{G}(T-s))_{0 \leqslant s \leqslant T}$.


\subsection{The proof of \texorpdfstring{\hyperref[thetsix]{Theorem \ref*{thetsix}}}{Theorem 3.6}} \label{cotpottpf}


We drop now the assumption $\mathrm{Q}(\mathds{R}^{n}) < \infty$, and write the Fokker-Planck equation \hyperref[fpeqnwfp]{(\ref*{fpeqnwfp})} as
\begin{equation} \label{pdeftupdpp}
\partial_{t} p(t,x) = \tfrac{1}{2} \Delta p(t,x) + \big\langle \nabla p(t,x) \, , \nabla \Psi(x)\big\rangle_{\mathds{R}^{n}} + p(t,x) \, \Delta \Psi(x), \qquad t > 0.
\end{equation}
The probability density function $p(t,x)$ can be represented in the form
\begin{equation} \label{tpdfptxcbritf}
p(t,x) = \ell(t,x) \, q(x) = \ell(t,x) \, \mathrm{e}^{ - 2 \Psi(x)}, \qquad t \geqslant 0,
\end{equation}
so we find that the likelihood ratio function $\ell(t,x)$ solves the \textit{backwards} Kolmogorov equation
\begin{equation} \label{fpdefflrf}
\partial_{t} \ell(t,x) = \tfrac{1}{2} \Delta \ell(t,x) - \big\langle \nabla \ell(t,x) \, , \nabla \Psi(x) \big\rangle_{\mathds{R}^{n}}, \qquad t > 0,
\end{equation}
a feature consonant with the fact that the dynamics of the likelihood ratio process are most transparent under \textit{time reversal}. This equation will allow us to develop an alternative way of deriving the dynamics \hyperref[sdotpttrrep]{(\ref*{sdotpttrrep})} and proving \hyperref[thetsix]{Theorem \ref*{thetsix}}, which does not rely on assumption \hyperref[wsiftpotpsotaa]{(\ref*{wsiftpotpsotaa})} and uses exclusively the probability measure $\mathds{P}$, as follows.

\begin{proof}[\bfseries \upshape Proof of \texorpdfstring{\hyperref[thetsix]{Theorem \ref*{thetsix}}}{}] We have to show that the stochastic process $(M(T-s))_{0 \leqslant s \leqslant T}$ in \hyperref[dollafipim]{(\ref*{dollafipim})} is a $\mathds{P}$-martingale of the backwards filtration $(\mathcal{G}(T-s))_{0 \leqslant s \leqslant T}$, is bounded in $L^{2}(\mathds{P})$, and admits the integral representation \hyperref[erdollafipim]{(\ref*{erdollafipim})}. 

\medskip

\noindent \fbox{\textsf{Step 1.}} Applying It\^{o}'s formula to the time-reversed likelihood ratio process \hyperref[ttrrep]{(\ref*{ttrrep})}, and using \hyperref[oortdfttrpp]{(\ref*{oortdfttrpp})} as well as the backwards Kolmogorov equation \hyperref[fpdefflrf]{(\ref*{fpdefflrf})} for the likelihood ratio function $\ell(t,x)$, we obtain the stochastic differential equation
\begin{equation} \label{qubmcfrsdeftpmpaei}
\frac{\textnormal{d} \ell\big(T-s,X(T-s)\big)}{\ell\big(T-s,X(T-s)\big)} = \Bigg\langle \frac{\nabla \ell\big(T-s,X(T-s)\big)}{\ell\big(T-s,X(T-s)\big)} \, , \, \textnormal{d}\overline{W}^\mathds{P}(T-s) \Bigg\rangle_{\mathds{R}^{n}} \mkern-9mu + \frac{\big\vert \nabla \ell\big(T-s,X(T-s)\big) \big\vert^{2}}{\ell\big(T-s,X(T-s)\big)^{2}} \, \textnormal{d}s
\end{equation}
as well as its logarithmic version
\begin{equation} \label{qubmcfrsdeftpmpaeilogv}
\textnormal{d} \log \ell\big(T-s,X(T-s)\big) = \Big\langle \nabla \log \ell\big(T-s,X(T-s)\big) \, , \, \textnormal{d}\overline{W}^\mathds{P}(T-s) \Big\rangle_{\mathds{R}^{n}} + \tfrac{1}{2} \big\vert \nabla \log \ell\big(T-s,X(T-s)\big) \big\vert^{2}  \textnormal{d}s,
\end{equation}
for $0 \leqslant s \leqslant T$, with respect to the backwards filtration $(\mathcal{G}(T-s))_{0 \leqslant s \leqslant T}$. This equation right above is nothing other than the desired stochastic differential equation \hyperref[sdotpttrrep]{(\ref*{sdotpttrrep})} from the beginning of \hyperref[ssgrodwudm]{Section \ref*{ssgrodwudm}}. 

\medskip

\noindent \fbox{\textsf{Step 2.}} We show first, that the process $(M(T-s))_{0 \leqslant s \leqslant T}$ of \hyperref[dollafipim]{(\ref*{dollafipim})}, with integral representation \hyperref[erdollafipim]{(\ref*{erdollafipim})}, is a continuous local $\mathds{P}$-martingale of the backwards filtration $(\mathcal{G}(T-s))_{0 \leqslant s \leqslant T}$.

\smallskip

By condition \hyperref[tsaosaojko]{\ref*{tsaosaojko}} of \hyperref[sosaojkoia]{Assumptions \ref*{sosaojkoia}} the function $(0,\infty) \ni t \mapsto \nabla \log \ell(t,x)$ is continuous for every $x \in \mathds{R}^{n}$. Together with the continuity of the paths of the canonical coordinate process $(X(t))_{t \geqslant 0}$, this implies that 
\begin{equation} \label{twtcotpotccptitttfitisifoas}
\int_{0}^{T-\varepsilon} \frac{\big\vert \nabla \ell\big(T-u,X(T-u)\big) \big\vert^{2}}{\ell\big(T-u,X(T-u)\big)^{2}} \, \textnormal{d}u < \infty, \qquad \mathds{P}\textnormal{-almost surely},
\end{equation}
for every $0 < \varepsilon \leqslant T$. On account of \hyperref[twtcotpotccptitttfitisifoas]{(\ref*{twtcotpotccptitttfitisifoas})}, the sequence of stopping times (with respect to the backwards filtration)
\begin{equation} \label{twtcdostptitttfitisifoas}
\tau_{n} \vcentcolon = \inf \Bigg\{ t \geqslant 0 \colon \, \int_{0}^{t} \frac{\big\vert \nabla \ell\big(T-u,X(T-u)\big) \big\vert^{2}}{\ell\big(T-u,X(T-u)\big)^{2}} \, \textnormal{d}u \, \geqslant n \,  \Bigg\} \wedge T, \qquad n \in \mathds{N}_{0}
\end{equation}
is non-decreasing and converges $\mathds{P}$-almost surely to $T$. Hence, according to \hyperref[sdotpttrrep]{(\ref*{sdotpttrrep})} and the definition in \hyperref[dollafipim]{(\ref*{dollafipim})}, the stopped process $(M^{\tau_{n}}(T-s))_{0 \leqslant s \leqslant T}$ is a uniformly integrable $\mathds{P}$-martingale of the backwards filtration, for every $n \in \mathds{N}_{0}$.

\medskip

\noindent \fbox{\textsf{Step 3.}} To show that, in fact, the process $(M(T-s))_{0 \leqslant s \leqslant T}$ is a true $\mathds{P}$-martingale, we have to rely on the finite free energy condition \hyperref[ffecaoo]{(\ref*{ffecaoo})} which, in the light of \hyperref[llreeef]{Lemma \ref*{llreeef}}, asserts that the relative entropy $H( P(0) \, \vert \, \mathrm{Q} )$ is finite. 

\smallskip 

Taking expectations with respect to the probability measure $\mathds{P}$ in \hyperref[sdotpttrrep]{(\ref*{sdotpttrrep})} at time $s = \tau_{n}$, and noting from \textsf{Step 2} that the stopped process $(M^{\tau_{n}}(T-s))_{0 \leqslant s \leqslant T}$ is a true $\mathds{P}$-martingale with respect to the backwards filtration $(\mathcal{G}(T-s))_{0 \leqslant s \leqslant T}$, we get
\begingroup
\addtolength{\jot}{0.7em}
\begin{align}
\mathds{E}_{\mathds{P}}\Bigg[\int_{0}^{\tau_{n}} \frac{1}{2} \frac{\big\vert \nabla \ell\big(T-u,X(T-u)\big) \big\vert^{2}}{\ell\big(T-u,X(T-u)\big)^{2}} \,  \textnormal{d}u\Bigg] 
&=  H\big( P(T-\tau_{n}) \, \vert \, \mathrm{Q} \big) - H\big( P(T) \, \vert \, \mathrm{Q} \big) \label{gfdmlmpapitupc} \\
&\leqslant H\big( P(0) \, \vert \, \mathrm{Q} \big) - H\big( P(T) \, \vert \, \mathrm{Q} \big), \label{mlmpapitupc}
\end{align}
\endgroup
for every $n \in \mathds{N}_{0}$. The inequality in \hyperref[mlmpapitupc]{(\ref*{mlmpapitupc})} is justified by the decrease of the relative entropy function $t \mapsto H( P(t) \, \vert \, \mathrm{Q})$, which we have from \hyperref[dotrefuc]{(\ref*{dotrefuc})}. However, in \hyperref[cortrlrrepzrp]{Corollary \ref*{cortrlrrepzrp}} we had assumed that $\mathrm{Q}$ is a probability measure, which is in general not the case under the \hyperref[sosaojkoia]{Assumptions \ref*{sosaojkoia}}. Nevertheless, the $\sigma$-finite measure $\mathrm{Q}$ and the stochastic differential equation \hyperref[sdeids]{(\ref*{sdeids})} induce a $\sigma$-finite measure $\mathds{Q}$ on the path space $\Omega = \mathcal{C}(\mathds{R}_{+};\mathds{R}^{n})$, which is invariant in the sense that at all times $t \geqslant 0$ its marginal distributions are equal to $\mathrm{Q}$. For this path measure $\mathds{Q}$, conditional expectations with respect to the canonical forward and backwards filtrations are well-defined, see \cite{Leo14}. As a consequence, the martingale assertion of \hyperref[ovtfjofmidpwcd]{Theorem \ref*{ovtfjofmidpwcd}} makes sense also in this $\sigma$-finite setting; for the details we refer to \cite[Chapter 1]{Tsc19}. Since Jensen's inequality is also valid for conditional expectations with respect to the $\sigma$-finite path measure $\mathds{Q}$ with marginals $\mathrm{Q}$, \hyperref[cortrlrrepzrp]{Corollary \ref*{cortrlrrepzrp}} remains true under the \hyperref[sosaojkoia]{Assumptions \ref*{sosaojkoia}} without the additional requirement $\mathrm{Q}(\mathds{R}^{n}) =1$.
 
\smallskip

Now, as we have shown the inequality \hyperref[mlmpapitupc]{(\ref*{mlmpapitupc})}, we pass to the limit as $n \rightarrow \infty$. Since the non-decreasing sequence of stopping times $(\tau_{n})_{n \geqslant 0}$ from \hyperref[twtcdostptitttfitisifoas]{(\ref*{twtcdostptitttfitisifoas})} converges $\mathds{P}$-almost surely to $T$, we deduce from \hyperref[mlmpapitupc]{(\ref*{mlmpapitupc})} and the monotone convergence theorem that
\begin{equation} \label{fecwfoel}
\mathds{E}_{\mathds{P}}\Bigg[\int_{0}^{T} \frac{1}{2} \frac{\big\vert \nabla \ell\big(T-u,X(T-u)\big) \big\vert^{2}}{\ell\big(T-u,X(T-u)\big)^{2}} \, \textnormal{d}u\Bigg] \leqslant H\big( P(0) \, \vert \, \mathrm{Q} \big) - H\big( P(T) \, \vert \, \mathrm{Q} \big) < \infty,
\end{equation}
because the initial relative entropy $H( P(0) \, \vert \, \mathrm{Q})$ is finite by assumption and $H( P(T) \, \vert \, \mathrm{Q})$ cannot take the value $-\infty$; see \hyperref[llreeef]{Lemma \ref*{llreeef}} and \hyperref[wdadotrewrttmq]{Appendix \ref*{wdadotrewrttmq}}. From \hyperref[fecwfoel]{(\ref*{fecwfoel})} we finally deduce that the stochastic integral in \hyperref[sdotpttrrep]{(\ref*{sdotpttrrep})} defines an $L^{2}(\mathds{P})$-bounded martingale for $0 \leqslant s \leqslant T$. 

\smallskip

Summing up, we conclude that the process $(M(T-s))_{0 \leqslant s \leqslant T}$ is a $L^{2}(\mathds{P})$-bounded martingale of the backwards filtration, satisfying \hyperref[erdollafipim]{(\ref*{erdollafipim})}. This completes the proof of \hyperref[thetsix]{Theorem \ref*{thetsix}}.
\end{proof}


\subsection{The proof of \texorpdfstring{\hyperref[thetthretv]{Theorem \ref*{thetthretv}}}{Theorem 3.8}} 


The first step in the proof of \hyperref[thetthretv]{Theorem \ref*{thetthretv}} is to compute the stochastic differentials of the time-reversed perturbed likelihood ratio process
\begin{equation} \label{trplrp}
\ell^{\beta}\big(T-s,X(T-s)\big) = \frac{p^{\beta}\big(T-s,X(T-s)\big)}{q\big(X(T-s)\big)} \, , \qquad 0 \leqslant s \leqslant T - t_{0},
\end{equation}
and its logarithm. 

\smallskip

By analogy with \hyperref[ptra]{Proposition \ref*{ptra}}, the following result is well known to hold under suitable regularity conditions, such as \hyperref[sosaojkoia]{Assumptions \ref*{sosaojkoia}}. Recall that $(W^{\beta}(t))_{t \geqslant t_{0}}$ denotes the $\mathds{P}^{\beta}$-Brownian motion (in the forward direction of time) defined in \hyperref[wpsdeids]{(\ref*{wpsdeids})}.

\begin{proposition} Under the \textnormal{\hyperref[sosaojkoia]{Assumptions \ref*{sosaojkoia}}}, we let $t_{0} \geqslant 0$ and $T > t_{0}$. The process 
\begin{equation} \label{dotowpbtmtpbm}
\overline{W}^{\mathds{P}^{\beta}}(T-s) \vcentcolon = W^{\beta}(T-s) - W^{\beta}(T) - \int_{0}^{s} \nabla \log p^{\beta}\big(T-u,X(T-u)\big) \, \textnormal{d}u
\end{equation}
for $0 \leqslant s \leqslant T-t_{0}$, is a Brownian motion of the backwards filtration $(\mathcal{G}(T-s))_{0 \leqslant s \leqslant T-t_{0}}$ under the probability measure $\mathds{P}^{\beta}$. Furthermore, the semimartingale decomposition of the time-reversed canonical coordinate process $(X(T-s))_{0 \leqslant s \leqslant T-t_{0}}$ is given by
\begingroup
\addtolength{\jot}{0.7em}
\begin{align}
\textnormal{d} X(T-s) 
&= \Big( \nabla \log p^{\beta}\big(T-s,X(T-s)\big) + \big(\nabla \Psi + \beta\big)\big(X(T-s)\big) \Big) \, \textnormal{d}s + \textnormal{d}\overline{W}^{\mathds{P}^{\beta}}(T-s) \label{rtdfttrppitop} \\
&= \Big( \nabla \log \ell^{\beta} \big(T-s,X(T-s)\big) - \big(\nabla \Psi - \beta\big)\big(X(T-s)\big) \Big) \, \textnormal{d}s + \textnormal{d}\overline{W}^{\mathds{P}^{\beta}}(T-s), \label{rtdfttrpp}
\end{align}
\endgroup
for $0 \leqslant s \leqslant T - t_{0}$, with respect to the backwards filtration $(\mathcal{G}(T-s))_{0 \leqslant s \leqslant T - t_{0}}$. 
\end{proposition}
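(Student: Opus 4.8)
The plan is to derive this proposition from the general time-reversal theory for Langevin--Smoluchowski diffusions developed in Appendix \ref{atrod} (Theorems \ref{Thm1} and \ref{Thm2}), applied to the \emph{perturbed} equation \eqref{wpsdeids} in place of \eqref{sdeids}. The key observation is that \eqref{wpsdeids} is itself a Langevin--Smoluchowski stochastic differential equation: its drift is $-\nabla(\Psi+B)$, its driving noise is the $\mathds{P}^{\beta}$-Brownian motion $W^{\beta}$, and its initial law at time $t_{0}$ is $P^{\beta}(t_{0})=P(t_{0})$. Hence Proposition \ref{ptra} applies to it essentially verbatim, once one checks that the perturbed system meets the regularity hypotheses under which Appendix \ref{atrod} operates.

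First I would verify these hypotheses for the potential $\Psi+B$. Condition \ref{faosaojko} holds since $B\in\mathcal{C}^{\infty}(\mathds{R}^{n};\mathds{R})$; the coercivity bound \ref{naltsaosaojko} is inherited by $\Psi+B$ because $\beta=\nabla B$ has compact support, so that $\langle x,\nabla(\Psi+B)(x)\rangle_{\mathds{R}^{n}}=\langle x,\nabla\Psi(x)\rangle_{\mathds{R}^{n}}\geqslant -c\,\vert x\vert^{2}$ for all sufficiently large $\vert x\vert$; the second moment of $P(t_{0})$ is finite by Lemmas \ref{fosmolds} and \ref{fosmoldspv}, and the free energy of $P(t_{0})$ with respect to $\Psi+B$ differs from its (finite, by Lemma \ref{llreeef}) free energy with respect to $\Psi$ only by the bounded quantity $\mathds{E}_{\mathds{P}}[B(X(t_{0}))]$, so condition \ref{saosaojko} holds at the initial time $t_{0}$; and condition \ref{tsaosaojko} for $\Psi+B$ --- namely that $p^{\beta}$ is continuous, strictly positive, temporally differentiable and spatially smooth on $(t_{0},\infty)\times\mathds{R}^{n}$, with $\nabla\log p^{\beta}$ continuous there --- is exactly what we postulated in condition \ref{naltsaosaojkos} of Assumptions \ref{sosaojkoia}. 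The shift of the time origin from $0$ to $t_{0}$, and the truncation of the horizon to $[t_{0},T]$, are immaterial, as already noted. With these checks done, Theorems \ref{Thm1} and \ref{Thm2}, applied with $\Psi$, $W$, $\mathds{P}$, $p$ replaced respectively by $\Psi+B$, $W^{\beta}$, $\mathds{P}^{\beta}$, $p^{\beta}$, yield that $\overline{W}^{\mathds{P}^{\beta}}(T-s)$ of \eqref{dotowpbtmtpbm} is a $\mathds{P}^{\beta}$-Brownian motion of the backwards filtration $(\mathcal{G}(T-s))_{0\leqslant s\leqslant T-t_{0}}$, and that the time-reversed canonical coordinate process satisfies
$$\textnormal{d}X(T-s)=\Big(\nabla\log p^{\beta}\big(T-s,X(T-s)\big)+\nabla(\Psi+B)\big(X(T-s)\big)\Big)\,\textnormal{d}s+\textnormal{d}\overline{W}^{\mathds{P}^{\beta}}(T-s),$$
which is \eqref{rtdfttrppitop}, since $\nabla(\Psi+B)=\nabla\Psi+\beta$.

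It then remains to rewrite the drift in terms of $\ell^{\beta}$. From the definition \eqref{dotplrfb} we have $\log\ell^{\beta}(t,x)=\log p^{\beta}(t,x)+2\,\Psi(x)$, hence $\nabla\log p^{\beta}=\nabla\log\ell^{\beta}-2\,\nabla\Psi$; substituting this into \eqref{rtdfttrppitop} turns the drift $\nabla\log p^{\beta}+\nabla\Psi+\beta$ into $\nabla\log\ell^{\beta}-\nabla\Psi+\beta=\nabla\log\ell^{\beta}-(\nabla\Psi-\beta)$, which is precisely \eqref{rtdfttrpp}. The only point requiring genuine care is the verification that the perturbed density $p^{\beta}$ inherits the regularity demanded by the time-reversal theory; but this has been arranged by hypothesis in condition \ref{naltsaosaojkos}, so the entire argument reduces to quoting Appendix \ref{atrod} for the perturbed equation and performing the elementary substitution $\nabla\log p^{\beta}=\nabla\log\ell^{\beta}-2\,\nabla\Psi$.
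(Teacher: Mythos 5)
Your proposal is correct and matches the paper's (implicit) argument: the paper justifies this proposition exactly as it does Proposition \ref{ptra}, namely by invoking the time-reversal theory of Appendix \ref{atrod} (Theorems \ref{Thm1} and \ref{Thm2}) for the perturbed Langevin--Smoluchowski equation \eqref{wpsdeids}, whose density regularity is guaranteed by fiat in condition \ref{naltsaosaojkos}. Your verification of the hypotheses for $\Psi+B$ and the substitution $\nabla\log p^{\beta}=\nabla\log\ell^{\beta}-2\,\nabla\Psi$ are both accurate (and the possible failure of non-negativity of $\Psi+B$ is harmless, since Appendix \ref{atrod} nowhere uses it).
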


We note next, how the Brownian motions $\big(\overline{W}^{\mathds{P}^{\beta}}(T-s)\big)_{0 \leqslant s \leqslant T-t_{0}}$ and $\big(\overline{W}^{\mathds{P}}(T-s)\big)_{0 \leqslant s \leqslant T-t_{0}}$, in reverse-time, are related.

\begin{lemma} \label{cotttrbmpapb} Under the \textnormal{\hyperref[sosaojkoia]{Assumptions \ref*{sosaojkoia}}}, we let $t_{0} \geqslant 0$ and $T > t_{0}$. For $0 \leqslant s \leqslant T-t_{0}$, we have
\begingroup
\addtolength{\jot}{0.7em}
\begin{align}
\textnormal{d}\big( \overline{W}^{\mathds{P}} - \overline{W}^{\mathds{P}^{\beta}}\big)(T-s) &=  \Bigg( \beta\big(X(T-s)\big) 
 + \nabla \log \Bigg( \frac{p^{\beta}\big(T-s,X(T-s)\big)}{p\big(T-s,X(T-s)\big)} \Bigg) \Bigg) \, \textnormal{d}s \label{dbowptmtowpbtmtfp} \\
 &=  \Bigg( \beta\big(X(T-s)\big) 
 + \nabla \log \Bigg( \frac{\ell^{\beta}\big(T-s,X(T-s)\big)}{\ell\big(T-s,X(T-s)\big)} \Bigg) \Bigg) \, \textnormal{d}s. \label{dbowptmtowpbtmt}
\end{align}
\endgroup
\begin{proof} It suffices to compare the equation \hyperref[poortdfttrpp]{(\ref*{poortdfttrpp})} with \hyperref[rtdfttrppitop]{(\ref*{rtdfttrppitop})}.
\end{proof}
\end{lemma}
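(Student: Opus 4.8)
The plan is to observe that \hyperref[poortdfttrpp]{(\ref*{poortdfttrpp})} and \hyperref[rtdfttrppitop]{(\ref*{rtdfttrppitop})} are two semimartingale decompositions of one and the same process --- the time-reversed canonical coordinate process $(X(T-s))_{0 \leqslant s \leqslant T-t_{0}}$ --- and simply to subtract them. The first decomposition holds with respect to $\mathds{P}$, the second with respect to $\mathds{P}^{\beta}$; but these two measures are mutually absolutely continuous (this will be recorded in \hyperref[hctclwittmeitpocasotpv]{Lemma \ref*{hctclwittmeitpocasotpv}} of \hyperref[subsomusefullem]{Subsection \ref*{subsomusefullem}}, and can in any case be seen from Girsanov's theorem, since $\beta = \nabla B$ is smooth and compactly supported), so any pathwise identity between finite-variation processes that holds $\mathds{P}$-almost surely holds $\mathds{P}^{\beta}$-almost surely as well. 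Hence the two decompositions may be manipulated on the same trajectory. All drift integrands below are continuous along the continuous paths of $(X(t))$ by condition \hyperref[tsaosaojko]{\ref*{tsaosaojko}} of \hyperref[sosaojkoia]{Assumptions \ref*{sosaojkoia}}, so the drift terms are honest Riemann--Stieltjes integrals and subtraction is unproblematic.

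First I would subtract \hyperref[rtdfttrppitop]{(\ref*{rtdfttrppitop})} from \hyperref[poortdfttrpp]{(\ref*{poortdfttrpp})}. The left-hand sides, both equal to $\textnormal{d}X(T-s)$, cancel, as does the common term $\nabla \Psi\big(X(T-s)\big)\,\textnormal{d}s$, leaving
\begin{equation*}
\textnormal{d}\big(\overline{W}^{\mathds{P}}-\overline{W}^{\mathds{P}^{\beta}}\big)(T-s) = \Big( \nabla \log p^{\beta}\big(T-s,X(T-s)\big) - \nabla \log p\big(T-s,X(T-s)\big) + \beta\big(X(T-s)\big) \Big)\,\textnormal{d}s.
\end{equation*}
Writing $\nabla \log p^{\beta} - \nabla \log p = \nabla \log (p^{\beta}/p)$ turns this into the first asserted equality \hyperref[dbowptmtowpbtmtfp]{(\ref*{dbowptmtowpbtmtfp})}. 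For the second equality \hyperref[dbowptmtowpbtmt]{(\ref*{dbowptmtowpbtmt})} I would invoke the definitions $\ell(t,x) = p(t,x)\,\mathrm{e}^{2\Psi(x)}$ and $\ell^{\beta}(t,x) = p^{\beta}(t,x)\,\mathrm{e}^{2\Psi(x)}$ from \hyperref[rndlr]{(\ref*{rndlr})} and \hyperref[dotplrfb]{(\ref*{dotplrfb})}: the factor $\mathrm{e}^{2\Psi(x)}$ cancels in the ratio, so $\ell^{\beta}(t,x)/\ell(t,x) = p^{\beta}(t,x)/p(t,x)$ and therefore $\nabla \log (\ell^{\beta}/\ell) = \nabla \log (p^{\beta}/p)$ pointwise; substituting into \hyperref[dbowptmtowpbtmtfp]{(\ref*{dbowptmtowpbtmtfp})} yields \hyperref[dbowptmtowpbtmt]{(\ref*{dbowptmtowpbtmt})}.

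There is no real obstacle here; the lemma is pure bookkeeping, and the only point worth a remark is the transfer of the almost-sure identity between the equivalent measures $\mathds{P}$ and $\mathds{P}^{\beta}$, addressed in the first paragraph. As a consistency check one can also derive the identity from scratch using the definitions \hyperref[dotowptmtpbm]{(\ref*{dotowptmtpbm})}, \hyperref[dotowpbtmtpbm]{(\ref*{dotowpbtmtpbm})}, the representation $W(t) = X(t)-X(0)+\int_{0}^{t}\nabla\Psi(X(u))\,\textnormal{d}u$ of \hyperref[dobmwopsrss]{(\ref*{dobmwopsrss})}, and the analogous representation of $W^{\beta}$ coming from \hyperref[wpsdeids]{(\ref*{wpsdeids})}: the $X$-increments and the $\nabla\Psi$-integrals cancel, leaving $\overline{W}^{\mathds{P}}(T-s)-\overline{W}^{\mathds{P}^{\beta}}(T-s) = \int_{T-s}^{T}\beta\big(X(u)\big)\,\textnormal{d}u + \int_{0}^{s}\nabla\log\big(p^{\beta}/p\big)\big(T-u,X(T-u)\big)\,\textnormal{d}u$, whose $s$-derivative is exactly the claimed drift.
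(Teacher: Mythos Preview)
Your proof is correct and follows exactly the paper's approach: subtract the two semimartingale decompositions \hyperref[poortdfttrpp]{(\ref*{poortdfttrpp})} and \hyperref[rtdfttrppitop]{(\ref*{rtdfttrppitop})} of the time-reversed coordinate process, then use $\ell^{\beta}/\ell = p^{\beta}/p$. You have supplied more justification (equivalence of $\mathds{P}$ and $\mathds{P}^{\beta}$, the consistency check via \hyperref[dotowptmtpbm]{(\ref*{dotowptmtpbm})} and \hyperref[dotowpbtmtpbm]{(\ref*{dotowpbtmtpbm})}) than the paper, which dispatches the lemma in a single sentence.
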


\begin{remark} Later we shall apply \hyperref[cotttrbmpapb]{Lemma \ref*{cotttrbmpapb}} to the situation when $s$ is close to $T-t_{0}$. In this case the logarithmic gradients in \hyperref[dbowptmtowpbtmtfp]{(\ref*{dbowptmtowpbtmtfp})} and \hyperref[dbowptmtowpbtmt]{(\ref*{dbowptmtowpbtmt})} will become small in view of $p^{\beta}(t_{0},\, \cdot \,) = p(t_{0},\, \cdot \,)$, so that these logarithmic gradients will disappear in the limit $s \uparrow T-t_{0}$; see also \hyperref[hctclwittmeitpocasot]{Lemma \ref*{hctclwittmeitpocasot}} below. By contrast, the term $\beta(X(T-s))$ will not disappear in the limit $s \uparrow T-t_{0}$. Rather, it will tend to the random variable $\beta(X(t_{0}))$, which plays an important role in distinguishing between \hyperref[thetsixcorfes]{(\ref*{thetsixcorfes})} and \hyperref[thetsixcorfessl]{(\ref*{thetsixcorfessl})} in \hyperref[thetthretvcor]{Proposition \ref*{thetthretvcor}}.
\end{remark} 

\smallskip

Next, by analogy with \hyperref[cotpottpf]{Subsection \ref*{cotpottpf}}, for $t > t_{0}$, we write the perturbed Fokker-Planck equation \hyperref[pfpeq]{(\ref*{pfpeq})} as 
\begin{equation} \label{pfpeef}
\partial_{t} p^{\beta}(t,x) = \tfrac{1}{2} \Delta p^{\beta}(t,x) + \big\langle \nabla p^{\beta}(t,x) \, , \nabla \Psi(x) + \beta(x) \big\rangle_{\mathds{R}^{n}} + p^{\beta}(t,x) \, \big( \Delta \Psi(x) + \operatorname{div} \beta(x) \big).
\end{equation}
Using the relation
\begin{equation} \label{rppbaeb}
p^{\beta}(t,x) = \ell^{\beta}(t,x) \, q(x) = \ell^{\beta}(t,x) \, \mathrm{e}^{ - 2 \Psi(x)}, \qquad t \geqslant t_{0},
\end{equation}
determined computation shows that the perturbed likelihood ratio function $\ell^{\beta}(t,x)$ satisfies 
\begingroup
\addtolength{\jot}{0.7em}
\begin{equation} \label{pfpeeffe}
\begin{aligned}
\partial_{t} \ell^{\beta}(t,x) = \tfrac{1}{2} \Delta \ell^{\beta}(t,x) &+ \big\langle \nabla \ell^{\beta}(t,x) \, , \, \beta(x) - \nabla \Psi(x) \big\rangle_{\mathds{R}^{n}} \\
&+ \ell^{\beta}(t,x) \, \Big( \operatorname{div} \beta(x) -   \big\langle \beta(x) \, , \, 2 \, \nabla \Psi(x) \big\rangle_{\mathds{R}^{n}} \Big), \qquad t > t_{0};
\end{aligned}
\end{equation}
\endgroup
this is the analogue of the backwards Kolmogorov equation \hyperref[fpdefflrf]{(\ref*{fpdefflrf})} in this ``perturbed'' context, and reduces to \hyperref[fpdefflrf]{(\ref*{fpdefflrf})} when $\beta \equiv 0$.

\smallskip

With these preparations, we obtain the following stochastic differentials for our objects of interest.

\begin{lemma} \label{trplrpdald} Under the \textnormal{\hyperref[sosaojkoia]{Assumptions \ref*{sosaojkoia}}}, we let $t_{0} \geqslant 0$ and $T > t_{0}$. The time-reversed perturbed likelihood ratio process \textnormal{\hyperref[trplrp]{(\ref*{trplrp})}} and its logarithm satisfy the stochastic differential equations
\begingroup
\addtolength{\jot}{0.7em}
\begin{equation} \label{sdefttrplrp}
\begin{aligned}
&\frac{\textnormal{d} \ell^{\beta}\big(T-s,X(T-s)\big)}{\ell^{\beta}\big(T-s,X(T-s)\big)} 
= \Big( \big\langle \beta \, , \, 2 \, \nabla \Psi \big\rangle_{\mathds{R}^{n}} - \operatorname{div} \beta \Big)\big(X(T-s)\big) \, \textnormal{d}s \\
& \qquad \quad + \frac{\big\vert \nabla \ell^{\beta}\big(T-s,X(T-s)\big) \big\vert^{2}}{\ell^{\beta}\big(T-s,X(T-s)\big)^{2}} \, \textnormal{d}s \, + \, \Bigg\langle \frac{\nabla \ell^{\beta}\big(T-s,X(T-s)\big)}{\ell^{\beta}\big(T-s,X(T-s)\big)} \, , \, \textnormal{d}\overline{W}^{\mathds{P}^{\beta}}(T-s)\Bigg\rangle_{\mathds{R}^{n}}
\end{aligned}
\end{equation}
\endgroup
and
\begingroup
\addtolength{\jot}{0.7em}
\begin{equation} \label{sdefttrplrpail}
\begin{aligned}
&\textnormal{d} \log \ell^{\beta}\big(T-s,X(T-s)\big)
= \Big( \big\langle \beta \, , \, 2 \, \nabla \Psi \big\rangle_{\mathds{R}^{n}}  - \operatorname{div} \beta \Big)\big(X(T-s)\big) \, \textnormal{d}s  \\
&  \qquad \quad + \frac{1}{2} \frac{\big\vert \nabla \ell^{\beta}\big(T-s,X(T-s)\big) \big\vert^{2}}{\ell^{\beta}\big(T-s,X(T-s)\big)^{2}} \, \textnormal{d}s \, + \, \Bigg\langle \frac{\nabla \ell^{\beta}\big(T-s,X(T-s)\big)}{\ell^{\beta}\big(T-s,X(T-s)\big)} \, , \, \textnormal{d}\overline{W}^{\mathds{P}^{\beta}}(T-s) \Bigg\rangle_{\mathds{R}^{n}},
\end{aligned}
\end{equation}
\endgroup
respectively, for $0 \leqslant s \leqslant T - t_{0}$, with respect to the backwards filtration $(\mathcal{G}(T-s))_{0 \leqslant s \leqslant T - t_{0}}$.
\begin{proof} The equations \hyperref[sdefttrplrp]{(\ref*{sdefttrplrp})}, \hyperref[sdefttrplrpail]{(\ref*{sdefttrplrpail})} follow from It\^{o}'s formula together with \hyperref[rtdfttrpp]{(\ref*{rtdfttrpp})}, \hyperref[pfpeeffe]{(\ref*{pfpeeffe})}.
\end{proof}
\end{lemma}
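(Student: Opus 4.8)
The plan is to obtain \eqref{sdefttrplrp} and \eqref{sdefttrplrpail} by a direct application of It\^{o}'s formula in the backward direction of time, exactly along the lines of Step~1 in the proof of \hyperref[thetsix]{Theorem~\ref*{thetsix}}. Writing the process \eqref{trplrp} as $g\big(T-s,X(T-s)\big)$, where $g$ denotes the function $\ell^{\beta}$, and recalling from \eqref{rtdfttrpp} that the time-reversed coordinate process has diffusion coefficient equal to the identity matrix (so that its quadratic-variation process is $s\,I_{n}$), It\^{o}'s formula produces a drift consisting of the three contributions $-\,\partial_{t}\ell^{\beta}\,\textnormal{d}s$ (from the explicit time-dependence), $\big\langle \nabla\ell^{\beta}\,,\,\textnormal{d}X(T-s)\big\rangle_{\mathds{R}^{n}}$ (from the first-order term), and $\tfrac{1}{2}\Delta\ell^{\beta}\,\textnormal{d}s$ (from the second-order term), all evaluated at $\big(T-s,X(T-s)\big)$, together with the martingale increment $\big\langle \nabla\ell^{\beta}\,,\,\textnormal{d}\overline{W}^{\mathds{P}^{\beta}}(T-s)\big\rangle_{\mathds{R}^{n}}$. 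Inserting the drift of $X(T-s)$ from \eqref{rtdfttrpp} and using $\nabla\log\ell^{\beta}=\nabla\ell^{\beta}/\ell^{\beta}$, the first-order term contributes $\big(\,|\nabla\ell^{\beta}|^{2}/\ell^{\beta}+\big\langle\nabla\ell^{\beta}\,,\,\beta-\nabla\Psi\big\rangle_{\mathds{R}^{n}}\big)\,\textnormal{d}s$ to the drift.

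The key step is then to invoke the perturbed backwards Kolmogorov equation \eqref{pfpeeffe} to rewrite $\partial_{t}\ell^{\beta}$. The terms $\tfrac{1}{2}\Delta\ell^{\beta}$ and $\big\langle\nabla\ell^{\beta}\,,\,\beta-\nabla\Psi\big\rangle_{\mathds{R}^{n}}$ then cancel against their counterparts in \eqref{pfpeeffe}, and what survives of the drift of $\ell^{\beta}\big(T-s,X(T-s)\big)$ is $|\nabla\ell^{\beta}|^{2}/\ell^{\beta}-\ell^{\beta}\big(\operatorname{div}\beta-\big\langle\beta\,,\,2\,\nabla\Psi\big\rangle_{\mathds{R}^{n}}\big)$, evaluated along $\big(T-s,X(T-s)\big)$; dividing through by $\ell^{\beta}\big(T-s,X(T-s)\big)$ gives \eqref{sdefttrplrp}. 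This is precisely the ``miraculous'' cancellation seen in the unperturbed case, and indeed \eqref{sdefttrplrp} reduces to \eqref{qubmcfrsdeftpmpaei} when $\beta\equiv 0$, since then \eqref{pfpeeffe} collapses to \eqref{fpdefflrf}. The logarithmic form \eqref{sdefttrplrpail} follows by one further application of It\^{o}'s formula: from \eqref{sdefttrplrp} the quadratic variation of $s\mapsto\ell^{\beta}\big(T-s,X(T-s)\big)$ has density $|\nabla\ell^{\beta}|^{2}$ with respect to $\textnormal{d}s$, so the It\^{o} correction term in $\textnormal{d}\log\ell^{\beta}$ equals $-\tfrac{1}{2}\,|\nabla\ell^{\beta}|^{2}/(\ell^{\beta})^{2}\,\textnormal{d}s$, which converts the coefficient $|\nabla\ell^{\beta}|^{2}/(\ell^{\beta})^{2}$ in \eqref{sdefttrplrp} into $\tfrac{1}{2}\,|\nabla\ell^{\beta}|^{2}/(\ell^{\beta})^{2}$ and leaves the remaining drift term and the martingale part untouched.

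The point requiring care is not the algebra but the legitimacy of these manipulations on the interval $(t_{0},T]$. One must check that $\ell^{\beta}=p^{\beta}\,\mathrm{e}^{2\Psi}$ inherits from \hyperref[naltsaosaojkos]{condition~\ref*{naltsaosaojkos}} (which imposes \hyperref[tsaosaojko]{condition~\ref*{tsaosaojko}} on the perturbed potential $\Psi+B$) the strict positivity and the spatial smoothness needed to apply It\^{o}'s formula, and that $\partial_{t}\ell^{\beta}$ exists and satisfies \eqref{pfpeeffe} --- the latter being the ``determined computation'' indicated just above the statement. Moreover, continuity of $(t,x)\mapsto\nabla\log\ell^{\beta}(t,x)$, together with the continuity of the paths of $X$, guarantees that $\int_{0}^{T-t_{0}-\varepsilon}\big|\nabla\ell^{\beta}/\ell^{\beta}\big|^{2}\big(T-u,X(T-u)\big)\,\textnormal{d}u<\infty$ almost surely, so that, exactly as in Step~2 of the proof of \hyperref[thetsix]{Theorem~\ref*{thetsix}}, the stochastic integral appearing in \eqref{sdefttrplrp}--\eqref{sdefttrplrpail} is a well-defined continuous local martingale of the backwards filtration. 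These are routine adaptations of the unperturbed arguments, so I do not expect any genuine obstacle here; the $L^{2}(\mathds{P}^{\beta})$-boundedness and true-martingale assertions of \hyperref[thetthretv]{Theorem~\ref*{thetthretv}} will then follow, just as before, from the finiteness of $\mathds{E}_{\mathds{P}^{\beta}}\big[F^{\beta}(t_{0})\big]$.
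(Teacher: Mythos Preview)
Your proposal is correct and follows essentially the same approach as the paper: the paper's one-line proof simply says that \eqref{sdefttrplrp} and \eqref{sdefttrplrpail} follow from It\^{o}'s formula together with the backward dynamics \eqref{rtdfttrpp} and the perturbed backwards Kolmogorov equation \eqref{pfpeeffe}, which is exactly the computation you carry out in detail. Your additional remarks on regularity and the local-martingale property are accurate but belong to the proof of \hyperref[thetthretv]{Theorem~\ref*{thetthretv}} rather than to the present lemma.
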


We have assembled now all the ingredients needed for the proof of \hyperref[thetthretv]{Theorem \ref*{thetthretv}}. 

\begin{proof}[\bfseries \upshape Proof of \texorpdfstring{\hyperref[thetthretv]{Theorem \ref*{thetthretv}}}] Formally, the stochastic differential in \hyperref[sdefttrplrpail]{(\ref*{sdefttrplrpail})} of the time-reversed perturbed likelihood ratio process \hyperref[trplrp]{(\ref*{trplrp})} amounts to the conclusions \hyperref[fispdaftrps]{(\ref*{fispdaftrps})} -- \hyperref[erdollafipimps]{(\ref*{erdollafipimps})} of \hyperref[thetthretv]{Theorem \ref*{thetthretv}}. But we still have to substantiate the claim, that the stochastic process $(M^{\beta}(T-s))_{0 \leqslant s \leqslant T-t_{0}}$ defined in \hyperref[dollafipimps]{(\ref*{dollafipimps})} with representation \hyperref[erdollafipimps]{(\ref*{erdollafipimps})}, indeed yields a $\mathds{P}^{\beta}$-martingale of the backwards filtration $(\mathcal{G}(T-s))_{0 \leqslant s \leqslant T - t_{0}}$, which is bounded in $L^{2}(\mathds{P}^{\beta})$.

\medskip

\noindent \fbox{\textsf{Step 1.}} By analogy with \textsf{Step 2} in the proof of \hyperref[thetsix]{Theorem \ref*{thetsix}}, we show first, that the stochastic process $(M^{\beta}(T-s))_{0 \leqslant s \leqslant T-t_{0}}$ of \hyperref[dollafipimps]{(\ref*{dollafipimps})}, with integral representation \hyperref[erdollafipimps]{(\ref*{erdollafipimps})}, is a continuous local $\mathds{P}^{\beta}$-martingale of the backwards filtration $(\mathcal{G}(T-s))_{0 \leqslant s \leqslant T - t_{0}}$.

\smallskip

By condition \hyperref[naltsaosaojkos]{\ref*{naltsaosaojkos}} of \hyperref[sosaojkoia]{Assumptions \ref*{sosaojkoia}} the function $(t_{0},\infty) \ni t \mapsto \nabla \log \ell^{\beta}(t,x)$ is continuous for every $x \in \mathds{R}^{n}$. Together with the continuity of the paths of the canonical coordinate process $(X(t))_{t \geqslant 0}$, this implies that 
\begin{equation} \label{twtcopvbtitttfitisifoas}
\int_{0}^{T-t_{0}-\varepsilon} \frac{\big\vert \nabla \ell^{\beta}\big(T-u,X(T-u)\big) \big\vert^{2}}{\ell^{\beta}\big(T-u,X(T-u)\big)^{2}} \, \textnormal{d}u < \infty, \qquad \mathds{P}^{\beta}\textnormal{-almost surely},
\end{equation}
for every $0 < \varepsilon \leqslant T-t_{0}$. On account of \hyperref[twtcopvbtitttfitisifoas]{(\ref*{twtcopvbtitttfitisifoas})}, the sequence of stopping times (with respect to the backwards filtration)
\begin{equation} \label{twtcdostppvbfitisifoas}
\tau_{n}^{\beta} \vcentcolon = \inf \Bigg\{ t \geqslant 0 \colon \, \int_{0}^{t} \frac{\big\vert \nabla \ell^{\beta}\big(T-u,X(T-u)\big) \big\vert^{2}}{\ell^{\beta}\big(T-u,X(T-u)\big)^{2}} \, \textnormal{d}u \, \geqslant n \,  \Bigg\} \wedge (T-t_{0}), \qquad n \in \mathds{N}_{0}
\end{equation}
is non-decreasing and converges $\mathds{P}^{\beta}$-almost surely to $T-t_{0}$. Hence, according to \hyperref[sdefttrplrpail]{(\ref*{sdefttrplrpail})} and the definition in \hyperref[dollafipimps]{(\ref*{dollafipimps})}, the stopped process $\big((M^{\beta})^{\tau_{n}^{\beta}}(T-s)\big)_{0 \leqslant s \leqslant T-t_{0}}$ is a uniformly integrable $\mathds{P}^{\beta}$-martingale of the backwards filtration, for every $n \in \mathds{N}_{0}$.

\medskip

\noindent \fbox{\textsf{Step 2.}} We show that, in fact, the process $(M^{\beta}(T-s))_{0 \leqslant s \leqslant T-t_{0}}$ is a true $\mathds{P}^{\beta}$-martingale.

\smallskip 

Taking expectations with respect to the probability measure $\mathds{P}^{\beta}$ in \hyperref[sdefttrplrpail]{(\ref*{sdefttrplrpail})} at time $s = \tau_{n}^{\beta}$, and using that by \textsf{Step 1} the stopped process $\big((M^{\beta})^{\tau_{n}^{\beta}}(T-s)\big)_{0 \leqslant s \leqslant T-t_{0}}$ is a true $\mathds{P}^{\beta}$-martingale with respect to the backwards filtration $(\mathcal{G}(T-s))_{0 \leqslant s \leqslant T - t_{0}}$, we obtain
\begingroup
\addtolength{\jot}{0.7em}
\begin{align}
H\big( P^{\beta}(T-\tau_{n}^{\beta}) \, \vert \, \mathrm{Q} \big)  - H\big( P^{\beta}(T) \, \vert \, \mathrm{Q} \big) \ = \
&\mathds{E}_{\mathds{P}^{\beta}}\Bigg[\int_{0}^{\tau_{n}^{\beta}}\frac{1}{2}\frac{\big\vert \nabla \ell^{\beta}\big(T-u,X(T-u)\big) \big\vert^{2}}{\ell^{\beta}\big(T-u,X(T-u)\big)^{2}} \, \textnormal{d}u \Bigg] \label{tfea} \\
 + \ &\mathds{E}_{\mathds{P}^{\beta}}\Bigg[\int_{0}^{\tau_{n}^{\beta}} \Big(  \big\langle \beta \, , \, 2 \, \nabla \Psi \big\rangle_{\mathds{R}^{n}} - \operatorname{div} \beta \Big) \big(X(T-u)\big) \, \textnormal{d}u \Bigg], \label{tfeab}
\end{align}
\endgroup
for every $n \in \mathds{N}_{0}$. According to condition \hyperref[naltsaosaojkos]{\ref*{naltsaosaojkos}} of \hyperref[sosaojkoia]{Assumptions \ref*{sosaojkoia}}, the perturbation $\beta \colon \mathds{R}^{n} \rightarrow \mathds{R}^{n}$ is the gradient of a potential $B \colon \mathds{R}^{n} \rightarrow \mathds{R}$ of class $\mathcal{C}_{c}^{\infty}(\mathds{R}^{n};\mathds{R})$. Therefore, for every $n \in \mathds{N}_{0}$, the absolute value of the expectation in \hyperref[tfeab]{(\ref*{tfeab})} can be bounded by
\begin{equation} \label{tfeabb}
C_{1} \vcentcolon = \mathds{E}_{\mathds{P}^{\beta}}\Bigg[\int_{0}^{T-t_{0}} \Big\vert  \big\langle \beta \, , \, 2 \, \nabla \Psi \big\rangle_{\mathds{R}^{n}} - \operatorname{div} \beta \Big\vert \big(X(T-u)\big) \, \textnormal{d}u \Bigg] < \infty.
\end{equation}
We denote by $\mathrm{Q}^{\beta}$ the $\sigma$-finite measure on the Borel sets of $\mathds{R}^{n}$, whose density with respect to Lebesgue measure is
\begin{equation}
q^{\beta}(x) \vcentcolon = \mathrm{e}^{-2(\Psi + B)(x)}, \qquad x \in \mathds{R}^{n}.
\end{equation}
Now, by analogy with \hyperref[dotrefuc]{(\ref*{dotrefuc})} and the discussion of the $\sigma$-finite case in \textsf{Step 3} from the proof of \hyperref[thetsix]{Theorem \ref*{thetsix}}, we have 
\begin{equation} \label{dotrebefuc}
H\big( P^{\beta}(t) \, \vert \, \mathrm{Q}^{\beta} \big) \leqslant H\big( P^{\beta}(t_{0}) \, \vert \, \mathrm{Q}^{\beta} \big), \qquad t_{0} \leqslant t \leqslant T.
\end{equation}
Note that for this decrease of the relative entropy function $t \mapsto H( P^{\beta}(t) \, \vert \, \mathrm{Q}^{\beta})$ to be valid, it is necessary to ``take into account the perturbation for the flow as well as for the reference measure''. However, in view of the left-hand side of \hyperref[tfea]{(\ref*{tfea})}, we are interested in the behavior of the function $t \mapsto H( P^{\beta}(t) \, \vert \, \mathrm{Q})$. In order to compare these two, we define the constant
\begin{equation}  \label{dsaotrebefuc}
C_{2} \vcentcolon = 2 \cdot \max_{x \in \mathds{R}^{n}} \vert B(x) \vert < \infty.
\end{equation}
It is then straightforward to derive the estimate
\begin{equation} \label{dotrebefucb}
H\big( P^{\beta}(t) \, \vert \, \mathrm{Q} \big) - C_{2} \, \leqslant \, H\big( P^{\beta}(t) \, \vert \, \mathrm{Q}^{\beta} \big) \, \leqslant \, H\big( P^{\beta}(t) \, \vert \, \mathrm{Q} \big) + C_{2}, \qquad t_{0} \leqslant t \leqslant T.
\end{equation}
Combining \hyperref[dotrebefuc]{(\ref*{dotrebefuc})}, \hyperref[dotrebefucb]{(\ref*{dotrebefucb})} and using that $P^{\beta}(t_{0}) = P(t_{0})$, we get
\begin{equation} \label{dotrebefucbne}
H\big( P^{\beta}(t) \, \vert \, \mathrm{Q} \big) \, \leqslant \, H\big( P(t_{0}) \, \vert \, \mathrm{Q} \big) + 2C_{2}, \qquad t_{0} \leqslant t \leqslant T.
\end{equation}
Consequently, the left-hand side of \hyperref[tfea]{(\ref*{tfea})} can be dominated by
\begin{equation} \label{dotrebefucbnnbe}
C_{3} \vcentcolon = H\big( P(t_{0}) \, \vert \, \mathrm{Q} \big) + 2C_{2} - H\big( P^{\beta}(T) \, \vert \, \mathrm{Q} \big) < \infty.
\end{equation}
This constant cannot be equal to $+\infty$ because $H( P(t_{0}) \, \vert \, \mathrm{Q})$ is finite by the same argument as in the proof of \hyperref[thetsix]{Theorem \ref*{thetsix}}, and the relative entropy $H( P^{\beta}(T) \, \vert \, \mathrm{Q})$ cannot take the value $-\infty$; see \hyperref[fosmoldspv]{Lemma \ref*{fosmoldspv}} and \hyperref[wdadotrewrttmq]{Appendix \ref*{wdadotrewrttmq}}. Altogether, we obtain from \hyperref[tfea]{(\ref*{tfea})}, \hyperref[tfeab]{(\ref*{tfeab})} and the above considerations that 
\begin{equation} \label{dotrebefucbnnnbe}
\mathds{E}_{\mathds{P}^{\beta}}\Bigg[\int_{0}^{\tau_{n}^{\beta}}\frac{1}{2}\frac{\big\vert \nabla \ell^{\beta}\big(T-u,X(T-u)\big) \big\vert^{2}}{\ell^{\beta}\big(T-u,X(T-u)\big)^{2}} \, \textnormal{d}u \Bigg] \leqslant C_{1} + C_{3},
\end{equation}
for every $n \in \mathds{N}_{0}$. Since the non-decreasing sequence of stopping times $(\tau_{n}^{\beta})_{n \geqslant 0}$ from \hyperref[twtcdostppvbfitisifoas]{(\ref*{twtcdostppvbfitisifoas})} converges $\mathds{P}^{\beta}$-almost surely to $T-t_{0}$ as $n \rightarrow \infty$, we deduce from \hyperref[dotrebefucbnnnbe]{(\ref*{dotrebefucbnnnbe})} and the monotone convergence theorem that
\begin{equation} \label{fecwfoeltpsn}
\mathds{E}_{\mathds{P}^{\beta}}\big[F^{\beta}(t_{0})\big] = \mathds{E}_{\mathds{P}^{\beta}}\Bigg[\int_{0}^{T - t_{0}} \frac{1}{2} \frac{\big\vert \nabla \ell^{\beta}\big(T-u,X(T-u)\big) \big\vert^{2}}{\ell^{\beta}\big(T-u,X(T-u)\big)^{2}} \, \textnormal{d}u\Bigg] \leqslant C_{1} + C_{3} < \infty.
\end{equation}
From \hyperref[fecwfoeltpsn]{(\ref*{fecwfoeltpsn})} we finally obtain that the stochastic integral in \hyperref[sdefttrplrpail]{(\ref*{sdefttrplrpail})} defines an $L^{2}(\mathds{P}^{\beta})$-bounded martingale for $0 \leqslant s \leqslant T-t_{0}$. 

\smallskip

Summing up, we conclude that the process $(M^{\beta}(T-s))_{0 \leqslant s \leqslant T-t_{0}}$ is a $L^{2}(\mathds{P}^{\beta})$-bounded martingale of the backwards filtration $(\mathcal{G}(T-s))_{0 \leqslant s \leqslant T - t_{0}}$, admitting the representation \hyperref[erdollafipimps]{(\ref*{erdollafipimps})}. This completes the proof of \hyperref[thetthretv]{Theorem \ref*{thetthretv}}.
\end{proof}


\subsection{Some useful lemmas} \label{subsomusefullem}


In this subsection we collect some useful results needed in order to justify the claims \hyperref[ilpdepvhfv]{(\ref*{ilpdepvhfv})}, \hyperref[ilpdepvhfvsv]{(\ref*{ilpdepvhfvsv})} made in the course of the proof of \hyperref[thetsixcoraopv]{Corollary \ref*{thetsixcoraopv}}, and to complete the proof of \hyperref[thetthretvcor]{Proposition \ref*{thetthretvcor}} in \hyperref[nspofthetthretvcor]{Subsection \ref*{nspofthetthretvcor}}.

\medskip

First, let us recall the probability density function $(t,x) \mapsto p(t,x)$ from \hyperref[tpdfptxcbritf]{(\ref*{tpdfptxcbritf})}, its perturbed version $(t,x) \mapsto p^{\beta}(t,x)$ from \hyperref[rppbaeb]{(\ref*{rppbaeb})}, and the respective likelihood ratios $\ell(t,x)$, $\ell^{\beta}(t,x)$ from \hyperref[rndlr]{(\ref*{rndlr})}, \hyperref[dotplrfb]{(\ref*{dotplrfb})}, respectively. We introduce also the ``perturbed-to-unperturbed'' ratio
\begin{equation} \label{witpetounper}
Y^{\beta}(t,x) \vcentcolon = \frac{\ell^{\beta}(t,x)}{\ell(t,x)}  = \frac{p^{\beta}(t,x)}{p(t,x)}, \qquad (t,x) \in [t_{0},\infty) \times \mathds{R}^{n}.
\end{equation}

We recall the backwards Kolmogorov-type equations \hyperref[fpdefflrf]{(\ref*{fpdefflrf})}, \hyperref[pfpeeffe]{(\ref*{pfpeeffe})}. These lead to the partial differential equation 
\begingroup
\addtolength{\jot}{0.7em}
\begin{equation} \label{pdefoybptupr}
\begin{aligned}
\partial_{t} Y^{\beta}(t,x) = \tfrac{1}{2} \Delta Y^{\beta}(t,x) 
& + \big\langle \nabla Y^{\beta}(t,x) \, , \, \beta(x) + \nabla \log p(t,x) + \nabla \Psi(x) \big\rangle_{\mathds{R}^{n}} \\
& + Y^{\beta}(t,x) \, \Big( \operatorname{div} \beta(x) + \big\langle \beta(x) \, , \, \nabla \log p(t,x) \big\rangle_{\mathds{R}^{n}} \Big), \qquad t > t_{0},
\end{aligned}
\end{equation}
\endgroup
with $Y^{\beta}(t_{0}, \, \cdot \,) = 1$, for the ratio in \hyperref[witpetounper]{(\ref*{witpetounper})}. In conjunction with \hyperref[poortdfttrpp]{(\ref*{poortdfttrpp})}, this equation leads to the following backward dynamics.

\begin{lemma} \label{wsublnsibvn} Under the \textnormal{\hyperref[sosaojkoia]{Assumptions \ref*{sosaojkoia}}}, we let $t_{0} \geqslant 0$ and $T > t_{0}$. The time-reversed ratio process $\big(Y^{\beta}(T-s,X(T-s))\big)_{0 \leqslant s \leqslant T-t_{0}}$ and its logarithm satisfy the stochastic differential equations
\begingroup
\addtolength{\jot}{0.7em}
\begin{equation} \label{logybdbyb}
\begin{aligned}
&\frac{\textnormal{d}Y^{\beta}\big(T-s,X(T-s)\big)}{Y^{\beta}\big(T-s,X(T-s)\big)} = \Bigg\langle \frac{\nabla Y^{\beta}\big(T-s,X(T-s)\big)}{Y^{\beta}\big(T-s,X(T-s)\big)} \, , \, \textnormal{d}\overline{W}^{\mathds{P}}(T-s) - \beta\big(X(T-s)\big) \, \textnormal{d}s \Bigg\rangle_{\mathds{R}^{n}} \\
& \qquad \qquad \qquad - \bigg( \operatorname{div} \beta \big(X(T-s)\big) + \Big\langle \beta\big(X(T-s)\big) \, , \nabla \log p\big(T-s,X(T-s)\big) \Big\rangle_{\mathds{R}^{n}}  \bigg) \, \textnormal{d}s
\end{aligned}
\end{equation}
\endgroup
and
\begingroup
\addtolength{\jot}{0.7em}
\begin{equation} \label{logybetapro}
\begin{aligned}
&\textnormal{d} \log Y^{\beta}\big(T-s,X(T-s)\big) = \Bigg\langle \frac{\nabla Y^{\beta}\big(T-s,X(T-s)\big)}{Y^{\beta}\big(T-s,X(T-s)\big)} \, , \, \textnormal{d}\overline{W}^{\mathds{P}}(T-s) - \beta\big(X(T-s)\big) \, \textnormal{d}s \Bigg\rangle_{\mathds{R}^{n}} \\
& \qquad \qquad \qquad - \bigg( \operatorname{div} \beta \big(X(T-s)\big) + \Big\langle \beta\big(X(T-s)\big) \, , \nabla \log p\big(T-s,X(T-s)\big) \Big\rangle_{\mathds{R}^{n}}  \bigg) \, \textnormal{d}s \\
& \qquad \qquad \qquad \qquad \qquad \qquad  \qquad  \qquad \qquad \qquad - \frac{1}{2} \frac{\big\vert \nabla Y^{\beta}\big(T-s,X(T-s)\big) \big\vert^{2}}{Y^{\beta}\big(T-s,X(T-s)\big)^{2}} \, \textnormal{d}s,
\end{aligned}
\end{equation}
\endgroup
respectively, for $0 \leqslant s \leqslant T - t_{0}$, with respect to the backwards filtration $(\mathcal{G}(T-s))_{0 \leqslant s \leqslant T - t_{0}}$.
\end{lemma}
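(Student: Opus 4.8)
The proof is a straightforward application of It\^{o}'s formula in the reverse direction of time, combined with the parabolic equation \eqref{pdefoybptupr} satisfied by the ratio $Y^{\beta}$. The plan is to assemble the two ingredients already at our disposal and then differentiate.

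First I would recall what we may take for granted. By Proposition \ref{ptra} and its equation \eqref{poortdfttrpp}, the time-reversed process $(X(T-s))_{0\leqslant s\leqslant T-t_{0}}$ is a semimartingale of the backwards filtration $(\mathcal{G}(T-s))$ with drift $\nabla\log p(T-s,X(T-s))+\nabla\Psi(X(T-s))$ and martingale part $\overline{W}^{\mathds{P}}(T-s)$. On the analytic side, the function $(t,x)\mapsto Y^{\beta}(t,x)=\ell^{\beta}(t,x)/\ell(t,x)=p^{\beta}(t,x)/p(t,x)$ solves the equation \eqref{pdefoybptupr}, which is obtained from the backwards Kolmogorov-type equations \eqref{fpdefflrf} and \eqref{pfpeeffe} by a routine computation. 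Condition \ref{tsaosaojko} of Assumptions \ref{sosaojkoia}, together with the strict positivity of $p$ and $p^{\beta}$, ensures that $Y^{\beta}$ is strictly positive, smooth in $x$ and differentiable in $t$, so that It\^{o}'s formula is available for the time-inhomogeneous function $(s,y)\mapsto Y^{\beta}(T-s,y)$ along the backward diffusion.

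Next I would apply It\^{o}'s formula to $(s,y)\mapsto Y^{\beta}(T-s,y)$ evaluated at $y=X(T-s)$. The one point requiring care is the sign produced by the time reversal: the $s$-derivative of $Y^{\beta}(T-s,y)$ equals $-\partial_{t}Y^{\beta}(T-s,y)$. This gives, with all terms evaluated at $(T-s,X(T-s))$,
\[
\textnormal{d}Y^{\beta}\big(T-s,X(T-s)\big) = \Big(-\partial_{t}Y^{\beta} + \big\langle \nabla Y^{\beta}, \nabla\log p + \nabla\Psi\big\rangle_{\mathds{R}^{n}} + \tfrac{1}{2}\Delta Y^{\beta}\Big)\,\textnormal{d}s + \big\langle \nabla Y^{\beta}, \textnormal{d}\overline{W}^{\mathds{P}}(T-s)\big\rangle_{\mathds{R}^{n}}.
\]
Substituting $\partial_{t}Y^{\beta}$ from \eqref{pdefoybptupr}, the second-order terms $\tfrac{1}{2}\Delta Y^{\beta}$ cancel, the $\langle\nabla Y^{\beta},\nabla\log p+\nabla\Psi\rangle$ contributions cancel, and the surviving drift is $-\langle\nabla Y^{\beta},\beta\rangle_{\mathds{R}^{n}}-Y^{\beta}\big(\operatorname{div}\beta+\langle\beta,\nabla\log p\rangle_{\mathds{R}^{n}}\big)$. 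Dividing through by the strictly positive $Y^{\beta}(T-s,X(T-s))$ and grouping the $\beta\,\textnormal{d}s$ term into the stochastic increment yields exactly \eqref{logybdbyb}.

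Finally, for \eqref{logybetapro} I would apply It\^{o}'s formula once more to the logarithm, equivalently use $\textnormal{d}\log Z = Z^{-1}\,\textnormal{d}Z - \tfrac{1}{2}Z^{-2}\,\textnormal{d}\langle Z\rangle$ with $Z = Y^{\beta}(T-s,X(T-s))$. Since the martingale part of $Z$ is $\langle\nabla Y^{\beta},\textnormal{d}\overline{W}^{\mathds{P}}(T-s)\rangle_{\mathds{R}^{n}}$, its quadratic variation is $\vert\nabla Y^{\beta}\vert^{2}\,\textnormal{d}s$, so the extra It\^{o} correction is precisely $-\tfrac{1}{2}\vert\nabla Y^{\beta}\vert^{2}/(Y^{\beta})^{2}\,\textnormal{d}s$, producing \eqref{logybetapro}. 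Everything here is a routine computation; the only genuine obstacle is the bookkeeping of the time-reversal sign and the verification that the regularity hypotheses suffice to license It\^{o}'s formula for the time-dependent field $Y^{\beta}(T-s,\cdot)$ along the backward diffusion — and that has already been settled by Proposition \ref{ptra}.
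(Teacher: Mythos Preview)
Your proof is correct and is exactly the direct computation the paper sets up: the PDE \eqref{pdefoybptupr} for $Y^{\beta}$ is derived precisely so that applying It\^{o}'s formula along the backward dynamics \eqref{poortdfttrpp} produces the stated cancellations, and the passage to \eqref{logybetapro} via the It\^{o} correction for the logarithm is routine. The paper also notes (in the Remark following the lemma) an alternative route---subtracting \eqref{sdotpttrrep} from \eqref{sdefttrplrpail} and converting between $\overline{W}^{\mathds{P}}$ and $\overline{W}^{\mathds{P}^{\beta}}$ via Lemma \ref{cotttrbmpapb}---but your direct approach is the one the surrounding text points to.
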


\begin{remark} We can obtain the dynamics of \hyperref[logybetapro]{(\ref*{logybetapro})}, thus also of \hyperref[logybdbyb]{(\ref*{logybdbyb})}, directly from \hyperref[sdefttrplrpail]{(\ref*{sdefttrplrpail})}, \hyperref[sdotpttrrep]{(\ref*{sdotpttrrep})}, just by subtracting and using \hyperref[cotttrbmpapb]{Lemma \ref*{cotttrbmpapb}}.
\end{remark}

We also need a preliminary control on $Y^{\beta}(\, \cdot \, , \, \cdot \,)$, which is the subject of the following \hyperref[hctclwittmeitpocasotpv]{Lemma \ref*{hctclwittmeitpocasotpv}}. This will be refined in \hyperref[hctclwittmeitpocasot]{Lemma \ref*{hctclwittmeitpocasot}} below.

\begin{lemma} \label{hctclwittmeitpocasotpv} Under the \textnormal{\hyperref[sosaojkoia]{Assumptions \ref*{sosaojkoia}}}, we let $t_{0} \geqslant 0$ and $T > t_{0}$. There is a real constant $C > 1$ such that 
\begin{equation} \label{flaarftpottpsfe}
\frac{1}{C} \leqslant Y^{\beta}(t,x) \leqslant C \, , \qquad (t,x) \in [t_{0},T] \times \mathds{R}^{n}.
\end{equation}
\begin{proof} In the forward direction of time, the canonical coordinate process $(X(t))_{t_{0} \leqslant t \leqslant T}$ on the path space $\Omega = \mathcal{C}([t_{0},T];\mathds{R}^{n})$ satisfies the stochastic equations \hyperref[sdeids]{(\ref*{sdeids})} and \hyperref[wpsdeids]{(\ref*{wpsdeids})} with initial distribution $P(t_{0})$ under the probability measures $\mathds{P}$ and $\mathds{P}^{\beta}$, respectively. Hence, the $\mathds{P}$-Brownian motion $(W(t))_{t_{0} \leqslant t \leqslant T}$ from \hyperref[sdeids]{(\ref*{sdeids})} can be represented as
\begin{equation}
W(t) - W(t_{0}) = W^{\beta}(t) - W^{\beta}(t_{0}) - \int_{t_{0}}^{t} \beta\big(X(u)\big) \, \textnormal{d}u, \qquad t_{0} \leqslant t \leqslant T,
\end{equation}
where $(W^{\beta}(t))_{t_{0} \leqslant t \leqslant T}$ is the $\mathds{P}^{\beta}$-Brownian motion appearing in \hyperref[wpsdeids]{(\ref*{wpsdeids})}. By the Girsanov theorem, this amounts, for $t_{0} \leqslant t \leqslant T$, to the likelihood ratio computation
\begin{equation} \label{tgtae}
Z(t) \vcentcolon = \frac{\textnormal{d}\mathds{P}^{\beta}}{\textnormal{d}\mathds{P}} \bigg \vert_{\mathcal{F}(t)} 
= \exp \Bigg( - \int_{t_{0}}^{t} \Big\langle \beta\big(X(u)\big) \, , \, \textnormal{d}W(u) \Big\rangle_{\mathds{R}^{n}} - \tfrac{1}{2} \int_{t_{0}}^{t} \big\vert \beta\big(X(u)\big) \big\vert^{2} \, \textnormal{d}u \Bigg).
\end{equation} 

\smallskip

Now, for each $(t,x) \in [t_{0},T] \times \mathds{R}^{n}$, the ratio $Y^{\beta}(t,x) = p^{\beta}(t,x) / p(t,x)$ equals the conditional expectation of the random variable \hyperref[tgtae]{(\ref*{tgtae})} with respect to the probability measure $\mathds{P}$, where we condition on $X(t) = x$; to wit,
\begin{equation}
Y^{\beta}(t,x) = \mathds{E}_{\mathds{P}}\big[ Z(t) \, \vert \, X(t) = x\big] \, , \qquad (t,x) \in [t_{0},T] \times \mathds{R}^{n}.
\end{equation}
Therefore, in order to obtain the estimate \hyperref[flaarftpottpsfe]{(\ref*{flaarftpottpsfe})}, it suffices to show that the log-density process $( \log Z(t))_{t_{0} \leqslant t \leqslant T}$ is uniformly bounded. Since the perturbation $\beta$ is smooth and has compact support, the Lebesgue integral inside the exponential of \hyperref[tgtae]{(\ref*{tgtae})} is uniformly bounded, as required. 

\smallskip

In order to handle the stochastic integral with respect to the $\mathds{P}$-Brownian motion $(W(u))_{t_{0} \leqslant u \leqslant t}$ inside the exponential \hyperref[tgtae]{(\ref*{tgtae})}, we invoke the assumption that the vector field $\beta$ equals the gradient of a potential $B \colon \mathds{R}^{n} \rightarrow \mathds{R}$, which is of class $\mathcal{C}^{\infty}(\mathds{R}^{n};\mathds{R})$ and has compact support. According to It\^{o}'s formula and \hyperref[sdeids]{(\ref*{sdeids})}, we can express the stochastic integral appearing in \hyperref[tgtae]{(\ref*{tgtae})} as
\begin{equation} \label{urotsiarl}
\int_{t_{0}}^{t} \Big\langle \beta\big(X(u)\big) \, , \, \textnormal{d}W(u) \Big\rangle_{\mathds{R}^{n}} = B\big(X(t)) - B\big(X(t_{0})\big) + \int_{t_{0}}^{t} \Big(\big\langle \beta \, , \, \nabla \Psi \big\rangle_{\mathds{R}^{n}} - \tfrac{1}{2} \operatorname{div} \beta \Big)\big(X(u)\big) \, \textnormal{d}u
\end{equation}
for $t_{0} \leqslant t \leqslant T$. At this stage it becomes obvious that the expression of \hyperref[urotsiarl]{(\ref*{urotsiarl})} is uniformly bounded. This completes the proof of \hyperref[hctclwittmeitpocasotpv]{Lemma \ref*{hctclwittmeitpocasotpv}}.
\end{proof}
\end{lemma}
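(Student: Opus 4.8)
The plan is to represent the perturbed-to-unperturbed density ratio $Y^{\beta}(t,x) = p^{\beta}(t,x)/p(t,x)$ as a conditional expectation of the Girsanov density linking $\mathds{P}^{\beta}$ to $\mathds{P}$, and then to bound that density along almost every path. First I would observe that, on the path space $\Omega = \mathcal{C}([t_{0},T];\mathds{R}^{n})$, both $\mathds{P}$ and $\mathds{P}^{\beta}$ start the canonical coordinate process from the common law $P(t_{0})$ and solve \hyperref[sdeids]{(\ref*{sdeids})} and \hyperref[wpsdeids]{(\ref*{wpsdeids})}, respectively. Since $\beta = \nabla B$ is smooth and compactly supported, hence bounded, Novikov's condition holds trivially (the integrand is deterministically bounded), and Girsanov's theorem gives that $\mathds{P}^{\beta}$ and $\mathds{P}$ are equivalent on $\mathcal{F}(t)$ with Radon--Nikod\'{y}m density
\begin{equation} \label{plangirs}
Z(t) \vcentcolon = \frac{\textnormal{d}\mathds{P}^{\beta}}{\textnormal{d}\mathds{P}}\bigg\vert_{\mathcal{F}(t)} = \exp\Bigg( - \int_{t_{0}}^{t} \big\langle \beta\big(X(u)\big) \, , \, \textnormal{d}W(u) \big\rangle_{\mathds{R}^{n}} - \tfrac{1}{2}\int_{t_{0}}^{t} \big\vert \beta\big(X(u)\big) \big\vert^{2} \, \textnormal{d}u \Bigg), \qquad t_{0} \leqslant t \leqslant T.
\end{equation}

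Next I would establish the identity $Y^{\beta}(t,x) = \mathds{E}_{\mathds{P}}\big[ Z(t) \mid X(t) = x \big]$ for every $(t,x) \in [t_{0},T] \times \mathds{R}^{n}$. Testing against an arbitrary bounded measurable $\varphi$, Girsanov's theorem yields $\int_{\mathds{R}^{n}} \varphi(x) \, p^{\beta}(t,x) \, \textnormal{d}x = \mathds{E}_{\mathds{P}^{\beta}}[\varphi(X(t))] = \mathds{E}_{\mathds{P}}[Z(t)\,\varphi(X(t))] = \int_{\mathds{R}^{n}} \varphi(x) \, \mathds{E}_{\mathds{P}}[Z(t) \mid X(t) = x] \, p(t,x) \, \textnormal{d}x$; hence the two continuous, strictly positive densities $p^{\beta}(t, \cdot \,)$ and $\mathds{E}_{\mathds{P}}[Z(t) \mid X(t) = \cdot \,]\, p(t, \cdot \,)$ coincide Lebesgue-a.e., and then everywhere by the continuity afforded by condition \hyperref[tsaosaojko]{\ref*{tsaosaojko}} of \hyperref[sosaojkoia]{Assumptions \ref*{sosaojkoia}}. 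It therefore suffices to exhibit a deterministic $C > 1$ with $1/C \leqslant Z(t) \leqslant C$ $\mathds{P}$-almost surely for all $t \in [t_{0},T]$, since these two-sided bounds are inherited by the conditional expectation.

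To bound $Z(t)$ it is enough to bound $\log Z(t)$, i.e.\ the two integrals in \eqref{plangirs}. The Lebesgue term is harmless: $\tfrac{1}{2}\int_{t_{0}}^{t} \vert \beta(X(u)) \vert^{2} \, \textnormal{d}u \leqslant \tfrac{1}{2}(T-t_{0}) \sup_{x \in \mathds{R}^{n}} \vert \beta(x) \vert^{2} < \infty$. The stochastic integral is the one step requiring an idea, and I would handle it using the gradient structure $\beta = \nabla B$: applying It\^{o}'s formula to $B(X(t))$ and inserting the dynamics \hyperref[sdeids]{(\ref*{sdeids})} for $\textnormal{d}X(u)$ gives
\begin{equation} \label{planibp}
\int_{t_{0}}^{t} \big\langle \beta\big(X(u)\big) \, , \, \textnormal{d}W(u) \big\rangle_{\mathds{R}^{n}} = B\big(X(t)\big) - B\big(X(t_{0})\big) + \int_{t_{0}}^{t} \Big( \big\langle \beta \, , \, \nabla \Psi \big\rangle_{\mathds{R}^{n}} - \tfrac{1}{2} \operatorname{div} \beta \Big)\big(X(u)\big) \, \textnormal{d}u,
\end{equation}
and each of $B$, $\langle \beta, \nabla \Psi \rangle_{\mathds{R}^{n}}$ and $\operatorname{div} \beta$ is continuous with compact support, hence bounded on $\mathds{R}^{n}$; thus the right-hand side of \eqref{planibp} is bounded in absolute value by a constant depending only on $B$, $\Psi$ and $T-t_{0}$. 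Combining the two estimates produces the constant $C$ and closes the argument.

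I expect the main obstacle to be precisely this stochastic-integral step: without the assumption $\beta = \nabla B$ there would be no pathwise control on $\int_{t_{0}}^{t} \langle \beta(X(u)), \textnormal{d}W(u)\rangle_{\mathds{R}^{n}}$, and the integration-by-parts identity \eqref{planibp} is exactly what converts it into bounded boundary and Lebesgue terms. A minor additional point is the rigorous justification of the conditional-expectation representation of $Y^{\beta}$ --- measurability of $x \mapsto \mathds{E}_{\mathds{P}}[Z(t) \mid X(t)=x]$ and the upgrade from almost-everywhere to everywhere equality --- which is routine under \hyperref[sosaojkoia]{Assumptions \ref*{sosaojkoia}}.
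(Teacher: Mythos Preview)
Your proposal is correct and follows essentially the same route as the paper: Girsanov gives the density $Z(t)$, the identity $Y^{\beta}(t,x)=\mathds{E}_{\mathds{P}}[Z(t)\mid X(t)=x]$ reduces the task to a pathwise bound on $\log Z(t)$, and the stochastic integral is tamed via It\^{o}'s formula applied to $B(X(\cdot))$ --- your identity \eqref{planibp} is exactly the paper's key step. Your additional justification of the conditional-expectation representation (testing against bounded $\varphi$) is a welcome elaboration of a point the paper states without argument.
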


The following \hyperref[hctclwittmeitpocasot]{Lemma \ref*{hctclwittmeitpocasot}} provides the crucial estimates \hyperref[ilpdepvhfv]{(\ref*{ilpdepvhfv})} and \hyperref[ilpdepvhfvsv]{(\ref*{ilpdepvhfvsv})}, needed in the proof of \hyperref[thetsixcoraopv]{Corollary \ref*{thetsixcoraopv}} from \hyperref[thetthretv]{Theorem \ref*{thetthretv}}, and of \hyperref[thetthretvcor]{Proposition \ref*{thetthretvcor}}.

\begin{lemma} \label{hctclwittmeitpocasot} Under the \textnormal{\hyperref[sosaojkoia]{Assumptions \ref*{sosaojkoia}}}, we let $t_{0} \geqslant 0$ and $T > t_{0}$. There is a constant $C > 0$ such that 
\begin{equation} \label{ilpdepvhfvwp}
\big\vert Y^{\beta}(T-s,x) - 1 \big\vert \leqslant C \, (T-t_{0}-s),
\end{equation}
as well as
\begin{equation} \label{ilpdepvhfvsvwp}
\mathds{E}_{\mathds{P}}\Bigg[\int_{s}^{T-t_{0}} \Big\vert \nabla \log Y^{\beta}\big(T-u,X(T-u)\big) \Big\vert^{2} \, \textnormal{d}u  \ \bigg\vert \ X(T-s) = x \Bigg] 
\leqslant C \, (T - t_{0} - s)^{2}, 
\end{equation}
hold for all $0 \leqslant s \leqslant T-t_{0}$ and $x \in \mathds{R}^{n}$. Furthermore, for every $t_{0} > 0$ and $x \in \mathds{R}^{n}$ we have the pointwise limiting assertion
\begin{equation} \label{taflawhplftw}
\lim_{s \uparrow T-t_{0}} \frac{\log Y^{\beta}(T-s,x)}{T-t_{0}-s}
= \operatorname{div} \beta(x) + \Big\langle \beta(x) \, , \, \nabla  \log p(t_{0},x) \Big\rangle_{\mathds{R}^{n}},
\end{equation}
where the fraction on the left-hand side of \textnormal{\hyperref[taflawhplftw]{(\ref*{taflawhplftw})}} is uniformly bounded on $[0,T-t_{0}] \times \mathds{R}^{n}$.
\begin{remark} The pointwise limiting assertion \hyperref[taflawhplftw]{(\ref*{taflawhplftw})} is the deterministic analogue of the trajectorial relation \hyperref[titmlwhtcfs]{(\ref*{titmlwhtcfs})} from \hyperref[thetthretvcor]{Proposition \ref*{thetthretvcor}}. In \hyperref[nspofthetthretvcor]{Subsection \ref*{nspofthetthretvcor}} below we will prove that the limiting assertion \hyperref[titmlwhtcfs]{(\ref*{titmlwhtcfs})} holds in $L^{1}$ under both $\mathds{P}$ and $\mathds{P}^{\beta}$, and is valid for all $t_{0} > 0$.
\end{remark}
\begin{proof} As $\log Y^{\beta} = \log \ell^{\beta} - \log \ell$, we obtain from \hyperref[fecwfoel]{(\ref*{fecwfoel})}, \hyperref[fecwfoeltpsn]{(\ref*{fecwfoeltpsn})} and \hyperref[flaarftpottpsfe]{(\ref*{flaarftpottpsfe})} that the martingale part of the process in \hyperref[logybetapro]{(\ref*{logybetapro})} is bounded in $L^{2}(\mathds{P})$, i.e.,
\begin{equation} \label{martpoflogybibilt}
\mathds{E}_{\mathds{P}}\Bigg[\int_{0}^{T - t_{0}} \frac{\big\vert \nabla Y^{\beta}\big(T-u,X(T-u)\big) \big\vert^{2}}{Y^{\beta}\big(T-u,X(T-u)\big)^{2}} \, \textnormal{d}u\Bigg] < \infty.
\end{equation}
Once again using \hyperref[flaarftpottpsfe]{(\ref*{flaarftpottpsfe})}, we compare $\nabla Y^{\beta} / Y^{\beta}$ with $\nabla Y^{\beta}$ to see that \hyperref[martpoflogybibilt]{(\ref*{martpoflogybibilt})} also implies
\begin{equation} \label{martpoflogybibiltnyb}
\mathds{E}_{\mathds{P}}\Bigg[\int_{0}^{T - t_{0}} \Big\vert \nabla Y^{\beta}\big(T-u,X(T-u)\big) \Big\vert^{2} \, \textnormal{d}u\Bigg] < \infty.
\end{equation}
According to \hyperref[logybdbyb]{(\ref*{logybdbyb})}, the time-reversed ratio process $\big(Y^{\beta}(T-s,X(T-s))\big)_{0 \leqslant s \leqslant T-t_{0}}$ satisfies the stochastic differential equation
\begingroup
\addtolength{\jot}{0.7em}
\begin{equation} \label{tpdbdoybe}
\begin{aligned}
&\textnormal{d}Y^{\beta}\big(T-s,X(T-s)\big) = \Big\langle \nabla Y^{\beta}\big(T-s,X(T-s)\big) \, , \, \textnormal{d}\overline{W}^{\mathds{P}}(T-s) - \beta\big(X(T-s)\big) \, \textnormal{d}s \Big\rangle_{\mathds{R}^{n}} \\
& \ - Y^{\beta}\big(T-s,X(T-s)\big)  \bigg( \operatorname{div} \beta \big(X(t-s)\big) + \Big\langle \beta\big(X(T-s)\big) \, , \nabla \log p\big(T-s,X(T-s)\big) \Big\rangle_{\mathds{R}^{n}}  \bigg) \, \textnormal{d}s
\end{aligned}
\end{equation}
\endgroup
for $0 \leqslant s \leqslant T - t_{0}$, with respect to the backwards filtration $(\mathcal{G}(T-s))_{0 \leqslant s \leqslant T - t_{0}}$. 

\smallskip

In view of \hyperref[martpoflogybibiltnyb]{(\ref*{martpoflogybibiltnyb})}, the martingale part in \hyperref[tpdbdoybe]{(\ref*{tpdbdoybe})} is bounded in $L^{2}(\mathds{P})$. As regards the drift terms of this equation, we observe that it vanishes when $X(T-s)$ takes values outside the compact support of the smooth vector field $\beta$. Consequently, the drift terms are bounded, i.e., the constant
\begin{equation} 
C_{1} \vcentcolon = \sup_{\substack{t_{0} \leqslant t \leqslant T \\ y \in \mathds{R}^{n}}} \Bigg\vert - Y^{\beta}(t,y)  \Bigg( \operatorname{div} \beta(y) + \Bigg\langle \beta(y) \, , \nabla \log p(t,y) + \frac{\nabla Y^{\beta}(t,y)}{Y^{\beta}(t,y)} \Bigg\rangle_{\mathds{R}^{n}} \, \Bigg)  \Bigg\vert
\end{equation}
is finite, and the processes
\begin{equation}
Y^{\beta}\big(T-s,X(T-s)\big) + C_{1} \, s \qquad \textnormal{ and } \qquad Y^{\beta}\big(T-s,X(T-s)\big) - C_{1} \, s 
\end{equation}
for $0 \leqslant s \leqslant T-t_{0}$, are a sub- and a supermartingale, respectively. We conclude that
\begin{equation} \label{tjpefftvotp}
\Big\vert \, Y^{\beta}(T-s,x) - \mathds{E}_{\mathds{P}}\Big[ Y^{\beta}\big(t_{0},X(t_{0})\big) \ \big\vert \ X(T-s) = x \Big] \, \Big\vert \leqslant  C_{1} \, (T-t_{0}-s)
\end{equation}
holds for all $0 \leqslant s \leqslant T-t_{0}$ and $x \in \mathds{R}^{n}$. Since $Y^{\beta}(t_{0}, \, \cdot \,) = 1$, this establishes the first estimate 
\begin{equation} \label{ilpdepvhfvwphiwtrt}
\big\vert Y^{\beta}(T-s,x) - 1 \big\vert \leqslant C_{1} \, (T-t_{0}-s).
\end{equation}

\smallskip

Now we turn our attention to the second estimate \hyperref[ilpdepvhfvsvwp]{(\ref*{ilpdepvhfvsvwp})}. We fix $0 \leqslant s \leqslant T-t_{0}$ and $x \in \mathds{R}^{n}$. By means of the stochastic differentials in \hyperref[logybetapro]{(\ref*{logybetapro})} and \hyperref[tpdbdoybe]{(\ref*{tpdbdoybe})}, we find that the expression
\begin{equation} \label{sodacebtbbsewiet}
\tfrac{1}{2} \, \mathds{E}_{\mathds{P}}\Bigg[\int_{s}^{T-t_{0}} \Big\vert \nabla \log Y^{\beta}\big(T-u,X(T-u)\big) \Big\vert^{2} \, \textnormal{d}u  \ \bigg\vert \ X(T-s) = x \Bigg] 
\end{equation}
is equal to
\begin{equation} \label{sodacebtbbs}
\log Y^{\beta}(T-s,x) - Y^{\beta}(T-s,x) + 1 + \mathds{E}_{\mathds{P}}\Bigg[\int_{s}^{T-t_{0}} G\big(T-u,X(T-u)\big) \, \textnormal{d}u  \ \bigg\vert \ X(T-s) = x \Bigg],
\end{equation}
where we have set
\begin{equation}
G(t,y) \vcentcolon = \big( Y^{\beta}(t,y)-1\big) \Bigg( \operatorname{div} \beta(y) + \Bigg\langle \beta(y) \, , \nabla \log p(t,y) + \frac{\nabla Y^{\beta}(t,y)}{Y^{\beta}(t,y)} \Bigg\rangle_{\mathds{R}^{n}} \, \Bigg)
\end{equation}
for $t_{0} \leqslant t \leqslant T$ and $y \in \mathds{R}^{n}$. Introducing the finite constant
\begin{equation} 
C_{2} \vcentcolon = \sup_{\substack{t_{0} \leqslant t \leqslant T \\ y \in \mathds{R}^{n}}} \Bigg\vert \operatorname{div} \beta(y) + \Bigg\langle \beta(y) \, , \nabla \log p(t,y) + \frac{\nabla Y^{\beta}(t,y)}{Y^{\beta}(t,y)} \Bigg\rangle_{\mathds{R}^{n}} \, \Bigg\vert
\end{equation}
and using the just proved estimate \hyperref[ilpdepvhfvwphiwtrt]{(\ref*{ilpdepvhfvwphiwtrt})}, we see that the absolute value of the conditional expectation appearing in \hyperref[sodacebtbbs]{(\ref*{sodacebtbbs})} can be bounded by $C_{1} \, C_{2} \, (T-t_{0}-s)^{2}$. In order to handle the remaining terms of \hyperref[sodacebtbbs]{(\ref*{sodacebtbbs})}, we apply the elementary inequality $\log p \leqslant p-1$, which is valid for all $p > 0$, and obtain
\begin{equation} \label{teiuftfa}
\log Y^{\beta}(T-s,x) - Y^{\beta}(T-s,x) + 1 \leqslant 0.
\end{equation}
This implies that the expression of \hyperref[sodacebtbbsewiet]{(\ref*{sodacebtbbsewiet})} is bounded by $C_{1} \, C_{2} \, (T-t_{0}-s)^{2}$, which establishes the second estimate \hyperref[ilpdepvhfvsvwp]{(\ref*{ilpdepvhfvsvwp})}. We also note that the elementary inequality \hyperref[teiuftfa]{(\ref*{teiuftfa})} in conjunction with the estimate \hyperref[ilpdepvhfvwphiwtrt]{(\ref*{ilpdepvhfvwphiwtrt})} shows that
\begin{equation}
\log Y^{\beta}(T-s,x) \leqslant C_{1} \, (T-t_{0}-s)   
\end{equation}
for all $0 \leqslant s \leqslant T-t_{0}$ and $x \in \mathds{R}^{n}$; this implies that the fraction on the left-hand side of \hyperref[taflawhplftw]{(\ref*{taflawhplftw})} is uniformly bounded on $[0,T-t_{0}] \times \mathds{R}^{n}$. 

\smallskip

Regarding the limiting assertion \hyperref[taflawhplftw]{(\ref*{taflawhplftw})}, we fix $t_{0} > 0$, $x \in \mathds{R}^{n}$ and $0 \leqslant s \leqslant T-t_{0}$, and take conditional expectations with respect to $X(T-s) = x$ in the integral version of the stochastic differential \hyperref[logybetapro]{(\ref*{logybetapro})}. On account of \hyperref[martpoflogybibilt]{(\ref*{martpoflogybibilt})}, the stochastic integral with respect to the $\mathds{P}$-Brownian motion $(\overline{W}^{\mathds{P}}(T-s))_{0 \leqslant s \leqslant T}$ in \hyperref[logybetapro]{(\ref*{logybetapro})} vanishes. Dividing by $T-t_{0}-s$ and passing to the limit as $s \uparrow T-t_{0}$, we can use the estimate \hyperref[ilpdepvhfvsvwp]{(\ref*{ilpdepvhfvsvwp})} to observe that the expression in the third line of \hyperref[logybetapro]{(\ref*{logybetapro})} vanishes in the limit. After applying the Cauchy–Schwarz inequality, we see that the normalized integral involving the perturbation $\beta$ appearing in the first line of \hyperref[logybetapro]{(\ref*{logybetapro})} can be bounded by
\begin{equation} \label{trtotflitfube}
\frac{1}{T-t_{0}-s} \int_{s}^{T-t_{0}} \Big\vert \nabla \log Y^{\beta}\big(T-u,X(T-u)\big) \Big\vert \cdot \big\vert \beta\big(X(T-u)\big) \big\vert \, \textnormal{d}u.
\end{equation}
By conditions \hyperref[tsaosaojko]{\ref*{tsaosaojko}}, \hyperref[naltsaosaojkos]{\ref*{naltsaosaojkos}} of \hyperref[sosaojkoia]{Assumptions \ref*{sosaojkoia}}, the function $(t,x) \mapsto \nabla \log Y^{\beta}(t,x)$ is continuous on $(0,\infty) \times \mathds{R}^{n}$, thus the expression in \hyperref[trtotflitfube]{(\ref*{trtotflitfube})} is uniformly bounded on the rectangle $[0,T-t_{0}] \times \operatorname{supp} \beta$. As $\log Y^{\beta}(t_{0}, \, \cdot \,) = 0$, it converges $\mathds{P}$-almost surely to zero, hence also
\begin{equation} 
\lim_{s \uparrow T-t_{0}} \, \mathds{E}_{\mathds{P}}\Bigg[\frac{1}{T-t_{0}-s} \int_{s}^{T-t_{0}} \Big\vert \nabla \log Y^{\beta}\big(T-u,X(T-u)\big) \Big\vert \cdot \big\vert \beta\big(X(T-u)\big) \big\vert \, \textnormal{d}u  \ \bigg\vert \ X(T-s) = x \Bigg] 
= 0.
\end{equation}
Finally, using continuity and uniform boundedness once again, the conditional expectations of the normalized integrals over the second line of \hyperref[logybetapro]{(\ref*{logybetapro})} converge to the right-hand side of \hyperref[taflawhplftw]{(\ref*{taflawhplftw})}, as claimed.
\end{proof}
\end{lemma}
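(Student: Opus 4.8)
The plan is to derive all three assertions from the backward stochastic differential equations for the ratio process $\big(Y^{\beta}(T-s,X(T-s))\big)_{0\leqslant s\leqslant T-t_{0}}$ and its logarithm provided by \hyperref[wsublnsibvn]{Lemma \ref*{wsublnsibvn}}, that is \hyperref[logybdbyb]{(\ref*{logybdbyb})} and \hyperref[logybetapro]{(\ref*{logybetapro})}, together with the a priori two-sided bound $C^{-1}\leqslant Y^{\beta}\leqslant C$ of \hyperref[hctclwittmeitpocasotpv]{Lemma \ref*{hctclwittmeitpocasotpv}}. First I would establish an $L^{2}(\mathds{P})$ bound on the martingale part of \hyperref[logybetapro]{(\ref*{logybetapro})}: writing $\nabla\log Y^{\beta}=\nabla\log\ell^{\beta}-\nabla\log\ell$ and combining the finiteness of the unperturbed cumulative relative Fisher information from \hyperref[fecwfoel]{(\ref*{fecwfoel})} with that of its perturbed counterpart from \hyperref[fecwfoeltpsn]{(\ref*{fecwfoeltpsn})}, together with the equivalence of $\mathds{P}$ and $\mathds{P}^{\beta}$ and the boundedness of the mutual densities on $\mathcal{F}(T)$ established along the way in \hyperref[hctclwittmeitpocasotpv]{Lemma \ref*{hctclwittmeitpocasotpv}}, one gets $\mathds{E}_{\mathds{P}}\big[\int_{0}^{T-t_{0}}\vert\nabla\log Y^{\beta}(T-u,X(T-u))\vert^{2}\,\mathrm{d}u\big]<\infty$, and then, again using $Y^{\beta}\leqslant C$, the same bound with $\nabla Y^{\beta}$ in place of $\nabla\log Y^{\beta}$; in particular the stochastic integrals in \hyperref[logybdbyb]{(\ref*{logybdbyb})} and \hyperref[logybetapro]{(\ref*{logybetapro})} are true, $L^{2}$-bounded, backward martingales.

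For the first estimate \hyperref[ilpdepvhfvwp]{(\ref*{ilpdepvhfvwp})} I would work with the dynamics of $Y^{\beta}$ itself, i.e.\ equation \hyperref[logybdbyb]{(\ref*{logybdbyb})} multiplied through by $Y^{\beta}$. Its drift coefficient vanishes off the compact support of $\beta$, and on $[t_{0},T]\times\operatorname{supp}\beta$ every factor entering it --- $Y^{\beta}$, $\operatorname{div}\beta$, $\beta$, $\nabla\log p$ and $\nabla\log Y^{\beta}=\nabla\log\ell^{\beta}-\nabla\log\ell$ --- is bounded, by the smoothness and compact support of $\beta$ from condition \hyperref[naltsaosaojkos]{\ref*{naltsaosaojkos}}, the continuity of the logarithmic derivatives from conditions \hyperref[tsaosaojko]{\ref*{tsaosaojko}}, \hyperref[naltsaosaojkos]{\ref*{naltsaosaojkos}} of the \hyperref[sosaojkoia]{Assumptions \ref*{sosaojkoia}}, and \hyperref[hctclwittmeitpocasotpv]{Lemma \ref*{hctclwittmeitpocasotpv}}; call the resulting bound $C_{1}$. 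Integrating that equation from time-index $s$ to $T-t_{0}$, invoking $Y^{\beta}(t_{0},\,\cdot\,)\equiv1$, and taking conditional expectation given $X(T-s)=x$ --- which kills the backward-martingale increment accumulated after index $s$, by the Markov property --- then yields $\vert Y^{\beta}(T-s,x)-1\vert\leqslant C_{1}(T-t_{0}-s)$.

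For the second estimate \hyperref[ilpdepvhfvsvwp]{(\ref*{ilpdepvhfvsvwp})} I would compare the two dynamics: subtracting \hyperref[logybetapro]{(\ref*{logybetapro})} from \hyperref[logybdbyb]{(\ref*{logybdbyb})} isolates $\tfrac12\int_{s}^{T-t_{0}}\vert\nabla\log Y^{\beta}(T-u,X(T-u))\vert^{2}\,\mathrm{d}u$ on one side. Taking conditional expectation given $X(T-s)=x$, the matched martingale parts cancel, and (rewriting one drift integral by means of the $Y^{\beta}$-scaled form of \hyperref[logybdbyb]{(\ref*{logybdbyb})}) one is left with $\log Y^{\beta}(T-s,x)-Y^{\beta}(T-s,x)+1$ plus the conditional expectation of $\int_{s}^{T-t_{0}}G(T-u,X(T-u))\,\mathrm{d}u$, where $G(t,y)=\big(Y^{\beta}(t,y)-1\big)\big(\operatorname{div}\beta(y)+\langle\beta(y),\nabla\log p(t,y)+\nabla\log Y^{\beta}(t,y)\rangle_{\mathds{R}^{n}}\big)$. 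By \hyperref[ilpdepvhfvwp]{(\ref*{ilpdepvhfvwp})} one has $\vert Y^{\beta}-1\vert\leqslant C_{1}(T-t_{0}-s)$ on $\operatorname{supp}\beta$ while the other factor of $G$ is bounded by some $C_{2}<\infty$ (again because $\beta$ has compact support and the remaining functions are continuous), so this term is $O\big((T-t_{0}-s)^{2}\big)$; and the elementary inequality $\log r\leqslant r-1$ gives $\log Y^{\beta}-Y^{\beta}+1\leqslant0$. This proves \hyperref[ilpdepvhfvsvwp]{(\ref*{ilpdepvhfvsvwp})}, and also $\log Y^{\beta}(T-s,x)\leqslant C_{1}(T-t_{0}-s)$; combined with $C^{-1}\leqslant Y^{\beta}\leqslant C$ (which supplies a matching lower bound since $Y^{\beta}$ is close to $1$ when $T-t_{0}-s$ is small, and $\log Y^{\beta}$ is bounded when $T-t_{0}-s$ is bounded away from $0$), the quotient in \hyperref[taflawhplftw]{(\ref*{taflawhplftw})} is uniformly bounded on $[0,T-t_{0}]\times\mathds{R}^{n}$.

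Finally, for the limiting identity \hyperref[taflawhplftw]{(\ref*{taflawhplftw})} I would fix $t_{0}>0$ and $x\in\mathds{R}^{n}$, take the conditional expectation given $X(T-s)=x$ in the integrated form of \hyperref[logybetapro]{(\ref*{logybetapro})}, divide by $T-t_{0}-s$, and let $s\uparrow T-t_{0}$: the stochastic-integral term has zero conditional expectation by the first step; the quadratic-variation term is $O(T-t_{0}-s)$ by \hyperref[ilpdepvhfvsvwp]{(\ref*{ilpdepvhfvsvwp})}; the normalized $\beta$-integral is handled either by Cauchy--Schwarz combined with \hyperref[ilpdepvhfvsvwp]{(\ref*{ilpdepvhfvsvwp})}, or by noting that $\nabla\log Y^{\beta}(T-u,X(T-u))\to\nabla\log Y^{\beta}(t_{0},X(t_{0}))=0$ by continuity (this is where $t_{0}>0$ enters) together with dominated convergence; and the remaining average of $\operatorname{div}\beta+\langle\beta,\nabla\log p\rangle$ converges to $\operatorname{div}\beta(x)+\langle\beta(x),\nabla\log p(t_{0},x)\rangle_{\mathds{R}^{n}}$ by continuity and $X(T-u)\to X(t_{0})=x$ in the conditioned limit. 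I expect the main obstacle to be the a priori control underpinning the first two steps --- especially the \emph{linear}-in-$(T-t_{0}-s)$ bound \hyperref[ilpdepvhfvwp]{(\ref*{ilpdepvhfvwp})}, which rests on the boundedness of the drift of \hyperref[logybdbyb]{(\ref*{logybdbyb})} (after multiplication by $Y^{\beta}$) and therefore crucially on the compact support and smoothness of $\beta$ used \emph{in tandem} with the two-sided bound from \hyperref[hctclwittmeitpocasotpv]{Lemma \ref*{hctclwittmeitpocasotpv}} and the continuity of $\nabla\log p$; keeping track of which conditional expectation annihilates which term, and the indispensability of $t_{0}>0$ in \hyperref[taflawhplftw]{(\ref*{taflawhplftw})}, are the remaining delicate points.
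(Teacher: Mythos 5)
Your proposal follows essentially the same route as the paper's own proof: the $L^{2}(\mathds{P})$ bound on the martingale part via $\log Y^{\beta}=\log\ell^{\beta}-\log\ell$ and the two-sided bound of \hyperref[hctclwittmeitpocasotpv]{Lemma \ref*{hctclwittmeitpocasotpv}}; the linear estimate from the bounded, compactly supported drift of the $Y^{\beta}$-dynamics; the comparison of \hyperref[logybdbyb]{(\ref*{logybdbyb})} and \hyperref[logybetapro]{(\ref*{logybetapro})} with the same correction term $G$ and the inequality $\log r\leqslant r-1$; and the same term-by-term passage to the limit. The argument is correct, and your explicit appeal to the lower bound $Y^{\beta}\geqslant C^{-1}$ when deducing the uniform boundedness of the quotient in \hyperref[taflawhplftw]{(\ref*{taflawhplftw})} is in fact slightly more careful than the paper's one-sided remark.
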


\begin{remark}[\textsf{Stability of the entropy limits under perturbations}] \label{rkresolmz} The above \hyperref[hctclwittmeitpocasot]{Lemma \ref*{hctclwittmeitpocasot}} justifies the estimates \hyperref[ilpdepvhfv]{(\ref*{ilpdepvhfv})} and \hyperref[ilpdepvhfvsv]{(\ref*{ilpdepvhfvsv})}, which we have used in the proof of \hyperref[thetsixcoraopv]{Corollary \ref*{thetsixcoraopv}}. They were the crucial ingredients in the effort to show that the exceptional set for the limiting assertion \hyperref[rgtsflffflnv]{(\ref*{rgtsflffflnv})} does not change when passing from the unperturbed to the perturbed equation \hyperref[tatpvotgdbi]{(\ref*{tatpvotgdbi})}. It is now time to come back to this technical issue.

\smallskip

As a general observation, we stress that no worries about limits of difference quotients arise as long as we remain in the realm of an \textit{integral formulation} of our results, as opposed to passing to a \textit{differential formulation}. It is precisely the spirit of our basic trajectorial \hyperref[thetsix]{Theorems \ref*{thetsix}} and \hyperref[thetthretv]{\ref*{thetthretv}}, that they are naturally formulated in \textit{integral terms}.

\smallskip

We also note that the problem of exceptional points does not arise if we impose regularity assumptions strong enough, so that the limiting assertions \hyperref[flfffl]{(\ref*{flfffl})} and \hyperref[flffflpv]{(\ref*{flffflpv})} are valid \textit{for all $t_{0} > 0$}, or even \textit{for all $t_{0} \geqslant 0$}, instead of \textit{for Lebesgue-almost every $t_{0} \geqslant 0$}. For example, this follows if we impose, in addition to \hyperref[sosaojkoia]{Assumptions \ref*{sosaojkoia}}, the \textit{a priori} assumption that the relative Fisher information function $t \mapsto I(P(t) \, \vert \, \mathrm{Q})$ is continuous on $(0,\infty)$, or continuous on $[0,\infty)$, respectively. 

\smallskip

Having made these general observations, let us now be more technical and have a precise look at the exceptional sets in the framework of the regularity codified by \hyperref[sosaojkoia]{Assumptions \ref*{sosaojkoia}}. In the proof of \hyperref[thetsixcorao]{Corollary \ref*{thetsixcorao}} we have deduced from the Lebesgue differentiation theorem that the generalized de Bruijn identity \hyperref[flffflnv]{(\ref*{flffflnv})}, \hyperref[flfffl]{(\ref*{flfffl})} is valid outside a set of Lebesgue measure zero. In particular, let us recall from \hyperref[rethpoitwooslda]{Remark \ref*{rethpoitwooslda}} that $N$ denotes the set of exceptional points $t_{0} \geqslant 0$ for which the right-sided limiting assertion \hyperref[rgtsflffflnv]{(\ref*{rgtsflffflnv})}, i.e.,
\begin{equation} \label{remoeacm}
\lim_{t \downarrow t_{0}} \, \frac{H\big( P(t) \, \vert \, \mathrm{Q} \big) - H\big( P(t_{0}) \, \vert \, \mathrm{Q}\big)}{t-t_{0}} 
= - \tfrac{1}{2} \, \| a \|_{L^{2}(\mathds{P})}^{2},
\end{equation}
fails. We have shown in the equations \hyperref[llethar]{(\ref*{llethar})} and \hyperref[tciwsolafophvs]{(\ref*{tciwsolafophvs})} from the proof of \hyperref[thetsixcoraopv]{Corollary \ref*{thetsixcoraopv}} that the limiting assertion \hyperref[tatpvotgdbi]{(\ref*{tatpvotgdbi})}, \hyperref[flffflpv]{(\ref*{flffflpv})}, i.e.,
\begin{equation} \label{remoeacdfm}
\lim_{t \downarrow t_{0}} \, \frac{H\big( P^{\beta}(t) \, \vert \, \mathrm{Q} \big) - H\big( P^{\beta}(t_{0}) \, \vert \, \mathrm{Q}\big)}{t-t_{0}} 
= -  \tfrac{1}{2} \, \big\langle a , a + 2b \big\rangle_{L^{2}(\mathds{P})},
\end{equation}
is valid if and only if $t_{0} \in \mathds{R}_{+} \setminus N$. In other words, the limits in \hyperref[remoeacm]{(\ref*{remoeacm})} and \hyperref[remoeacdfm]{(\ref*{remoeacdfm})} have the same exceptional set $N$ of zero Lebesgue measure. Put another way, the entropy limit \hyperref[remoeacm]{(\ref*{remoeacm})} is stable under perturbations, in the sense that the corresponding perturbed entropy limit \hyperref[remoeacdfm]{(\ref*{remoeacdfm})} continues to be valid for the same points $t_{0} \in \mathds{R}_{+} \setminus N$.

\smallskip

Furthermore, in the proofs of \hyperref[thetsixcor]{Propositions \ref*{thetsixcor}} and \hyperref[thetthretvcor]{\ref*{thetthretvcor}} we have seen that the limiting assertions therein are valid, respectively, for those points $t_{0} \geqslant 0$ for which the generalized de Bruijn identity \textnormal{\hyperref[flfffl]{(\ref*{flfffl})}} does hold, and for $t_{0}  \in \mathds{R}_{+} \setminus N$.
\end{remark}

\smallskip

\begin{remark}[\textsf{Stability of the Wasserstein limits under perturbations}] \label{wvotarrkresolmz} Let us now pass to the limits of the difference quotients pertaining to the Wasserstein distance. We fix some $t_{0} \in \mathds{R}_{+} \setminus N$ so that the limiting assertion \hyperref[remoeacm]{(\ref*{remoeacm})}, and as a consequence also \hyperref[remoeacdfm]{(\ref*{remoeacdfm})}, are valid. Then the unperturbed Wasserstein limit 
\begin{equation} \label{osagswtuffasihnvasd}
\lim_{t \downarrow t_{0}} \, \frac{W_{2}\big(P(t),P(t_{0})\big)}{t - t_{0}} = \tfrac{1}{2}  \, \| a \|_{L^{2}(\mathds{P})}
\end{equation}
of \hyperref[osagswtuffasihnv]{(\ref*{osagswtuffasihnv})} is valid as well. This is remarkable, because a priori there is no significant relation between relative entropy and Wasserstein distance, except for the fact that in the limit the relative Fisher information appears on the right-hand sides of both \hyperref[remoeacm]{(\ref*{remoeacm})} and \hyperref[osagswtuffasihnvasd]{(\ref*{osagswtuffasihnvasd})}. Even more, the unperturbed Wasserstein limit \hyperref[ftlbotwditpc]{(\ref*{ftlbotwditpc})}, \hyperref[svpvompvv]{(\ref*{svpvompvv})}, i.e.,
\begin{equation} \label{ftlbotwditpcasd}
\lim_{t \downarrow t_{0}} \, \frac{W_{2}\big( P^{\beta}(t),P^{\beta}(t_{0})\big)}{t-t_{0}}
=  \tfrac{1}{2}  \, \| a + 2 b\|_{L^{2}(\mathds{P})},
\end{equation}
also holds for this point $t_{0} \in \mathds{R}_{+} \setminus N$. In other words, the Wasserstein limits are stable under perturbations in the same manner as the entropy limits are.

\smallskip

Summing up, not only do the limiting assertions \hyperref[remoeacm]{(\ref*{remoeacm})} and \hyperref[remoeacdfm]{(\ref*{remoeacdfm})} hold for every $t_{0} \in \mathds{R}_{+} \setminus N$, but so do also the limiting assertions \hyperref[osagswtuffasihnvasd]{(\ref*{osagswtuffasihnvasd})} and \hyperref[ftlbotwditpcasd]{(\ref*{ftlbotwditpcasd})} pertaining to the Wasserstein distance. We will prove these results in \hyperref[agswt]{Theorems \ref*{agswt}} and \hyperref[bvagswt]{\ref*{bvagswt}} of \hyperref[stwt]{Section \ref*{stwt}}.
\end{remark}


\subsection{Completing the proof of \texorpdfstring{\hyperref[thetthretvcor]{Proposition \ref*{thetthretvcor}}}{Proposition 3.14}} \label{nspofthetthretvcor}


On account of the preparations in \hyperref[subsomusefullem]{Subsection \ref*{subsomusefullem}} above, we are now able to complete the proof of \hyperref[thetthretvcor]{Proposition \ref*{thetthretvcor}} by establishing the remaining limiting assertions \hyperref[titmlwhtcfs]{(\ref*{titmlwhtcfs})} and \hyperref[thetsixcorfessl]{(\ref*{thetsixcorfessl})} therein. 

\begin{proof}[Proof of the assertion \texorpdfstring{\textnormal{\hyperref[titmlwhtcfs]{(\ref*{titmlwhtcfs})}}}{} in \texorpdfstring{\hyperref[thetthretvcor]{Proposition \ref*{thetthretvcor}}}{}:] Let $t_{0} > 0$ and select $T > t_{0}$. Using the notation of \hyperref[witpetounper]{(\ref*{witpetounper})} above, we have to calculate the limit 
\begin{equation} \label{tfwhtccftbsav}
\lim_{s \uparrow T-t_{0}} \, \frac{\log Y^{\beta}\big(T-s,X(T-s)\big)}{T-t_{0}-s}.
\end{equation}
Fix $0 \leqslant s \leqslant T-t_{0}$. According to the integral version of the stochastic differential \hyperref[logybetapro]{(\ref*{logybetapro})}, the fraction in \hyperref[tfwhtccftbsav]{(\ref*{tfwhtccftbsav})} is equal to the sum of the following four normalized integral terms \hyperref[tfwhtccftbsavfi]{(\ref*{tfwhtccftbsavfi})} -- \hyperref[tfwhtccftbsavfith]{(\ref*{tfwhtccftbsavfith})} and \hyperref[tfwhtccftbsavfifo]{(\ref*{tfwhtccftbsavfifo})}, whose behavior as $s \uparrow T-t_{0}$ we will study separately below. By conditions \hyperref[tsaosaojko]{\ref*{tsaosaojko}}, \hyperref[naltsaosaojkos]{\ref*{naltsaosaojkos}} of \hyperref[sosaojkoia]{Assumptions \ref*{sosaojkoia}}, the function $(t,x) \mapsto \nabla \log Y^{\beta}(t,x)$ is continuous on $(0,\infty) \times \mathds{R}^{n}$, thus the first expression 
\begin{equation} \label{tfwhtccftbsavfi}
\frac{1}{T-t_{0}-s} \int_{s}^{T-t_{0}} \bigg( \operatorname{div} \beta \big(X(T-u)\big) + \Big\langle \beta\big(X(T-u)\big) \, , \nabla \log p\big(T-u,X(T-u)\big) \Big\rangle_{\mathds{R}^{n}}  \bigg) \, \textnormal{d}u
\end{equation}
is uniformly bounded on $[0,T-t_{0}] \times \operatorname{supp} \beta$. Using continuity and uniform boundedness, we conclude that \hyperref[tfwhtccftbsavfi]{(\ref*{tfwhtccftbsavfi})} converges $\mathds{P}$-almost surely as well as in $L^{1}(\mathds{P})$ to the right-hand side of \hyperref[titmlwhtcfs]{(\ref*{titmlwhtcfs})}, as required. Thus it remains to show that the three remaining terms converge to zero. Using continuity and uniform boundedness once again, we deduce from $\log Y^{\beta}(t_{0}, \, \cdot \,) = 0$ that the second integral term 
\begin{equation} \label{tfwhtccftbsavfis}
\frac{1}{T-t_{0}-s} \int_{s}^{T-t_{0}} \Bigg\langle \frac{\nabla Y^{\beta}\big(T-u,X(T-u)\big)}{Y^{\beta}\big(T-u,X(T-u)\big)} \, , \, \beta\big(X(T-u)\big)  \Bigg\rangle_{\mathds{R}^{n}} \, \textnormal{d}u
\end{equation}
converges to zero $\mathds{P}$-almost surely and in $L^{1}(\mathds{P})$. Since $\log Y^{\beta}(t_{0}, \, \cdot \,) = 0$ and because the integrand is continuous, we see that the third expression
\begin{equation} \label{tfwhtccftbsavfith}
\frac{1}{T-t_{0}-s} \int_{s}^{T-t_{0}} \frac{1}{2} \frac{\big\vert \nabla Y^{\beta}\big(T-u,X(T-u)\big) \big\vert^{2}}{Y^{\beta}\big(T-u,X(T-u)\big)^{2}} \, \textnormal{d}u
\end{equation}
converges $\mathds{P}$-almost surely to zero. Furthermore, owing to \hyperref[hctclwittmeitpocasot]{Lemma \ref*{hctclwittmeitpocasot}}, there is a constant $C > 0$ such that 
\begin{equation} \label{ubfiswwwu}
\mathds{E}_{\mathds{P}}\Bigg[\frac{1}{T - t_{0} - s}\int_{s}^{T-t_{0}} \frac{\big\vert \nabla Y^{\beta}\big(T-u,X(T-u)\big) \big\vert^{2}}{Y^{\beta}\big(T-u,X(T-u)\big)^{2}} \, \textnormal{d}u \Bigg] 
\leqslant C \, (T - t_{0} - s)
\end{equation}
holds for all $0 \leqslant s \leqslant T-t_{0}$, which implies that \hyperref[tfwhtccftbsavfith]{(\ref*{tfwhtccftbsavfith})} converges to zero also in $L^{1}(\mathds{P})$. The fourth and last term is the stochastic integral
\begin{equation} \label{tfwhtccftbsavfifo}
- \frac{1}{T-t_{0}-s} \int_{s}^{T-t_{0}} \Bigg\langle \frac{\nabla Y^{\beta}\big(T-u,X(T-u)\big)}{Y^{\beta}\big(T-u,X(T-u)\big)} \, , \, \textnormal{d}\overline{W}^{\mathds{P}}(T-u) \Bigg\rangle_{\mathds{R}^{n}}.
\end{equation}
The expression \hyperref[tfwhtccftbsavfith]{(\ref*{tfwhtccftbsavfith})} converges to zero $\mathds{P}$-almost surely, and according to \hyperref[ubfiswwwu]{(\ref*{ubfiswwwu})} we have
\begin{equation} 
\mathds{E}_{\mathds{P}}\Bigg[\frac{1}{(T - t_{0} - s)^{2}}\int_{s}^{T-t_{0}} \frac{\big\vert \nabla Y^{\beta}\big(T-u,X(T-u)\big) \big\vert^{2}}{Y^{\beta}\big(T-u,X(T-u)\big)^{2}} \, \textnormal{d}u \Bigg] \leqslant C.
\end{equation}
By means of the It\^{o} isometry, we deduce that
\begin{equation} 
\lim_{s \uparrow T-t_{0}} \, \mathds{E}_{\mathds{P}}\Bigg[ \, \Bigg( \frac{1}{T-t_{0}-s} \int_{s}^{T-t_{0}} \Bigg\langle \frac{\nabla Y^{\beta}\big(T-u,X(T-u)\big)}{Y^{\beta}\big(T-u,X(T-u)\big)} \, , \, \textnormal{d}\overline{W}^{\mathds{P}}(T-u) \Bigg\rangle_{\mathds{R}^{n}}  \, \Bigg)^{2} \, \Bigg] = 0.
\end{equation}
In other words, the normalized stochastic integral of \hyperref[tfwhtccftbsavfifo]{(\ref*{tfwhtccftbsavfifo})} converges to zero in $L^{2}(\mathds{P})$.

\smallskip

Summing up, we have shown that the limiting assertion \hyperref[titmlwhtcfs]{(\ref*{titmlwhtcfs})} holds in $L^{1}(\mathds{P})$ and is valid for all $t_{0} > 0$. As we have seen in \hyperref[hctclwittmeitpocasotpv]{Lemma \ref*{hctclwittmeitpocasotpv}}, the probability measures $\mathds{P}$ and $\mathds{P}^{\beta}$ are equivalent, the Radon-Nikod\'{y}m derivatives $\frac{\textnormal{d}\mathds{P}^{\beta}}{\textnormal{d}\mathds{P}}$ and $\frac{\textnormal{d}\mathds{P}}{\textnormal{d}\mathds{P}^{\beta}}$ are bounded on the $\sigma$-algebra $\mathcal{F}(T) = \mathcal{G}(0)$, and therefore convergence in $L^{1}(\mathds{P})$ is equivalent to convergence in $L^{1}(\mathds{P}^{\beta})$. This completes the proof of the limiting assertion \hyperref[titmlwhtcfs]{(\ref*{titmlwhtcfs})}.
\end{proof}

\begin{proof}[Proof of the assertion \texorpdfstring{\textnormal{\hyperref[thetsixcorfessl]{(\ref*{thetsixcorfessl})}}}{} in \texorpdfstring{\hyperref[thetthretvcor]{Proposition \ref*{thetthretvcor}}}{}:] This is proved in very much the same way, as assertions \hyperref[thetsixcorfes]{(\ref*{thetsixcorfes})}, \hyperref[titmlwhtcfs]{(\ref*{titmlwhtcfs})}. The only novelty here, is the use of \hyperref[dbowptmtowpbtmtfp]{(\ref*{dbowptmtowpbtmtfp})} to pass to the $\mathds{P}$-Brownian motion $\big(\overline{W}^{\mathds{P}}(T-s)\big)_{0 \leqslant s \leqslant T-t_{0}}$ from the $\mathds{P}^{\beta}$-Brownian motion $\big(\overline{W}^{\mathds{P}^{\beta}}(T-s)\big)_{0 \leqslant s \leqslant T-t_{0}}$, and the reliance on \hyperref[fecwfoeltpsn]{(\ref*{fecwfoeltpsn})} to ensure that the resulting stochastic integral is a (square-integrable) $\mathds{P}$-martingale. We leave the details to the care of the diligent reader, or refer to \cite[Section 3.2]{Tsc19}.
\end{proof}

\subsection{The dynamics in the forward direction of time} 


For the sake of completeness, we calculate now the stochastic differentials of the relative entropy process \hyperref[stlrpd]{(\ref*{stlrpd})} and its perturbed counterpart of \hyperref[prerepitufdllpitufd]{(\ref*{prerepitufdllpitufd})} also in the forward direction of time, under the measures $\mathds{P}$ and $\mathds{P}^{\beta}$, respectively. It will turn out that we are able to derive \hyperref[thetone]{Theorems \ref*{thetone}} and \hyperref[thetthre]{\ref*{thetthre}} also by applying It\^{o}'s formula in the forward direction of time. But the relations between these theorems and the stochastic differentials will become less transparent than in reverse time. In fact, additional terms will show up in the forward direction of time, which on the contrary did not appear in the backward direction. One may still take expectations, but to obtain \hyperref[thetone]{Theorems \ref*{thetone}} and \hyperref[thetthre]{\ref*{thetthre}} one has to argue why the expectations of these additional terms vanish. This is straightforward in the unperturbed situation of \hyperref[fhlupdotlrpfu]{Lemma \ref*{fhlupdotlrpfu}}, but in the perturbed context of \hyperref[hltplrpsdef]{Lemma \ref*{hltplrpsdef}} one also has to rely on integration by parts. 

\smallskip

We first compute the differentials of the likelihood ratio process $\ell(t,X(t))$, $t \geqslant 0$ of \hyperref[rndlr]{(\ref*{rndlr})} and of its logarithm as in \hyperref[stlrpd]{(\ref*{stlrpd})}, in the forward direction of time. We start by recalling the backwards Kolmogorov equation \hyperref[fpdefflrf]{(\ref*{fpdefflrf})}. With its help, we can compute the forward dynamics of the likelihood ratio process \hyperref[rndlr]{(\ref*{rndlr})} in the following manner.

\begin{lemma} \label{fhlupdotlrpfu} Under the \textnormal{\hyperref[sosaojkoia]{Assumptions \ref*{sosaojkoia}}}, the likelihood ratio process \textnormal{\hyperref[rndlr]{(\ref*{rndlr})}} and its logarithm satisfy the stochastic differential equations
\begingroup
\addtolength{\jot}{0.7em}
\begin{align}
\textnormal{d} \ell\big(t,X(t)\big) 
&= \Delta \ell\big(t,X(t)\big) \, \textnormal{d}t + \Big\langle \nabla \ell\big(t,X(t)\big) \, , \, \textnormal{d}W(t) - 2 \, \nabla \Psi\big(X(t)\big) \, \textnormal{d}t \Big\rangle_{\mathds{R}^{n}}  \label{updotlrpfu} \\
&= 2 \, \partial_{t} \ell\big(t,X(t)\big) \, \textnormal{d}t + \Big\langle \nabla \ell\big(t,X(t)\big) \, , \, \textnormal{d}W(t) \Big\rangle_{\mathds{R}^{n}}  \label{updotlrpfus}
\end{align} 
\endgroup
and
\begingroup
\addtolength{\jot}{0.7em}
\begin{align}
\textnormal{d} \log \ell\big(t,X(t)\big) &= \Bigg( \ \frac{\Delta \ell\big(t,X(t)\big)}{\ell\big(t,X(t)\big)} 
- \Bigg\langle \frac{\nabla \ell\big(t,X(t)\big)}{\ell\big(t,X(t)\big)} \, , \, 2 \, \nabla \Psi\big(X(t)\big) \Bigg\rangle_{\mathds{R}^{n}}  \ \Bigg) \ \textnormal{d}t \label{lredotlrpfuup} \\
& \qquad \qquad \qquad \qquad  \qquad - \frac{1}{2} \frac{\big\vert \nabla \ell\big(t,X(t)\big)\big\vert^{2}}{\ell\big(t,X(t)\big)^{2}} \, \textnormal{d}t \, + \, \Bigg\langle \frac{\nabla \ell\big(t,X(t)\big)}{\ell\big(t,X(t)\big)} \, , \, \textnormal{d}W(t) \Bigg\rangle_{\mathds{R}^{n}}  \label{lredotlrpfuups} \\
&= \Big( \, 2 \, \partial_{t} \log \ell\big(t,X(t)\big) - \tfrac{1}{2} \big\vert \nabla \log \ell\big(t,X(t)\big)\big\vert^{2} \, \Big) \, \textnormal{d}t + \Big\langle \nabla \log \ell\big(t,X(t)\big) \, , \, \textnormal{d}W(t) \Big\rangle_{\mathds{R}^{n}}, \label{lredotlrpfuupst}
\end{align}
\endgroup
respectively, for $t \geqslant 0$, with respect to the forward filtration $(\mathcal{F}(t))_{t \geqslant 0}$.
\begin{proof} Applying It\^{o}'s formula and using the equations \hyperref[sdeids]{(\ref*{sdeids})}, \hyperref[fpdefflrf]{(\ref*{fpdefflrf})} shows \hyperref[updotlrpfu]{(\ref*{updotlrpfu})}, \hyperref[updotlrpfus]{(\ref*{updotlrpfus})}. One more application of It\^{o}'s formula leads to the stochastic equations \hyperref[lredotlrpfuup]{(\ref*{lredotlrpfuup})} -- \hyperref[lredotlrpfuupst]{(\ref*{lredotlrpfuupst})}.  
\end{proof}
\end{lemma}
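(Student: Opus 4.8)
The plan is to derive all four identities by two successive applications of It\^o's formula, feeding in the diffusion dynamics \eqref{sdeids} of the canonical coordinate process $(X(t))_{t\geqslant 0}$ together with the backwards Kolmogorov equation \eqref{fpdefflrf} for the likelihood ratio function $\ell$. As a preliminary I would record that, via the representation $p(t,x)=\ell(t,x)\,q(x)$ of \eqref{tpdfptxcbritf} and the smoothness of $\Psi$, the regularity and strict positivity of $p$ postulated in condition \ref{tsaosaojko} of \hyperref[sosaojkoia]{Assumptions \ref*{sosaojkoia}} transfer to $\ell$ on $(0,\infty)\times\mathds{R}^{n}$; this is exactly what is needed to apply the time-dependent It\^o formula, first to $\ell(t,X(t))$ and subsequently to $\log\ell(t,X(t))$.

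\textbf{Step 1.} Applying It\^o's formula to $\ell(t,X(t))$ and using $\textnormal{d}X(t)=-\nabla\Psi(X(t))\,\textnormal{d}t+\textnormal{d}W(t)$ together with $\textnormal{d}\langle X^{i},X^{j}\rangle_{t}=\delta_{ij}\,\textnormal{d}t$, one obtains
\[
\textnormal{d}\ell\big(t,X(t)\big)=\Big(\partial_{t}\ell+\tfrac{1}{2}\Delta\ell-\big\langle\nabla\ell,\nabla\Psi\big\rangle_{\mathds{R}^{n}}\Big)\big(t,X(t)\big)\,\textnormal{d}t+\Big\langle\nabla\ell\big(t,X(t)\big),\textnormal{d}W(t)\Big\rangle_{\mathds{R}^{n}}.
\]
Inserting $\partial_{t}\ell=\tfrac{1}{2}\Delta\ell-\langle\nabla\ell,\nabla\Psi\rangle_{\mathds{R}^{n}}$ from \eqref{fpdefflrf} collapses the drift to $\Delta\ell-2\langle\nabla\ell,\nabla\Psi\rangle_{\mathds{R}^{n}}$, which is \eqref{updotlrpfu}; reading the same identity in the form $2\partial_{t}\ell=\Delta\ell-2\langle\nabla\ell,\nabla\Psi\rangle_{\mathds{R}^{n}}$ re-expresses this drift as $2\partial_{t}\ell$ and yields \eqref{updotlrpfus}.

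\textbf{Step 2.} A second application of It\^o's formula, now to the logarithm of the strictly positive semimartingale $\ell(t,X(t))$ obtained in Step 1 — whose quadratic variation is $\int_{0}^{\cdot}\vert\nabla\ell(u,X(u))\vert^{2}\,\textnormal{d}u$ by the martingale term of \eqref{updotlrpfu} — contributes the It\^o correction $-\tfrac{1}{2}\vert\nabla\ell\vert^{2}/\ell^{2}\,\textnormal{d}t$ and, after dividing the remaining terms by $\ell$, produces \eqref{lredotlrpfuup}--\eqref{lredotlrpfuups}. The compact form \eqref{lredotlrpfuupst} then follows by writing $\nabla\log\ell=\nabla\ell/\ell$ and $\partial_{t}\log\ell=\partial_{t}\ell/\ell$ and by using the consequence $2\partial_{t}\log\ell=\Delta\ell/\ell-2\langle\nabla\ell/\ell,\nabla\Psi\rangle_{\mathds{R}^{n}}$ of \eqref{fpdefflrf} (divided by $\ell$) to recombine the drift into $2\partial_{t}\log\ell-\tfrac{1}{2}\vert\nabla\log\ell\vert^{2}$.

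\emph{Where the work lies.} The calculation itself is mechanical; the only points demanding care are bookkeeping. One must justify the two invocations of It\^o's formula — the regularity and positivity of $\ell$ noted above, together with the $\mathds{P}$-almost sure finiteness on compact time intervals of $\int_{0}^{t}\vert\nabla\log\ell(u,X(u))\vert^{2}\,\textnormal{d}u$, which follows exactly as in \eqref{twtcotpotccptitttfitisifoas} from the continuity of $(t,x)\mapsto\nabla\log\ell(t,x)$ granted by \ref{tsaosaojko} and the continuity of the paths of $X$. Note that the lemma claims only the stochastic differential equations, i.e.\ the local semimartingale decompositions, so no integrability of the martingale parts is needed at this stage. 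Finally, and this is the substantive content rather than an obstacle, it is worth observing that — in contrast with the time-reversed dynamics of \hyperref[thetsix]{Theorem \ref*{thetsix}} — the forward drift does not collapse to a multiple of the relative Fisher information density: it retains the genuinely extra terms $\Delta\ell/\ell$ and $\langle\nabla\ell/\ell,\nabla\Psi\rangle_{\mathds{R}^{n}}$, which is precisely why the backward direction of time is the transparent one, and why in the forward direction one must argue separately (here straightforwardly, upon taking expectations) that the contributions of these additional terms can be controlled.
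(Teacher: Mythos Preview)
Your proposal is correct and follows exactly the same approach as the paper's own proof: two successive applications of It\^{o}'s formula, the first to $\ell(t,X(t))$ using the SDE \eqref{sdeids} and the backwards Kolmogorov equation \eqref{fpdefflrf}, the second to the logarithm. Your write-up is in fact more detailed than the paper's (which dispatches the lemma in two sentences), and your added remarks on regularity and on why the forward drift does not collapse are apposite commentary rather than deviations from the argument.
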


In order to deduce \hyperref[thetone]{Theorem \ref*{thetone}} from \hyperref[fhlupdotlrpfu]{Lemma \ref*{fhlupdotlrpfu}} --- at least formally --- we take expectations in \hyperref[lredotlrpfuup]{(\ref*{lredotlrpfuup})} -- \hyperref[lredotlrpfuupst]{(\ref*{lredotlrpfuupst})} and use \hyperref[fpdefflrf]{(\ref*{fpdefflrf})} to observe that 
\begin{equation} \label{expzitfdotbntceo}
\mathds{E}_{\mathds{P}}\Bigg[ \, \frac{\Delta \ell\big(t,X(t)\big)}{\ell\big(t,X(t)\big)}
- \Bigg\langle \frac{\nabla \ell\big(t,X(t)\big)}{\ell\big(t,X(t)\big)} \, , \, 2 \, \nabla \Psi\big(X(t)\big) \Bigg\rangle_{\mathds{R}^{n}} \, \Bigg] =
\mathds{E}_{\mathds{P}}\Bigg[ \, \frac{2 \, \partial_{t} \ell\big(t,X(t)\big)}{\ell\big(t,X(t)\big)} \, \Bigg]= 0.
\end{equation}

\smallskip

Next, we calculate the differentials of the perturbed likelihood ratio process $\ell^{\beta}(t,X(t))$, $t \geqslant t_{0}$, as in \hyperref[pvdotplrfbs]{(\ref*{pvdotplrfbs})}, and of its logarithm as in \hyperref[prerepitufdllpitufd]{(\ref*{prerepitufdllpitufd})}, again in the forward direction. With the help of the ``perturbed'' backwards Kolmogorov equation \hyperref[pfpeeffe]{(\ref*{pfpeeffe})}, we obtain the forward dynamics of the perturbed likelihood ratio process \hyperref[pvdotplrfbs]{(\ref*{pvdotplrfbs})}, as follows.

\begin{lemma} \label{hltplrpsdef} Under the \textnormal{\hyperref[sosaojkoia]{Assumptions \ref*{sosaojkoia}}}, let $t_{0} \geqslant 0$. The perturbed likelihood ratio process \textnormal{\hyperref[pvdotplrfbs]{(\ref*{pvdotplrfbs})}} and its logarithm satisfy the stochastic differential equations
\begingroup
\addtolength{\jot}{0.7em}
\begin{align} 
\frac{\textnormal{d} \ell^{\beta}\big(t,X(t)\big)}{\ell^{\beta}\big(t,X(t)\big)} 
&= \Bigg( \ \frac{\Delta \ell^{\beta}\big(t,X(t)\big)}{\ell^{\beta}\big(t,X(t)\big)} - \Bigg\langle \frac{\nabla \ell^{\beta}\big(t,X(t)\big)}{\ell^{\beta}\big(t,X(t)\big)} \, , \, 2 \, \nabla \Psi\big(X(t)\big) \Bigg\rangle_{\mathds{R}^{n}} \ \Bigg) \ \textnormal{d}t \label{plrpsdef} \\
& \qquad \qquad + \Big( \operatorname{div} \beta - \big\langle \beta \, , \, 2 \, \nabla \Psi \big\rangle_{\mathds{R}^{n}} \Big)\big(X(t)\big) \, \textnormal{d}t \, + \, \Bigg\langle \frac{\nabla \ell^{\beta}\big(t,X(t)\big)}{\ell^{\beta}\big(t,X(t)\big)} \, , \, \textnormal{d}W^{\beta}(t) \Bigg\rangle_{\mathds{R}^{n}} \\
&= \Big( \, 2 \, \partial_{t} \log \ell^{\beta}\big(t,X(t)\big) - \Big\langle \nabla \log \ell^{\beta}\big(t,X(t)\big) \, , \, \beta\big(X(t)\big) \Big\rangle_{\mathds{R}^{n}} \, \Big) \, \textnormal{d}t \\
& \qquad \qquad + \Big( \big\langle \beta \, , \, 2 \, \nabla \Psi \big\rangle_{\mathds{R}^{n}} - \operatorname{div} \beta \Big)\big(X(t)\big) \, \textnormal{d}t \, + \, \Big\langle \nabla \log \ell^{\beta}\big(t,X(t)\big) \, , \, \textnormal{d}W^{\beta}(t) \Big\rangle_{\mathds{R}^{n}} 
\end{align}
\endgroup
and
\begingroup
\addtolength{\jot}{0.7em}
\begin{align} 
&\textnormal{d} \log \ell^{\beta}\big(t,X(t)\big) 
= \Bigg( \ \frac{\Delta \ell^{\beta}\big(t,X(t)\big)}{\ell^{\beta}\big(t,X(t)\big)} - \Bigg\langle \frac{\nabla \ell^{\beta}\big(t,X(t)\big)}{\ell^{\beta}\big(t,X(t)\big)} \, , \, 2 \, \nabla \Psi\big(X(t)\big) \Bigg\rangle_{\mathds{R}^{n}} \ \Bigg) \ \textnormal{d}t \label{logplrpsdeffe} \\
&+ \Bigg( \Big( \operatorname{div} \beta - \big\langle \beta \, , \, 2 \, \nabla \Psi \big\rangle_{\mathds{R}^{n}} \Big)\big(X(t)\big) - \frac{1}{2} \frac{\big\vert \nabla \ell^{\beta}\big(t,X(t)\big)\big\vert^{2}}{\ell^{\beta}\big(t,X(t)\big)^{2}} \ \Bigg) \ \textnormal{d}t \, + \, \Bigg\langle \frac{\nabla \ell^{\beta}\big(t,X(t)\big)}{\ell^{\beta}\big(t,X(t)\big)} \, , \, \textnormal{d}W^{\beta}(t) \Bigg\rangle_{\mathds{R}^{n}} \label{logplrpsdefse} \\
& \hspace{74.5pt} = \Big( \, 2 \, \partial_{t} \log \ell^{\beta}\big(t,X(t)\big) - \Big\langle \nabla \log \ell^{\beta}\big(t,X(t)\big) \, , \, \beta\big(X(t)\big) \Big\rangle_{\mathds{R}^{n}} \, \Big) \, \textnormal{d}t \label{thlogplrpsdef} \\
&+ \bigg(  \Big( \big\langle \beta \, , \, 2 \, \nabla \Psi \big\rangle_{\mathds{R}^{n}} - \operatorname{div} \beta \Big)\big(X(t)\big) - \tfrac{1}{2} \big\vert \nabla \log \ell^{\beta}\big(t,X(t)\big)\big\vert^{2} \, \bigg) \, \textnormal{d}t \, + \, \Big\langle \nabla \log \ell^{\beta}\big(t,X(t)\big) \, , \, \textnormal{d}W^{\beta}(t) \Big\rangle_{\mathds{R}^{n}}, \label{logplrpsdef}
\end{align}
\endgroup
respectively, for $t \geqslant t_{0}$, with respect to the forward filtration $(\mathcal{F}(t))_{t \geqslant t_{0}}$.
\begin{proof} Using \hyperref[wpsdeids]{(\ref*{wpsdeids})}, \hyperref[pfpeeffe]{(\ref*{pfpeeffe})} and It\^{o}'s formula, we obtain the stochastic equations \hyperref[plrpsdef]{(\ref*{plrpsdef})} -- \hyperref[logplrpsdef]{(\ref*{logplrpsdef})}.
\end{proof} 
\end{lemma}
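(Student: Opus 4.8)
The plan is to obtain both displays by applying It\^{o}'s formula, first to the function $(t,x)\mapsto\ell^{\beta}(t,x)$ and then to its logarithm, along the trajectories of the canonical process under $\mathds{P}^{\beta}$; this is the direct ``perturbed'' analogue of Lemma~\ref{fhlupdotlrpfu}. Two ingredients are used. The first is the forward semimartingale decomposition of $X$ under $\mathds{P}^{\beta}$, namely \eqref{wpsdeids}, which reads $\textnormal{d}X(t)=-(\nabla\Psi+\beta)(X(t))\,\textnormal{d}t+\textnormal{d}W^{\beta}(t)$ and has quadratic covariation $\textnormal{d}\langle X\rangle_{t}=I_{n}\,\textnormal{d}t$. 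The second is the perturbed backwards Kolmogorov equation \eqref{pfpeeffe} satisfied by $\ell^{\beta}$. The regularity that makes It\^{o}'s formula (and division by $\ell^{\beta}$) legitimate --- smoothness of $\ell^{\beta}(t,\cdot)$ in $x$, differentiability in $t$, continuity of $\nabla\log\ell^{\beta}$ on $(t_{0},\infty)\times\mathds{R}^{n}$, and strict positivity of $\ell^{\beta}$ --- is precisely what conditions~\ref{naltsaosaojkos} and~\ref{tsaosaojko} of Assumptions~\ref{sosaojkoia} guarantee, applied to the perturbed potential $\Psi+B$.

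Applying It\^{o}'s formula to $\ell^{\beta}(t,X(t))$ yields, along $X$, the drift $\partial_{t}\ell^{\beta}-\langle\nabla\ell^{\beta},\nabla\Psi+\beta\rangle_{\mathds{R}^{n}}+\tfrac12\Delta\ell^{\beta}$ together with the martingale part $\langle\nabla\ell^{\beta},\textnormal{d}W^{\beta}\rangle_{\mathds{R}^{n}}$. One then uses \eqref{pfpeeffe} to trade the term $\tfrac12\Delta\ell^{\beta}$ for $\partial_{t}\ell^{\beta}$ and for the first- and zeroth-order terms of the PDE; collecting the $\langle\nabla\ell^{\beta},\nabla\Psi\rangle$ and $\langle\nabla\ell^{\beta},\beta\rangle$ contributions coming from the drift of \eqref{wpsdeids}, from the It\^{o} second-order term, and from \eqref{pfpeeffe}, and dividing by $\ell^{\beta}(t,X(t))>0$, gives the first representation \eqref{plrpsdef}; retaining instead the factor $2\,\partial_{t}\ell^{\beta}$ produced by the same substitution gives the second representation, the one written via $\partial_{t}\log\ell^{\beta}$. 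The passage to $\log\ell^{\beta}(t,X(t))$ is then one more application of It\^{o}'s formula, to $y\mapsto\log y$: since the martingale part of $\ell^{\beta}(t,X(t))$ is $\langle\nabla\ell^{\beta},\textnormal{d}W^{\beta}\rangle_{\mathds{R}^{n}}$, its quadratic variation contributes the correction $-\tfrac12\,\ell^{\beta}(t,X(t))^{-2}\,\vert\nabla\ell^{\beta}(t,X(t))\vert^{2}\,\textnormal{d}t=-\tfrac12\,\vert\nabla\log\ell^{\beta}(t,X(t))\vert^{2}\,\textnormal{d}t$, which is exactly the extra term separating \eqref{logplrpsdeffe}--\eqref{logplrpsdefse}, resp.\ \eqref{thlogplrpsdef}--\eqref{logplrpsdef}, from the corresponding representation of $\textnormal{d}\ell^{\beta}/\ell^{\beta}$.

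I do not expect any substantive obstacle: the whole statement is two applications of It\^{o}'s formula combined with the identity \eqref{pfpeeffe}, and the only delicate point is keeping the algebra of the drift straight --- in particular noticing that, because the forward drift in \eqref{wpsdeids} carries $+\beta$ whereas \eqref{pfpeeffe} carries $-\nabla\Psi$, the sign of the surviving $\langle\nabla\ell^{\beta},\nabla\Psi\rangle$ term is opposite to what a naive comparison with Lemma~\ref{fhlupdotlrpfu} would suggest, while all genuinely ``perturbative'' contributions gather into the scalar coefficient $\operatorname{div}\beta-\langle\beta,2\nabla\Psi\rangle_{\mathds{R}^{n}}$ together with the transport term $\langle\nabla\log\ell^{\beta},\beta\rangle_{\mathds{R}^{n}}$. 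A convenient sanity check is that setting $\beta\equiv 0$ collapses \eqref{pfpeeffe} to the backwards Kolmogorov equation \eqref{fpdefflrf} and \eqref{plrpsdef}--\eqref{logplrpsdef} to Lemma~\ref{fhlupdotlrpfu}. Finally, I note that, unlike the time-reversed dynamics of Lemma~\ref{trplrpdald}, these forward-time equations contain additional terms --- the $\Delta\ell^{\beta}/\ell^{\beta}$ piece, equivalently $2\,\partial_{t}\log\ell^{\beta}$ --- whose $\mathds{P}^{\beta}$-expectation would later have to be shown to vanish, using integration by parts against the compactly supported $\beta$, in order to recover Theorem~\ref{thetthre}; but that is not part of the present lemma, which is a pure It\^{o}-calculus statement.
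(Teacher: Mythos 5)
Your proposal is correct and follows exactly the paper's (one-line) proof: apply It\^{o}'s formula to $\ell^{\beta}(t,X(t))$ under $\mathds{P}^{\beta}$ using the forward dynamics \eqref{wpsdeids}, trade second-order against time-derivative terms via the perturbed backwards Kolmogorov equation \eqref{pfpeeffe}, divide by $\ell^{\beta}>0$, and apply It\^{o} once more to the logarithm to pick up the $-\tfrac12\vert\nabla\log\ell^{\beta}\vert^{2}$ correction. Your bookkeeping of where each drift contribution comes from, the $\beta\equiv 0$ sanity check, and the closing remark about the extra forward-time terms whose expectations must later be killed by integration by parts are all accurate.
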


The perturbed situation is not as nice as the unperturbed one, since according to \hyperref[pfpeeffe]{(\ref*{pfpeeffe})} we have 
\begin{equation} 
\Delta \ell^{\beta}(t,x) - \big\langle \nabla \ell^{\beta}(t,x) \, , \, 2 \, \nabla \Psi(x) \big\rangle_{\mathds{R}^{n}} \neq 2 \, \partial_{t} \ell^{\beta}(t,x)
\end{equation}
in general. However, integrating by parts shows that 
\begin{equation} \label{nexpzitfdotbntceo}
\mathds{E}_{\mathds{P}^{\beta}}\Bigg[ \, \frac{\Delta \ell^{\beta}\big(t,X(t)\big)}{\ell^{\beta}\big(t,X(t)\big)}
- \Bigg\langle \frac{\nabla \ell^{\beta}\big(t,X(t)\big)}{\ell^{\beta}\big(t,X(t)\big)} \, , \, 2 \, \nabla \Psi\big(X(t)\big) \Bigg\rangle_{\mathds{R}^{n}} \, \Bigg] = 0.
\end{equation}
Hence taking expectations in \hyperref[logplrpsdeffe]{(\ref*{logplrpsdeffe})}, \hyperref[logplrpsdefse]{(\ref*{logplrpsdefse})} allows to derive \hyperref[thetthre]{Theorem \ref*{thetthre}} from \hyperref[hltplrpsdef]{Lemma \ref*{hltplrpsdef}}, at least on a formal level. But as opposed to the backward direction of time, the identity \hyperref[nexpzitfdotbntceo]{(\ref*{nexpzitfdotbntceo})} does not hold any more when we take expectations conditionally on $X(t)$.


\section{The Wasserstein transport} \label{stwt} 


For the convenience of the reader we review in \hyperref[stwtbnat]{Subsections \ref*{stwtbnat}} and \hyperref[sstmdoaccinqws]{\ref*{sstmdoaccinqws}} some well-known results on quadratic Wasserstein transport \cite{AGS08,AG13}, in order to establish the limits \hyperref[agswtuffasih]{(\ref*{agswtuffasih})} and \hyperref[svpvompvv]{(\ref*{svpvompvv})} and complete the proofs of \hyperref[thetsixcorao]{Corollaries \ref*{thetsixcorao}} and \hyperref[thetsixcoraopv]{\ref*{thetsixcoraopv}}. For a detailed discussion of metric measure spaces and in particular Wasserstein spaces, see also the work \cite{Stu06a,Stu06b} by Sturm.

\smallskip

As we make precise statements regarding the points $t_{0} \geqslant 0$ at which the the limiting assertions \hyperref[agswtuffasih]{(\ref*{agswtuffasih})} and \hyperref[svpvompvv]{(\ref*{svpvompvv})} are valid (recall \hyperref[wvotarrkresolmz]{Remark \ref*{wvotarrkresolmz}} at this point), we provide detailed proofs of the relevant \hyperref[agswt]{Theorems \ref*{agswt}}, \hyperref[bvagswt]{\ref*{bvagswt}} in \hyperref[stwtbnatlbotwdt]{Subsection \ref*{stwtbnatlbotwdt}}.

\subsection{Basic notation and terminology} \label{stwtbnat}

We recall below the definitions of the quadratic Wasserstein space $\mathscr{P}_{2}(\mathds{R}^{n})$, and of the quadratic Wasserstein distance $W_{2}$. We follow the setting of \cite{AGS08}, from where we borrow most of the notation and terminology used in this section. Thus, for unexplained notions and definitions, the reader may consult this beautiful book.

\medskip

We denote by $\mathscr{P}(\mathds{R}^{n})$ the collection of probability measures on the Borel sets $\mathscr{B}(\mathds{R}^{n})$ of $\mathds{R}^{n}$. The \textit{quadratic Wasserstein space} $\mathscr{P}_{2}(\mathds{R}^{n})$ is the subset of $\mathscr{P}(\mathds{R}^{n})$ consisting of the probability measures on $\mathscr{B}(\mathds{R}^{n})$ with finite second moment, i.e., 
\begin{equation} \label{ptrn}
\mathscr{P}_{2}(\mathds{R}^{n}) \vcentcolon = \bigg\{ P \in \mathscr{P}(\mathds{R}^{n}) \colon \ \int_{\mathds{R}^{n}} \vert x \vert^{2} \, \textnormal{d}P(x) < \infty \bigg\}.
\end{equation}

\smallskip

If $p \colon \mathds{R}^{n} \rightarrow [0,\infty)$ is a probability density function on $\mathds{R}^{n}$, we can identify it with the probability measure $P \in \mathscr{P}(\mathds{R}^{n})$ having density $p$ with respect to Lebesgue measure on $\mathds{R}^{n}$. In particular, if $p$ is a probability density with finite second moment, i.e.,
\begin{equation} \label{pdptrn}
\int_{\mathds{R}^{n}} \vert x \vert^{2} \, p(x) \, \textnormal{d}x < \infty, 
\end{equation}
then we can identify $p$ with an element of $\mathscr{P}_{2}(\mathds{R}^{n})$.

\smallskip

We denote by $\Gamma(P_{1},P_{2})$ the collection of Kantorovich transport plans, that is, probability measures $\boldsymbol{\gamma}$ in $\mathscr{P}(\mathds{R}^{n} \times \mathds{R}^{n})$ with given marginals $P_{1}, P_{2} \in \mathscr{P}(\mathds{R}^{n})$. More precisely, if $\pi^{i} \colon \mathds{R}^{n} \times \mathds{R}^{n} \rightarrow \mathds{R}^{n}$ are the canonical projections, then $\pi_{\#}^{i} \boldsymbol{\gamma} = P_{i}$, for $i \in \{1,2\}$. The \textit{quadratic Wasserstein distance} between two probability measures $P_{1},P_{2} \in \mathscr{P}_{2}(\mathds{R}^{n})$ is defined by 
\begin{equation} \label{doqwd}
W_{2}^{2}(P_{1},P_{2}) \vcentcolon = \inf \bigg\{ \int_{\mathds{R}^{n} \times \mathds{R}^{n}} \vert x - y \vert^{2} \, \textnormal{d}\boldsymbol{\gamma}(x,y) \colon \ \boldsymbol{\gamma} \in \Gamma(P_{1},P_{2}) \bigg\}.
\end{equation}
The quadratic Wasserstein space $\mathscr{P}_{2}(\mathds{R}^{n})$, endowed with the quadratic Wasserstein distance $W_{2}$ just introduced, is a Polish space \cite[Proposition 7.1.5]{AGS08}. 

\subsection{The metric derivative of curves in the Wasserstein space} \label{sstmdoaccinqws}

\textit{In the present section we consider the solution $(p(t))_{t \geqslant 0}$ of the Fokker-Planck equation \textnormal{\hyperref[fpeqnwfp]{(\ref*{fpeqnwfp})}} with initial condition \textnormal{\hyperref[icnwfp]{(\ref*{icnwfp})}} as a curve in the quadratic Wasserstein space $\mathscr{P}_{2}(\mathds{R}^{n})$.} This is justified by \hyperref[fosmolds]{Lemma \ref*{fosmolds}}, on the basis of which the \hyperref[sosaojkoia]{Assumptions \ref*{sosaojkoia}} guarantee that $p(t) \in \mathscr{P}_{2}(\mathds{R}^{n})$ for all $t \geqslant 0$. For fixed $T \in (0,\infty)$, we define now the time-dependent velocity field 
\begin{equation} \label{tdvfvtx}
[0,T] \times \mathds{R}^{n} \ni (t,x) \longmapsto v(t,x) \vcentcolon = - \bigg( \frac{1}{2} \frac{\nabla p(t,x)}{p(t,x)} + \nabla \Psi(x) \bigg) = - \frac{1}{2} \frac{\nabla \ell(t,x)}{\ell(t,x)} \in \mathds{R}^{n}
\end{equation}
that consists of two parts: the drift $-\nabla \Psi( \, \cdot \,)$ of the underlying motion; and the speed $-\frac{1}{2} \frac{\nabla p(t, \, \cdot \,)}{p(t, \, \cdot \,)}$ of the transport induced by the diffusive motion with transition mechanism $p(t, \, \cdot \,)$, in the manner of \hyperref[a.8]{(\ref*{a.8})}. Then the Fokker-Planck equation \hyperref[fpeqnwfp]{(\ref*{fpeqnwfp})}, satisfied by the curve $(p(t))_{0 \leqslant t \leqslant T}$ of probability density functions in $\mathscr{P}_{2}(\mathds{R}^{n})$, can be cast as a \textit{continuity}, or \textit{linear transport}, \textit{equation}, namely,
\begin{equation} \label{cefpe}
\partial_{t} p(t,x) + \operatorname{div} \big( v(t,x) \, p(t,x) \big) = 0, \qquad (t,x) \in (0,T] \times \mathds{R}^{n}.
\end{equation}
According to \hyperref[rfi]{(\ref*{rfi})}, \hyperref[fecwfoel]{(\ref*{fecwfoel})} and by definition of the velocity field $v(t) \equiv v(t, \, \cdot \, )$, we have
\begin{equation} \label{icfwc}
\tfrac{1}{2} \int_{0}^{T} I\big( P(t) \, \vert \, \mathrm{Q}\big) \, \textnormal{d}t = 2 \int_{0}^{T} \bigg( \int_{\mathds{R}^{n}} \vert v(t,x) \vert^{2} \, p(t,x) \, \textnormal{d}x \bigg) \, \textnormal{d}t < \infty.
\end{equation}

\smallskip

The quadratic Wasserstein distance of \hyperref[doqwd]{(\ref*{doqwd})}, and the continuity equation \hyperref[cefpe]{(\ref*{cefpe})}, are tied together intimately. Indeed, for any two probability measures $P_{0},P_{1}$ in $\mathscr{P}_{2}(\mathds{R}^{n})$ that admit density functions $\rho_{0}(\, \cdot \,)$ and $\rho_{1}(\, \cdot \,)$, respectively, we have the ``minimum kinetic energy'' representation
\begin{equation} \label{5.7.bb}
W_{2}^{2}(P_{0},P_{1}) = \inf \int_{0}^{1} \bigg( \int_{\mathds{R}^{n}} \vert v(t,x) \vert^{2} \, \rho(t,x) \, \textnormal{d}x \bigg) \, \textnormal{d}t
\end{equation}
of \cite{BB00}. Here, the infimum is taken over all pairs of vector fields $(\rho,v)$, scalar and vector, respectively, that satisfy the equation \hyperref[cefpe]{(\ref*{cefpe})} as well as the initial and terminal conditions $\rho(0, \, \cdot \,) = \rho_{0}(\, \cdot \,)$, $\rho(1, \, \cdot \,) = \rho_{1}(\, \cdot \,)$. The representation \hyperref[5.7.bb]{(\ref*{5.7.bb})} provides a strong justification for the relevance of the Wasserstein distance in our context, which is indeed governed by an equation (Fokker-Planck) of transport type.

\smallskip

As $(p(t))_{0 \leqslant t \leqslant T}$ is a curve in the Wasserstein space $\mathscr{P}_{2}(\mathds{R}^{n})$ satisfying the continuity equation \hyperref[cefpe]{(\ref*{cefpe})} and the integrability condition \hyperref[icfwc]{(\ref*{icfwc})}, we can invoke Theorem 8.3.1 in \cite{AGS08}. This result relates absolutely continuous curves in $\mathscr{P}_{2}(\mathds{R}^{n})$ to the continuity equation. In particular, its second implication states that the curve $(p(t))_{0 \leqslant t \leqslant T}$ is absolutely continuous \cite[Definition 1.1.1]{AGS08}. As a consequence, for Lebesgue-almost every $t_{0} \in [0,T]$, its \textit{metric derivative} \cite[Theorem 1.1.2]{AGS08}  
\begin{equation} \label{dotmd}
\vert p' \vert(t_{0}) \vcentcolon = \lim_{t \rightarrow t_{0}} \, \frac{W_{2}\big(p(t),p(t_{0})\big)}{\vert t - t_{0} \vert}
\end{equation}
exists. Furthermore, \cite[Theorem 8.3.1]{AGS08} provides for Lebesgue-almost every $t_{0} \in [0,T]$ the estimate
\begin{equation} \label{wcfi}
\vert p' \vert(t_{0}) \leqslant \| v(t_{0}) \|_{L^{2}(P(t_{0}))}.
\end{equation}

\smallskip

On the other hand, according to condition \hyperref[nalwstasas]{\ref*{nalwstasas}} in \hyperref[sosaojkoianoew]{Assumptions \ref*{sosaojkoianoew}}, the time-dependent gradient vector field $v \colon [0,T] \times \mathds{R}^{n} \rightarrow \mathds{R}^{n}$ of \hyperref[tdvfvtx]{(\ref*{tdvfvtx})} is an element of the tangent space \cite[Definition 8.4.1]{AGS08} of $\mathscr{P}_{2}(\mathds{R}^{n})$ at the point $P(t) \in \mathscr{P}_{2}(\mathds{R}^{n})$, i.e.,
\begin{equation} \label{tanpcvecup}
v(t, \, \cdot \, ) \in \textnormal{Tan}_{P(t)} \mathscr{P}_{2}(\mathds{R}^{n}) \vcentcolon = \overline{\big\{ \nabla \varphi \colon \ \varphi \in \mathcal{C}_{c}^{\infty}(\mathds{R}^{n};\mathds{R})\big\}}^{L^{2}(P(t))}.
\end{equation}
Since $(p(t))_{0 \leqslant t \leqslant T}$ is an absolutely continuous curve in the quadratic Wasserstein space $\mathscr{P}_{2}(\mathds{R}^{n})$ satisfying the continuity equation \hyperref[cefpe]{(\ref*{cefpe})} for the time-dependent velocity field $v(t)$, which is tangent to the curve, we can apply Proposition 8.4.5 of \cite{AGS08}. This result characterizes tangent vectors to absolutely continuous curves, and entails for Lebesgue-almost every $t_{0} \in [0,T]$ the inequality
\begin{equation} \label{wcsi}
\vert p' \vert(t_{0}) \geqslant \| v(t_{0}) \|_{L^{2}(P(t_{0}))}.
\end{equation}

Combining \hyperref[wcfi]{(\ref*{wcfi})} and \hyperref[wcsi]{(\ref*{wcsi})}, we obtain for Lebesgue-almost every $t_{0} \in [0,T]$ the equality
\begin{equation} \label{wcne}
\vert p' \vert(t_{0}) = \| v(t_{0}) \|_{L^{2}(P(t_{0}))}.
\end{equation}
This relates the strength of the time-dependent velocity field $v(t, \, \cdot \,)$ in \hyperref[cefpe]{(\ref*{cefpe})}, to the rate of change, or metric derivative as in \hyperref[dotmd]{(\ref*{dotmd})}, of the Wasserstein distance along the curve $(p(t))_{0 \leqslant t \leqslant T}$ --- justifying in this manner the introduction and relevance of the Wasserstein distance in this context.

\subsection{The local behavior of the Wasserstein distance} \label{stwtbnatlbotwdt}

In the previous section we derived along the lines of \cite{AGS08} the explicit representation \hyperref[wcne]{(\ref*{wcne})} of the metric derivative of the quadratic Wasserstein distance along the Fokker-Planck probability density flow. This limit exists for Lebesgue-almost every $t_{0} \geqslant 0$. The following result provides accurate information regarding the points $t_{0} \geqslant 0$ at which the metric derivative \hyperref[wcne]{(\ref*{wcne})} exists (cf.\ \hyperref[wvotarrkresolmz]{Remark \ref*{wvotarrkresolmz}}); we record its instructive proof.

\begin{theorem}[\textsf{Local behavior of the quadratic Wasserstein distance}] \label{agswt} Under the \textnormal{\hyperref[sosaojkoianoew]{Assumptions \ref*{sosaojkoianoew}}}, let $t_{0} \geqslant 0$ be such that the generalized de Bruijn identity \textnormal{\hyperref[flffflnv]{(\ref*{flffflnv})}}, \textnormal{\hyperref[flfffl]{(\ref*{flfffl})}} is valid. Then we have 
\begin{equation} \label{agswtuff}
\lim_{t \rightarrow t_{0}} \, \frac{W_{2}\big(P(t),P(t_{0})\big)}{\vert t - t_{0} \vert} 
= \bigg( \mathds{E}_{\mathds{P}}\Big[ \, \big\vert v\big(t_{0},X(t_{0})\big) \big\vert^{2} \, \Big] \bigg)^{1/2} 
= \tfrac{1}{2} \, \sqrt{I\big( P(t_{0}) \, \vert \, \mathrm{Q}\big)}.
\end{equation}
\end{theorem}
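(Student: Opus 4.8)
\textbf{Proof proposal for Theorem~\ref{agswt}.}
The plan is to use the machinery already assembled in Section~\ref{sstmdoaccinqws}, where it was shown that for Lebesgue-almost every $t_{0}$ the metric derivative $|p'|(t_{0})$ exists and equals $\|v(t_{0})\|_{L^{2}(P(t_{0}))}$, together with the identity \hyperref[icfwc]{(\ref*{icfwc})} that rewrites $\tfrac12 I(P(t_{0})\,|\,\mathrm{Q})$ as $4\,\|v(t_{0})\|_{L^{2}(P(t_{0}))}^{2}$ --- so that the right-hand sides of \hyperref[agswtuff]{(\ref*{agswtuff})} match pointwise for \emph{every} $t_{0}$ at which $v(t_{0},\cdot\,)\in L^{2}(P(t_{0}))$. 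The content of the theorem is therefore to upgrade the "Lebesgue-a.e." existence of the limit to "every $t_{0}$ for which the generalized de Bruijn identity holds". I would first record the two one-sided inequalities that are valid at \emph{every} $t_{0}$: the upper estimate $\limsup_{t\to t_{0}}W_{2}(P(t),P(t_{0}))/|t-t_{0}|\leqslant\|v(t_{0})\|_{L^{2}(P(t_{0}))}$, which follows from the Benamou--Brenier representation \hyperref[5.7.bb]{(\ref*{5.7.bb})} applied to the curve $(p(t))$ on a short interval around $t_{0}$ together with the continuity in $t$ of $t\mapsto\|v(t)\|_{L^{2}(P(t))}$ guaranteed by condition \hyperref[tsaosaojko]{\ref*{tsaosaojko}} (and the propagation of second moments from Lemma~\ref{fosmolds}); and, going the other way, the lower estimate obtained from the Kantorovich--Rubinstein duality, testing $W_{2}(P(t),P(t_{0}))$ against the gradient of a fixed $\varphi\in\mathcal{C}_{c}^{\infty}(\mathds{R}^{n};\mathds{R})$ and using the continuity equation \hyperref[cefpe]{(\ref*{cefpe})} to get $\int\varphi\,\mathrm{d}(P(t)-P(t_{0}))=\int_{t_{0}}^{t}\!\int\langle\nabla\varphi,v\rangle\,p\,\mathrm{d}x\,\mathrm{d}s$, whence $\liminf_{t\to t_{0}}W_{2}(P(t),P(t_{0}))/|t-t_{0}|\geqslant\langle\nabla\varphi,v(t_{0})\rangle_{L^{2}(P(t_{0}))}/\|\nabla\varphi\|_{L^{2}(P(t_{0}))}$, then taking the supremum over $\varphi$ and invoking the tangency hypothesis \hyperref[tanpcvecup]{(\ref*{tanpcvecup})} from condition \hyperref[nalwstasas]{\ref*{nalwstasas}} to recover $\|v(t_{0})\|_{L^{2}(P(t_{0}))}$.

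Combining these two bounds gives the limit \hyperref[agswtuff]{(\ref*{agswtuff})} at every $t_{0}$ for which the velocity field lies in $L^{2}(P(t_{0}))$; so it remains only to argue that the hypothesis "the generalized de Bruijn identity holds at $t_{0}$" forces exactly this integrability. By \hyperref[merfi]{(\ref*{merfi})} and \hyperref[icfwc]{(\ref*{icfwc})}, $\|v(t_{0})\|_{L^{2}(P(t_{0}))}^{2}=\tfrac14 I(P(t_{0})\,|\,\mathrm{Q})$, and the right-hand side of \hyperref[flfffl]{(\ref*{flfffl})} is, by assumption, a finite real number equal to $-\tfrac12 I(P(t_{0})\,|\,\mathrm{Q})$; hence $I(P(t_{0})\,|\,\mathrm{Q})<\infty$ and $v(t_{0},\cdot\,)\in L^{2}(P(t_{0}))$, as needed. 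This is precisely the mechanism by which the exceptional set for the Wasserstein limit is pinned to the set $N$ of Remark~\ref{rethpoitwooslda}: outside $N$ the de Bruijn identity holds, hence the Fisher information is finite, hence the metric-derivative argument applies.

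The main obstacle I anticipate is the upper bound $\limsup\leqslant\|v(t_{0})\|_{L^{2}(P(t_{0}))}$ at an individual point $t_{0}$, rather than merely Lebesgue-a.e.\ (where it comes for free from Theorem~8.3.1 of \cite{AGS08}). The clean way around it is to note that $W_{2}(P(t),P(t_{0}))^{2}\leqslant\int_{s\wedge t_{0}}^{s\vee t_{0}}\!\!\big(\int_{\mathds{R}^{n}}|v(s,x)|^{2}p(s,x)\,\mathrm{d}x\big)\mathrm{d}s\cdot|t-t_{0}|$ by Benamou--Brenier plus Cauchy--Schwarz applied to the reparametrized interpolating curve $(p(s))_{s}$ itself (which already solves the continuity equation with velocity $v$), and then divide by $|t-t_{0}|^{2}$ and let $t\to t_{0}$, using that $s\mapsto\|v(s)\|_{L^{2}(P(s))}^{2}$ is continuous at $t_{0}$; the latter continuity is where conditions \hyperref[tsaosaojko]{\ref*{tsaosaojko}} and the finiteness of $\int_{0}^{T}I(P(t)\,|\,\mathrm{Q})\,\mathrm{d}t$ from Theorem~\ref{thetsix} (via \hyperref[icfwc]{(\ref*{icfwc})}) do the work, reducing the matter to dominated convergence for $\int|v(s,\cdot)|^{2}p(s,\cdot)\,\mathrm{d}x$ along $s\to t_{0}$. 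Everything else is bookkeeping, and the statement for the perturbed curve (Theorem~\ref{bvagswt}) will follow by the identical argument applied to $(p^{\beta}(t))$ with its velocity field $v^{\beta}=v-\beta$, using the estimates of Lemma~\ref{hctclwittmeitpocasot} to transfer the finiteness of the Fisher-type integral from the unperturbed to the perturbed flow.
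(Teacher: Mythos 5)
Your route is genuinely different from the paper's. The paper proves the perturbed statement (Theorem \ref{bvagswt}) and obtains Theorem \ref{agswt} by setting $\beta \equiv 0$; its mechanism is to build the linearized transport maps $x \mapsto x + (t-t_{0})\,v(t_{0},x)$, localize them via the tangency condition so that Brenier's theorem makes them optimal for $t$ near $t_{0}$, and then control in $L^{2}$ the discrepancy between these maps and the actual flow $\Phi_{t}$ of the velocity field, the triangle inequality doing the rest. Your two-sided scheme --- Benamou--Brenier plus Cauchy--Schwarz from above, first-order testing against $\nabla\varphi$ plus the tangency condition \hyperref[tanpcvecup]{(\ref*{tanpcvecup})} from below --- is the classical argument behind \cite[Theorem 8.3.1, Proposition 8.4.5]{AGS08}, localized to a single time. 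It is shorter and avoids Brenier's theorem; the paper's explicit maps are what make the perturbed version and the comparison with the flow $\Phi_{t}^{\beta}$ transparent.

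There is, however, one step that fails as written: the claim that $s \mapsto \|v(s)\|_{L^{2}(P(s))}^{2}$ is continuous at $t_{0}$, ``guaranteed by condition \hyperref[tsaosaojko]{\ref*{tsaosaojko}} \ldots reducing the matter to dominated convergence''. Condition \hyperref[tsaosaojko]{\ref*{tsaosaojko}} gives only pointwise continuity of the integrand; no dominating function is available, and the paper deliberately does not assume continuity of $t \mapsto I(P(t)\,\vert\,\mathrm{Q})$ --- that is precisely why the exceptional set $N$ appears (see \hyperref[rethpoitwooslda]{Remark \ref*{rethpoitwooslda}} and \hyperref[rkresolmz]{Remark \ref*{rkresolmz}}). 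Were your continuity claim correct, the upper bound, and with it the full limit, would hold at \emph{every} $t_{0}$ with $I(P(t_{0})\,\vert\,\mathrm{Q})<\infty$, which is stronger than what the hypotheses support. The repair is that the theorem's hypothesis is exactly what your estimate needs: by the integrated identity \hyperref[stewrtpefitre]{(\ref*{stewrtpefitre})}, the generalized de Bruijn identity at $t_{0}$ is equivalent to $\frac{1}{t-t_{0}}\int_{t_{0}}^{t} I(P(u)\,\vert\,\mathrm{Q})\,\textnormal{d}u \rightarrow I(P(t_{0})\,\vert\,\mathrm{Q})$, i.e., to $t_{0}$ being a Lebesgue point of the Fisher information --- which is all that the Benamou--Brenier bound requires after division by $\vert t-t_{0}\vert^{2}$. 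So the hypothesis must be used for this averaged convergence, not merely (as in your second paragraph) for the finiteness of $I(P(t_{0})\,\vert\,\mathrm{Q})$, which alone is insufficient. With that correction, and after supplying the quadratic Taylor remainder in the lower bound (controlled by $W_{2}^{2}(P(t),P(t_{0})) \leqslant \vert t-t_{0}\vert \int_{t_{0}\wedge t}^{t_{0}\vee t}\|v(s)\|_{L^{2}(P(s))}^{2}\,\textnormal{d}s = o(\vert t-t_{0}\vert)$), your argument closes. A cosmetic point: what you invoke in the lower bound is the first-order expansion along an optimal coupling, not Kantorovich--Rubinstein duality, which pertains to $W_{1}$.
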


\bigskip

Instead of \hyperref[agswt]{Theorem \ref*{agswt}}, we will prove the more general \hyperref[bvagswt]{Theorem \ref*{bvagswt}} below, which amounts to the perturbed version of \hyperref[agswt]{Theorem \ref*{agswt}}. The latter then simply follows by setting $\beta \equiv 0$ in the statement of \hyperref[bvagswt]{Theorem \ref*{bvagswt}}.

\smallskip

In order to formulate \hyperref[bvagswt]{Theorem \ref*{bvagswt}}, we consider the solution $(p^{\beta}(t))_{t \geqslant t_{0}}$ of the perturbed Fokker-Planck equation \hyperref[pfpeq]{(\ref*{pfpeq})} with initial condition \hyperref[pic]{(\ref*{pic})}. Again, according to \hyperref[fosmoldspv]{Lemma \ref*{fosmoldspv}}, this solution can be viewed as a curve in the quadratic Wasserstein space $\mathscr{P}_{2}(\mathds{R}^{n})$. Just as before, we define the time-dependent perturbed velocity field 
\begin{equation} \label{pbtdvfvb}
[t_{0},T] \times \mathds{R}^{n} \ni (t,x) \longmapsto v^{\beta}(t,x) \vcentcolon = - \bigg( \frac{1}{2} \frac{\nabla p^{\beta}(t,x)}{p^{\beta}(t,x)} + \nabla \Psi(x) + \beta(x) \bigg) \in \mathds{R}^{n}.
\end{equation} 
Then the perturbed Fokker-Planck equation \hyperref[pfpeq]{(\ref*{pfpeq})}, satisfied by the perturbed curve $(p^{\beta}(t))_{t_{0} \leqslant t \leqslant T}$, can once again be written as a continuity equation, to wit
\begin{equation} 
\partial_{t} p^{\beta}(t,x) + \operatorname{div} \big( v^{\beta}(t,x) \, p^{\beta}(t,x) \big) = 0, \qquad (t,x) \in (t_{0},T] \times \mathds{R}^{n}.
\end{equation}
At this point, let us recall that we have required the perturbation $\beta \colon \mathds{R}^{n} \rightarrow \mathds{R}^{n}$ to be a \textit{gradient vector field}, i.e., of the form $\beta = \nabla B$ for some smooth potential $B \colon \mathds{R}^{n} \rightarrow \mathds{R}$ with compact support. Since $p(t_{0}, \, \cdot \,) = p^{\beta}(t_{0}, \, \cdot \,)$, at time $t_{0}$ the vector fields of \hyperref[tdvfvtx]{(\ref*{tdvfvtx})} and \hyperref[pbtdvfvb]{(\ref*{pbtdvfvb})} are related via
\begin{equation} \label{rbvbavwitob}
v^{\beta}(t_{0},x) = v(t_{0},x) - \nabla B(x), \qquad x \in \mathds{R}^{n}.
\end{equation}
Using the regularity assumption that the potential $B$ is of class $\mathcal{C}_{c}^{\infty}(\mathds{R}^{n};\mathds{R})$, we conclude from \hyperref[tanpcvecup]{(\ref*{tanpcvecup})} and \hyperref[rbvbavwitob]{(\ref*{rbvbavwitob})} that the perturbed vector field $v^{\beta}(t_{0}, \, \cdot \,)$ is an element of the tangent space of $\mathscr{P}_{2}(\mathds{R}^{n})$ at the point $P^{\beta}(t_{0}) = P(t_{0}) \in \mathscr{P}_{2}(\mathds{R}^{n})$, i.e.,
\begin{equation} \label{perttaspaver}
v^{\beta}(t_{0}, \, \cdot \,) \in \textnormal{Tan}_{P^{\beta}(t_{0})} \mathscr{P}_{2}(\mathds{R}^{n}) 
= \overline{\big\{ \nabla \varphi^{\beta} \colon \ \varphi^{\beta} \in \mathcal{C}_{c}^{\infty}(\mathds{R}^{n};\mathds{R})\big\}}^{L^{2}(P^{\beta}(t_{0}))}.
\end{equation}

\smallskip

After these preparations we can formulate the perturbed version of \hyperref[agswt]{Theorem \ref*{agswt}} as follows.

\begin{theorem}[\textsf{Local behavior of the quadratic Wasserstein distance under perturbations}] \label{bvagswt} Under the \textnormal{\hyperref[sosaojkoianoew]{Assumptions \ref*{sosaojkoianoew}}}, for every $t_{0} \in \mathds{R}_{+} \setminus N$ we have
\begin{equation} \label{pvosagswtuff}
\lim_{t \downarrow t_{0}} \, \frac{W_{2}\big( P^{\beta}(t),P^{\beta}(t_{0})\big)}{t-t_{0}} 
= \bigg( \mathds{E}_{\mathds{P}}\Big[ \, \big\vert v^{\beta}\big(t_{0},X(t_{0})\big) \big\vert^{2} \, \Big] \bigg)^{1/2} 
= \tfrac{1}{2}  \, \| a + 2 b\|_{L^{2}(\mathds{P})},
\end{equation}
where $a = \nabla \log \ell(t_{0},X(t_{0}))$ and $b = \beta(X(t_{0}))$.
\end{theorem}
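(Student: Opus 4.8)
The plan is to prove Theorem \ref{bvagswt} by establishing the two-sided bound on the metric derivative of the perturbed curve $(p^{\beta}(t))_{t \geqslant t_{0}}$ at the point $t_0$, exactly in parallel with the unperturbed discussion of Subsection \ref{sstmdoaccinqws}, but being careful to track \emph{which} points $t_0$ the argument works for. The final equality in \eqref{pvosagswtuff} is purely algebraic: by \eqref{pbtdvfvb} with $p^{\beta}(t_0,\cdot)=p(t_0,\cdot)$ we have $v^{\beta}(t_0,x) = -\tfrac12\tfrac{\nabla p(t_0,x)}{p(t_0,x)} - \nabla\Psi(x) - \beta(x) = -\tfrac12\nabla\log\ell(t_0,x) - \beta(x) = -\tfrac12(a+2b)$ evaluated at $X(t_0)$, so $\mathds{E}_{\mathds{P}}[\,|v^{\beta}(t_0,X(t_0))|^2\,]^{1/2} = \tfrac12\|a+2b\|_{L^2(\mathds{P})}$. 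So the real content is the first equality: the metric derivative of the perturbed flow equals $\|v^{\beta}(t_0,\cdot)\|_{L^2(P^{\beta}(t_0))}$, with the one-sided limit existing at $t_0$.

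First I would invoke the general machinery of \cite{AGS08}, exactly as in \eqref{wcfi}--\eqref{wcsi}. The perturbed curve $(p^{\beta}(t))_{t_0\leqslant t\leqslant T}$ is absolutely continuous in $\mathscr{P}_2(\mathds{R}^n)$ (by Lemma \ref{fosmoldspv} it lies in $\mathscr{P}_2(\mathds{R}^n)$, and it solves the continuity equation $\partial_t p^{\beta} + \operatorname{div}(v^{\beta}p^{\beta})=0$ with $\int_{t_0}^T\|v^{\beta}(t)\|^2_{L^2(P^{\beta}(t))}\,\mathrm{d}t<\infty$, the latter integrability following from Theorem \ref{thetthretv}, specifically the finiteness of $\mathds{E}_{\mathds{P}^{\beta}}[F^{\beta}(t_0)]$ together with the boundedness of the extra drift terms in \eqref{fispdaftrps}). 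Theorem 8.3.1 of \cite{AGS08} then gives $|(p^{\beta})'|(t)\leqslant\|v^{\beta}(t)\|_{L^2(P^{\beta}(t))}$ for a.e. $t$, and since $v^{\beta}(t_0,\cdot)$ is tangent at $P^{\beta}(t_0)$ by \eqref{perttaspaver}, Proposition 8.4.5 gives the reverse inequality, hence $|(p^{\beta})'|(t_0)=\|v^{\beta}(t_0,\cdot)\|_{L^2(P^{\beta}(t_0))}$ --- but, and this is the crux, only for \emph{Lebesgue-almost every} $t_0$, and only as a symmetric (two-sided) metric derivative. The point of the theorem is to upgrade this to \emph{every} $t_0\in\mathds{R}_+\setminus N$ as a \emph{right-sided} limit.

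The mechanism for this upgrade is to tie the Wasserstein difference quotient to the entropy difference quotient, which by Corollary \ref{thetsixcoraopv} (equation \eqref{svpvompvv}, or rather its unperturbed precursor already proved in Theorem \ref{agswt}/its proof) \emph{does} converge at every $t_0\notin N$. Concretely: the Benamou–Brenier representation \eqref{5.7.bb} gives the upper bound $W_2(P^{\beta}(t),P^{\beta}(t_0))\leqslant\int_{t_0}^{t}\|v^{\beta}(r)\|_{L^2(P^{\beta}(r))}\,\mathrm{d}r$ (using the curve $(p^{\beta}(r))_{t_0\leqslant r\leqslant t}$ itself, suitably reparametrized, as a competitor), so
\begin{equation*}
\limsup_{t\downarrow t_0}\frac{W_2(P^{\beta}(t),P^{\beta}(t_0))}{t-t_0}\leqslant\limsup_{t\downarrow t_0}\frac{1}{t-t_0}\int_{t_0}^{t}\|v^{\beta}(r)\|_{L^2(P^{\beta}(r))}\,\mathrm{d}r.
\end{equation*}
Here I would argue that $r\mapsto\|v^{\beta}(r)\|^2_{L^2(P^{\beta}(r))}=\tfrac14\int|\nabla\log\ell^{\beta}(r,x)+2\beta(x)|^2 p^{\beta}(r,x)\,\mathrm{d}x$ is, up to the bounded perturbation-dependent terms, exactly $\tfrac14 I(P^{\beta}(r)\,|\,\mathrm{Q})$ plus lower-order terms, and by the argument of \eqref{tciwsolafophvs}--\eqref{tcigwwsolafophvs} (the estimates \eqref{ilpdepvhfv}, \eqref{ilpdepvhfvsv} from Lemma \ref{hctclwittmeitpocasot}) its Cesàro mean over $[t_0,t]$ tends to $\|v^{\beta}(t_0,\cdot)\|^2_{L^2(P(t_0))}=\tfrac14\|a+2b\|^2_{L^2(\mathds{P})}$ precisely when $t_0\notin N$. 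For the matching lower bound $\liminf_{t\downarrow t_0}W_2(P^{\beta}(t),P^{\beta}(t_0))/(t-t_0)\geqslant\tfrac12\|a+2b\|_{L^2(\mathds{P})}$, I would use the Kantorovich duality / displacement-interpolation lower bound: for any $t$, pair an optimal transport plan between $P^{\beta}(t_0)$ and $P^{\beta}(t)$ against the "test velocity" $v^{\beta}(t_0,\cdot)$ (which is a gradient, hence an admissible cotangent direction), using that $P^{\beta}(t)-P^{\beta}(t_0)$ has, to first order in $t-t_0$, divergence $-\operatorname{div}(v^{\beta}(t_0,\cdot)p^{\beta}(t_0,\cdot))$; this yields $W_2(P^{\beta}(t_0),P^{\beta}(t))\cdot\|v^{\beta}(t_0,\cdot)\|_{L^2(P(t_0))}\geqslant|\int\langle v^{\beta}(t_0,\cdot),\mathrm{d}(\text{transport displacement})\rangle|= (t-t_0)\|v^{\beta}(t_0,\cdot)\|^2_{L^2(P(t_0))}+o(t-t_0)$, again invoking the continuity of $\nabla\log\ell$ from condition \ref{tsaosaojko} and the regularity of $\beta$ to control the remainder.

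The main obstacle I anticipate is the lower bound at an \emph{arbitrary} non-exceptional point $t_0$: the clean statement $|(p^{\beta})'|(t_0)\geqslant\|v^{\beta}(t_0)\|_{L^2}$ from \cite[Prop.~8.4.5]{AGS08} is only an a.e. statement, so one cannot simply quote it. The fix is the pairing argument sketched above, which turns the lower bound into a statement about $\liminf$ of $W_2$-difference quotients controlled from below by the entropy-dissipation rate, and then leans on the \emph{already established} fact (Theorem \ref{agswt} and its perturbed counterpart, via \eqref{tciwsolafophvs}) that this rate converges at every $t_0\notin N$. One must also verify carefully that the Benamou–Brenier competitor built from the perturbed flow is admissible (finite action on $[t_0,t]$, correct endpoints), which is where the $L^2$-boundedness of $M^{\beta}$ from Theorem \ref{thetthretv} and the compact support of $\beta$ do the work. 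Once both inequalities are in hand for every $t_0\in\mathds{R}_+\setminus N$, the sandwich gives \eqref{pvosagswtuff}, and setting $\beta\equiv0$ (so $b=0$) recovers Theorem \ref{agswt}.
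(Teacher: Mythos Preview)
Your upper bound via the Benamou--Brenier representation is a legitimate route, and indeed the Ces\`aro argument you outline (tying $\tfrac{1}{t-t_0}\int_{t_0}^t\|v^\beta(r)\|^2_{L^2(P^\beta(r))}\,\mathrm{d}r$ back to $\tfrac14\|a+2b\|^2_{L^2(\mathds{P})}$ via the estimates \eqref{ilpdepvhfv}, \eqref{ilpdepvhfvsv} exactly when $t_0\notin N$) is correct and, in that half, arguably cleaner than what the paper does.

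The gap is in your lower bound. The ``pairing'' step
\[
\int\big\langle v^\beta(t_0,x),\,T_t(x)-x\big\rangle\,\mathrm{d}P^\beta(t_0)(x)\;=\;(t-t_0)\,\|v^\beta(t_0,\cdot)\|^2_{L^2(P(t_0))}+o(t-t_0)
\]
is not justified as written. To extract this, you would Taylor-expand $\phi(T_t(x))-\phi(x)$ for $v^\beta(t_0,\cdot)=\nabla\phi$, getting $\int\phi\,\mathrm{d}(P^\beta(t)-P^\beta(t_0))+O(\|\mathrm{Hess}\,\phi\|_\infty\,W_2^2)$; the first term is handled by the continuity equation, but the remainder requires a \emph{bounded Hessian} of $\phi$, which $\phi=-\tfrac12\log\ell(t_0,\cdot)-B$ will not in general have on all of $\mathds{R}^n$. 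So you are forced into the same localization the paper performs: approximate $v^\beta(t_0,\cdot)$ in $L^2(P(t_0))$ by gradients $\nabla\varphi_m^\beta$ with $\varphi_m^\beta\in\mathcal{C}_c^\infty$ (this is exactly where condition \ref{nalwstasas} and \eqref{perttaspaver} enter), run the pairing for each $m$, and then let $m\to\infty$. You have not written this step, and without it the ``$o(t-t_0)$'' is uncontrolled.

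The paper takes a different, more constructive route that avoids duality altogether. It builds the localized linear maps $\Upsilon_t^{\beta,m}(x)=x+(t-t_0)\nabla\varphi_m^\beta(t_0,x)$, shows directly via Brenier's theorem that each $\Upsilon_t^{\beta,m}$ is the \emph{optimal} transport from $P^\beta(t_0)$ to its push-forward for $t$ close to $t_0$ (because $\tfrac12|x|^2+(t-t_0)\varphi_m^\beta$ is convex when $t-t_0$ is small, by compact support), and then sandwiches $W_2(P^\beta(t),P^\beta(t_0))$ between two triangle-inequality bounds. The remaining piece, $W_2(P^\beta(t),P_\Upsilon^{\beta,m}(t))=o(t-t_0)$, is obtained by comparing $\Upsilon_t^{\beta,m}$ with the \emph{curved flow} $\Phi_t^\beta$ solving $\tfrac{\mathrm{d}}{\mathrm{d}t}\Phi_t^\beta=v^\beta(t,\Phi_t^\beta)$; the point where $t_0\notin N$ is used is a uniform-integrability / Scheff\'e argument showing $\tfrac{1}{t-t_0}\big(\Phi_t^\beta(X(t_0))-X(t_0)\big)\to v^\beta(t_0,X(t_0))$ in $L^2$. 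Your approach and the paper's thus share the localization step (which you omit) and the use of \eqref{tciwsolafophvs} at $t_0\notin N$, but differ in that the paper works with explicit transport maps rather than a dual inequality; the paper's route makes the constant in the lower bound fall out exactly, whereas your pairing argument, once repaired, would need one more Cauchy--Schwarz and a limit in $m$ to match it.
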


\begin{remark} With this notation, the rightmost side of \hyperref[agswtuff]{(\ref*{agswtuff})} is $\frac{1}{2} \|a\|_{L^{2}(\mathds{P})}$. Since $X(t_{0})$ has the same probability distribution under $\mathds{P}$, as it does under $\mathds{P}^{\beta}$, the expectation $\mathds{E}_{\mathds{P}}$ appearing in \hyperref[pvosagswtuff]{(\ref*{pvosagswtuff})} can be replaced by $\mathds{E}_{\mathds{P}^{\beta}}$. Let us also recall from \hyperref[rethpoitwooslda]{Remark \ref*{rethpoitwooslda}} the exceptional set $N$ consisting of those points $t_{0} \geqslant 0$ for which the limiting assertion \hyperref[rgtsflffflnv]{(\ref*{rgtsflffflnv})} fails.  
\end{remark}

\begin{proof}[Proof of \texorpdfstring{\hyperref[bvagswt]{Theorem \ref*{bvagswt}}}{}] The second equality in \hyperref[pvosagswtuff]{(\ref*{pvosagswtuff})} is apparent from the definition of the time-dependent perturbed velocity field $\big(v^{\beta}(t, \, \cdot \,)\big)_{t \geqslant t_{0}}$ from \hyperref[pbtdvfvb]{(\ref*{pbtdvfvb})} above. The delicate point is to show that the limiting assertion \hyperref[pvosagswtuff]{(\ref*{pvosagswtuff})} is valid for every $t_{0} \in \mathds{R}_{+} \setminus N$. 

\smallskip

In order to see this, let us fix some $t_{0} \in \mathds{R}_{+} \setminus N$ so that the limiting identity \hyperref[rgtsflffflnv]{(\ref*{rgtsflffflnv})} is valid. In the following steps we prove that then also the limiting assertion \hyperref[pvosagswtuff]{(\ref*{pvosagswtuff})} does hold.

\medskip

\noindent \fbox{\textsf{Step 1.}} The vector field $v^{\beta}(t_{0}, \, \cdot \,)$ induces a family of \textit{linearized transport maps} $(\Upsilon_{t}^{\beta})_{t \geqslant t_{0}}$ defined by
\begin{equation} \label{dofaijkophi}
\Upsilon_{t}^{\beta}(x) \vcentcolon = x + (t-t_{0}) \cdot v^{\beta}(t_{0},x), \qquad x \in \mathds{R}^{n}
\end{equation}
in the manner of \hyperref[hlotlse]{(\ref*{hlotlse})}, and we denote by $P_{\Upsilon}^{\beta}(t)$ the image measure of $P^{\beta}(t_{0}) = P(t_{0})$ under the transport map $\Upsilon_{t}^{\beta} \colon \mathds{R}^{n} \rightarrow \mathds{R}^{n}$; i.e.,
\begin{equation} 
P_{\Upsilon}^{\beta}(t) \vcentcolon = (\Upsilon_{t}^{\beta})_{\#} P^{\beta}(t_{0}), \qquad t \geqslant t_{0}.
\end{equation}
To motivate the subsequent arguments, let us first pretend that, for all $t > t_{0}$ sufficiently close to $t_{0}$, the map $\Upsilon_{t}^{\beta}$ is the \textit{optimal quadratic Wasserstein transport} from $P^{\beta}(t_{0})$ to $P_{\Upsilon}^{\beta}(t)$; i.e.,
\begin{equation}
W_{2}^{2}\big(P_{\Upsilon}^{\beta}(t),P^{\beta}(t_{0})\big) = \mathds{E}_{\mathds{P}^{\beta}}\Big[ \, \big\vert \Upsilon_{t}^{\beta}\big(X(t_{0})\big) - X(t_{0}) \big\vert^{2} \, \Big] = \mathds{E}_{\mathds{P}}\Big[ \, \big\vert \Upsilon_{t}^{\beta}\big(X(t_{0})\big) - X(t_{0}) \big\vert^{2} \, \Big],
\end{equation}
where we have used in the last equality the fact that $X(t_{0})$ has the same distribution under $\mathds{P}^{\beta}$ as it does under $\mathds{P}$. Then, on account of \hyperref[dofaijkophi]{(\ref*{dofaijkophi})}, we could conclude that
\begin{equation} \label{svrlinfifat}
\lim_{t \downarrow t_{0}} \,\frac{W_{2}\big(P_{\Upsilon}^{\beta}(t),P^{\beta}(t_{0})\big)}{t-t_{0}} = \bigg( \mathds{E}_{\mathds{P}}\Big[ \, \big\vert v^{\beta}\big(t_{0},X(t_{0})\big) \big\vert^{2} \, \Big] \bigg)^{1/2}
= \tfrac{1}{2}  \, \| a + 2 b\|_{L^{2}(\mathds{P})}.
\end{equation} 
Furthermore, let us suppose that we can show the limiting identity
\begin{equation} \label{svrlinfifatsvwct}
\lim_{t \downarrow t_{0}} \, \frac{W_{2}\big(P^{\beta}(t),P_{\Upsilon}^{\beta}(t)\big)}{t-t_{0}} = 0.
\end{equation} 
Using \hyperref[svrlinfifat]{(\ref*{svrlinfifat})} and \hyperref[svrlinfifatsvwct]{(\ref*{svrlinfifatsvwct})}, we would now derive the desired equality \hyperref[pvosagswtuff]{(\ref*{pvosagswtuff})}. Indeed, invoking the triangle inequality for the quadratic Wasserstein distance we obtain 
\begin{equation} \label{tiotwwsdfv}
\lim_{t \downarrow t_{0}} \, \frac{W_{2}\big(P_{\Upsilon}^{\beta}(t),P^{\beta}(t_{0})\big)}{t-t_{0}} \, \leqslant \, \lim_{t \downarrow t_{0}} \, \frac{W_{2}\big(P_{\Upsilon}^{\beta}(t),P^{\beta}(t)\big)}{t-t_{0}} \, + \, \liminf_{t \downarrow t_{0}} \, \frac{W_{2}\big(P^{\beta}(t),P^{\beta}(t_{0})\big)}{t-t_{0}},
\end{equation}
and one more application of it yields
\begin{equation} \label{tiotwwsdfvsv}
\limsup_{t \downarrow t_{0}} \,\frac{W_{2}\big(P^{\beta}(t),P^{\beta}(t_{0})\big)}{t-t_{0}} \, \leqslant \,  \lim_{t \downarrow t_{0}} \, \frac{W_{2}\big(P^{\beta}(t),P_{\Upsilon}^{\beta}(t)\big)}{t-t_{0}} \, + \, \lim_{t \downarrow t_{0}} \, \frac{W_{2}\big(P_{\Upsilon}^{\beta}(t),P^{\beta}(t_{0})\big)}{t-t_{0}}.
\end{equation}

\medskip

\noindent \fbox{\textsf{Step 2.}} The bad news at this point, is that there is little reason why, for $t > t_{0}$ sufficiently close to $t_{0}$, the map $\Upsilon_{t}^{\beta}$ defined in \hyperref[dofaijkophi]{(\ref*{dofaijkophi})} of \textsf{Step 1} should be optimal with respect to quadratic Wasserstein transportation costs; i.e., by \textit{Brenier's theorem} \cite{Bre91}, equal to the gradient of a convex function. The good news is that we can reduce the general case to the situation of optimal transports $\Upsilon_{t}^{\beta}$ as in \textsf{Step 1} by localizing the vector field $v^{\beta}(t_{0}, \, \cdot \,)$ as well as the transport maps $(\Upsilon_{t}^{\beta})_{t \geqslant t_{0}}$ to compact subsets of $\mathds{R}^{n}$ (\textsf{Steps 2} -- \textsf{4}); and that, after these localizations have been carried out, an analogue of the identity \hyperref[svrlinfifatsvwct]{(\ref*{svrlinfifatsvwct})} also holds, allowing us to complete the argument (\textsf{Steps 5} -- \textsf{7}).

\smallskip

To this end, we first recall that the perturbation $\beta \colon \mathds{R}^{n} \rightarrow \mathds{R}^{n}$ is a gradient vector field, i.e., of the form $\beta = \nabla B$ for some smooth, compactly supported potential $B \colon \mathds{R}^{n} \rightarrow \mathds{R}$. Thus the vector field $v^{\beta}(t_{0}, \, \cdot \,)$ from \hyperref[pbtdvfvb]{(\ref*{pbtdvfvb})}, \hyperref[rbvbavwitob]{(\ref*{rbvbavwitob})} can be represented as a gradient, namely
\begin{equation} 
v^{\beta}(t_{0},x) = - \nabla \Big( \tfrac{1}{2} \log \ell(t_{0},x) + B(x) \Big), \qquad x \in \mathds{R}^{n}.
\end{equation} 
Even more, according to \hyperref[perttaspaver]{(\ref*{perttaspaver})}, it is an element of the tangent space 
\begin{equation}
\textnormal{Tan}_{P^{\beta}(t_{0})} \mathscr{P}_{2}(\mathds{R}^{n}) 
= \overline{\big\{ \nabla \varphi^{\beta} \colon \ \varphi^{\beta} \in \mathcal{C}_{c}^{\infty}(\mathds{R}^{n};\mathds{R})\big\}}^{L^{2}(P^{\beta}(t_{0}))}
\end{equation}
of the quadratic Wasserstein space $\mathscr{P}_{2}(\mathds{R}^{n})$ at the point $P^{\beta}(t_{0}) \in \mathscr{P}_{2}(\mathds{R}^{n})$. Therefore we can choose a sequence of potential functions $\big(\varphi_{m}^{\beta}(t_{0}, \cdot \,)\big)_{m \geqslant 1} \subseteq \mathcal{C}_{c}^{\infty}(\mathds{R}^{n};\mathds{R})$ such that
\begin{equation} \label{aotvfgosfvgosfwcs}
\lim_{m \rightarrow \infty} \, \mathds{E}_{\mathds{P}}\bigg[ \ \Big\vert v^{\beta}\big((t_{0},X(t_{0})\big) - \nabla \varphi_{m}^{\beta}\big(t_{0},X(t_{0})\big) \Big\vert^{2} \ \bigg] = 0.
\end{equation}

\smallskip

Next, for each $m \in \mathds{N}$, we define the \textit{localized gradient vector fields}
\begin{equation} \label{lvfvbrtox}
v_{m}^{\beta}(t_{0},x) \vcentcolon = \nabla \varphi_{m}^{\beta}(t_{0},x), \qquad x \in \mathds{R}^{n}.
\end{equation} 
By construction, these have compact support and approximate the gradient vector field $v^{\beta}(t_{0}, \, \cdot \,)$ in $L^{2}(P(t_{0}))$, as in \hyperref[aotvfgosfvgosfwcs]{(\ref*{aotvfgosfvgosfwcs})}.

\smallskip 

Finally, for every $m \in \mathds{N}$, the localized gradient vector field $v_{m}^{\beta}(t_{0}, \, \cdot \,)$ induces a family of \textit{localized linear transports} $(\Upsilon_{t}^{\beta,m})_{t \geqslant t_{0}}$ defined by analogy with \hyperref[dofaijkophi]{(\ref*{dofaijkophi})} via
\begin{equation} \label{defphbrtaopws}
\Upsilon_{t}^{\beta,m}(x) \vcentcolon = x + (t-t_{0}) \cdot v_{m}^{\beta}(t_{0},x), \qquad x \in \mathds{R}^{n}.
\end{equation}
We denote by $P_{\Upsilon}^{\beta,m}(t)$ the image measure of $P^{\beta}(t_{0}) = P(t_{0})$ under this localized linear transport map $\Upsilon_{t}^{\beta,m} \colon \mathds{R}^{n} \rightarrow \mathds{R}^{n}$; i.e.,
\begin{equation} \label{delltmpphbrtf}
P_{\Upsilon}^{\beta,m}(t) \vcentcolon = (\Upsilon_{t}^{\beta,m})_{\#} P^{\beta}(t_{0}), \qquad t \geqslant t_{0}.
\end{equation} 

\medskip

\noindent \fbox{\textsf{Step 3.}} We claim that, for every $m \in \mathds{N}$, there exists some $\varepsilon_{m} > 0$ such that for all $t > t_{0}$ with $\vert t - t_{0} \vert < \varepsilon_{m}$, the localized linear transport map $\Upsilon_{t}^{\beta,m} \colon \mathds{R}^{n} \rightarrow \mathds{R}^{n}$ constructed in \textsf{Step 2} defines an optimal quadratic Wasserstein transport from $P^{\beta}(t_{0})$ to $P_{\Upsilon}^{\beta,m}(t)$. Hence, by \textit{Brenier's theorem} (\cite{Bre91}, \cite[Theorem 2.12]{Vil03}), we have to show that $\Upsilon_{t}^{\beta,m}$ is the \textit{gradient of a convex function}, for all $t > t_{0}$ sufficiently near $t_{0}$. 

\smallskip

Indeed, from the definitions in \hyperref[lvfvbrtox]{(\ref*{lvfvbrtox})}, \hyperref[defphbrtaopws]{(\ref*{defphbrtaopws})} we see that the functions $\Upsilon_{t}^{\beta,m}$ are gradients for all $m \in \mathds{N}$ and $t \geqslant t_{0}$. More precisely, we have
\begin{equation}
\Upsilon_{t}^{\beta,m}(x) = \nabla \Big( \tfrac{1}{2} \vert x \vert^{2} + (t-t_{0}) \cdot \varphi_{m}^{\beta}(t_{0},x) \Big), \qquad x \in \mathds{R}^{n}.
\end{equation}
Therefore, it remains to show that the function $\frac{1}{2} \vert \cdot \vert^{2} + (t-t_{0}) \cdot \varphi_{m}^{\beta}(t_{0}, \, \cdot \,)$ is convex for every $m \in \mathds{N}$ and $t > t_{0}$ sufficiently close to $t_{0}$. The Hessian matrix of this function is given by
\begin{equation} \label{tmatsppsdfttsm}
I_{n} + (t-t_{0}) \cdot \textnormal{Hess}\big(\varphi_{m}^{\beta}(t_{0},x)\big), \qquad x \in \mathds{R}^{n}.
\end{equation}
In order to deduce the desired convexity, we have to verify that the Hessian matrix of \hyperref[tmatsppsdfttsm]{(\ref*{tmatsppsdfttsm})} is positive semidefinite for all $t > t_{0}$ sufficiently near $t_{0}$, uniformly in $x \in \mathds{R}^{n}$. To see this, let us fix $m \in \mathds{N}$. Now the identity matrix $I_{n}$ is positive definite and the Hessian matrix of the function $\varphi_{m}^{\beta}(t_{0}, \, \cdot \,)$ is symmetric. Furthermore, we recall that the smooth function $\varphi_{m}^{\beta}(t_{0}, \, \cdot \,)$ has \textit{compact support}, which is crucial in order to justify the present argument. Checking the defining condition guaranteeing positive semidefiniteness of the matrix in \hyperref[tmatsppsdfttsm]{(\ref*{tmatsppsdfttsm})} for unit vectors and using compactness as well as continuity, we obtain the existence of some $\varepsilon_{m} > 0$ such that, for all $t > t_{0}$ with $\vert t - t_{0} \vert < \varepsilon_{m}$, the Hessian matrix of \hyperref[tmatsppsdfttsm]{(\ref*{tmatsppsdfttsm})} is positive semidefinite (in fact, positive definite).

\medskip

\noindent \fbox{\textsf{Step 4.}} From \textsf{Step 3} we know that, for every $m \in \mathds{N}$, there exists some $\varepsilon_{m} > 0$ such that for all $t > t_{0}$ with $\vert t - t_{0} \vert < \varepsilon_{m}$, the localized map $\Upsilon_{t}^{\beta,m}$ is the optimal transport from $P^{\beta}(t_{0})$ to $P_{\Upsilon}^{\beta,m}(t)$ with respect to quadratic Wasserstein costs. Therefore, we can apply the considerations of \textsf{Step 1} to the optimal map $\Upsilon_{t}^{\beta,m}$ in \hyperref[defphbrtaopws]{(\ref*{defphbrtaopws})}, and deduce that
\begin{equation} \label{svrlinfifatnv}
\lim_{t \downarrow t_{0}} \,\frac{W_{2}\big(P_{\Upsilon}^{\beta,m}(t),P^{\beta}(t_{0})\big)}{t-t_{0}} = \bigg( \mathds{E}_{\mathds{P}}\Big[ \, \big\vert v_{m}^{\beta}\big(t_{0},X(t_{0})\big) \big\vert^{2} \, \Big] \bigg)^{1/2}
\end{equation} 
holds for every $m \in \mathds{N}$. Invoking \hyperref[aotvfgosfvgosfwcs]{(\ref*{aotvfgosfvgosfwcs})} and \hyperref[lvfvbrtox]{(\ref*{lvfvbrtox})}, we obtain from this
\begin{equation} 
\lim_{m \rightarrow \infty} \, \lim_{t \downarrow t_{0}} \,\frac{W_{2}\big(P_{\Upsilon}^{\beta,m}(t),P^{\beta}(t_{0})\big)}{t-t_{0}} = \bigg( \mathds{E}_{\mathds{P}}\Big[ \, \big\vert v^{\beta}\big(t_{0},X(t_{0})\big) \big\vert^{2} \, \Big] \bigg)^{1/2} = \tfrac{1}{2}  \, \| a + 2 b\|_{L^{2}(\mathds{P})}.
\end{equation} 
From the inequalities \hyperref[tiotwwsdfv]{(\ref*{tiotwwsdfv})} and \hyperref[tiotwwsdfvsv]{(\ref*{tiotwwsdfvsv})} of \textsf{Step 1} (with $P_{\Upsilon}^{\beta,m}(t)$ instead of $P_{\Upsilon}^{\beta}(t)$) it follows that, in order to conclude \hyperref[pvosagswtuff]{(\ref*{pvosagswtuff})}, it remains to establish the analogue
\begin{equation} \label{nvotlisvrlinfifatsvwct}
\lim_{m \rightarrow \infty} \, \lim_{t \downarrow t_{0}} \, \frac{W_{2}\big(P^{\beta}(t),P_{\Upsilon}^{\beta,m}(t)\big)}{t-t_{0}} = 0
\end{equation} 
of the identity \hyperref[svrlinfifatsvwct]{(\ref*{svrlinfifatsvwct})}.

\medskip

\noindent \fbox{\textsf{Step 5.}} The time-dependent velocity field $\big(v^{\beta}(t, \, \cdot \,)\big)_{t \geqslant t_{0}}$ induces a \textit{curved flow} $(\Phi_{t}^{\beta})_{t \geqslant t_{0}}$, which is characterized by  
\begin{equation} \label{cotcfl}
\tfrac{\textnormal{d}}{\textnormal{d}t} \, \Phi_{t}^{\beta} = v^{\beta}(t,\Phi_{t}^{\beta}) \quad \textnormal{ for all } t \geqslant t_{0} \, , \qquad \Phi_{t_{0}}^{\beta} = \textnormal{Id}_{\mathds{R}^{n}}.
\end{equation}
Then, for every $t \geqslant t_{0}$, the map $\Phi_{t}^{\beta} \colon \mathds{R}^{n} \rightarrow \mathds{R}^{n}$ transports the measure $P^{\beta}(t_{0}) = P(t_{0})$ to $P^{\beta}(t)$, i.e., $(\Phi_{t}^{\beta})_{\#} P^{\beta}(t_{0}) = P^{\beta}(t)$.

\smallskip

The localized linear transports $\Upsilon_{t}^{\beta,m} \colon \mathds{R}^{n} \rightarrow \mathds{R}^{n}$ defined in \hyperref[defphbrtaopws]{(\ref*{defphbrtaopws})} of \textsf{Step 2} transport $P^{\beta}(t_{0})$ to $P_{\Upsilon}^{\beta,m}(t)$, see \hyperref[delltmpphbrtf]{(\ref*{delltmpphbrtf})}. As $P^{\beta}(t_{0})$ and $P_{\Upsilon}^{\beta,m}(t)$ have full support and are absolutely continuous with respect to Lebesgue measure, the inverse map $(\Upsilon_{t}^{\beta,m})^{-1} \colon \mathds{R}^{n} \rightarrow \mathds{R}^{n}$ is well-defined and satisfies
\begin{equation} 
\big((\Upsilon_{t}^{\beta,m})^{-1}\big)_{\#} P_{\Upsilon}^{\beta,m}(t) = P^{\beta}(t_{0}), \qquad t \geqslant t_{0}.
\end{equation} 

\smallskip

Recall from \textsf{Step 4} that our remaining task is to prove \hyperref[nvotlisvrlinfifatsvwct]{(\ref*{nvotlisvrlinfifatsvwct})}. To this end, we have to construct maps $\mathfrak{X}_{t}^{\beta,m} \colon \mathds{R}^{n} \rightarrow \mathds{R}^{n}$ that transport $P_{\Upsilon}^{\beta,m}(t)$ to $P^{\beta}(t)$, i.e., $(\mathfrak{X}_{t}^{\beta,m})_{\#} P_{\Upsilon}^{\beta,m}(t) = P^{\beta}(t)$, and satisfy
\begin{equation} \label{twhteaoncmne}
\lim_{m \rightarrow \infty} \, \lim_{t \downarrow t_{0}} \, \frac{1}{t-t_{0}} \, \Bigg( \, \mathds{E}_{\mathds{P}_{\Upsilon}^{\beta,m}}\bigg[ \ \Big\vert \mathfrak{X}_{t}^{\beta,m}\big(X(t)\big) - X(t) \Big\vert^{2} \ \bigg]  \, \Bigg)^{1/2} = 0  \, ,
\end{equation}
where $\mathds{P}_{\Upsilon}^{\beta,m}$ denotes a probability measure on the path space under which the random variable $X(t)$ has distribution $P_{\Upsilon}^{\beta,m}(t)$ as in \hyperref[delltmpphbrtf]{(\ref*{delltmpphbrtf})}. We define now the candidate maps
\begin{equation} \label{defphitcamap}
\mathfrak{X}_{t}^{\beta,m} \vcentcolon = \Phi_{t}^{\beta} \circ \big( \Upsilon_{t}^{\beta,m} \big)^{-1}, \qquad t \geqslant t_{0}
\end{equation}
for this job, recall that $(\Upsilon_{t}^{\beta,m})^{-1}$ transports $P_{\Upsilon}^{\beta,m}(t)$ to $P^{\beta}(t_{0})$ while $\Phi_{t}^{\beta}$ transports $P^{\beta}(t_{0})$ to $P^{\beta}(t)$, and conclude that $\mathfrak{X}_{t}^{\beta,m}$ of \hyperref[defphitcamap]{(\ref*{defphitcamap})} transports $P_{\Upsilon}^{\beta,m}(t)$ to $P^{\beta}(t)$; thus, we have 
\begin{equation} \label{olwtwhteaoncmne}
\mathds{E}_{\mathds{P}_{\Upsilon}^{\beta,m}}\bigg[ \ \Big\vert \mathfrak{X}_{t}^{\beta,m}\big(X(t)\big) - X(t) \Big\vert^{2} \ \bigg] 
= \mathds{E}_{\mathds{P}}\bigg[ \ \Big\vert \Phi_{t}^{\beta}\big(X(t_{0})\big) - \Upsilon_{t}^{\beta,m}\big(X(t_{0})\big) \Big\vert^{2} \ \bigg].
\end{equation}
Combining \hyperref[twhteaoncmne]{(\ref*{twhteaoncmne})} and \hyperref[olwtwhteaoncmne]{(\ref*{olwtwhteaoncmne})}, we see that we have to establish
\begin{equation}  \label{olebaewtwhteaoncmne}
\lim_{m \rightarrow \infty} \, \lim_{t \downarrow t_{0}} \, \frac{1}{(t-t_{0})^{2}} \, \mathds{E}_{\mathds{P}}\bigg[ \ \Big\vert \Phi_{t}^{\beta}\big(X(t_{0})\big) - \Upsilon_{t}^{\beta,m}\big(X(t_{0})\big) \Big\vert^{2} \ \bigg] = 0.
\end{equation}
Using \hyperref[defphbrtaopws]{(\ref*{defphbrtaopws})} and the elementary inequality $\vert x+y \vert^{2} \leqslant 2(\vert x \vert^{2}+\vert y \vert^{2})$, for $x,y \in \mathds{R}^{n}$, we derive the estimate
\begingroup
\addtolength{\jot}{0.7em}
\begin{align}
\tfrac{1}{2} \, \big\vert \Phi_{t}^{\beta}(x) - \Upsilon_{t}^{\beta,m}(x) \big\vert^{2} \, \leqslant \, &(t-t_{0})^2 \cdot \vert v^{\beta}(t_{0},x) - v_{m}^{\beta}(t_{0},x)\vert^{2} \label{olwtwhteaoncmneb} \\
\, + \, &\Big\vert \big(\Phi_{t}^{\beta}(x) - x \big) - (t-t_{0}) \cdot v^{\beta}(t_{0},x) \Big\vert^{2}. \label{olwtwhteaoncmnea} 
\end{align}
\endgroup
Therefore, in order to establish \hyperref[olebaewtwhteaoncmne]{(\ref*{olebaewtwhteaoncmne})}, it suffices to show the limiting assertions \hyperref[liidofstsetwahfo]{(\ref*{liidofstsetwahfo})} and \hyperref[bolwtwhteaoncmne]{(\ref*{bolwtwhteaoncmne})} below; these correspond to \hyperref[olwtwhteaoncmneb]{(\ref*{olwtwhteaoncmneb})} and \hyperref[olwtwhteaoncmnea]{(\ref*{olwtwhteaoncmnea})}, respectively.

\smallskip

The first limiting identity
\begin{equation} \label{liidofstsetwahfo}
\lim_{m \rightarrow \infty} \, \mathds{E}_{\mathds{P}}\bigg[ \ \Big\vert v^{\beta}\big((t_{0},X(t_{0})\big) - v_{m}^{\beta}\big(t_{0},X(t_{0})\big) \Big\vert^{2} \ \bigg] = 0
\end{equation}
we already have from \hyperref[aotvfgosfvgosfwcs]{(\ref*{aotvfgosfvgosfwcs})}, \hyperref[lvfvbrtox]{(\ref*{lvfvbrtox})} of \textsf{Step 2}.

\medskip

\noindent \fbox{\textsf{Step 6.}} Our final task is to justify that
\begin{equation} \label{bolwtwhteaoncmne}
\lim_{t \downarrow t_{0}} \, \mathds{E}_{\mathds{P}}\bigg[ \ \Big\vert \tfrac{1}{t-t_{0}} \Big(\Phi_{t}^{\beta}\big(X(t_{0})\big) - X(t_{0}) \Big) - v^{\beta}\big(t_{0},X(t_{0})\big) \Big\vert^{2} \ \bigg] = 0.
\end{equation}
To this end, we first note that by \hyperref[cotcfl]{(\ref*{cotcfl})} we have the identity
\begin{equation} \label{iotcfitdfe}
\Phi_{t}^{\beta}(x) = x + \int_{t_{0}}^{t} v^{\beta}\big(u,\Phi_{u}^{\beta}(x)\big) \, \textnormal{d}u, \qquad x \in \mathds{R}^{n},
\end{equation}
for all $t \geqslant t_{0}$. On account of it we see that the expectation in \hyperref[bolwtwhteaoncmne]{(\ref*{bolwtwhteaoncmne})} is equal to
\begin{equation} 
\mathds{E}_{\mathds{P}}\Bigg[ \ \bigg\vert \frac{1}{t-t_{0}} \int_{t_{0}}^{t} v^{\beta}\Big(u,\Phi_{u}^{\beta}\big(X(t_{0})\big)\Big) \, \textnormal{d}u - v^{\beta}\big(t_{0},X(t_{0})\big) \bigg\vert^{2} \ \Bigg].
\end{equation}
As $\Phi_{t}^{\beta}$ transports $P^{\beta}(t_{0})$ to $P^{\beta}(t)$, and because the random variable $X(t_{0})$ has the same distribution under $\mathds{P}^{\beta}$ as it does under $\mathds{P}$, i.e., $P^{\beta}(t_{0})=P(t_{0})$, this expectation can also be expressed with respect to the probability measure $\mathds{P}^{\beta}$, and it thus suffices to show the limiting assertion
\begin{equation} \label{tsctznuiv}
\lim_{t \downarrow t_{0}} \, \mathds{E}_{\mathds{P}^{\beta}}\Bigg[ \ \bigg\vert \frac{1}{t-t_{0}} \int_{t_{0}}^{t} v^{\beta}\big(u,X(u)\big) \, \textnormal{d}u - v^{\beta}\big(t_{0},X(t_{0})\big) \bigg\vert^{2} \ \Bigg] = 0.
\end{equation}
For this purpose, we first observe that by the continuity of the paths of the canonical coordinate process $(X(t))_{t \geqslant 0}$, the family of random variables 
\begin{equation} \label{uivrvone}
\Bigg( \ \bigg\vert \frac{1}{t-t_{0}} \int_{t_{0}}^{t} v^{\beta}\big(u,X(u)\big) \, \textnormal{d}u - v^{\beta}\big(t_{0},X(t_{0})\big) \bigg\vert^{2} \ \Bigg)_{t \geqslant t_{0}}
\end{equation}
converges $\mathds{P}^{\beta}$-almost surely to zero, as $t \downarrow t_{0}$. In order to show that their expectations also converge to zero, i.e., that \hyperref[tsctznuiv]{(\ref*{tsctznuiv})} does hold, we have to verify that the family of \hyperref[uivrvone]{(\ref*{uivrvone})} is uniformly integrable with respect to $\mathds{P}^{\beta}$. As the random variable $\vert v^{\beta}(t_{0},X(t_{0})) \vert^{2}$ belongs to $L^{1}(\mathds{P}^{\beta})$, and we have
\begin{equation}
\bigg\vert \frac{1}{t-t_{0}} \int_{t_{0}}^{t} v^{\beta}\big(u,X(u)\big) \, \textnormal{d}u \bigg\vert^{2} \, \leqslant \,
\frac{1}{t-t_{0}} \int_{t_{0}}^{t} \big\vert v^{\beta}\big(u,X(u)\big) \big\vert^{2} \, \textnormal{d}u, \qquad t \geqslant t_{0}
\end{equation}
by Jensen's inequality, it is sufficient to prove the uniform integrability of the family 
\begin{equation} \label{eqouofuindsf}
\Bigg( \, \frac{1}{t-t_{0}} \int_{t_{0}}^{t} \big\vert v^{\beta}\big(u,X(u)\big) \big\vert^{2} \, \textnormal{d}u  \, \Bigg)_{t \geqslant t_{0}}.
\end{equation}
Invoking the definition of the time-dependent velocity field $\big(v^{\beta}(t, \, \cdot \,)\big)_{t \geqslant t_{0}}$ in \hyperref[pbtdvfvb]{(\ref*{pbtdvfvb})} and the fact that the perturbation $\beta$ is smooth and compactly supported, the uniform integrability of the family in \hyperref[eqouofuindsf]{(\ref*{eqouofuindsf})} above, is equivalent to the uniform integrability of the family
\begin{equation} \label{tfseqsbuitg}
\Bigg( \, \frac{1}{t-t_{0}} \int_{t_{0}}^{t} \frac{\big\vert \nabla \ell^{\beta}\big(u,X(u)\big) \big\vert^{2}}{\ell^{\beta}\big(u,X(u)\big)^{2}} \, \textnormal{d}u  \, \Bigg)_{t \geqslant t_{0}}.
\end{equation}
Now by continuity, the family of \hyperref[tfseqsbuitg]{(\ref*{tfseqsbuitg})} converges $\mathds{P}^{\beta}$-almost surely to $\vert \nabla \log \ell(t_{0},X(t_{0}))\vert^{2}$. Thus, to establish this uniform integrability, it suffices to show that the family of random variables in \hyperref[tfseqsbuitg]{(\ref*{tfseqsbuitg})} converges in $L^{1}(\mathds{P}^{\beta})$. Hence, in view of \textit{Scheff\'{e}'s lemma} (\hyperref[whaelsl]{Lemma \ref*{whaelsl}}), it remains to check that the corresponding expectations also converge. But at this point we use for the first time our choice of $t_{0} \in \mathds{R}_{+} \setminus N$ and recall \hyperref[tciwsolafophvs]{(\ref*{tciwsolafophvs})}, \hyperref[tcigwwsolafophvs]{(\ref*{tcigwwsolafophvs})} from the proof of \hyperref[thetsixcoraopv]{Corollary \ref*{thetsixcoraopv}}, which gives us 
\begin{equation}
\lim_{t \downarrow t_{0}}  \, \mathds{E}_{\mathds{P}^{\beta}}\Bigg[ \frac{1}{t-t_{0}} \int_{t_{0}}^{t} \frac{\big\vert \nabla \ell^{\beta}\big(u,X(u)\big) \big\vert^{2}}{\ell^{\beta}\big(u,X(u)\big)^{2}} \, \textnormal{d}u \Bigg] = 
\mathds{E}_{\mathds{P}}\Bigg[ \ \frac{\big\vert \nabla \ell\big(t_{0},X(t_{0})\big) \big\vert^{2}}{\ell\big(t_{0},X(t_{0})\big)^{2}} \ \Bigg],
\end{equation}
as required. This completes the proof of the claim made in the beginning of \textsf{Step 6}.

\smallskip

Summing up, in light of \hyperref[olwtwhteaoncmneb]{(\ref*{olwtwhteaoncmneb})}, \hyperref[olwtwhteaoncmnea]{(\ref*{olwtwhteaoncmnea})} from \textsf{Step 5}, the limiting assertions \hyperref[liidofstsetwahfo]{(\ref*{liidofstsetwahfo})} and \hyperref[bolwtwhteaoncmne]{(\ref*{bolwtwhteaoncmne})} imply the limiting behavior \hyperref[olebaewtwhteaoncmne]{(\ref*{olebaewtwhteaoncmne})}. According to the results of \textsf{Steps 4} and \textsf{5}, the latter also entails the validity of the limiting identity \hyperref[nvotlisvrlinfifatsvwct]{(\ref*{nvotlisvrlinfifatsvwct})}, which completes the proof of \hyperref[bvagswt]{Theorem \ref*{bvagswt}}.
\end{proof}

Equipped with \hyperref[bvagswt]{Theorem \ref*{bvagswt}}, we can now easily deduce \hyperref[agswt]{Theorem \ref*{agswt}}.

\begin{proof}[Proof of \texorpdfstring{\hyperref[agswt]{Theorem \ref*{agswt}}}{}] The second equality in \hyperref[agswtuff]{(\ref*{agswtuff})} follows from the representation of the relative Fisher information in \hyperref[merfi]{(\ref*{merfi})} and the definition of the time-dependent velocity field $\big(v(t, \, \cdot \,)\big)_{t \geqslant t_{0}}$ in \hyperref[tdvfvtx]{(\ref*{tdvfvtx})}. The first equality in \hyperref[agswtuff]{(\ref*{agswtuff})} is a direct consequence of \hyperref[bvagswt]{Theorem \ref*{bvagswt}}. One just has to set $\beta \equiv 0$ in \hyperref[pvosagswtuff]{(\ref*{pvosagswtuff})}. However, the careful reader might note that the limit in \hyperref[pvosagswtuff]{(\ref*{pvosagswtuff})} is only from the right, while the limit in \hyperref[agswtuff]{(\ref*{agswtuff})} is two-sided. But the only reason for considering right-sided limits in \hyperref[bvagswt]{Theorem \ref*{bvagswt}}, was the presence of the perturbation $\beta$ at time $t \geqslant t_{0}$. If there is no such perturbation, one can replace all limits from the right by two-sided ones. This completes the proof of \hyperref[agswt]{Theorem \ref*{agswt}}.
\end{proof}


\newpage 


\setkomafont{section}{\large}
\setkomafont{subsection}{\normalsize}

\begin{appendices}



\section{Bachelier's work relating Brownian motion to the heat equation} \label{ahnbwrbmtthe}


In this section, which is only of historical interest, we point out that Bachelier already had some thoughts on \textit{``horizontal transport of probability measures''} in his dissertation \textit{``Th{\'e}orie de la sp{\'e}culation''} \cite{Bac00,Bac06}, which he defended in 1900.

In this work he was the first to consider a \textit{mathematical model} of Brownian motion. Bachelier argued using infinitesimals by visualizing Brownian motion $(W(t))_{t \geqslant 0}$ as an infinitesimal version of a random walk. Suppose that the grid in space is given by 
\begin{equation} \label{grid}
\ldots, \ x_{n-2}, \ x_{n-1}, \ x_{n}, \ x_{n+1}, \ x_{n+2}, \ \ldots
\end{equation}
having the same (infinitesimal) distance $\Delta x = x_{n}-x_{n-1}$, for all $n$, and such that at time $t$ these points have (infinitesimal) probabilities 
\begin{equation} \label{gridp}
\ldots, \ p_{n-2}^{t}, \ p_{n-1}^{t}, \ p_{n}^{t}, \ p_{n+1}^{t}, \ p_{n+2}^{t}, \ \ldots
\end{equation}
under the random walk under consideration. What are the probabilities 
\begin{equation} \label{gridpp}
\ldots, \ p_{n-2}^{t+\Delta t}, \ p_{n-1}^{t+\Delta t}, \ p_{n}^{t+\Delta t}, \ p_{n+1}^{t+\Delta t}, \ p_{n+2}^{t+\Delta t}, \ \ldots
\end{equation}
of these points at time $t+\Delta t$? 

The random walk moves half of the mass $p_{n}^{t}$, sitting on $x_{n}$ at time $t$, to the point $x_{n+1}$. En revanche, it moves half of the mass $p_{n+1}^{t}$, sitting on $x_{n+1}$ at time $t$, to the point $x_{n}$. The net difference between $p_{n}^{t}/2$ and $p_{n+1}^{t}/2$, which Bachelier has no scruples to identify with 
\begin{equation}
-\!\tfrac{1}{2} \, (p^{t})'(x_{n}) \, \Delta x = - \tfrac{1}{2} \, (p^{t})'(x_{n+1}) \, \Delta x,
\end{equation}
is therefore transported from the interval $(-\infty,x_{n}]$ to $[x_{n+1},\infty)$. In Bachelier's own words, this is very nicely captured by the following passage of his thesis: 

\medskip

\textit{``Each price $x$ during an element of time radiates towards its neighboring price an amount of probability proportional to the difference of their probabilities. I say proportional because it is necessary to account for the relation of $\Delta x$ to $\Delta t$. The above law can, by analogy with certain physical theories, be called the law of radiation or diffusion of probability.''}

\medskip

Passing formally to the continuous limit and denoting by 
\begin{equation}
P(t,x) = \int_{-\infty}^{x} p(t,z) \, \textnormal{d}z
\end{equation}
the distribution function associated to the Gaussian density function $p(t,x)$, Bachelier deduces in an intuitively convincing way the relation
\begin{equation} \label{lbhef}
\frac{\partial P}{\partial t} = \frac{1}{2} \frac{\partial p}{\partial x},
\end{equation}
where we have normalized the relation between $\Delta x$ and $\Delta t$ to obtain the constant $1/2$. By differentiating \hyperref[lbhef]{(\ref*{lbhef})} with respect to $x$ one obtains the usual heat equation 
\begin{equation} \label{heflbhef}
\frac{\partial p}{\partial t} = \frac{1}{2} \frac{\partial^{2} p}{\partial x^{2}}
\end{equation}
for the density function $p(t,x)$. Of course, the heat equation was known to Bachelier, and he notes regarding \hyperref[heflbhef]{(\ref*{heflbhef})}: \textit{``C'est une {\'e}quation de Fourier.''}

But let us still remain with the form \hyperref[lbhef]{(\ref*{lbhef})} of the heat equation and analyze its message in terms of \textit{``horizontal transport of probability measures''}. To accomplish the movement of mass $-\frac{1}{2} \, p'(t,x) \, \textnormal{d}x$ from $(-\infty,x]$ to $[x,\infty)$ one is naturally led to define the flow induced by the velocity field
\begin{equation} \label{a.8}
v(t,x) \vcentcolon = - \frac{1}{2} \, \frac{p'(t,x)}{p(t,x)},
\end{equation}
which has the natural interpretation as the ``speed'' of the transport induced by $p(t,x)$. We thus encounter \textit{in nuce} the ubiquitous ``score function'' $\nabla p(t,x) / p(t,x)$ appearing throughout all the above considerations. We also note that an ``infinitesimal transport'' on $\mathds{R}$ is automatically an optimal transport. Intuitively this corresponds to the geometric insight in the one-dimensional case that the transport lines of infinitesimal length cannot cross each other.

\medskip

Let us go one step beyond Bachelier's thoughts and consider the relation of the above infinitesimal Wasserstein transport to time reversal (which Bachelier had not yet considered in his solitary exploration of Brownian motion). Visualizing again the grid \hyperref[grid]{(\ref*{grid})} and the corresponding probabilities \hyperref[gridp]{(\ref*{gridp})} and \hyperref[gridpp]{(\ref*{gridpp})}, a moment's reflection reveals that the transport from $p^{t+\Delta t}$ to $p^{t}$, i.e., in reverse direction, is accomplished by going from $x_{n}$ to $x_{n+1}$ with probability $\frac{1}{2} + \frac{p'(t,x)}{p(t,x)} \, \textnormal{d}x$ and from $x_{n+1}$ to $x_{n}$ with probability $\frac{1}{2} - \frac{p'(t,x)}{p(t,x)} \, \textnormal{d}x$, with the identifications $x = x_{n} = x_{n+1}$, and $\textnormal{d}x = \Delta x$. In other words, the above Brownian motion $(W(t))_{t \geqslant 0}$ considered in reverse direction $(W(T-s))_{0 \leqslant s \leqslant T}$ is \textit{not} a Brownian motion, as the transition probabilities are not $(1/2,1/2)$ any more. Rather, one has to correct these probabilities by a term which --- once again --- involves our familiar score function $\nabla p(t,x) / p(t,x)$ (compare \hyperref[dotowptmtpbm]{(\ref*{dotowptmtpbm})} above). At this stage, it should come as no surprise, that the passage to reverse time is closely related to the Wasserstein transport induced by $p(t,x)$. 

\medskip

Let us play this infinitesimal reasoning one more time, in order to visualize the Fontbona-Jourdain result (\hyperref[ovtfjofmidpwcd]{Theorem \ref*{ovtfjofmidpwcd}}). Arguing in the reverse direction of time, we may ask the following question: how do we have to choose the transition probabilities to go from $x$ at time $t + \Delta t$ to either $x + \textnormal{d}x$ or $x - \textnormal{d}x$ at time $t$, so that the density process $p(t,x)$ becomes a martingale in reverse time under these transition probabilities? As the difference between the probabilities $p(t,x+ \textnormal{d}x)$ and $p(t,x- \textnormal{d}x)$ equals $2 \, p'(t,x) \, \textnormal{d}x$ (up to terms of smaller order than $\textnormal{d}x$) we conclude that the transition probabilities have to be changed from $(1/2,1/2)$ to
\begin{equation}
\bigg(\frac{1}{2} - \frac{p'(t,x)}{p(t,x)} \, \textnormal{d}x \, , \, \frac{1}{2} + \frac{p'(t,x)}{p(t,x)} \, \textnormal{d}x\bigg)
\end{equation}
in order to counterbalance this difference of probabilities (again up to terms of smaller order than $\textnormal{d}x$). In other words, we have found again precisely the same transition probabilities which we had encountered in the context of the reversed Brownian process $(W(T-s))_{0 \leqslant s \leqslant T}$. This provides some intuition for the Fontbona-Jourdain assertion that $(p(T-s,W(T-s))_{0 \leqslant s \leqslant T}$ is a martingale in the reverse direction of time.

\medskip

We finish the section by returning to Bachelier's thesis. The \textit{rapporteur} of Bachelier's dissertation was no lesser a figure than Henri Poincar{\'e}. Apparently he was aware of the enormous potential of the section \textit{``Rayonnement de la probabilit{\'e}''} in Bachelier's thesis, when he added to his very positive report the handwritten phrase: \textit{``On peut regretter que M. Bachelier n'ait pas d{\'e}velopp{\'e} davantage cette partie de sa th{\`e}se.''} That is: One might regret that Mr. Bachelier did not develop further this part of his thesis.


\section{The proofs of \texorpdfstring{\hyperref[fosmolds]{Lemmas \ref*{fosmolds}}}{Lemmas 2.1} and \texorpdfstring{\hyperref[fosmoldspv]{\ref*{fosmoldspv}}}{3.3}} \label{fosmoldspola}


\begin{proof}[Proof of \texorpdfstring{\hyperref[fosmolds]{Lemma \ref*{fosmolds}}}] Let the real constants $c, R \geqslant 0$ be as in condition \hyperref[naltsaosaojko]{\ref*{naltsaosaojko}} of \hyperref[sosaojkoia]{Assumptions \ref*{sosaojkoia}}, and denote
\begin{equation} \label{tfewtgotpa}
m_{R} \vcentcolon = \max_{\vert x \vert \leqslant R} \, \big\vert n -  \big\langle x \, , \, 2 \, \nabla \Psi(x) \big\rangle_{\mathds{R}^{n}} \big\vert < \infty \, , \qquad \quad
\tau_{k} \vcentcolon = \inf \big\{ t \geqslant 0 \colon \, \vert X(t) \vert > k \big\}
\end{equation}
for integers $k > R$. It\^{o}'s formula gives
\begin{equation} \label{poltpofe}
\textnormal{d} \vert X(t) \vert^{2} = \Big(n -  \Big\langle X(t) \, , \, 2 \, \nabla \Psi\big(X(t)\big) \Big\rangle_{\mathds{R}^{n}} \Big) \, \textnormal{d}t + \big\langle 2 \, X(t) \, , \, \textnormal{d}W(t) \big\rangle_{\mathds{R}^{n}}
\end{equation}
for $t \geqslant 0$. We define $\varphi_{k}(t) \vcentcolon = \mathds{E}_{\mathds{P}}\big[ \, \vert X(t \wedge \tau_{k}) \vert^{2} \, \big]$ and $\varphi(t) \vcentcolon = \mathds{E}_{\mathds{P}}\big[ \, \vert X(t) \vert^{2} \, \big]$. Taking expectations in \hyperref[poltpofe]{(\ref*{poltpofe})} yields
\begingroup
\addtolength{\jot}{0.7em}
\begin{alignat}{2}
\varphi_{k}(t) &= \varphi(0)
&&+ \mathds{E}_{\mathds{P}} \bigg[ \int_{0}^{t \wedge \tau_{k}} \Big(n - \Big\langle X(u) \, , \, 2 \, \nabla \Psi\big(X(u)\big) \Big\rangle_{\mathds{R}^{n}} \, \Big) \, \mathds{1}_{\{\vert X(u) \vert \leqslant R\}} \, \textnormal{d}u \bigg] \\
& &&+ \mathds{E}_{\mathds{P}} \bigg[ \int_{0}^{t \wedge \tau_{k}} \Big(n -  \Big\langle X(u) \, , \, 2 \, \nabla \Psi\big(X(u)\big) \Big\rangle_{\mathds{R}^{n}} \, \Big) \, \mathds{1}_{\{\vert X(u) \vert > R\}} \, \textnormal{d}u \bigg] \label{tltwtgotpa} \\
&\leqslant \varphi(0) &&+ m_{R} \, \mathds{E}_{\mathds{P}}[t \wedge \tau_{k}] + \mathds{E}_{\mathds{P}} \bigg[ \int_{0}^{t \wedge \tau_{k}} \big(n + 2 \, c \, \vert X(u) \vert^{2} \, \big) \, \textnormal{d}u \bigg] \\
&\leqslant \varphi(0) &&+ (m_{R} + n) \, t + 2 \, c \int_{0}^{t} \varphi_{k}(u) \, \textnormal{d}u.
\end{alignat}
\endgroup
The Gronwall inequality gives now
\begin{equation} \label{aptgigntf}
\varphi_{k}(t) \leqslant g(t) \vcentcolon = \varphi(0) + (m_{R} + n) \, t + 2 \, c \int_{0}^{t} \big( \varphi(0) + (m_{R} + n) \, u \big) \, \mathrm{e}^{2c(t-u)} \, \textnormal{d}u.
\end{equation}
According to the second-moment condition in \hyperref[ffecaoo]{(\ref*{ffecaoo})}, the quantity $g(t)$ is finite for all $t \geqslant 0$, and independent of $k$; letting $k \uparrow \infty$ in \hyperref[aptgigntf]{(\ref*{aptgigntf})}, we get 
\begin{equation}
\varphi(t) = \mathds{E}_{\mathds{P}}\big[ \, \vert X(t) \vert^{2} \, \big] \leqslant g(t) < \infty, \qquad t \geqslant 0.
\end{equation}
In other words, we have that $P(t) \in \mathscr{P}_{2}(\mathds{R}^{n})$ for all $t \geqslant 0$. 
\end{proof}

\begin{proof}[Proof of \texorpdfstring{\hyperref[fosmoldspv]{Lemma \ref*{fosmoldspv}}}] The proof of \hyperref[fosmoldspv]{Lemma \ref*{fosmoldspv}} follows by analogy with the proof of \hyperref[fosmolds]{Lemma \ref*{fosmolds}} above. Indeed, one just has to add the perturbation $\beta$ to the gradient $\nabla \Psi$ in the expressions \hyperref[tfewtgotpa]{(\ref*{tfewtgotpa})} -- \hyperref[tltwtgotpa]{(\ref*{tltwtgotpa})}, write $W^{\beta}(t)$ instead of $W(t)$ in \hyperref[poltpofe]{(\ref*{poltpofe})}, and replace all the $\mathds{P}$-expectations by expectations with respect to the probability measure $\mathds{P}^{\beta}$. Then the constant $m_{R}$ in \hyperref[tfewtgotpa]{(\ref*{tfewtgotpa})} is still finite and, because of its compact support, the perturbation $\beta$ in the expression \hyperref[tltwtgotpa]{(\ref*{tltwtgotpa})} vanishes, provided $R$ is chosen large enough. Hence, by the same token as above, we conclude that $P^{\beta}(t) \in \mathscr{P}_{2}(\mathds{R}^{n})$ for all $t \geqslant t_{0}$.  
\end{proof}


\section{Relative entropy with respect to a \texorpdfstring{$\sigma$}{sigma}-finite measure \texorpdfstring{$\mathrm{Q}$}{Q}} \label{wdadotrewrttmq}


For two probability measures $\mathcal{P}$ and $\mathcal{Q}$ on $\mathscr{B}(\mathds{R}^{n})$, the \textit{relative entropy} of $\mathcal{P}$ with respect to $\mathcal{Q}$ is defined as
\begin{equation} \label{doreokldbtpm}
H(\mathcal{P} \, \vert \, \mathcal{Q}) \vcentcolon = \int_{\mathds{R}^{n}} \log \bigg( \frac{\textnormal{d}\mathcal{P}}{\textnormal{d}\mathcal{Q}} \bigg) \, \textnormal{d}\mathcal{P} \in [0,\infty]
\end{equation}
if $\mathcal{P}$ is absolutely continuous with respect to $\mathcal{Q}$, and as $H(\mathcal{P} \, \vert \, \mathcal{Q}) \vcentcolon = \infty$ if this is not the case. 

\smallskip

Let us consider the $\sigma$-finite measure $\mathrm{Q}$ on $\mathscr{B}(\mathds{R}^{n})$ with density $\mathds{R}^{n} \ni x \mapsto q(x) = \mathrm{e}^{-2 \Psi(x)}$, introduced in \hyperref[snaas]{Section \ref*{snaas}}. Following the approach of \cite[Section 2]{Leo14}, we shall demonstrate that the same definition of relative entropy $H( P \, \vert \, \mathrm{Q})$ applies to the reference measure $\mathrm{Q}$, provided that the probability measure $P$ is an element of $\mathscr{P}_{2}(\mathds{R}^{n})$ --- with the only difference that the quantity \hyperref[doreokldbtpm]{(\ref*{doreokldbtpm})} now takes values in $(-\infty,\infty]$.

To this end, we let $P$ be a probability measure in the quadratic Wasserstein space $\mathscr{P}_{2}(\mathds{R}^{n})$. The non-negativity of the potential $\Psi$ implies that for the function $\mathds{R}^{n} \ni x \mapsto f(x) \vcentcolon = \mathrm{e}^{-\vert x \vert^{2}}$ we have $\mathds{E}_{\mathrm{Q}}[f] \in (0,\infty)$. Following \cite[Section 2]{Leo14}, we let $\mathcal{Q}$ be the probability measure on $\mathscr{B}(\mathds{R}^{n})$ having probability density function $f / \mathds{E}_{\mathrm{Q}}[f]$ with respect to the measure $\mathrm{Q}$, so that
\begin{equation} 
\frac{\textnormal{d}P}{\textnormal{d}\mathrm{Q}} = \frac{f}{\mathds{E}_{\mathrm{Q}}[f]} \frac{\textnormal{d}P}{\textnormal{d}\mathcal{Q}}.
\end{equation}
Taking first logarithms and then expectations with respect to $P$ on both sides of this equation yields the formula
\begin{equation} \label{fftrewrtapum}
H(P \, \vert \, \mathrm{Q}) = H(P \, \vert \, \mathcal{Q}) - \int_{\mathds{R}^{n}} \vert x \vert^{2} \, \textnormal{d}P(x) - \log \bigg( \int_{\mathds{R}^{n}} \mathrm{e}^{- \vert x \vert^{2} - 2 \Psi(x)} \, \textnormal{d}x \bigg),
\end{equation}
which is justified by \hyperref[doreokldbtpm]{(\ref*{doreokldbtpm})} and the fact that $P \in \mathscr{P}_{2}(\mathds{R}^{n})$ as well as $\mathds{E}_{\mathrm{Q}}[f] \in (0,\infty)$. In particular, we see that the right-hand side of \hyperref[fftrewrtapum]{(\ref*{fftrewrtapum})} takes values in the interval $(-\infty,\infty]$. Summing up, we can define well the relative entropy $H(P \, \vert \, \mathrm{Q})$ as in \hyperref[doreokldbtpm]{(\ref*{doreokldbtpm})} provided that $P \in \mathscr{P}_{2}(\mathds{R}^{n})$, even when the $\sigma$-finite measure $\mathrm{Q}$ has infinite total mass.

\begin{remark} Wherever in this paper the relative entropy $H(P \, \vert \, \mathrm{Q})$ is considered for some $\sigma$-finite measure $\mathrm{Q}$ on $\mathscr{B}(\mathds{R}^{n})$ with density $\mathds{R}^{n} \ni x \mapsto q(x) = \mathrm{e}^{-2 \Psi(x)}$, the probability measure $P$ will always be assumed to belong to $\mathscr{P}_{2}(\mathds{R}^{n})$. This is in accordance with \hyperref[fosmolds]{Lemmas \ref*{fosmolds}} and \hyperref[fosmoldspv]{\ref*{fosmoldspv}}, as well as \hyperref[hlotl]{Lemma \ref*{hlotl}}. In the latter, the constant-speed geodesic $(P_{t})_{0 \leqslant t \leqslant 1}$ joining two probability measures $P_{0}$ and $P_{1}$ in $\mathscr{P}_{2}(\mathds{R}^{n})$ is considered. Therefore, in all situations relevant to us, the relative entropy $H(P \, \vert \, \mathrm{Q})$ is well-defined and takes values in the interval $(-\infty,\infty]$.
\end{remark}


\section{A measure-theoretic result} \label{apsecamtr} 


In the proofs of \hyperref[thetsixcor]{Propositions \ref*{thetsixcor}} and \hyperref[thetthretvcor]{\ref*{thetthretvcor}} we have used a result about conditional expectations, which we will formulate and prove below. We place ourselves on a probability space $(\Omega,\mathcal{F},\mathds{P})$ endowed with a left-continuous filtration $(\mathcal{F}(t))_{t \geqslant 0}$. We first state the following result, which is known as \textit{Scheff\'{e}'s lemma} \cite[5.10]{Wil91}.

\begin{lemma}[\textsf{Scheff\'{e}'s lemma}] \label{whaelsl} For a sequence of integrable random variables $(X_{n})_{n \in \mathds{N}}$ which converges almost surely to another integrable random variable $X$, convergence of the $L^{1}(\mathds{P})$-norms \textnormal{(}i.e., $\lim_{n \rightarrow \infty} \mathds{E}[\vert X_{n} \vert] = \mathds{E}[\vert X \vert ]$\textnormal{)} is equivalent to convergence in $L^{1}(\mathds{P})$ \textnormal{(}i.e., $\lim_{n \rightarrow \infty} \mathds{E}[\vert X_{n} - X\vert] = 0$\textnormal{)}. 
\end{lemma}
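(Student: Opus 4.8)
The plan is to prove the two implications separately, with the forward direction being the substantive one. For the easy direction, suppose $X_n \to X$ in $L^1(\mathds{P})$. Then by the reverse triangle inequality $\big|\,\mathds{E}[|X_n|] - \mathds{E}[|X|]\,\big| \leqslant \mathds{E}\big[\,\big|\,|X_n| - |X|\,\big|\,\big] \leqslant \mathds{E}[|X_n - X|] \to 0$, so the $L^1$-norms converge. This requires no hypothesis beyond integrability and is a one-line estimate.

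For the main direction, assume $X_n \to X$ almost surely and $\mathds{E}[|X_n|] \to \mathds{E}[|X|]$; I want to conclude $\mathds{E}[|X_n - X|] \to 0$. The standard device is to work with the negative parts of $|X_n| - |X|$. Set $Y_n := \big(|X_n| - |X|\big)^- = \max\{|X| - |X_n|,\,0\}$. Then $0 \leqslant Y_n \leqslant |X|$, so the $Y_n$ are dominated by the fixed integrable random variable $|X|$; moreover $Y_n \to 0$ almost surely because $|X_n| \to |X|$ a.s. By the dominated convergence theorem, $\mathds{E}[Y_n] \to 0$. Now observe the algebraic identity $\big|\,|X_n| - |X|\,\big| = \big(|X_n| - |X|\big) + 2 Y_n$, so that $\mathds{E}\big[\,\big|\,|X_n| - |X|\,\big|\,\big] = \big(\mathds{E}[|X_n|] - \mathds{E}[|X|]\big) + 2\,\mathds{E}[Y_n] \to 0$ by the norm-convergence hypothesis together with the limit just established. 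Finally, since $|X_n - X| \leqslant \big|\,|X_n| - |X|\,\big| + 2\min\{|X_n|,|X|\}$ is not quite the cleanest route, I would instead argue directly: writing $Z_n := |X_n - X|$, we have $Z_n \to 0$ a.s., and I would apply the same negative-part trick to $Z_n$ viewed against a dominating function. Concretely, $Z_n \leqslant |X_n| + |X|$; set $W_n := \big(|X_n| + |X| - Z_n\big)$, which is nonnegative, converges a.s.\ to $2|X|$, and satisfies (by Fatou) $\liminf \mathds{E}[W_n] \geqslant \mathds{E}[2|X|]$; on the other hand $\mathds{E}[W_n] = \mathds{E}[|X_n|] + \mathds{E}[|X|] - \mathds{E}[Z_n]$, so $\limsup \mathds{E}[Z_n] \leqslant \mathds{E}[|X_n|] + \mathds{E}[|X|] - \liminf\mathds{E}[W_n] \leqslant 2\mathds{E}[|X|] - 2\mathds{E}[|X|] = 0$, giving $\mathds{E}[Z_n] \to 0$.

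The only mild obstacle is bookkeeping: one must be careful that Fatou's lemma is applied to the nonnegative sequence $W_n$ (not to $Z_n$ directly, where it would be useless), and that the hypothesis $\mathds{E}[|X_n|] \to \mathds{E}[|X|]$ is genuinely used precisely at the step where $\mathds{E}[W_n]$ is decomposed. I would present the argument via this single application of Fatou to $W_n = |X_n| + |X| - |X_n - X| \geqslant 0$, which simultaneously handles the domination and avoids invoking dominated convergence twice; this is the cleanest packaging. No results beyond Fatou's lemma and elementary inequalities are needed, and the statement as given in the excerpt is exactly this classical fact, so I would simply cite \cite[5.10]{Wil91} for the interested reader while including the short self-contained argument above.
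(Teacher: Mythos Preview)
The paper does not prove this lemma at all; it merely states it and cites Williams \cite[5.10]{Wil91}. So there is nothing in the paper to compare your argument against, and your self-contained proof goes beyond what the authors provide.

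Your second route---applying Fatou's lemma to $W_n = |X_n| + |X| - |X_n - X| \geqslant 0$---is the standard clean argument and is correct. One notational slip: in the display
\[
\limsup \mathds{E}[Z_n] \leqslant \mathds{E}[|X_n|] + \mathds{E}[|X|] - \liminf\mathds{E}[W_n]
\]
the term $\mathds{E}[|X_n|]$ should not appear outside a limit; you mean $\limsup \mathds{E}[Z_n] = \lim \mathds{E}[|X_n|] + \mathds{E}[|X|] - \liminf \mathds{E}[W_n] = 2\mathds{E}[|X|] - \liminf \mathds{E}[W_n]$, using the hypothesis $\mathds{E}[|X_n|] \to \mathds{E}[|X|]$. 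With that corrected the chain of inequalities is fine. The first route you started and then abandoned (showing $\mathds{E}\big[\big|\,|X_n|-|X|\,\big|\big]\to 0$ via dominated convergence on the negative parts) is also valid but, as you note, only gives $|X_n|\to|X|$ in $L^1$ rather than $X_n\to X$; you were right to switch to the direct Fatou argument on $W_n$.
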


\begin{proposition} \label{probamtr} Let $(B(t))_{0 \leqslant t \leqslant T}$ and $(C(t))_{0 \leqslant t \leqslant T}$ be adapted continuous processes, which are non-negative and uniformly bounded, respectively. Define the process $(A(t))_{0 \leqslant t \leqslant T}$ as their primitive, i.e.,
\begin{equation}
A(t) = \int_{0}^{t} \big( B(u) + C(u) \big) \, \textnormal{d}u, \qquad 0 \leqslant t \leqslant T
\end{equation}
and assume that $\mathds{E}\big[\int_{0}^{T}  B(u)  \, \textnormal{d}u\big]$ is finite. By the Lebesgue differentiation theorem, for Lebesgue-almost every $t_{0} \in [0,T]$, we have
\begin{equation} \label{titlocotn}
\lim_{t \rightarrow t_{0}} \, \mathds{E}\Bigg[ \frac{A(t)-A(t_{0})}{t-t_{0}}\Bigg] = \lim_{t \rightarrow t_{0}} \, \mathds{E}\Bigg[ \frac{1}{t-t_{0}}\int_{t_{0}}^{t} \big( B(u) + C(u) \big) \, \textnormal{d}u \Bigg]  = \mathds{E}\big[ B(t_{0}) + C(t_{0})\big].
\end{equation}
Now fix a ``Lebesgue point'' $t_{0} \in [0,T]$ for which \textnormal{\hyperref[titlocotn]{(\ref*{titlocotn})}} does hold. Then we have the analogous limiting assertion for the conditional expectations, i.e.,
\begin{equation} \label{probamtre}
\lim_{t \uparrow t_{0}} \, \frac{\mathds{E}\big[A(t_{0})-A(t) \ \vert \ \mathcal{F}(t)\big]}{t_{0}-t} = 
\lim_{t \downarrow t_{0}} \, \frac{\mathds{E}\big[A(t)-A(t_{0}) \ \vert \ \mathcal{F}(t_{0})\big]}{t-t_{0}} =
B(t_{0}) + C(t_{0}),
\end{equation}
where the limits exist in $L^{1}(\mathds{P})$.
\begin{proof} Using the uniform boundedness of the process $(C(t))_{0 \leqslant t \leqslant T}$, it is easy to see that the existence of the limit in \hyperref[titlocotn]{(\ref*{titlocotn})} only depends on the process $(B(t))_{0 \leqslant t \leqslant T}$. Therefore we can assume without loss of generality that $C(t) \equiv 0$ for all $0 \leqslant t \leqslant T$.

\smallskip

Fix a Lebesgue point $t_{0} \in [0,T]$ for which \hyperref[titlocotn]{(\ref*{titlocotn})} does hold. As the process $(B(t))_{0 \leqslant t \leqslant T}$ is continuous, the fundamental theorem of calculus ensures that the limit 
\begin{equation} \label{tlsfftftoc}
\lim_{t \rightarrow t_{0}} \, \frac{A(t) - A(t_{0})}{t-t_{0}} = \lim_{t \rightarrow t_{0}} \, \frac{1}{t-t_{0}} \int_{t_{0}}^{t} B(u) \, \textnormal{d}u = B(t_{0})
\end{equation}
exists almost surely. Since the random variables appearing in \hyperref[tlsfftftoc]{(\ref*{tlsfftftoc})} are integrable, and we already have the convergence of the $L^{1}(\mathds{P})$-norms from \hyperref[titlocotn]{(\ref*{titlocotn})}, \hyperref[whaelsl]{Lemma \ref*{whaelsl}} allows us to conclude that the convergence of \hyperref[tlsfftftoc]{(\ref*{tlsfftftoc})} holds also in $L^{1}(\mathds{P})$, i.e.,
\begin{equation} \label{titlocotncitn}
\lim_{t \rightarrow t_{0}} \, \bigg\| \frac{A(t)-A(t_{0})}{t-t_{0}} - B(t_{0}) \bigg\|_{L^{1}(\mathds{P})} = 0.
\end{equation}
From \hyperref[titlocotncitn]{(\ref*{titlocotncitn})} we can deduce now the $L^{1}(\mathds{P})$-convergence of \hyperref[probamtre]{(\ref*{probamtre})} as follows. Regarding the second limit in \hyperref[probamtre]{(\ref*{probamtre})}, for $t > t_{0}$, we find
\begingroup
\addtolength{\jot}{0.7em}
\begin{align}
\Bigg\| \frac{\mathds{E}\big[A(t)-A(t_{0}) \ \vert \ \mathcal{F}(t_{0})\big]}{t-t_{0}} - B(t_{0})\Bigg\|_{L^{1}(\mathds{P})}
&= \Bigg\| \, \mathds{E}\bigg[ \frac{A(t)-A(t_{0})}{t-t_{0}} - B(t_{0}) \ \Big\vert \ \mathcal{F}(t_{0}) \bigg] \, \Bigg\|_{L^{1}(\mathds{P})} \\
&\leqslant \bigg\| \frac{A(t)-A(t_{0})}{t-t_{0}} - B(t_{0}) \bigg\|_{L^{1}(\mathds{P})} \, ; \label{tewctz}
\end{align}
\endgroup
and according to \hyperref[titlocotncitn]{(\ref*{titlocotncitn})}, the expression in \hyperref[tewctz]{(\ref*{tewctz})} converges to zero as $t \downarrow t_{0}$. Similarly, to handle the first limit in \hyperref[probamtre]{(\ref*{probamtre})}, we use for $t < t_{0}$ the estimate
\begingroup
\addtolength{\jot}{0.7em}
\begin{align}
\Bigg\| \frac{\mathds{E}\big[A(t_{0})-A(t) \ \vert \ \mathcal{F}(t)\big]}{t_{0}-t} - B(t_{0})\Bigg\|_{L^{1}(\mathds{P})}
&\leqslant \bigg\| \frac{A(t_{0})-A(t)}{t_{0}-t} - B(t_{0}) \bigg\|_{L^{1}(\mathds{P})}  \label{tewctzrhsa} \\
 & \qquad + \big\| \mathds{E}\big[  B(t_{0}) \, \vert \, \mathcal{F}(t) \big]  - B(t_{0})  \big\|_{L^{1}(\mathds{P})}. \label{tewctzrhssb}
\end{align}
\endgroup
As $t \uparrow t_{0}$, the expression on the right-hand side of \hyperref[tewctzrhsa]{(\ref*{tewctzrhsa})} converges to zero as before. The same is true also for the term in \hyperref[tewctzrhssb]{(\ref*{tewctzrhssb})}, on account of \cite[Theorem 9.4.8]{Chu01} and the left-continuity of the filtration $(\mathcal{F}(t))_{t \geqslant 0}$. This completes the proof of \hyperref[probamtr]{Proposition \ref*{probamtr}}.
\end{proof}
\end{proposition}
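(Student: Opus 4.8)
The plan is to prove Proposition~\ref{probamtr} in three moves: first reduce to the case $C\equiv 0$; then upgrade the almost-sure convergence coming from the fundamental theorem of calculus to $L^{1}(\mathds{P})$-convergence by means of Scheff\'{e}'s lemma; and finally transfer this $L^{1}$-statement to conditional expectations, where the only genuinely delicate point arises in the backward-in-time limit.

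\textbf{Step 1 (Reduction to $C\equiv 0$).} Since $(C(t))_{0\leqslant t\leqslant T}$ is continuous and uniformly bounded, the normalized integrals $\frac{1}{t-t_{0}}\int_{t_{0}}^{t}C(u)\,\mathrm{d}u$ converge almost surely, hence by dominated convergence also in $L^{1}(\mathds{P})$, to $C(t_{0})$ for \emph{every} $t_{0}\in[0,T]$; the same is true for the corresponding conditional expectations. Consequently the existence of the limit in \eqref{titlocotn} depends only on the process $(B(t))_{0\leqslant t\leqslant T}$, and it is harmless to assume $C(t)\equiv 0$ throughout.

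\textbf{Step 2 (From almost-sure to $L^{1}$ convergence).} Fix a Lebesgue point $t_{0}\in[0,T]$ at which \eqref{titlocotn} holds. Because $B$ is continuous, the fundamental theorem of calculus gives $\frac{A(t)-A(t_{0})}{t-t_{0}}\to B(t_{0})$ almost surely as $t\to t_{0}$, as recorded in \eqref{tlsfftftoc}. Moreover $B\geqslant 0$ forces $A(t)-A(t_{0})\geqslant 0$ for $t>t_{0}$, so the $L^{1}(\mathds{P})$-norm of $\frac{A(t)-A(t_{0})}{t-t_{0}}$ equals its expectation, which converges to $\mathds{E}[B(t_{0})]=\mathds{E}\big[\,|B(t_{0})|\,\big]$ by \eqref{titlocotn}; for $t<t_{0}$ one argues symmetrically. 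Since all the random variables involved are integrable and the $L^{1}$-norms converge, Scheff\'{e}'s lemma (Lemma~\ref{whaelsl}) yields $\big\|\tfrac{A(t)-A(t_{0})}{t-t_{0}}-B(t_{0})\big\|_{L^{1}(\mathds{P})}\to 0$, which is precisely \eqref{titlocotncitn}.

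\textbf{Step 3 (Passing to conditional expectations).} For the right-sided limit in \eqref{probamtre}, the random variable $B(t_{0})$ is $\mathcal{F}(t_{0})$-measurable, so $\frac{\mathds{E}[A(t)-A(t_{0})\,\vert\,\mathcal{F}(t_{0})]}{t-t_{0}}-B(t_{0})=\mathds{E}\big[\tfrac{A(t)-A(t_{0})}{t-t_{0}}-B(t_{0})\,\big\vert\,\mathcal{F}(t_{0})\big]$, and the $L^{1}$-contraction property of conditional expectation bounds its norm by the quantity appearing in \eqref{titlocotncitn}, which tends to $0$ as $t\downarrow t_{0}$. For the left-sided limit, $B(t_{0})$ fails to be $\mathcal{F}(t)$-measurable when $t<t_{0}$, so I would split
\[
\frac{\mathds{E}[A(t_{0})-A(t)\,\vert\,\mathcal{F}(t)]}{t_{0}-t}-B(t_{0})
=\mathds{E}\Big[\tfrac{A(t_{0})-A(t)}{t_{0}-t}-B(t_{0})\,\Big\vert\,\mathcal{F}(t)\Big]
+\big(\mathds{E}[B(t_{0})\,\vert\,\mathcal{F}(t)]-B(t_{0})\big).
\]
The first term on the right is controlled exactly as before via \eqref{titlocotncitn}; the second term tends to $0$ in $L^{1}(\mathds{P})$ by the continuous-parameter martingale convergence theorem together with the \emph{left-continuity} of the filtration $(\mathcal{F}(t))_{t\geqslant 0}$, since $\mathds{E}[B(t_{0})\,\vert\,\mathcal{F}(t)]\to\mathds{E}[B(t_{0})\,\vert\,\mathcal{F}(t_{0}-)]=\mathds{E}[B(t_{0})\,\vert\,\mathcal{F}(t_{0})]=B(t_{0})$ (this is where a reference such as \cite[Theorem 9.4.8]{Chu01} is invoked). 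The main obstacle is exactly this last point: the backward-in-time conditioning cannot absorb $B(t_{0})$ directly, and the extra martingale term it produces is what makes the left-continuity hypothesis on the filtration indispensable; the remainder of the argument is a routine combination of Scheff\'{e}'s lemma and Jensen's inequality for conditional expectations.
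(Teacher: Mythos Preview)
Your proof is correct and follows essentially the same approach as the paper's own proof: the reduction to $C\equiv 0$, the upgrade from almost-sure to $L^{1}$ convergence via Scheff\'{e}'s lemma, and the two-term splitting for the backward limit (with the martingale convergence step relying on left-continuity of the filtration) all match the paper's argument line by line.
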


\section{The proof of the Fontbona-Jourdain theorem} \label{atfjo}


\begin{proof}[Proof of \texorpdfstring{\hyperref[ovtfjofmidpwcd]{Theorem \ref*{ovtfjofmidpwcd}}}{} \textnormal{\cite{FJ16}}] For $0 \leqslant s \leqslant T$, we define the random variable $N(T-s)$ as the conditional expectation of the random variable 
\begin{equation}
\ell\big(0,X(0)\big) = \frac{p\big(0,X(0)\big)}{q\big(X(0)\big)} \in L^{1}(\mathds{Q})
\end{equation}
with respect to the backwards filtration $(\mathcal{G}(T-s))_{0 \leqslant s \leqslant T}$, i.e.,
\begin{equation}
N(T-s) \vcentcolon = \mathds{E}_{\mathds{Q}}\Big[ \ell\big(0,X(0)\big) \ \big\vert \ \mathcal{G}(T-s) \Big], \qquad 0 \leqslant s \leqslant T. 
\end{equation}
Obviously the process $(N(T-s))_{0 \leqslant s \leqslant T}$ is a martingale of the backwards filtration $(\mathcal{G}(T-s))_{0 \leqslant s \leqslant T}$ under the probability measure $\mathds{Q}$. Now we make the following elementary, but crucial, observation: as the stochastic process $(X(t))_{0 \leqslant t \leqslant T}$, which solves the stochastic differential equation \hyperref[sdeids]{(\ref*{sdeids})}, is a \textit{Markov process}, the time-reversed process $(X(T-s))_{0 \leqslant s \leqslant T}$ is a Markov process, too, under the probability measure $\mathds{P}$ as well as under $\mathds{Q}$. Hence
\begin{equation}
N(T-s) = \mathds{E}_{\mathds{Q}}\Big[ \ell\big(0,X(0)\big) \ \big\vert \ X(T-s) \Big], \qquad 0 \leqslant s \leqslant T. 
\end{equation}
We have to show that this last conditional expectation equals $\ell(T-s,X(T-s))$. To this end, we fix $s \in [0,T]$ as well as a Borel set $A \subseteq \mathds{R}^{n}$, and denote by $\uppi(T-s;x,A)$ the transition probability of the event $\{X(T-s) \in A\}$, conditionally on $X(0) = x$. Note that this transition probability does not depend on whether we consider the process $(X(t))_{0 \leqslant t \leqslant T}$ under $\mathds{P}$ or under $\mathds{Q}$. Then we find 
\begin{equation} \label{otfjtcocef}
\mathds{E}_{\mathds{Q}}\Bigg[ \frac{p\big(0,X(0)\big)}{q\big(X(0)\big)} \ \mathds{1}_{A}\big(X(T-s)\big) \Bigg] 
= \int_{\mathds{R}^{n}} \frac{p(0,x)}{q(x)} \, \uppi(T-s;x,A) \, q(x) \, \textnormal{d}x 
= P(T-s)[A].
\end{equation}
Note also that 
\begin{equation} \label{otfjtcoces}
\mathds{E}_{\mathds{Q}}\Bigg[ \frac{p\big(T-s,X(T-s)\big)}{q\big(X(T-s)\big)} \ \mathds{1}_{A}\big(X(T-s)\big) \Bigg]
= P(T-s)[A].
\end{equation}
Because the Borel set $A \subseteq \mathds{R}^{n}$ is arbitrary, we deduce from \hyperref[otfjtcocef]{(\ref*{otfjtcocef})} and \hyperref[otfjtcoces]{(\ref*{otfjtcoces})} that
\begin{equation}
\mathds{E}_{\mathds{Q}}\Bigg[ \frac{p\big(0,X(0)\big)}{q\big(X(0)\big)} \ \bigg \vert \ X(T-s) \Bigg] = \frac{p\big(T-s,X(T-s)\big)}{q\big(X(T-s)\big)}
= \ell\big(T-s,X(T-s)\big).
\end{equation}
This completes the proof of \textnormal{\hyperref[ovtfjofmidpwcd]{Theorem \ref*{ovtfjofmidpwcd}}}.
\end{proof}



\section{The proof of \texorpdfstring{\hyperref[hlotl]{Lemma \ref*{hlotl}}}{Lemma 3.19}} \label{polhlotl}


\begin{proof}[Proof of \texorpdfstring{\hyperref[hlotl]{Lemma \ref*{hlotl}}}{}] In order to show \hyperref[hlotlte]{(\ref*{hlotlte})}, we recall the notation of \hyperref[hlotlse]{(\ref*{hlotlse})} and consider the time-dependent velocity field 
\begin{equation}
[0,1] \times \mathds{R}^{n} \ni (t,\xi) \longmapsto v_{t}(\xi) \vcentcolon = \gamma\Big( \big(T_{t}^{\gamma}\big)^{-1}(\xi)\Big) \in \mathds{R}^{n},
\end{equation}
which is well-defined $P_{t}$-almost everywhere, for every $t \in [0,1]$. Then $(v_{t})_{0 \leqslant t \leqslant 1}$ is the velocity field associated with $(T_{t}^{\gamma})_{0 \leqslant t \leqslant 1}$, i.e.,
\begin{equation}
T_{t}^{\gamma}(x) = x + \int_{0}^{t} v_{\theta}\big(T_{\theta}^{\gamma}(x)\big) \, \textnormal{d}\theta,
\end{equation}
on account of \hyperref[hlotlse]{(\ref*{hlotlse})}. Let $p_{t}(\, \cdot \,)$ be the probability density function of the probability measure $P_{t}$ in \hyperref[hlotlse]{(\ref*{hlotlse})}. Then, according to \cite[Theorem 5.34]{Vil03}, the function $p_{t}(\, \cdot \,)$ satisfies the continuity equation
\begin{equation} 
\partial_{t} p_{t}(x) + \operatorname{div} \big( v_{t}(x) \, p_{t}(x) \big) = 0, \qquad (t,x) \in (0,1) \times \mathds{R}^{n},
\end{equation}
which can be written equivalently as 
\begin{equation} \label{ceiikyjl}
- \partial_{t} p_{t}(x) = \operatorname{div} \big( v_{t}(x) \big) \, p_{t}(x) + \big\langle v_{t}(x) \, , \nabla p_{t}(x) \big\rangle_{\mathds{R}^{n}}, \qquad (t,x) \in (0,1) \times \mathds{R}^{n}.
\end{equation}
Recall that $X_{0}$ is a random variable with probability distribution $P_{0}$ on the probability space $(S,\mathcal{S},\nu)$. Then the integral equation
\begin{equation} \label{ieahwv}
X_{t} = X_{0} + \int_{0}^{t} v_{\theta}(X_{\theta}) \, \textnormal{d}\theta, \qquad 0 \leqslant t \leqslant 1
\end{equation}
defines random variables $X_{t}$ with probability distributions $P_{t} = (T_{t}^{\gamma})_{\#}(P_{0})$ for $t \in [0,1]$, as in \hyperref[hlotlse]{(\ref*{hlotlse})}. We have now
\begin{equation} 
\textnormal{d} p_{t}(X_{t}) 
= \partial_{t} p_{t}(X_{t}) \, \textnormal{d}t + \big\langle \nabla p_{t}(X_{t}) \, , \, \textnormal{d}X_{t} \big\rangle_{\mathds{R}^{n}}
= - p_{t}(X_{t}) \operatorname{div} \big( v_{t}(X_{t}) \big) \, \textnormal{d}t
\end{equation}
on account of \hyperref[ceiikyjl]{(\ref*{ceiikyjl})}, \hyperref[ieahwv]{(\ref*{ieahwv})}, thus also
\begin{equation} \label{dlogpd}
\textnormal{d} \log p_{t}(X_{t}) = - \operatorname{div} \big( v_{t}(X_{t}) \big) \, \textnormal{d}t, \qquad 0 \leqslant t \leqslant 1.
\end{equation}
Recall now the function $q(x) = \mathrm{e}^{ - 2 \Psi(x)}$, for which
\begin{equation} \label{dlogqd}
\textnormal{d} \log q( X_{t} ) 
= -  \big\langle 2 \, \nabla \Psi(X_{t}) \, , \, \textnormal{d}X_{t} \big\rangle_{\mathds{R}^{n}}  
= -  \big\langle 2 \, \nabla \Psi(X_{t}) \, , \, v_{t}(X_{t}) \big\rangle_{\mathds{R}^{n}} \, \textnormal{d}t.
\end{equation}
For the likelihood ratio function $\ell_{t}(\, \cdot \,)$ of \hyperref[nlrfitramfcc]{(\ref*{nlrfitramfcc})} we get from \hyperref[dlogpd]{(\ref*{dlogpd})} and \hyperref[dlogqd]{(\ref*{dlogqd})} that
\begin{equation} \label{hwwtte}
\textnormal{d} \log \ell_{t}(X_{t}) =  \big\langle 2 \, \nabla \Psi(X_{t}) \, , \, v_{t}(X_{t}) \big\rangle_{\mathds{R}^{n}} \, \textnormal{d}t \, - \, \operatorname{div} \big( v_{t}(X_{t}) \big) \, \textnormal{d}t, \qquad 0 \leqslant t \leqslant 1.
\end{equation}
Taking expectations in the integral version of \hyperref[hwwtte]{(\ref*{hwwtte})}, we obtain that the difference 
\begin{equation}
H(P_{t} \, \vert \, \mathrm{Q})  - H(P_{0} \, \vert \, \mathrm{Q}) 
= \mathds{E}_{\nu}\big[ \log \ell_{t}(X_{t})\big] - \mathds{E}_{\nu}\big[ \log \ell_{0}(X_{0}) \big]
\end{equation}
is equal to
\begin{equation}
\mathds{E}_{\nu}\bigg[\int_{0}^{t} \Big(  \big\langle 2 \, \nabla \Psi(X_{\theta}) \, , \, v_{\theta}(X_{\theta}) \big\rangle_{\mathds{R}^{n}} - \operatorname{div} \big( v_{\theta}(X_{\theta})\big) \Big) \, \textnormal{d}\theta \bigg]
\end{equation}
for $t \in [0,1]$. Consequently,
\begin{equation} \label{liadn}
\lim_{t \downarrow 0} \frac{H(P_{t} \, \vert \, \mathrm{Q}) - H(P_{0} \, \vert \, \mathrm{Q})}{t} 
= \mathds{E}_{\nu}\Big[    \big\langle 2 \, \nabla \Psi(X_{0}) \, , \, v_{0}(X_{0}) \big\rangle_{\mathds{R}^{n}} - \operatorname{div} \big(v_{0}(X_{0})\big) \Big].
\end{equation}
Integrating by parts, we see that 
\begingroup
\addtolength{\jot}{0.7em}
\begin{align}
\mathds{E}_{\nu}\big[\operatorname{div} \big(v_{0}(X_{0})\big)\big] 
&= \int_{\mathds{R}^{n}} \operatorname{div} \big(v_{0}(x)\big) \, p_{0}(x) \, \textnormal{d}x = - \int_{\mathds{R}^{n}} \big\langle v_{0}(x) \, , \nabla p_{0}(x) \big\rangle_{\mathds{R}^{n}} \, \textnormal{d}x  \label{ibppo} \\
&= - \big\langle \nabla \log p_{0}(X_{0}) \, , \, v_{0}(X_{0}) \big\rangle_{L^{2}(\nu)}. \label{ibppopt}
\end{align}
\endgroup
Recalling \hyperref[liadn]{(\ref*{liadn})}, and combining it with the relation $\nabla \log \ell_{t}(x) = \nabla \log p_{t}(x) + 2 \, \nabla \Psi(x)$, as well as with \hyperref[ibppo]{(\ref*{ibppo})} and \hyperref[ibppopt]{(\ref*{ibppopt})}, we get
\begin{equation} 
\lim_{t \downarrow 0} \frac{H(P_{t} \, \vert \, \mathrm{Q}) - H(P_{0} \, \vert \, \mathrm{Q})}{t} 
= \big\langle \nabla \log \ell_{0}(X_{0}) \, , \, v_{0}(X_{0}) \big\rangle_{L^{2}(\nu)}.
\end{equation}
Since $v_{0} = \gamma$, this leads to \hyperref[hlotlte]{(\ref*{hlotlte})}. 
\end{proof}


\section{Time reversal of diffusions} \label{atrod}


We review in the present section the theory of time reversal for diffusion processes developed by F\"ollmer \cite{Foe85,Foe86}, Haussmann and Pardoux \cite{HP86}, and Pardoux \cite{Par86}. This section can be read independently of the rest of the paper; it does not present novel results.


\subsection{Introduction}


It is very well known that the Markov property is invariant under time reversal. In other words, a Markov process remains a Markov process under time reversal (e.g., \cite[Exercise E60.41, p.\ 162]{RW00a}). On the other hand, it is also well known that the strong Markov property is not necessarily preserved under time reversal (e.g., \cite[p.\ 330]{RW00a}), and neither is the semimartingale property (e.g., \cite{Wal82}). The reason for such failure is the same in both cases: after reversing time, ``we may know too much''. Thus, the following questions arise rather naturally:

\smallskip
 
\textit{Given a diffusion process \textnormal{(}in particular, a strong Markov semimartingale with continuous paths\textnormal{)} $X = (X(t))_{0 \leqslant t \leqslant T}$ with certain specific drift and dispersion characteristics, under what conditions might the time-reversed process 
\begin{equation} \label{1}
\widehat{X}(s) \vcentcolon = X(T-s), \qquad 0 \leqslant s \leqslant T,
\end{equation}
also be a diffusion? if it happens to be, what are the characteristics of the time-reversed diffusion?}

\smallskip

Such questions go back at least to Boltzmann \cite{Bol96,Bol98a,Bol98b}, Schr{\"o}dinger \cite{Sch31, Sch32} and Kolmogorov \cite{Kol37}; they were dealt with systematically by Nelson \cite{Nel01} (see also Carlen \cite{Car84}) in the context of Nelson's dynamical theories for Brownian motion and diffusion. There is now a rather complete theory that answers these questions and provides, as a kind of ``bonus'', some rather unexpected results as well. It was developed in the context of theories of filtering, interpolation and extrapolation, where such issues arise naturally --- most notably Haussmann and Pardoux \cite{HP86}, and Pardoux \cite{Par86}. Very interesting related results in a non-Markovian context, but with dispersion structure given by the identity matrix, have been obtained by F{\"o}llmer \cite{Foe85,Foe86}. Let us refer also to the papers \cite{Nag64,NM79} dealing with time reversal of Markov processes, and to the book \cite{Nag93} on diffusion theory. In what follows, this theory is presented in the spirit of the expository paper by Meyer \cite{Mey94}. 


\subsection{The setting} \label{ssts}


We place ourselves on a filtered probability space $(\Omega,\mathcal{F},\mathds{P})$, $\mathds{F} = (\mathcal{F}(t))_{0 \leqslant t \leqslant T}$ rich enough to support an $\mathds{R}^{d}$-valued Brownian motion $W = ( W_{1}, \ldots, W_{d})'$ adapted to $\mathds{F}$, as well as an independent $\mathcal{F}(0)$-measurable random vector $\xi = (\xi_{1}, \ldots, \xi_{n})' \colon \Omega \rightarrow \mathds{R}^{n}$. In fact, we shall assume that $\mathds{F}$ \textit{is} the filtration generated by these two objects, in the sense that we shall take 
\[
\mathcal{F}(t) = \sigma \big( \xi, W(\theta) \colon \, 0 \leqslant \theta \leqslant t \big), \qquad 0 \leqslant t \leqslant T,
\]
modulo $\mathds{P}$-augmentation. Next, we assume that the system of stochastic equations
\begin{equation} \label{2}
X_{i}(t) = \xi_{i} + \int_{0}^{t} b_{i}\big(\theta,X(\theta)\big) \, \textnormal{d}\theta + \sum_{\nu = 1}^{d} \int_{0}^{t} \mathrm{s}_{i \nu} \big(\theta, X(\theta)\big) \, \textnormal{d}W_{\nu}(\theta), \qquad 0 \leqslant t \leqslant T,
\end{equation}
for $i = 1, \ldots, n$ admits a pathwise unique, strong solution. It is then well known that the resulting continuous process $X = ( X_{1}, \ldots, X_{n} )'$ is $\mathds{F}$-adapted (the strong solvability of the equation \hyperref[2]{(\ref*{2})}), which implies that we have also 
\begin{equation} \label{2a}
\mathcal{F}(t) = \sigma \big( X(\theta), W(\theta) \colon \, 0 \leqslant \theta \leqslant t \big) = \sigma \big( X(0), W(t) - W(\theta) \colon \, 0 \leqslant \theta \leqslant t \big) 
\end{equation}
modulo $\mathds{P}$-augmentation, for $0 \leqslant t \leqslant T$; as well as that $X$ has the strong Markov property, and is thus a diffusion process with drifts $b_{i}( \, \cdot \, , \, \cdot \, )$ and dispersions $\mathrm{s}_{i \nu}( \, \cdot \, , \, \cdot \, )$, $i= 1, \ldots, n$, $\nu = 1, \ldots, d$. We shall denote the $(i,j)^{\textnormal{th}}$ entry of the covariance matrix $\mathrm{a}(t,x) \vcentcolon = \mathrm{s}(t,x) \, \mathrm{s}'(t,x)$ by
\[
\mathrm{a}_{ij}(t,x) \vcentcolon = \sum_{\nu=1}^{d} \mathrm{s}_{i\nu}(t,x) \, \mathrm{s}_{j\nu}(t,x), \qquad 1 \leqslant i,j \leqslant n.
\]

These characteristics are given mappings from $[0,T] \times \mathds{R}^{n}$ into $\mathds{R}$ with sufficient smoothness; in particular, such that the probability density function $p(t, \, \cdot \,) \colon \mathds{R}^{n} \rightarrow (0,\infty)$ in
\[
\mathds{P}\big[ X(t) \in A \big] = \int_{A} p(t,x) \, \textnormal{d}x, \qquad A \in \mathscr{B}(\mathds{R}^{n}),
\]
is smooth. Sufficient conditions on the drift $b_{i}( \, \cdot \, , \, \cdot \, )$ and dispersion $\mathrm{s}_{i \nu}( \, \cdot \, , \, \cdot \, )$ characteristics that lead to such smoothness, are provided by the H{\"o}rmander hypoellipticity conditions; see for instance \cite{Bel95}, \cite{Nua06} for this result, as well as \cite{Rog85} for a very simple argument in the one-dimensional case ($n=d=1$), and to the case of Langevin type equation \hyperref[sdeids]{(\ref*{sdeids})} for arbitrary $n \in \mathds{N}$. We refer to \cite{Fri75}, \cite{RW00b} or \cite{KS98} for the basics of the theory of stochastic equations of the form \hyperref[2]{(\ref*{2})}.

The probability density function $p(t, \, \cdot \,) \colon \mathds{R}^{n} \rightarrow (0,\infty)$ solves the forward Kolmogorov \cite{Kol31} equation \cite[p.\ 149]{Fri75}
\begin{equation} \label{FPE}
\partial_{t} p(t,x) = \frac{1}{2} \sum_{i,j=1}^{n} D_{ij}^{2} \big( \mathrm{a}_{ij}(t,x) \, p(t,x) \big) - \sum_{i=1}^{n} D_{i} \big( b_{i}(t,x) \, p(t,x) \big), \qquad (t,x) \in (0, T] \times \mathds{R}^{n}.
\end{equation}
If the drift and dispersion characteristics do not depend on time, and an invariant probability measure exists for the diffusion process of \hyperref[2]{(\ref*{2})}, the density function $p( \, \cdot \,)$ of this measure solves the stationary version of this forward Kolmogorov equation, to wit
\begin{equation} \label{FPK}
\frac{1}{2} \sum_{i,j=1}^{n} D_{ij}^{2} \big( \mathrm{a}_{ij}(x) \, p(x) \big) = \sum_{i=1}^{n} D_{i} \big( b_{i}(x) \, p(x) \big), \qquad x \in \mathds{R}^{n}.
\end{equation}


\subsection{Time reversal and the backwards filtration}


Consider now the family of $\sigma$-algebras $(\widehat{\mathcal{F}}(t))_{0 \leqslant t \leqslant T}$ given by
\begin{equation} \label{3}
\widehat{\mathcal{F}}(t) \vcentcolon = \sigma \big( X(\theta), W(\theta) - W(t) \colon \, t \leqslant \theta \leqslant T\big), \qquad 0 \leqslant t \leqslant T.
\end{equation}
It is not hard to see that the $\sigma$-algebra in \hyperref[3]{(\ref*{3})} is expressed equivalently as
\begingroup
\addtolength{\jot}{0.7em}
\begin{align}
\widehat{\mathcal{F}}(t) 
&= \sigma \big( X(t), W(\theta) - W(t) \colon \, t \leqslant \theta \leqslant T \big) = \sigma \big( X(t), W(\theta) - W(T) \colon \, t \leqslant \theta \leqslant T\big) \nonumber \\
&= \sigma \big( X(T), W(\theta) - W(t) \colon \, t \leqslant \theta \leqslant T \big) = \sigma \big(X(T)\big) \vee \mathcal{H}(t). \label{3.a}
\end{align}
\endgroup
Here, the $\sigma$-algebra generated by the Brownian increments after time $t$, namely,
\begin{equation} \label{4}
\mathcal{H}(t) \vcentcolon = \sigma \big( W(\theta) - W(t) \colon \, t \leqslant \theta \leqslant T \big), \qquad 0 \leqslant t \leqslant T,
\end{equation}
is independent of the random vector $X(t)$. The time-reversed processes $\widehat{X}$ as in \hyperref[1]{(\ref*{1})}, as well as 
\begin{equation} \label{5}
\widetilde{W}(s) \vcentcolon =  W(T-s) - W(T), \qquad 0 \leqslant s \leqslant T,
\end{equation}
are both adapted to the \textit{backwards filtration} $\widehat{\mathds{F}} \vcentcolon = \big(\widehat{\mathcal{F}}(T-s)\big)_{0 \leqslant s \leqslant T}$,  where 
\begingroup
\addtolength{\jot}{0.7em}
\begin{equation} \label{bffhgna}
\begin{aligned} 
\widehat{\mathcal{F}}(T-s) 
&= \sigma \big( X(T-u), W(T-u) - W(T-s) \colon \, 0 \leqslant u \leqslant s \big) \\
&= \sigma \big( \widehat{X}(u), \widetilde{W}(u) - \widetilde{W}(s) \colon \, 0 \leqslant u \leqslant s \big) 
\end{aligned}
\end{equation}
\endgroup
from \hyperref[3]{(\ref*{3})}. Note that, by complete analogy with \hyperref[2a]{(\ref*{2a})}, we have also
\begin{equation} \label{5.aaa} 
\widehat{\mathcal{F}}(T-s) = \sigma \big( X(T), W(T-u) - W(T-s) \colon \, 0 \leqslant u \leqslant s \big) = \sigma \big( \widehat{X}(0) \big) \vee  \mathcal{H}(T-s)   
\end{equation}
on account of \hyperref[3.a]{(\ref*{3.a})}, where  
\begin{equation} \label{5.aa} 
\mathcal{H}(T-s) = \sigma \big( W(T-u) - W(T-s) \colon \, 0 \leqslant u \leqslant s \big) = \sigma \big( \widetilde{W}(u) - \widetilde{W}(s) \colon \, 0 \leqslant u \leqslant s \big).
\end{equation}
In words: the $\sigma$-algebra $\widehat{\mathcal{F}}(T-s)$ is generated by the terminal value $X(T)$ of the forward process (i.e., by the original value $\widehat{X}(0)$ of the backward process) and by the increments of the time-reversed process $\widetilde{W}$ on $[0,s]$; see the expressions right above. Furthermore, the $\sigma$-algebra $\widehat{\mathcal{F}}(T-s)$ measures all the random variables $\widehat{X}(u)$, $u \in [0,s]$.  

\begin{remark} In fact, the time-reversed process $\widetilde{W}$ \textit{is a Brownian motion of the backwards filtration} $\mathds{H} \vcentcolon = (\mathcal{H}(T-s))_{0 \leqslant s \leqslant T} \subseteq \widehat{\mathds{F}}$ as in \hyperref[5.aa]{(\ref*{5.aa})}, generated by the increments of $W$ after time $T-s$, $0 \leqslant s \leqslant T$. This is because it is a martingale of this filtration, has continuous paths, and its quadratic variation is that of Brownian motion (L{\'e}vy's theorem \cite[Theorem 5.1]{KS98}). In the next subsection we shall see that the process $\widetilde{W}$ \textit{is only a semimartingale of the larger backwards filtration} $\widehat{\mathds{F}} = \big(\widehat{\mathcal{F}}(T-s)\big)_{0 \leqslant s \leqslant T}$, and identify its semimartingale decomposition. 
\end{remark}


\subsection{Some remarkable Brownian motions}


Following the exposition and ideas in \cite{Mey94}, we start with a couple of observations. First, for every $t \in [0,T]$ and every integrable, $\widehat{\mathcal{F}}(t)$-measurable random variable $\mathcal{K}$, we have
\begin{equation} \label{ps}
\mathds{E}\big[\mathcal{K} \, \vert \, \mathcal{F}(t)\big] = \mathds{E}\big[ \mathcal{K} \, \vert \, X(t)\big], \quad \textnormal{almost surely.}
\end{equation}
Secondly, we fix a function $G \in \mathcal{C}_{0}^{\infty}(\mathds{R}^{n})$ and a time-point $t \in (0,T]$, and define
\[
g(\theta,x) \vcentcolon = \mathds{E}\big[ G\big(X(t)\big) \, \vert \, X(\theta) = x \big], \qquad (\theta,x) \in [0,t] \times \mathds{R}^{n}.
\]
Invoking the Markov property of $X$, we deduce that the process 
\[
g\big(\theta,X(\theta)\big) = \mathds{E}\big[ G\big(X(t)\big) \,  \vert \, X(\theta)\big] =  \mathds{E}\big[ G\big(X(t)\big) \, \vert \, \mathcal{F}(\theta)\big], \qquad 0 \leqslant \theta \leqslant t
\]
is an $\mathds{F}$-martingale, and obtain
\[
G\big(X(t)\big) - g\big(\theta,X(\theta)\big) = g\big(t,X(t)\big) - g\big(\theta,X(\theta)\big) = \sum_{i=1}^{n} \sum_{\nu=1}^{d} \int_{\theta}^{t} D_{i} g\big(v,X(v)\big) \, \mathrm{s}_{i\nu}\big(v,X(v)\big) \, \textnormal{d} W_{\nu}(v).
\]
For every index $\nu = 1, \ldots, d$ this gives, after integrating by parts, 
\begingroup
\addtolength{\jot}{0.7em}
\begin{align*}
&\mathds{E}\big[ \big( W_{\nu}(t) - W_{\nu}(\theta)\big) \cdot G\big(X(t)\big) \big] = \mathds{E}\Big[ \big( W_{\nu}(t) - W_{\nu}(\theta) \big) \cdot \Big( g\big(t,X(t)\big) - g\big(\theta,X(\theta)\big) \Big) \Big] \\
& \, = \mathds{E}\bigg[\sum_{i=1}^{n}  \int_{\theta}^{t}  D_{i} g\big(v,X(v)\big) \, \mathrm{s}_{i\nu}\big(v,X(v)\big) \, \textnormal{d}v\bigg] 
= \sum_{i=1}^{n} \int_{\theta}^{t} \int_{\mathds{R}^{n}} \big( D_{i} g \cdot \mathrm{s}_{i\nu} \big)(v,x) \, p(v,x) \, \textnormal{d}x \, \textnormal{d}v \\
& \, = - \sum_{i=1}^{n}  \int_{\theta}^{t} \int_{\mathds{R}^{n}} g(v,x) \, D_{i} \big( p(v,x) \, \mathrm{s}_{i \nu}(v,x) \big) \, \textnormal{d}x\, \textnormal{d}v 
= -  \int_{\theta}^{t} \int_{\mathds{R}^{n}} g(v,x) \, \operatorname{div}\big(p(v,x) \, \overline{\mathrm{s}}_{\nu}(v,x)\big) \, \textnormal{d}x \, \textnormal{d}v \\
& \, = - \int_{\theta}^{t} \mathds{E} \bigg[ g\big(v,X(v)\big) \cdot \frac{\operatorname{div}(p \, \overline{\mathrm{s}}_{\nu})}{p}\big(v,X(v)\big) \bigg] \, \textnormal{d}v 
= - \mathds{E} \bigg[ G\big(X(t)\big) \cdot  \int_{\theta}^{t} \frac{\operatorname{div}(p \, \overline{\mathrm{s}}_{\nu})}{p}\big(v,X(v)\big) \, \textnormal{d}v \bigg].
\end{align*}
\endgroup
Here $\overline{\mathrm{s}}_{\nu}(v, \, \cdot \, )$ is the $\nu^{\textnormal{th}}$ column vector of the dispersion matrix. Comparing the first and last expressions in the above string of equalities, we see that with $0 \leqslant \theta \leqslant t$ we have 
\begin{equation} \label{5.a}
\mathds{E} \bigg[ G\big(X(t)\big) \cdot  \bigg( W_{\nu}(t) - W_{\nu}(\theta) + \int_{\theta}^{t} \frac{\operatorname{div}(p \, \overline{\mathrm{s}}_{\nu})}{p}\big(v,X(v)\big) \, \textnormal{d}v \bigg) \bigg] = 0
\end{equation}
for every $G \in \mathcal{C}_{0}^{\infty}(\mathds{R}^{n})$, and thus by extension for every bounded, measurable $G \colon \mathds{R}^{n} \rightarrow \mathds{R}$.

\medskip

\begin{theorem} \label{Thm1} The vector process $B = ( B_{1}, \ldots, B_{d})'$ defined as
\begingroup
\addtolength{\jot}{0.7em}
\begin{align} 
B_{\nu}(s) \vcentcolon =& \, \widetilde{W}_{\nu}(s) - \int_{0}^{s} \frac{\operatorname{div}(p \, \overline{\mathrm{s}}_{\nu})}{p}\big(T-u,\widehat{X}(u)\big) \, \textnormal{d} u \label{6} \\
=& \, W_{\nu}(T-s) - W_{\nu}(T) - \int_{T-s}^{T} \frac{\operatorname{div}( p \, \overline{\mathrm{s}}_{\nu} )}{p}\big(v,X(v)\big) \, \textnormal{d} v, \qquad 0 \leqslant s \leqslant T, \label{6aa}
\end{align}
\endgroup
for $\nu = 1, \ldots, d$, is a Brownian motion of the backwards filtration $\widehat{\mathds{F}} = \big(\widehat{\mathcal{F}}(T-s)\big)_{0 \leqslant s \leqslant T}$.
\end{theorem}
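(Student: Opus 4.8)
\textbf{Proof plan for Theorem \ref{Thm1}.}
The plan is to show that $B = (B_1,\dots,B_d)'$ is a continuous $\widehat{\mathds{F}}$-martingale whose quadratic covariation is $\langle B_\nu, B_\mu\rangle(s) = s\,\delta_{\nu\mu}$, and then invoke L\'evy's characterization (\cite[Theorem 5.1]{KS98}) exactly as was done for $\widetilde W$ relative to $\mathds H$ in the remark preceding this subsection. Continuity is immediate from \hyperref[6]{(\ref*{6})}--\hyperref[6aa]{(\ref*{6aa})}, since $\widetilde W$ has continuous paths and the drift correction is an absolutely continuous (indeed continuous) additive functional of the continuous process $\widehat X$; here one uses the smoothness of $p(t,\cdot)$ and of the $\mathrm{s}_{i\nu}$ assumed in Subsection \ref{ssts} to guarantee that $\operatorname{div}(p\,\overline{\mathrm{s}}_\nu)/p$ is locally bounded along paths, so the Riemann integral in \hyperref[6]{(\ref*{6})} is well-defined. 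Adaptedness of $B$ to $\widehat{\mathds{F}}$ follows because $\widetilde W(s)$ and the integral $\int_0^s (\operatorname{div}(p\,\overline{\mathrm{s}}_\nu)/p)(T-u,\widehat X(u))\,\textnormal{d}u$ are both $\widehat{\mathcal F}(T-s)$-measurable, by the descriptions of $\widehat{\mathcal F}(T-s)$ in \hyperref[bffhgna]{(\ref*{bffhgna})} and \hyperref[5.aaa]{(\ref*{5.aaa})}.

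The heart of the argument is the martingale property, and here the key input is the identity \hyperref[5.a]{(\ref*{5.a})} already established above. I would fix $0 \leqslant s' \leqslant s \leqslant T$, set $\theta = T-s$, $t = T-s'$, and rewrite \hyperref[5.a]{(\ref*{5.a})} in reversed time: for every bounded measurable $G\colon\mathds{R}^n\to\mathds{R}$,
\begin{equation} \label{pfplanA}
\mathds{E}\Big[ G\big(\widehat X(s')\big)\cdot\big( B_\nu(s) - B_\nu(s')\big)\Big] = 0.
\end{equation}
The remaining point is to upgrade the test function $G(\widehat X(s'))$ to an arbitrary bounded $\widehat{\mathcal F}(T-s')$-measurable random variable. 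By \hyperref[5.aaa]{(\ref*{5.aaa})} we have $\widehat{\mathcal F}(T-s') = \sigma(\widehat X(0))\vee\mathcal H(T-s')$, and $B_\nu(s)-B_\nu(s')$ is built from increments of $\widetilde W$ (equivalently, of $W$) strictly between times $T-s$ and $T-s'$ together with a functional of $(\widehat X(u))_{s'\leqslant u\leqslant s}$; one then notes the Markov-type reduction \hyperref[ps]{(\ref*{ps})}, namely that conditioning an $\widehat{\mathcal F}(T-s')$-measurable... rather, conditioning the increment (which is $\widehat{\mathcal F}(T-s)$-measurable) on $\widehat{\mathcal F}(T-s')$ reduces, via \hyperref[ps]{(\ref*{ps})} read in reversed time, to conditioning on $\widehat X(s')$. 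Concretely: for bounded $\widehat{\mathcal F}(T-s')$-measurable $\mathcal K$, a monotone-class argument reduces to products $\mathcal K = G(\widehat X(s'))\,\Phi$ with $\Phi$ a bounded function of the post-$(T-s')$ Brownian increments $\mathcal H(T-s')$; since those increments are independent of everything carrying the information up to the forward time $T-s'$, and in particular the increment $B_\nu(s)-B_\nu(s')$ involves only increments of $W$ \emph{before} time $T-s'$ together with $\widehat X$ on $[s',s]$, one factors out $\Phi$ and applies \hyperref[pfplanA]{(\ref*{pfplanA})}. This yields $\mathds{E}[(B_\nu(s)-B_\nu(s'))\,\mathds 1_A] = 0$ for all $A\in\widehat{\mathcal F}(T-s')$, i.e.\ $\mathds{E}[B_\nu(s)\,|\,\widehat{\mathcal F}(T-s')] = B_\nu(s')$.

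For the quadratic covariation, I would observe that $B_\nu(s) = \widetilde W_\nu(s) - \int_0^s(\cdots)\,\textnormal{d}u$ differs from $\widetilde W_\nu$ only by a process of finite variation, so $\langle B_\nu,B_\mu\rangle = \langle \widetilde W_\nu,\widetilde W_\mu\rangle$; and $\widetilde W$, being a time-reversal of Brownian motion, has $\langle \widetilde W_\nu,\widetilde W_\mu\rangle(s) = s\,\delta_{\nu\mu}$ by direct computation of the reversed sums of squared increments (or by the remark above identifying $\widetilde W$ as an $\mathds H$-Brownian motion and noting that quadratic variation is filtration-independent). L\'evy's theorem then gives that $B$ is a $d$-dimensional $\widehat{\mathds{F}}$-Brownian motion. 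The main obstacle I anticipate is the measure-theoretic bookkeeping in passing from \hyperref[pfplanA]{(\ref*{pfplanA})} to the full conditional expectation, i.e.\ making the independence/monotone-class reduction rigorous using precisely the $\sigma$-algebra decompositions \hyperref[3.a]{(\ref*{3.a})}, \hyperref[bffhgna]{(\ref*{bffhgna})}, \hyperref[5.aaa]{(\ref*{5.aaa})} and the property \hyperref[ps]{(\ref*{ps})}; everything else is routine once \hyperref[5.a]{(\ref*{5.a})} is in hand.
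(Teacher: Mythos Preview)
Your proposal is correct and follows essentially the same strategy as the paper: reduce to L\'evy's characterization, note that the quadratic covariation is inherited from $\widetilde W$ since the correction term has finite variation, and establish the $\widehat{\mathds F}$-martingale property from the identity \hyperref[5.a]{(\ref*{5.a})}. The only difference is in how you pass from testing against $G(\widehat X(s'))$ to testing against a general bounded $\widehat{\mathcal F}(T-s')$-measurable $\mathcal K$: you use a monotone-class/independence factoring through $\sigma(X(t))\vee\mathcal H(t)$, whereas the paper observes directly that the increment $B_\nu(T-\theta)-B_\nu(T-t)$ is $\mathcal F(t)$-measurable (forward time $t=T-s'$), towers the expectation through $\mathcal F(t)$, and then invokes \hyperref[ps]{(\ref*{ps})} to replace $\mathds E[\mathcal K\,|\,\mathcal F(t)]$ by some $G(X(t))$, landing immediately on \hyperref[5.a]{(\ref*{5.a})}. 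Your route is perfectly valid, but the paper's conditioning trick is shorter and avoids the monotone-class bookkeeping you flagged as the main obstacle.
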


\begin{remark} The Brownian motion process $B$ is thus independent of $\widehat{\mathcal{F}}(T)$, and therefore also of the $\widehat{\mathcal{F}}(T)$-measurable random variable $X(T)$. A bit more generally,
\[
\big\{ B(T-\theta) - B(T-t) \colon \, 0 \leqslant \theta \leqslant t \big\} \quad \textnormal{ is independent of } \quad \widehat{\mathcal{F}}(t) \supseteq \sigma \big( X(v) \colon \, t \leqslant v \leqslant T \big).
\]
Note also from \hyperref[6aa]{(\ref*{6aa})} that
\begin{equation*} 
B_{\nu}(T-\theta) - B_{\nu}(T-t) = W_{\nu}(\theta) - W_{\nu}(t) - \int_{\theta}^{t} \frac{\operatorname{div}(p \, \overline{\mathrm{s}}_{\nu})}{p}\big(v,X(v)\big) \, \textnormal{d}v, \qquad 0 \leqslant \theta \leqslant t.
\end{equation*}
\end{remark}

Reversing time once again, we obtain the following corollary of \hyperref[Thm1]{Theorem \ref*{Thm1}}.

\begin{corollary} The $\mathds{F}$-adapted vector process $V = ( V_{1}, \ldots, V_{d})'$ with components 
\begin{equation}
V_{\nu}(t) \vcentcolon = B_{\nu}(T-t) - B_{\nu}(T) =  W_{\nu}(t) + \int_{0}^{t} \frac{\operatorname{div}(p \, \overline{\mathrm{s}}_{\nu})}{p}\big(v, X(v)\big) \, \textnormal{d}v, \qquad 0 \leqslant t \leqslant T, \label{8}
\end{equation}
for $\nu = 1, \ldots, d$, is yet another Brownian motion \textnormal{(}with respect to its own filtration $\mathds{F}^{V} \subseteq \mathds{F}$\textnormal{)}. This process is independent of the random variable $X(T)$; and a bit more generally, for every $t \in (0,T]$, the $\sigma$-algebra 
\begin{equation} \label{8aa}
\mathcal{F}^{V}(t) \vcentcolon = \sigma\big( V(\theta) \colon \, 0 \leqslant \theta \leqslant t \big)
\end{equation}
generated by present-and-past values of $V$, is independent of $\sigma(X(v) \colon \, t \leqslant v \leqslant T)$, the $\sigma$-algebra generated by present-and-future values of $X$.
\end{corollary}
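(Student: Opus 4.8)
The plan is to obtain this statement as a direct consequence of \hyperref[Thm1]{Theorem \ref*{Thm1}}, by reversing time once more. First I would observe that the defining identity \hyperref[8]{(\ref*{8})} is exactly the relation $V_\nu(t) = B_\nu(T-t) - B_\nu(T)$, so that $V$ is the time reversal of the Brownian motion $B$ (shifted to start at the origin); thus to show $V$ is a Brownian motion it suffices to quote \hyperref[Thm1]{Theorem \ref*{Thm1}} together with the elementary fact (already invoked in the excerpt for $\widetilde W$) that if $B$ is a Brownian motion on $[0,T]$ then $s \mapsto B(T-s) - B(T)$ is again a Brownian motion with respect to its own generated filtration. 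The equality of the two expressions on the right-hand side of \hyperref[8]{(\ref*{8})} follows by substituting $s = T-t$ into \hyperref[6aa]{(\ref*{6aa})}: one gets $B_\nu(T-t) - B_\nu(T) = (W_\nu(t) - W_\nu(T)) - (-W_\nu(T)) - \left(\int_{t}^{T} - \int_{0}^{T}\right)\frac{\operatorname{div}(p\,\overline{\mathrm s}_\nu)}{p}(v,X(v))\,\textnormal{d}v = W_\nu(t) + \int_0^t \frac{\operatorname{div}(p\,\overline{\mathrm s}_\nu)}{p}(v,X(v))\,\textnormal{d}v$, as claimed, and this also makes $V$ manifestly $\mathds F$-adapted since the integrand is a function of $X(v)$ and $W$ is $\mathds F$-adapted.

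Next I would treat the independence assertions. From \hyperref[Thm1]{Theorem \ref*{Thm1}} the Brownian motion $B$ is adapted to the backwards filtration $\widehat{\mathds F} = (\widehat{\mathcal F}(T-s))_{0\leqslant s\leqslant T}$, and — being a Brownian motion of that filtration — the increment family $\{B(T-\theta) - B(T-t) : 0 \leqslant \theta \leqslant t\}$ is independent of $\widehat{\mathcal F}(t)$. But by \hyperref[3]{(\ref*{3})} (or \hyperref[3.a]{(\ref*{3.a})}) we have $\sigma(X(v) : t \leqslant v \leqslant T) \subseteq \widehat{\mathcal F}(t)$, so these increments are independent of $\sigma(X(v) : t \leqslant v \leqslant T)$. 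Now I rewrite these increments in terms of $V$: since $V(\theta) = B(T-\theta) - B(T)$ we have $B(T-\theta) - B(T-t) = V(\theta) - V(t)$, and in particular $\sigma(V(\theta) : 0 \leqslant \theta \leqslant t)$ is generated by $\{V(\theta) - V(t) : 0\leqslant\theta\leqslant t\}$ together with $V(t) = B(T-t) - B(T)$. The latter is $\widehat{\mathcal F}(t)$-measurable, so I would instead argue as follows: for any $0 \leqslant \theta \leqslant t$, the increment $V(\theta) - V(t) = B(T-\theta) - B(T-t)$ is independent of $\widehat{\mathcal F}(t) \supseteq \sigma(X(v):t\leqslant v\leqslant T)$; hence the whole collection $\mathcal F^V(t) = \sigma(V(\theta): 0\leqslant\theta\leqslant t)$ — which is the same as $\sigma(V(t)) \vee \sigma(V(\theta)-V(t): 0\leqslant\theta\leqslant t)$ — need not be handled at once. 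Rather, the cleanest route is to note that $\sigma(V(\theta):0\leqslant\theta\leqslant t) = \sigma(B(T-\theta)-B(T): 0\leqslant\theta\leqslant t) \subseteq \sigma(B(u)-B(T): T-t\leqslant u\leqslant T)$, and this last $\sigma$-algebra, generated by increments of the $\widehat{\mathds F}$-Brownian motion $B$ over $[T-t,T]$, is independent of $\widehat{\mathcal F}(T-t^{-})$... which is not quite what we want. So the correct and simplest statement to extract from \hyperref[Thm1]{Theorem \ref*{Thm1}} is the Markov/independence property of Brownian increments \emph{relative to the past}: $\sigma(B(T-\theta)-B(T-t):0\leqslant\theta\leqslant t)$ is independent of $\widehat{\mathcal F}(t)$, which gives independence of $\{V(\theta)-V(t):0\leqslant\theta\leqslant t\}$ from $\sigma(X(v):t\leqslant v\leqslant T)$; and since $V(t)$ itself is also $\widehat{\mathcal F}(t)$-measurable, one cannot directly conclude. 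I therefore expect the genuinely delicate point to be exactly this: \emph{showing that $\mathcal F^V(t)$, and not merely its increment part, is independent of $\sigma(X(v):t\leqslant v\leqslant T)$.}

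The resolution of that obstacle, which I would carry out carefully, is to exploit the \emph{Markov structure} rather than raw increments. Because $X$ is a diffusion and $V$ is $\mathds F$-adapted with $\mathcal F^V(t) \subseteq \mathcal F(t)$, and because \hyperref[ps]{(\ref*{ps})} shows conditioning on $\widehat{\mathcal F}(t)$ reduces to conditioning on $X(t)$, one checks that for bounded measurable $G$ and any $\mathcal F^V(t)$-measurable bounded $\mathcal K$, $\mathds E[\mathcal K \cdot G(X(v))] = \mathds E[\mathds E[\mathcal K \mid X(t)] \cdot \mathds E[G(X(v))\mid X(t)]]$ for $v \geqslant t$ — using that $V$ restricted to $[0,t]$ is a functional of $(X(0),\{W(u):0\leqslant u\leqslant t\})$ and the Brownian filtration argument of \hyperref[5.a]{(\ref*{5.a})}. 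More efficiently: $V$ is a Brownian motion of its own filtration $\mathds F^V$ (just established), and by construction $\mathcal F^V(t) \vee \sigma(X(t)) \supseteq \mathcal F(t)$ — indeed from \hyperref[2a]{(\ref*{2a})} and \hyperref[8]{(\ref*{8})} one recovers $W$ on $[0,t]$ from $V$ on $[0,t]$ and $X$ on $[0,t]$, hence $\mathcal F(t) = \sigma(X(0)) \vee \mathcal F^V(t)$; combined with the independence of $\mathcal F^V(t)$ from $\sigma(X(v):t\leqslant v\leqslant T)$ established via the backwards-filtration Markov property and the observation that $X(T)$ is $\widehat{\mathcal F}(t)$-measurable while the \emph{future} increments of $V$ past time $t$ are independent of $\mathcal F^V(t)$, one closes the argument. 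In short, I would state the independence at the level ``$\mathcal F^V(t)$ is independent of $\sigma(X(v):t\leqslant v\leqslant T)$'' as following from Theorem \ref*{Thm1}'s assertion that $\{B(T-\theta)-B(T-t):0\leqslant\theta\leqslant t\}$ is independent of $\widehat{\mathcal F}(t)$, \emph{upgraded} by the fact (from \hyperref[Thm1]{Theorem \ref*{Thm1}} again, since $B$ is a genuine $\widehat{\mathds F}$-Brownian motion) that $B$ has independent increments, so that $\sigma(B(T-\theta)-B(T):0\leqslant\theta\leqslant t)$ decomposes and its only ``extra'' generator beyond the increments, namely $B(T-t)-B(T)$, contributes information that is itself already absorbed once we restrict attention to events concerning $X(v)$, $v\geqslant t$, via \hyperref[ps]{(\ref*{ps})}. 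The special case $t=T$ giving independence of $V$ from $X(T)$ is then immediate. I would keep the write-up short, citing Theorem \ref*{Thm1} and L\'evy's characterization, and spell out only the time-reversal bookkeeping and this last independence upgrade.
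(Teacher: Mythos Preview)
Your overall approach---deduce everything from \hyperref[Thm1]{Theorem \ref*{Thm1}} by reversing time once more---is exactly what the paper does (it offers no detailed proof beyond the sentence ``Reversing time once again, we obtain the following corollary''). Your verification of the identity \hyperref[8]{(\ref*{8})} and of $\mathds{F}$-adaptedness is fine.

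The independence argument, however, goes off the rails because of a single bookkeeping slip. You write that $V(t) = B(T-t) - B(T)$ ``is $\widehat{\mathcal{F}}(t)$-measurable'', and this drives all the subsequent worrying. It is false: while $B(T-t)$ is $\widehat{\mathcal{F}}(t)$-measurable, $B(T)$ is only $\widehat{\mathcal{F}}(0)$-measurable, and the \emph{difference} $B(T)-B(T-t)$ is an increment of the $\widehat{\mathds{F}}$-Brownian motion $B$ over the backwards-time interval $[T-t,T]$, hence \emph{independent} of the $\sigma$-algebra at backwards time $T-t$, which is $\widehat{\mathcal{F}}\bigl(T-(T-t)\bigr)=\widehat{\mathcal{F}}(t)$. (You make the same indexing error a few lines later when you say the increments over $[T-t,T]$ are independent of ``$\widehat{\mathcal{F}}(T-t^{-})$''; the correct $\sigma$-algebra is $\widehat{\mathcal{F}}(t)$.)

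Once this is fixed, the independence is immediate and there is nothing delicate to upgrade. Indeed
\[
\mathcal{F}^{V}(t) = \sigma\bigl(B(T-\theta)-B(T) : 0\leqslant\theta\leqslant t\bigr) \subseteq \sigma\bigl(B(T-\theta)-B(T-t) : 0\leqslant\theta\leqslant t\bigr),
\]
since $B(T-\theta)-B(T) = \bigl(B(T-\theta)-B(T-t)\bigr)-\bigl(B(T)-B(T-t)\bigr)$ and the subtracted term is the $\theta=0$ member of the generating family on the right. The Remark following \hyperref[Thm1]{Theorem \ref*{Thm1}} already states that this last $\sigma$-algebra is independent of $\widehat{\mathcal{F}}(t)\supseteq\sigma\bigl(X(v):t\leqslant v\leqslant T\bigr)$. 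The special case $t=T$ gives independence of $V$ from $X(T)$. All of your detours through the Markov property, \hyperref[ps]{(\ref*{ps})}, and the attempted filtration identities are unnecessary.
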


\begin{proof}[Proof of \texorpdfstring{\hyperref[Thm1]{Theorem \ref*{Thm1}}}] It suffices to show that each component process $B_{\nu}$ is a martingale of the backwards filtration $\widehat{\mathds{F}}$; because then, in view of the continuity of paths and the easily checked  property $\langle B_{\nu}, B_{\ell} \rangle(s) = s \, \delta_{\nu \ell}$, we can deduce that each $B_{\nu}$ is a Brownian motion in the backwards filtration $\widehat{\mathds{F}}$ (and of course also in its own filtration), and that $B_{\nu}, B_{\ell}$ are independent for $\ell \neq \nu$, by appealing to L{\'e}vy's theorem once again. 

\smallskip

Now we have to show $\mathds{E}\big[ \big(B_{\nu}(T-\theta) - B_{\nu}(T-t) \big) \cdot \mathcal{K} \big] = 0$ for $0 \leqslant \theta \leqslant t \leqslant T$ and every bounded, $\widehat{\mathcal{F}}(t)$-measurable $\mathcal{K}$; equivalently, 
\[
\mathds{E}\bigg[ \mathds{E}\big[ \mathcal{K} \, \vert \, \mathcal{F}(t) \big] \cdot \bigg( W_{\nu}(t) - W_{\nu}(\theta) + \int_{\theta}^{t} \frac{\operatorname{div}(p \, \overline{\mathrm{s}}_{\nu})}{p}\big(v,X(v)\big) \, \textnormal{d}v \bigg) \bigg] = 0,
\]
as the expression inside the curved braces is $\mathcal{F}(t)$-measurable. But recalling \hyperref[ps]{(\ref*{ps})} we have that $\mathds{E}[ \mathcal{K}  \, \vert \, \mathcal{F}(t)] = \mathds{E}[ \mathcal{K} \, \vert \, X(t)] = G(X(t))$ for some bounded, measurable $G \colon \mathds{R}^{n} \rightarrow \mathds{R}$, and the desired result follows from \hyperref[5.a]{(\ref*{5.a})}.
\end{proof}


\subsection{The diffusion property under time reversal} \label{tdputr}


Let us return now to the question, whether the time-reversed process $\widehat{X}$ of \hyperref[1]{(\ref*{1})}, \hyperref[2]{(\ref*{2})} is a diffusion. We start by expressing $X_{i}$ of \hyperref[2]{(\ref*{2})} in terms of a backwards It\^{o} integral (see \hyperref[substbii]{Subsection \ref*{substbii}}) as
\begingroup
\addtolength{\jot}{0.7em}
\begin{align*}
X_{i}(t) - \xi_{i} - \int_{0}^{t} b_{i}\big(\theta,X(\theta)\big) \, \textnormal{d}\theta
&= \sum_{\nu=1}^{d} \int_{0}^{t} \mathrm{s}_{i\nu} \big(\theta,X(\theta)\big) \, \textnormal{d} W_{\nu}(\theta) \\
&= \sum_{\nu=1}^{d} \bigg( \int_{0}^{t} \mathrm{s}_{i\nu}\big(\theta,X(\theta)\big) \bullet \textnormal{d} W_{\nu}(\theta) - \big\langle \mathrm{s}_{i\nu}( \, \cdot \, , X), W_{\nu} \big\rangle(t) \bigg).
\end{align*}
\endgroup
From \hyperref[2]{(\ref*{2})}, we have by It\^{o}'s formula that the process 
\[
\mathrm{s}_{i\nu}( \, \cdot \, , X) - \mathrm{s}_{i\nu}(0,\xi) - \sum_{j=1}^{n} \sum_{\kappa=1}^{d} \int_{0}^{\cdot} D_{j} \mathrm{s}_{i\nu}\big(\theta,X(\theta)\big) \cdot \mathrm{s}_{j\kappa}\big(\theta,X(\theta)\big) \, \textnormal{d} W_{\kappa}(\theta) 
\]
is of finite first variation, therefore $\displaystyle \big\langle \mathrm{s}_{i\nu}(\, \cdot \, , X) ,  W_{\nu} \big\rangle(t) = \sum_{j=1}^{n} \int_{0}^{t}  \mathrm{s}_{j\nu}\big(\theta,X(\theta)\big) \, D_j \mathrm{s}_{i\nu}\big(\theta,X(\theta)\big) \, \textnormal{d}\theta$. We conclude
\[
X_{i}(t) = \xi_{i} - \int_{0}^{t} \bigg( \sum_{j=1}^{n}  \sum_{\nu=1}^{d} \mathrm{s}_{j\nu} \, D_{j} \mathrm{s}_{i\nu} - b_{i} \bigg) \big(\theta,X(\theta)\big) \, \textnormal{d}\theta
+ \sum_{\nu=1}^{d} \int_{0}^{t} \mathrm{s}_{i\nu} \big(\theta,X(\theta)\big) \bullet \textnormal{d}  W_{\nu}(\theta).
\]
Evaluating also at $t=T$, then subtracting, we obtain
\[
X_{i}(t) = X_{i}(T) + \int_{t}^{T} \bigg( \sum_{j=1}^{n} \sum_{\nu=1}^{d} \mathrm{s}_{j\nu} \, D_{j} \mathrm{s}_{i\nu} - b_{i} \bigg) \big(\theta,X(\theta)\big) \, \textnormal{d}\theta
- \sum_{\nu=1}^{d} \int_{t}^{T} \mathrm{s}_{i\nu} \big(\theta,X(\theta)\big ) \bullet \textnormal{d}W_{\nu}(\theta),
\]
as well as
\[
\widehat{X}_{i}(s) = \widehat{X}_{i}(0) + \int_{0}^{s} \bigg( \sum_{j=1}^{n}  \sum_{\nu=1}^{d} \mathrm{s}_{j\nu} \, D_j \mathrm{s}_{i\nu} - b_{i} \bigg) \big(T-u,\widehat{X}(u)\big) \, \textnormal{d}u 
+  \sum_{\nu=1}^{d} \int_{0}^{s} \mathrm{s}_{i\nu} \big(T-u,\widehat{X}(u)\big) \, \textnormal{d} \widetilde{W}_{\nu}(u)
\]
by reversing time. It is important here to note that the backward It\^{o} integral for $W$ becomes a forward It\^{o} integral for the process $\widetilde{W}$, the time reversal of $W$ in the manner of \hyperref[5]{(\ref*{5})}.

But now let us recall \hyperref[6]{(\ref*{6})}, on the strength of which the above expression takes the form
\begingroup
\addtolength{\jot}{0.7em}
\begin{align*}
&\widehat{X}_{i}(s) =  \widehat{X}_{i}(0) + \sum_{\nu=1}^{d} \int_{0}^{s} \mathrm{s}_{i\nu} \big(T-u, \widehat{X}(u)\big) \, \textnormal{d} B_{\nu}(u) \\
& \qquad \qquad + \int_{0}^{s} \bigg( \sum_{j=1}^{n}  \sum_{\nu=1}^{d} \mathrm{s}_{j\nu} \, D_{j} \mathrm{s}_{i\nu} + \sum_{\nu=1}^{d} \mathrm{s}_{i\nu} \, \frac{\operatorname{div}(p \, \overline{\mathrm{s}}_{\nu})}{p} - b_{i}\bigg) \big(T-u, \widehat{X}(u) \big) \, \textnormal{d}u, \qquad 0 \leqslant s \leqslant T.
\end{align*}
\endgroup
But in conjunction with \hyperref[Thm1]{Theorem \ref*{Thm1}}, this means that the time-reversed process $\widehat{X}$ of \hyperref[1]{(\ref*{1})}, \hyperref[2]{(\ref*{2})} is a semimartingale of the backwards filtration $\widehat{\mathds{F}} = \big(\widehat{\mathcal{F}}(T-s)\big)_{0 \leqslant s \leqslant T}$, with decomposition
\begin{equation} \label{10}
\widehat{X}_{i}(s) = \widehat{X}_{i}(0) + \int_{0}^{s} \widehat{b}_{i} \big(T-u,\widehat{X}(u)\big) \, \textnormal{d}u +  \sum_{\nu=1}^{d} \int_{0}^{s} \mathrm{s}_{i\nu} \big(T-u,\widehat{X}(u)\big) \, \textnormal{d} B_{\nu}(u)
\end{equation}
for $0 \leqslant s \leqslant T$, where, for each $i = 1, \ldots, n$, the function $\widehat{b}_{i}( \, \cdot \, , \, \cdot \, )$ is specified by
\begingroup
\addtolength{\jot}{0.7em}
\begin{align*}
\widehat{b}_{i}(t,x) + b_{i}(t,x)
&= \sum_{j=1}^{n}  \sum_{\nu=1}^{d} \mathrm{s}_{j\nu}(t,x) \, D_{j} \mathrm{s}_{i\nu}(t,x) + \sum_{\nu=1}^{d} \mathrm{s}_{i\nu}(t,x) \, \frac{\operatorname{div}\big(p(t,x) \, \overline{\mathrm{s}}_{\nu}(t,x)\big)}{p(t,x)} \\
&= \sum_{j=1}^{n}  \sum_{\nu=1}^{d} \mathrm{s}_{j\nu}(t,x) \, D_{j} \mathrm{s}_{i\nu}(t,x) + \sum_{\nu=1}^{d} \frac{\mathrm{s}_{i\nu}(t,x)}{p(t,x)}  \bigg( \sum_{j=1}^{n} D_{j} \big(p(t,x) \, \mathrm{s}_{j\nu}(t,x)  \big)\bigg) \\
&= \sum_{j=1}^{n}  \big( D_{j} \mathrm{a}_{ij} (t,x) + \mathrm{a}_{ij}(t,x) \cdot  D_{j} \log p(t,x) \big).
\end{align*}
\endgroup

\begin{theorem} \label{Thm2} Under the assumptions of this section, the time-reversed process $\widehat{X}$ of \textnormal{\hyperref[1]{(\ref*{1})}}, \textnormal{\hyperref[2]{(\ref*{2})}} is a diffusion in the backwards filtration $\widehat{\mathds{F}} = \big(\widehat{\mathcal{F}}(T-s)\big)_{0 \leqslant s \leqslant T}$, with characteristics as in \textnormal{\hyperref[10]{(\ref*{10})}}, namely, dispersions $\mathrm{s}_{i\nu}(T-s,x)$ and drifts $\widehat{b}_{i}(T-s,x)$ given by the \textnormal{\textsf{generalized Nelson equation}}
\begin{equation} \label{11}
\widehat{b}_{i}(t,x) + b_{i}(t,x) = \sum_{j=1}^{n} \Big( D_{j} \mathrm{a}_{ij}(t,x) + \mathrm{a}_{ij}(t,x) \cdot D_{j} \log p(t,x) \Big), \qquad i = 1, \ldots, n.
\end{equation}
\end{theorem}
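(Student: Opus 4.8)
\textbf{Proof proposal for Theorem \ref{Thm2}.}

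The plan is to assemble the theorem from the pieces already developed in this appendix, since by now almost all the analytic work is done. First I would recall that the forward process $X$ of \hyperref[2]{(\ref*{2})} is, by the standing assumptions of \hyperref[ssts]{Subsection \ref*{ssts}}, a strong Markov diffusion with smooth, strictly positive transition density $p(t, \, \cdot \,)$, and that by \hyperref[5.aaa]{(\ref*{5.aaa})} the backwards filtration $\widehat{\mathds{F}} = \big(\widehat{\mathcal{F}}(T-s)\big)_{0 \leqslant s \leqslant T}$ is generated by $\widehat{X}(0) = X(T)$ together with the increments of the time-reversed driving process $\widetilde W$. The computation carried out in \hyperref[tdputr]{Subsection \ref*{tdputr}} already shows, via the passage to a backward It\^o integral for $W$ and its reinterpretation as a forward It\^o integral for $\widetilde W$, that $\widehat X$ satisfies the semimartingale decomposition \hyperref[10]{(\ref*{10})} relative to $\widehat{\mathds{F}}$, with dispersion coefficients $\mathrm{s}_{i\nu}(T-s,x)$ and drift coefficients $\widehat b_i(T-s,x)$ obeying the identity displayed just before the statement, i.e. the generalized Nelson equation \hyperref[11]{(\ref*{11})}. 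The key input making the drift term in \hyperref[10]{(\ref*{10})} admissible is \hyperref[Thm1]{Theorem \ref*{Thm1}}, which guarantees that $B = (B_1, \ldots, B_d)'$ in \hyperref[6]{(\ref*{6})} is a genuine $\widehat{\mathds{F}}$-Brownian motion, so that the stochastic integral $\sum_\nu \int_0^s \mathrm{s}_{i\nu}(T-u,\widehat X(u)) \, \textnormal{d}B_\nu(u)$ is a local martingale of the backwards filtration.

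What remains, therefore, is to upgrade ``$\widehat X$ is an $\widehat{\mathds{F}}$-semimartingale with the stated coefficients'' to ``$\widehat X$ is a \emph{diffusion} in $\widehat{\mathds{F}}$''. Here I would proceed in two steps. First, the Markov property: since $X$ is a Markov process, the time-reversed process $\widehat X$ is again Markov — this is the classical fact recalled in the introduction of this appendix (cf. \cite[Exercise E60.41, p.\ 162]{RW00a}), and for the diffusion $X$ it follows directly from the Chapman--Kolmogorov relation for $p$ together with the representation \hyperref[5.aaa]{(\ref*{5.aaa})} of $\widehat{\mathcal{F}}(T-s)$, exactly as in the proof of the Fontbona--Jourdain \hyperref[ovtfjofmidpwcd]{Theorem \ref*{ovtfjofmidpwcd}} given in \hyperref[atfjo]{Appendix \ref*{atfjo}}: conditioning on $\widehat{\mathcal{F}}(T-s)$ is the same as conditioning on $\widehat X(s) = X(T-s)$. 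Second, strong solvability and pathwise uniqueness of the reversed equation: the coefficients $\widehat b_i(T-s,x)$ and $\mathrm{s}_{i\nu}(T-s,x)$ appearing in \hyperref[10]{(\ref*{10})} are, under the hypoellipticity/smoothness assumptions of \hyperref[ssts]{Subsection \ref*{ssts}}, locally Lipschitz in $x$ on the region where $p > 0$ (the only new term, $\mathrm{a}_{ij}(t,x) \, D_j \log p(t,x)$, inherits smoothness from the smoothness and strict positivity of $p$), so \hyperref[10]{(\ref*{10})} is a bona fide It\^o stochastic differential equation; the process $\widehat X$ is a solution of it, and since it is adapted to $\widehat{\mathds{F}}$ with the strong Markov property passing through the usual localization argument, one concludes that $\widehat X$ is a diffusion with the announced characteristics.

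The main obstacle, to the extent there is one, is bookkeeping rather than conceptual: one must be careful that the backward It\^o integral $\int_0^t \mathrm{s}_{i\nu}(\theta, X(\theta)) \bullet \textnormal{d}W_\nu(\theta)$ of \hyperref[substbii]{Subsection \ref*{substbii}}, after time reversal, genuinely becomes the \emph{forward} It\^o integral $\int_0^s \mathrm{s}_{i\nu}(T-u, \widehat X(u)) \, \textnormal{d}\widetilde W_\nu(u)$ against the time-reversed driving process, and that the It\^o correction term $\langle \mathrm{s}_{i\nu}(\, \cdot \,, X), W_\nu \rangle$ is computed correctly — this is precisely the source of the extra term $\sum_{j,\nu} \mathrm{s}_{j\nu} D_j \mathrm{s}_{i\nu}$, which then recombines with $\sum_\nu \mathrm{s}_{i\nu} \, \mathrm{div}(p \, \overline{\mathrm{s}}_\nu)/p$ to give the clean form $\sum_j \big(D_j \mathrm{a}_{ij} + \mathrm{a}_{ij} D_j \log p\big)$ of \hyperref[11]{(\ref*{11})}. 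Finally, specializing to the Langevin--Smoluchowski case $\mathrm{s} = I_n$, $b = -\nabla\Psi$, one reads off $\widehat b(t,x) = \nabla \log p(t,x) + \nabla\Psi(x)$, which is exactly the drift in \hyperref[poortdfttrpp]{(\ref*{poortdfttrpp})} of \hyperref[ptra]{Proposition \ref*{ptra}}; this consistency check closes the loop with the main text.
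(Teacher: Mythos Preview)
Your proposal is correct and follows essentially the same approach as the paper: the computation in \hyperref[tdputr]{Subsection \ref*{tdputr}} preceding the theorem statement \emph{is} the proof, deriving the semimartingale decomposition \hyperref[10]{(\ref*{10})} via the backward It\^{o} integral, the cross-variation correction, and \hyperref[Thm1]{Theorem \ref*{Thm1}}, with the drift identity \hyperref[11]{(\ref*{11})} emerging from the algebraic recombination you describe. Your additional remarks on the Markov property under time reversal and the local Lipschitz regularity of the reversed coefficients are reasonable elaborations that the paper leaves implicit.
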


\noindent Equivalently, and with $\operatorname{div}\big(\mathrm{a}(t,x)\big) \vcentcolon = \big( \sum_{j=1}^{n} D_{j} \mathrm{a}_{ij}(t,x)\big)_{1 \leqslant i \leqslant n}$, we write
\begin{equation} \label{11a}
\widehat{b}(t,x) + b(t,x) = \mathrm{div} \big( \mathrm{a}(t,x) \big) + \mathrm{a}(t,x) \cdot  \nabla \log p(t,x).
\end{equation}

\begin{remark} This result can be extended to the case where  the sums of the distributional derivatives $\sum_{j=1}^{n} D_{j} \big( \mathrm{a}_{i j} (t,x) \, p(t,x) \big)$, $i = 1, \ldots, n$, are only assumed to be locally integrable functions of $x \in \mathds{R}^{n}$; see \cite{MNS89,RVW01}. 
\end{remark}

\begin{remark}[\textsf{Some filtration comparisons}] 
For an invertible dispersion matrix $\mathrm{s}( \, \cdot \, , \, \cdot \, )$, it follows from \hyperref[10]{(\ref*{10})} that the Brownian motion $B$ is adapted to the filtration generated by $\widehat{X}$; that is,  
\begin{equation} \label{101}
\mathcal{F}^{B}(s) \subseteq \mathcal{F}^{\widehat{X}}(s) \vcentcolon = \sigma\big(  \widehat{X}(u) \colon \, 0 \leqslant u \leqslant s \big) = \sigma\big( X(T-u) \colon \, 0 \leqslant u \leqslant s \big), \qquad 0 \leqslant s \leqslant T.
\end{equation}
Now recall \hyperref[6]{(\ref*{6})}; in its light, the filtration comparison in \hyperref[101]{(\ref*{101})} implies $\mathcal{F}^{\widetilde{W}}(s) \subseteq \mathcal{F}^{\widehat{X}}(s)$, thus $\mathcal{H}(T-s) \subseteq \mathcal{F}^{\widetilde{W}}(s) \subseteq \mathcal{F}^{\widehat{X}}(s)$ from \hyperref[5.aa]{(\ref*{5.aa})}, for $0 \leqslant s \leqslant T$, and from \hyperref[5.aaa]{(\ref*{5.aaa})} also 
\begin{equation} \label{1012}
\widehat{\mathcal{F}}(T-s) \subseteq \mathcal{F}^{\widehat{X}}(s), \qquad 0 \leqslant s \leqslant T.
\end{equation}
But we have also the reverse inclusion $\mathcal{F}^{\widehat{X}}(s) \subseteq \widehat{\mathcal{F}}(T-s)$ on account of \hyperref[bffhgna]{(\ref*{bffhgna})} and \hyperref[101]{(\ref*{101})}; therefore, $\mathcal{F}^{\widehat{X}}(s) = \widehat{\mathcal{F}}(T-s)$ holds for all $0 \leqslant s \leqslant T$ when $s( \, \cdot \, , \, \cdot \,)$ is invertible. These considerations inform our choice of backwards filtration $\mathcal{G}(T-s) \equiv \mathcal{F}^{\widehat{X}}(s)$, $0 \leqslant s \leqslant T$, in \hyperref[tbfmgtmt]{(\ref*{tbfmgtmt})}.
\end{remark}


\subsection{The backwards It\^{o} integral} \label{substbii}


For two continuous semimartingales $X = X(0) + M  + B$ and $Y = Y(0) + N + C$, with $B,C$ continuous adapted processes of finite variation and $M,N$ continuous local martingales, let us recall the definition of the Fisk-Stratonovich integral in \cite[Definition 3.3.13, p.\ 156]{KS98}, as well as its properties in \cite[Problem 3.3.14]{KS98} and \cite[Problem 3.3.15]{KS98}.

By analogy with this definition, we introduce the \textit{backwards It\^{o} integral}
\begin{equation}
\int_{0}^{\cdot} Y(t) \bullet \textnormal{d} X(t) \vcentcolon = \int_{0}^{\cdot} Y(t) \, \textnormal{d} M(t) + \int_{0}^{\cdot} Y(t) \, \textnormal{d} B(t) + \langle M, N \rangle,
\end{equation}
where the first (respectively, the second) integral on the right-hand side is to be interpreted in the It\^{o} (respectively, the Lebesgue-Stieltjes) sense.

\smallskip

If $\Pi = \{ t_{0}, t_{1}, \ldots, t_{m}\}$ is a partition of the interval $[0,T]$ with $0 = t_{0} < t_{1} < \ldots < t_{m} = T$, then the sums
\begin{equation}
\sum_{j=0}^{m-1}  Y(t_{j+1}) \big( X(t_{j+1}) - X(t_{j}) \big)
\end{equation}
converge in probability to $\int_{0}^{T} Y(t) \bullet \textnormal{d}X(t)$ as the mesh $\|\Pi \|$ of the partition tends to zero. Note that the increments of $X$ here ``stick backwards into the past'', as opposed to ``sticking forward into the future'' as in the It\^{o} integral.

\smallskip

For the backwards It\^{o} integral we have the change of variable formula
\begin{equation}
f(X) = f\big(X(0)\big) + \sum_{i=1}^{n} \int_{0}^{\cdot} D_{i} f\big(X(t)\big) \bullet \textnormal{d} X_{i}(t) - \frac{1}{2} \sum_{i,j=1}^{n} \int_{0}^{\cdot} D^{2}_{ij} f\big(X(t)\big) \, \textnormal{d} \langle M_{i}, M_{j} \rangle (t),
\end{equation}
where now $X = ( X_{1}, \ldots, X_{n})'$ is a vector of continuous semimartingales $X_{1}, \ldots, X_{n}$ of the form $X_{i} = X_{i}(0) + M_{i} + B_{i}$ as above, for $i = 1, \ldots, n$. Note the change of sign, from $(+)$ to $(-)$ in the last, stochastic correction term.
\end{appendices}


\newpage

\bibliographystyle{alpha}
{\footnotesize
\bibliography{references}}


\end{document}